\renewcommand{\eqref}[1]{\hyperref[#1]{(\ref{#1})}}
\newlist{enumlist}{enumerate}{2}
\setlist[enumlist,1]{labelindent=0cm,label=\arabic*.,ref=\arabic*,labelwidth=2.5ex,labelsep=0.5ex,leftmargin=3ex,align=left,topsep=0.5ex,itemsep=1ex,parsep=1ex}
\setlist[enumlist,2]{labelindent=0cm,label=\theenumlisti.\arabic*.,ref=\arabic*,labelwidth=5ex,labelsep=0.5ex,leftmargin=5.5ex,align=left,topsep=0.5ex,itemsep=1ex,parsep=1ex}
\newlist{itemlist}{itemize}{1}
\setlist[itemlist]{labelindent=0cm,label=$\bullet$,labelwidth=2.5ex,labelsep=0.5ex,leftmargin=3ex,align=left,topsep=0.5ex,itemsep=1ex,parsep=1ex}
\numberwithin{equation}{section}
\theoremstyle{definition}\newtheorem{definition}{Definition}[section]
\newtheorem*{definition*}{Definition}
\newtheorem{remark}[definition]{Remark}
\newtheorem{example}[definition]{Example}
\newtheorem*{example*}{Example}
\newtheorem*{examples*}{Examples}}
\newtheorem{proposition}[definition]{Proposition}
\newtheorem{lemma}[definition]{Lemma}
\newtheorem{theorem}[definition]{Theorem}
\newtheorem{corollary}[definition]{Corollary}
\newtheorem{letterthm}{Theorem}
\theoremstyle{definition}}
\renewcommand{\Re}{\operatorname{Re}}
\newcommand{\C}{\mathbb{C}}
\newcommand{\eps}{\varepsilon}
\newcommand{\al}{\alpha}
\newcommand{\be}{\beta}
\newcommand{\ot}{\otimes}
\newcommand{\recht}{\rightarrow}
\newcommand{\Z}{\mathbb{Z}}
\newcommand{\vphi}{\varphi}
\newcommand{\id}{\mathord{\text{\rm id}}}
\newcommand{\om}{\omega}
\newcommand{\N}{\mathbb{N}}
\newcommand{\ovt}{\mathbin{\overline{\otimes}}}
\newcommand{\Om}{\Omega}
\newcommand{\cD}{\mathcal{D}}
\newcommand{\si}{\sigma}
\newcommand{\R}{\mathbb{R}}
\newcommand{\cH}{\mathcal{H}}
\newcommand{\cZ}{\mathcal{Z}}
\newcommand{\cG}{\mathcal{G}}
\newcommand{\cK}{\mathcal{K}}
\newcommand{\cF}{\mathcal{F}}
\newcommand{\T}{\mathbb{T}}
\newcommand{\actson}{\curvearrowright}
\newcommand{\cW}{\mathcal{W}}
\newcommand{\cU}{\mathcal{U}}
\newcommand{\cR}{\mathcal{R}}
\newcommand{\cV}{\mathcal{V}}
\newcommand{\cS}{\mathcal{S}}
\newcommand{\cL}{\mathcal{L}}
\newcommand{\per}{\operatorname{per}}
\newcommand{\mutil}{\widetilde{\mu}}
\newcommand{\Xtil}{\widetilde{X}}
\newcommand{\symdiff}{\vartriangle}
\newcommand{\cUtil}{\widetilde{\cU}}
\newcommand{\omtil}{\widetilde{\omega}}
\newcommand{\gammatil}{\widetilde{\gamma}}
\newcommand{\xtil}{\widetilde{x}}
\newcommand{\ztil}{\widetilde{z}}
\newcommand{\nutil}{\widetilde{\nu}}
\newcommand{\Var}{\operatorname{Var}}
\newcommand{\ctil}{\widetilde{c}}
\newcommand{\rhotil}{\widetilde{\rho}}
\begin{document}

\begin{center}
{\boldmath\LARGE\bf Nonsingular Bernoulli actions \vspace{0.5ex}\\ of arbitrary Krieger type}

\bigskip

{\sc by Tey Berendschot\footnote{\noindent KU~Leuven, Department of Mathematics, Leuven (Belgium).\\ E-mails: tey.berendschot@kuleuven.be and stefaan.vaes@kuleuven.be.}\textsuperscript{,}\footnote{\noindent T.B.\ and S.V.\ are supported by long term structural funding~-- Methusalem grant of the Flemish Government.} and Stefaan Vaes\textsuperscript{1,2,}\footnote{S.V.\ is supported by FWO research project G090420N of the Research Foundation Flanders.}}
\end{center}

\begin{abstract}\noindent
\noindent We prove that every infinite amenable group admits Bernoulli actions of any possible Krieger type, including type II$_\infty$ and type III$_0$. We obtain this result as a consequence of general results on the ergodicity and Krieger type of nonsingular Bernoulli actions $G \curvearrowright \prod_{g \in G} (X_0,\mu_g)$ with arbitrary base space $X_0$, both for amenable and for nonamenable groups. Earlier work focused on two point base spaces $X_0 = \{0,1\}$, where type II$_\infty$ was proven not to occur.
\end{abstract}

\section{Introduction}

Both in operator algebras and in ergodic theory, a very prominent class of group actions is given by the Bernoulli construction. Given a countable group $G$ and a standard probability space $(X_0,\mu_0)$, one considers the translation action of $G$ on the infinite product $(X,\mu) = (X_0,\mu_0)^G$. Some of the most striking superrigidity theorems for von Neumann algebras and orbit equivalence relations in the past decades were proven for such probability measure preserving (pmp) Bernoulli actions, see e.g.\ \cite{Pop03,Pop05}.

In recent years, motivated by the success of Popa's deformation/rigidity theory for II$_1$ factors, there has been an increasing interest in group actions that are not measure preserving and their associated von Neumann algebras of type III, see e.g.\ \cite{AIM19,HI15,HMV16}. In this context, nonsingular Bernoulli actions are a very natural family of group actions to consider. Given a standard Borel space $X_0$, one still considers the translation action of $G$ on $X = X_0^G$, but $X$ is now equipped with the product of a family of potentially distinct probability measures $(\mu_g)_{g \in G}$. Even seemingly basic questions as when such an action is ergodic and what its Krieger type may be, turn out to be very subtle and intimately related to properties of the acting group $G$.

Early work on these questions focussed on the construction of concrete examples, mainly for the group $\Z$. In \cite{Ham81}, it was shown that there exist ergodic Bernoulli actions of $\Z$ of type III (i.e.\ without an equivalent finite or $\si$-finite invariant measure), while it was only proven much later in \cite{Kos09} that there exist Bernoulli actions of $\Z$ of type III$_1$. Still in the spirit of constructing interesting large families of examples, it was proven in great generality in \cite{Kos12,DL16} when a Bernoulli action of $\Z$ with all $\mu_n$ being identical for negative $n < 0$, is ergodic and of type III$_1$, while \cite{VW17} provided the first systematic approach to Bernoulli actions of nonamenable groups $G$. In particular, it was proven in \cite{VW17}, that if a nonamenable group $G$ admits a Bernoulli action of type III, then $G$ must have a positive first $L^2$-Betti number. It was conjectured in \cite{VW17} that the converse holds. Recently, in \cite{BKV19}, this conjecture was proven as part of a series of results providing the first entirely general criteria for ergodicity and Krieger type of nonsingular Bernoulli actions $G \actson \{0,1\}^G$ with a \emph{two point base space} $\{0,1\}$.

One of the main results of \cite{BKV19}, solving a question going back to Krengel, is that a nonsingular Bernoulli action of $\Z$ with a two point base space $X_0 = \{0,1\}$ is never of type II$_\infty$. In \cite{VW17,BKV19}, it was shown that all non virtually cyclic groups with more than one end (like the free groups, or free product groups) admit Bernoulli actions of type III$_\lambda$ with $\lambda$ strictly positive, while infinite locally finite groups were proven to admit Bernoulli actions of all possible types. Very recently in \cite{KS20}, it was proven that all infinite amenable groups admit Bernoulli actions of type III$_\lambda$ with $\lambda$ strictly positive.

All this left open the question whether the group of integers, or any other non locally finite group, admits Bernoulli actions of type II$_\infty$ or type III$_0$. Our first main result says that the answer is yes, if we allow for an infinite base space $X_0$. The original question of Krengel thus gets the following surprising answer: a Bernoulli action of $\Z$ can be of type II$_\infty$ if and only if we allow the base space $X_0$ to be infinite.

\begin{letterthm}\label{thmA.amenable-all-types}
Every infinite amenable group admits nonsingular Bernoulli actions of all possible types: II$_1$, II$_\infty$ and III$_\lambda$ for all $\lambda \in [0,1]$.
\end{letterthm}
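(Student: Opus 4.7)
The plan is to deduce Theorem~A from the general ergodicity and Krieger type theorems that the paper announces for nonsingular Bernoulli actions $G\curvearrowright\prod_{g\in G}(X_0,\mu_g)$ with arbitrary base space, by exhibiting, for each target type, an explicit family $(X_0,(\mu_g)_{g\in G})$ whose Bernoulli action realises that type. Two cases are essentially in the literature and can be quoted: type~II$_1$ is given by the classical measure preserving shift with all $\mu_g$ equal, and type~III$_\lambda$ for $\lambda\in(0,1]$ on every infinite amenable group by the constructions of \cite{KS20} on the two point base. The substantive new cases are therefore II$_\infty$ and III$_0$, and as the discussion preceding the statement indicates, these genuinely require an infinite base space.

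For type~II$_\infty$ I would take $X_0$ countably infinite, say $X_0=\Z$, and choose probability measures $(\mu_g)_{g\in G}$ so that the product $\mu=\prod_g\mu_g$ is quasi-invariant and ergodic under the shift and admits an equivalent infinite $G$-invariant $\sigma$-finite measure $\nu$ on $X=X_0^G$. Writing $\nu=\phi\cdot\mu$ with $\phi>0$ almost everywhere, invariance of $\nu$ is equivalent to the Radon--Nikodym cocycle $\omega_h=d(h\cdot\mu)/d\mu$ being the coboundary $\omega_h(x)=\phi(x)/\phi(h^{-1}\cdot x)$, while $\nu(X)=\infty$ forces $\phi\notin L^1(\mu)$. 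A natural recipe is to fix a reference probability measure $\mu_0$ on $X_0$ and a F\o lner sequence $(F_n)$ in $G$, and to let $\mu_g=\mu_0$ outside a carefully chosen ``perturbation set'' $S\subset G$; the measures $\mu_g$ for $g\in S$ are then designed so that the resulting cocycle is a coboundary of a prescribed positive non-integrable $\phi$. The extra freedom compared to the two point case is that an infinite $X_0$ leaves room to realise such a $\phi$ without forcing the Hellinger distances between shifts to sum, which would have reduced the type back to II$_1$.

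For type~III$_0$, my plan is to combine a III$_\lambda$-type oscillation of Radon--Nikodym ratios with an infinite ``ladder'' of scales inside $X_0$, so that the flow of weights of the associated Maharam extension is nontrivial (excluding type~II) but aperiodic and free of nonzero eigenvalues (excluding III$_\lambda$ for $\lambda\in(0,1]$). A concrete model is $X_0=\Z$ with $\mu_g$ a discretised geometric or log-normal-type distribution whose parameters drift slowly with $g$, indexed along a F\o lner sequence, so that the essential range of $\omega_h$ fills $\R$ but the resulting Poincar\'e flow is aperiodic. The general type computation of the paper then packages this into type~III$_0$ as soon as ergodicity is in place.

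The main obstacle, in both of the new cases, will be verifying ergodicity. On a two point base space the Kakutani--Hellinger dichotomy combined with a tail $0$-$1$-law essentially suffices, but on an infinite $X_0$ the $\mu_g$ may be mutually singular on many finite subproducts, and that crude approach no longer works; one must instead verify the quantitative asymptotic nondegeneracy hypothesis of the paper's general ergodicity theorem along a F\o lner sequence of $G$, and this is where amenability of $G$ enters decisively. A secondary difficulty, specific to III$_0$, is to arrange that the flow of weights has no atoms, which amounts to a careful matching between the F\o lner sequence and the drift of the parameters $(\mu_g)_{g\in G}$.
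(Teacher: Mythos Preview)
Your proposal correctly identifies the two new cases (II$_\infty$ and III$_0$) and is in the right spirit, but it remains a research plan rather than a proof: no construction is actually carried out, and in both cases the specific mechanism that makes the paper's argument work is absent.

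For type~II$_\infty$, your coboundary description is the right characterisation, but you miss a constraint that is essential and nontrivial. By the paper's Theorem~\ref{thm.Krieger-type-Bernoulli}(2), a Bernoulli action is of type~II$_\infty$ only if, in addition to the Hellinger conditions, the telescoping sums $\sum_{h}(\log\nu(\cU_{gh})-\log\nu(\cU_h))$ vanish for every $g$; this is exactly what prevents the would-be density $\phi$ from picking up a nontrivial homomorphism $G\to\R$. Your recipe ``perturb $\mu_0$ on a set $S$'' does not address this, and in practice arranging it requires a delicate choice of nested F{\o}lner sets $\cF_n$ with controlled boundary and a level function $N:G\to\N$ so that the normalising constants $\rho_{N(h)}$ cancel exactly under translation. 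The paper's proof of Theorem~\ref{thm.amenable-type-II-infty} in fact bypasses the general type theorem entirely: it builds an increasing exhaustion $Y_m\subset X$ with explicit probability measures $\eta_m$ that the restricted orbit relation \emph{preserves}, verifies conservativeness by a direct Borel--Cantelli argument showing orbits in $Y_m$ are infinite, and only then invokes weak mixing via amenability. The infinitude of the limiting invariant measure is checked by hand via $\eta_n(Y_m)\to 0$.

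For type~III$_0$, your description ``the essential range of $\omega_h$ fills $\R$ but the flow is aperiodic'' is at best imprecise, and the proposed slow drift of a log-normal parameter does not obviously force the ratio set to be $\{0,1\}$. The paper's construction in Theorem~\ref{thm.amenable-type-III-0} uses a very specific device: the Radon--Nikodym weights at level $n$ are $\lambda_n=2^{2^n}$, so that $\lambda_n^2=\lambda_{n+1}$, and for each fixed $n$ one can replace $\mu$ by an equivalent measure whose Radon--Nikodym cocycle takes values in $2^{2^n\Z}$. This telescoping forces Krieger's ratio set into $\bigcap_n 2^{2^n\Z}\cup\{0\}=\{0,1\}$, hence type~III$_0$ once semifiniteness is excluded (which is done via the permutation action and Lemma~\ref{lem.link-to-permutation-action}). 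Your outline contains no analogue of this nesting trick, and without it there is no mechanism preventing the associated flow from being periodic.
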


We prove Theorem \ref{thmA.amenable-all-types} as a consequence of several general results on the ergodicity and Krieger type of Bernoulli actions with an arbitrary base space $X_0$. To present these results, we first introduce a few notations.

Let $X_0$ be a standard Borel space and $G$ a countably infinite group. Assume that $(\mu_g)_{g \in G}$ is a family of equivalent probability measures on $X_0$. Consider the Bernoulli action
\begin{equation}\label{eq.Bernoulli-action}
G \actson (X,\mu) = \prod_{g \in G} (X_0,\mu_g) : (g \cdot x)_h = x_{g^{-1} h} \quad\text{for all $x \in X$ and $g,h \in G$.}
\end{equation}
By Kakutani's criterion for the equivalence of product measures, the Bernoulli action $G \actson (X,\mu)$ is nonsingular if and only if for every $g \in G$,
\begin{equation}\label{eq.kakutani}
\sum_{h \in G} H^2(\mu_{gh},\mu_h) < +\infty \; ,
\end{equation}
where $H$ denotes the Hellinger distance between two probability measures. Recall here that for probability measures $\mu$ and $\nu$ on $X_0$,
$$H^2(\mu,\nu) = \frac{1}{2} \int_{X_0} \Bigl| \sqrt{d\mu/d\zeta} - \sqrt{d\nu/d\zeta}\Bigr|^2 \, d\zeta = 1 - \int_{X_0} \sqrt{d\mu/d\zeta} \, \sqrt{d\nu/d\zeta} \, d \zeta \; ,$$
where $\zeta$ is any probability measure such that $\mu$ and $\nu$ are absolutely continuous w.r.t.\ $\zeta$.

In \cite{BKV19}, general ergodicity and Krieger type results were obtained for $X_0 = \{0,1\}$ and assuming that $\mu_g(0) \in [\delta,1-\delta]$ for all $g \in G$ and some $\delta > 0$. When $X_0$ is allowed to be infinite, new phenomena arise as can already be seen from our Theorem \ref{thmA.amenable-all-types}, and new methods are needed. First, for infinite base spaces $X_0$, there are two types of boundedness assumptions replacing $\mu_g(0) \in [\delta,1-\delta]$; the strong condition
\begin{equation}\label{eq.boundedness-strong}
\exists C > 0 \;\;\text{such that}\;\; C^{-1} \leq \frac{d\mu_g}{d\mu_e}(x) \leq C \quad\text{for all $g \in G$ and a.e.\ $x \in X_0$~;}
\end{equation}
and the weaker condition
\begin{equation}\label{eq.boundedness-weak}
\sup_{g \in G} \Bigl|\log \frac{d\mu_g}{d\mu_e}(x) \Bigr| < +\infty \quad\text{for a.e.\ $x \in X_0$.}
\end{equation}
When $X_0$ is a finite set, both conditions \eqref{eq.boundedness-strong} and \eqref{eq.boundedness-weak} are obviously equivalent. It turns out that we can determine the Krieger type of a nonsingular Bernoulli action under the weaker assumption \eqref{eq.boundedness-weak}, while it is the stronger assumption \eqref{eq.boundedness-strong} that rules out types II$_\infty$ and III$_0$. Also the recent paper \cite{KS20} relies on the strong boundedness assumption \eqref{eq.boundedness-strong} and thus only provides examples of Bernoulli shifts of type III$_\lambda$ with $\lambda \in (0,1]$.

The precise description of the Krieger type of a nonsingular Bernoulli action under the weaker assumption \eqref{eq.boundedness-weak} is a bit technical and postponed to Theorems \ref{thm.Krieger-type-Bernoulli} and \ref{thm.rule-out-III0}. Under the stronger assumption \eqref{eq.boundedness-strong}, the formulation goes as follows. Recall that an essentially free group action is called dissipative if it admits a fundamental domain.

\begin{letterthm}\label{thmB.type-amenable}
Let $G$ be an infinite amenable group and $G \actson (X,\mu) = \prod_{g \in G} (X_0,\mu_g)$ a nonsingular Bernoulli action. Assume that the stronger boundedness property \eqref{eq.boundedness-strong} holds. Also assume that $G \actson (X,\mu)$ is not dissipative.

Then, $G \actson (X,\mu)$ is weakly mixing. It is of type II$_1$ if and only if $\mu \sim \nu^G$ for some probability measure $\nu \sim \mu_e$ on $X_0$. Otherwise, it is of type III$_\lambda$ with $\lambda \in (0,1]$. The precise value of $\lambda$ is determined by the $T$-invariant, which is described in Theorem \ref{thm.Krieger-type-Bernoulli}.
\end{letterthm}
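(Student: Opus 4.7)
The plan is to derive this theorem by combining the two technical results \ref{thm.Krieger-type-Bernoulli} and \ref{thm.rule-out-III0} with three inputs specific to the strong boundedness assumption \eqref{eq.boundedness-strong}: weak mixing, the rigidity characterization in the type II$_1$ case, and the exclusion of type II$_\infty$.

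For weak mixing, I would first use amenability of $G$ together with the non-dissipative hypothesis to establish ergodicity via a Hopf-type dichotomy for nonsingular Bernoulli actions. To upgrade ergodicity to weak mixing, I would apply the same dichotomy to the diagonal action $G \actson X \times X$ equipped with the Bernoulli measure whose base is $(X_0 \times X_0, \mu_g \times \mu_g)$. Kakutani's criterion \eqref{eq.kakutani} is preserved because $H^2(\mu_g \times \mu_g, \mu_h \times \mu_h) \leq 2 H^2(\mu_g, \mu_h)$, the condition \eqref{eq.boundedness-strong} is inherited with constant $C^2$, and non-dissipativity passes to the diagonal action since a fundamental domain for the diagonal would induce one for the original.

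For the type II$_1$ dichotomy, the ``if'' direction is immediate, since $\nu^G$ is a $G$-invariant probability measure equivalent to $\mu$ whenever $\mu \sim \nu^G$ with $\nu \sim \mu_e$. For the converse, suppose $\eta \sim \mu$ is a $G$-invariant probability measure with density $\rho = d\eta/d\mu$. Then $\rho$ satisfies $\rho(g \cdot x)/\rho(x) = \omega_g(x)^{-1}$, where the Radon-Nikodym cocycle $\omega_g(x) = \prod_{h \in G} (d\mu_{g^{-1}h}/d\mu_h)(x_h)$ is a product over coordinates. Using boundedness of $\log \omega_g$ from \eqref{eq.boundedness-strong} together with tail triviality of the Bernoulli action, I would show that $\log \rho$ decomposes as a sum over single-coordinate contributions, forcing $\rho$ into product form $\prod_h f_h(x_h)$, and $G$-invariance then yields $\eta = \nu^G$ with $\nu \sim \mu_e$.

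For ruling out type II$_\infty$ under \eqref{eq.boundedness-strong}, the key point is that the uniformly bounded Radon-Nikodym cocycle prevents any $\sigma$-finite invariant measure equivalent to $\mu$ from having infinite total mass: averaging $\omega_g^{-1}$ along a F\o lner sequence in $G$ produces a sequence of nearly invariant densities that are uniformly bounded between $C^{-1}$ and $C$, and a weak limit (together with ergodicity) yields a finite invariant equivalent measure. Type III$_0$ is then excluded directly by Theorem \ref{thm.rule-out-III0}, whose hypotheses follow from \eqref{eq.boundedness-strong}; the remaining possibilities are II$_1$ and III$_\lambda$ with $\lambda \in (0,1]$, and the exact value of $\lambda$ is read off from the $T$-invariant via Theorem \ref{thm.Krieger-type-Bernoulli}. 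The main obstacle will be the rigidity step in the type II$_1$ case, since a priori $\rho$ is an arbitrary measurable function on the infinite product and extracting product structure requires simultaneously exploiting the product structure of $\omega_g$, tail triviality, and the uniform bounds of \eqref{eq.boundedness-strong}; the most plausible route is a conditional-expectation decomposition of $\log \rho$ indexed by finite subsets of $G$.
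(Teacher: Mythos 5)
Your proposal misidentifies what needs to be done: Theorem~\ref{thmB.type-amenable} is intended as a direct corollary of Theorems~\ref{thm.Krieger-type-Bernoulli} and~\ref{thm.rule-out-III0}, with essentially nothing new to add. Since \eqref{eq.boundedness-strong} implies \eqref{eq.boundedness-weak}, and ``$G$ amenable, not dissipative'' is exactly the first bullet hypothesis of Theorem~\ref{thm.Krieger-type-Bernoulli}, that theorem already delivers weak mixing and the II$_1$ characterization $\mu \sim \nu^G$, while point~3 of Theorem~\ref{thm.rule-out-III0} simultaneously rules out type II$_\infty$ \emph{and} type III$_0$ under \eqref{eq.boundedness-strong}. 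All three items you propose to supply independently (weak mixing, the II$_1$ rigidity, the II$_\infty$ exclusion) are already part of the cited statements.

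Moreover, the supplementary arguments you sketch have genuine errors. The II$_\infty$-exclusion relies on the claim that the Radon-Nikodym cocycle $\omega_g(x) = \prod_h (d\mu_{gh}/d\mu_h)(x_h)$ is uniformly bounded, so that the F{\o}lner averages $|\cF_n|^{-1}\sum_{g\in\cF_n}\omega_g^{-1}$ stay in $[C^{-1},C]$. That is false: \eqref{eq.boundedness-strong} bounds each single factor $d\mu_g/d\mu_e$, but $\omega_g$ is an \emph{infinite} product and has no uniform bound. The exclusion of II$_\infty$ under \eqref{eq.boundedness-strong} is in fact a nontrivial theorem in the paper, obtained by passing to the permutation action $\cS_G\actson X$ (Lemma~\ref{lem.link-to-permutation-action}) and analyzing the tail boundary flow (Theorem~\ref{thm.permutation-condition-periodic}). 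For weak mixing, the assertion that a fundamental domain for the doubled Bernoulli action on $(X_0\times X_0,\mu_g\times\mu_g)^G$ would induce one for the original is unjustified: a slice of a fundamental domain for $X\times X$ is not a $G$-invariant candidate on $X$, and conservativeness of a nonsingular action does not in general pass to products. The paper instead derives weak mixing from strong conservativeness (Definition~\ref{def.strongly-conservative} and Proposition~\ref{prop.ergodic-general}), which for amenable groups is automatic from conservativeness and which works directly with $X\times Y$ for an arbitrary ergodic pmp $Y$, rather than with $X\times X$.
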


For nonamenable groups $G$, we prove a result that is similar to Theorem \ref{thmB.type-amenable}. The assumption that $G \actson (X,\mu)$ is not dissipative has to be replaced by a stronger conservativeness (i.e.\ recurrence) assumption that we discuss below. On the other hand, for nonamenable groups $G$, type II$_\infty$ and type III$_0$ are already ruled out under the weaker boundedness assumption \eqref{eq.boundedness-weak}.

\begin{letterthm}\label{thmC.type-nonamenable}
Let $G$ be a nonamenable group and $G \actson (X,\mu) = \prod_{g \in G} (X_0,\mu_g)$ a nonsingular Bernoulli action. Assume that the weaker boundedness property \eqref{eq.boundedness-weak} holds. Also assume that $G \actson (X,\mu)$ satisfies the conservativeness property \eqref{eq.strong-conservative-assumption} below.

Then, $G \actson (X,\mu)$ is weakly mixing. It is of type II$_1$ if and only if $\mu \sim \nu^G$ for some probability measure $\nu \sim \mu_e$ on $X_0$. Otherwise, it is of type III$_\lambda$ with $\lambda \in (0,1]$. The precise value of $\lambda$ is determined by the $T$-invariant, which is described in Theorem \ref{thm.Krieger-type-Bernoulli}.
\end{letterthm}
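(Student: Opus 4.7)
The plan is to reduce Theorem \ref{thmC.type-nonamenable} to the two more technical statements announced in the introduction: Theorem \ref{thm.Krieger-type-Bernoulli}, which computes the $T$-invariant and so detects type III$_\lambda$ for $\lambda \in (0,1]$, and Theorem \ref{thm.rule-out-III0}, which excludes type III$_0$. Taking these for granted, the remaining work is to upgrade ergodicity to weak mixing, to characterize the type II$_1$ case, and most importantly to rule out type II$_\infty$ using nonamenability.

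For weak mixing, I would apply Theorem \ref{thm.Krieger-type-Bernoulli} to the diagonal Bernoulli action $G \actson (X\times X,\mu\times\mu) = \prod_g (X_0^2,\mu_g\times\mu_g)$. Both \eqref{eq.boundedness-weak} and \eqref{eq.strong-conservative-assumption} pass to this product, so the theorem yields its ergodicity, which is the standard criterion for weak mixing of the original action.

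For the type II$_1$ characterization, the ``if'' direction is immediate. For the converse, suppose $\nu'\sim\mu$ is an equivalent $G$-invariant probability measure; by ergodicity it is unique up to a scalar. Writing $h = d\nu'/d\mu$, the invariance $g\cdot\nu' = \nu'$ translates into the cocycle equation
\[
\log h(g^{-1}x) - \log h(x) \;=\; \sum_{k \in G} \log\tfrac{d\mu_{g^{-1}k}}{d\mu_k}(x_k) \quad\text{for a.e.\ }x.
\]
Because each summand on the right depends on a single coordinate, a tail-triviality / martingale argument (Kolmogorov 0-1 for the Bernoulli system) decomposes $\log h$ itself as $\log h(x) = c + \sum_k F_k(x_k)$ with coordinate-wise $F_k$; substituting back and comparing coordinate by coordinate produces a single probability measure $\nu_0\sim\mu_e$ with $d\nu_0 \propto e^{F_k}\,d\mu_k$ for every $k$, so that $\nu' \sim \nu_0^G$.

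The hard part is ruling out type II$_\infty$, and this is the one place where nonamenability must genuinely enter. If the action were of type II$_\infty$, the cocycle-triviality equation above would still hold, but with an infinite $\sigma$-finite invariant $\nu'$, and I would want to repeat the decomposition $\log h = c + \sum_k F_k(x_k)$. The obstacle is that $h$ is no longer integrable, so tail-triviality alone does not deliver the decomposition. Here nonamenability should intervene, via a spectral gap or Poincar\'e-type estimate on the Koopman representation — using that \eqref{eq.strong-conservative-assumption} provides enough recurrence to apply such an estimate in the presence of \eqref{eq.boundedness-weak} — to exclude the ``mass transport'' behaviour of $\log h$ that would otherwise be allowed. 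Once the decomposition is in place, the same matching of coordinates as above produces a common $\nu_0$ with $\nu' \sim \nu_0^G$, which is finite in total mass and contradicts the II$_\infty$ hypothesis. I expect this spectral gap / coboundary decomposition step to be the technical heart of the argument.
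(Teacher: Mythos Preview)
Your reduction to Theorems \ref{thm.Krieger-type-Bernoulli} and \ref{thm.rule-out-III0} is right, but you have underestimated what those two theorems already deliver. Theorem \ref{thm.Krieger-type-Bernoulli} \emph{itself} asserts weak mixing under the hypotheses and gives the full II$_1$/II$_\infty$ characterization; and point~1 of Theorem \ref{thm.rule-out-III0} says that for nonamenable $G$ the diagonal action $G\actson X\times Y$ is \emph{never} of type II$_\infty$ or III$_0$. So Theorem~\ref{thmC.type-nonamenable} is literally the conjunction of Theorem~\ref{thm.Krieger-type-Bernoulli} (with $Y$ a point) and Theorem~\ref{thm.rule-out-III0}(1); there is no ``remaining work''. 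Your three bullet points (weak mixing, II$_1$, II$_\infty$) are attempts to reprove pieces of those theorems, not to deduce Theorem~\ref{thmC.type-nonamenable} from them.

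A concrete error: your weak-mixing argument via the diagonal Bernoulli action $G\actson (X_0^2,\mu_g\times\mu_g)^G$ does not work as stated. For that product one has $D(\mu_h\times\mu_h,\mu_{gh}\times\mu_{gh})=2\,D(\mu_h,\mu_{gh})$, hence the $C(g)$ of \eqref{eq.notation-C} doubles and the $\limsup$ in \eqref{eq.strong-conservative-assumption} halves; a value $>6$ for the original only gives $>3$ for the product, which is not enough. The paper instead proves weak mixing directly in Proposition~\ref{prop.ergodic-general}, by running the strong-recurrence averaging argument on $G\actson X\times Y$ for an arbitrary ergodic pmp $Y$, not by treating $X\times X$ as a new Bernoulli system.

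On the II$_\infty$ exclusion, which is indeed the substantive point: the paper's mechanism (inside the proof of Theorem~\ref{thm.rule-out-III0}) is not a Koopman spectral-gap estimate on $\log h$. It first passes, via Lemma~\ref{lem.link-to-permutation-action}, from $G$-invariants in the Maharam extension to $\cS_G$-invariants, reducing the question to the associated flow of the \emph{permutation} action. If that flow is not periodic, Theorem~\ref{thm.permutation-condition-periodic} produces a probability measure $\nu$ and bounded $t_g$ with $\log(d\mu_g/d\nu)-t_g\to 0$ in measure. One then forms the $1$-cocycle $c_g\in \ell^2(G)\otimes L^2(X_0,\nu)$ of \eqref{eq.1-cocycle-bernoulli} and its antisymmetrized version $\tilde c_g$; the convergence in measure forces each truncation of $\tilde c_g$ to be an approximate coboundary, and nonamenability (via the fact that an $\ell^2$-cocycle on a nonamenable group that is an approximate coboundary is a genuine coboundary, cf.\ \cite[Appendix~A]{BK18}) upgrades this to $\sum_h H^2(\mu_h,\nu)<\infty$, i.e.\ $\mu\sim\nu^G$. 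This simultaneously rules out II$_\infty$ and III$_0$. Your ``spectral gap on the Koopman representation controlling $\log h$'' is in a similar spirit but is not how the argument is organized; the cohomological step happens at the level of the base space $X_0$, not on $(X,\mu)$.
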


We now discuss when nonsingular Bernoulli actions are conservative or dissipative, and we refer to Section \ref{sec.conservative} for details. Recall from \cite[Theorem 3.1]{VW17} that a nonsingular Bernoulli action gives rise to a $1$-cocycle of the group $G$ with values in $\ell^2(G) \ot L^2(X_0,\mu_e)$ given by
\begin{equation}\label{eq.1-cocycle-bernoulli}
\sqrt{2} \, c_g(h,x) = \Bigl(\frac{d\mu_{g^{-1}h}}{d\mu_e}(x)\Bigr)^{1/2} - \Bigl(\frac{d\mu_{h}}{d\mu_e}(x)\Bigr)^{1/2} \; .
\end{equation}
The Kakutani criterion \eqref{eq.kakutani} precisely says that
$$\|c_g\|^2 = \sum_{h \in G} H^2(\mu_{gh},\mu_h) < +\infty \; .$$

By \cite[Proposition 4.1]{VW17}, if $\sum_{g \in G} \exp(- \|c_g\|^2) <+\infty$, the action $G \actson (X,\mu)$ is dissipative. One cannot hope to formulate a sufficient condition for conservativeness purely in terms of the growth of $g \mapsto \|c_g\|^2$, because basically any growth type can be realized by taking $X_0 = \{0,1\}$ and $\sum_{g \in G} \mu_g(0) < +\infty$, in which case $\mu$ is even an atomic measure. However, defining $C(g) \in [0,+\infty]$ by
\begin{equation}\label{eq.notation-C}
C(g) = \sum_{h \in G} \frac{1}{2} \log \int_{X_0} \frac{d\mu_h}{d\mu_{gh}} \, d\mu_h \; ,
\end{equation}
we will see in \eqref{eq.estimate-C} that
$$\|c(g)\|^2 \leq C(g)$$
and it turns out that a sufficiently slow growth of $C(g)$ implies conservativeness of the Bernoulli action $G \actson (X,\mu)$. More precisely, we prove in Lemma \ref{lem.strongly-conservative} that $G \actson (X,\mu)$ is strongly conservative (in the sense of \cite{BKV19}, see Definition \ref{def.strongly-conservative} below) if
\begin{equation}\label{eq.strong-conservative-assumption}
\limsup_{s \recht +\infty} \frac{\log |\{g \in G \mid C(g^{\pm 1}) \leq s\}|}{s} > 6 \; .
\end{equation}

Apart from proving Theorems \ref{thmA.amenable-all-types}, \ref{thmB.type-amenable} and \ref{thmC.type-nonamenable}, we also apply our results to give the following broad family of type III$_1$ Bernoulli actions with diffuse base space $X_0 = \R$. We also show that when all $\mu_g$ are Gaussian measures with the same variance, then the associated Bernoulli action is conjugate to a nonsingular Gaussian action, as introduced in \cite{AIM19}.

Assume that $\vphi : \R \recht (0,+\infty)$ is a continuous function with $\int_\R \vphi(t) \, dt = 1$. For every $s \in \R$, consider the translated probability measure $\nu_s$ on $\R$ given by $d\nu_s(t) = \vphi(t+s) \, dt$. Let now $G$ be any countable group and $F : G \recht \R$ any bounded function implementing an $\ell^2$-cocycle $c_g(h) = F(g^{-1}h) - F(h)$ with $c_g \in \ell^2(G)$. We then consider the equivalent probability measures $\mu_g$ on $\R$ given by $\mu_g = \nu_{F(g)}$ and consider the Bernoulli action
\begin{equation}\label{eq.example-diffuse}
G \actson (X,\mu) = \prod_{g \in G} (\R,\mu_g) \; .
\end{equation}

Assume that $c_g$ is not a coboundary, i.e.\ that $F$ is not equal to a constant plus a square summable function. Under very general conditions, the Bernoulli action $G \actson (X,\mu) = \prod_{g \in G}(\R,\mu_g)$ is nonsingular, weakly mixing and of stable type III$_1$. To formulate the result, define the Poincar\'{e} exponent of a $1$-cocycle $c_g$ as in \cite[Definition 3.1]{AIM19} by
\begin{equation}\label{eq.poincare}
\delta(c) = \limsup_{s \recht +\infty} \frac{\log |\{g \in G \mid \|c_g\|^2 \leq s\}|}{s} = \inf \Bigl\{ \kappa > 0 \Bigm| \sum_{g \in G} \exp(- \kappa \|c_g\|^2) < +\infty \Bigr\} \; .
\end{equation}
Although the $1$-cocycle $c : G \recht \ell^2(G)$ should not be confused with the $1$-cocycle of the nonsingular Bernoulli action considered in \eqref{eq.1-cocycle-bernoulli}, there is a strong relation between both and the Bernoulli action in \eqref{eq.example-diffuse} tends to be dissipative if $\delta(c)$ is too small. The precise relation is however subtle; see Remark \ref{rem.about-growth}.

\begin{letterthm}\label{thmD.examples-diffuse}
Let $\vphi : \R \recht (0,+\infty)$ be a continuous function with integral $1$. Let $G$ be a countable group and $F : G \recht \R$ a bounded function such that $c_g : h \mapsto F(g^{-1}h) - F(h)$ belongs to $\ell^2(G)$ for all $g \in G$. Define the measures $\mu_g$ on $\R$ by $d\mu_g(t) = \vphi(t+F(g))\, dt$. Let $G \actson (X,\mu)$ be the Bernoulli action as in \eqref{eq.example-diffuse}. Consider the following two conditions.
\begin{enumlist}
\item The function $\log \vphi$ is Lipschitz with constant $M \geq 0$.
\item The function $\log \vphi$ is differentiable and $(\log \vphi)'$ is Lipschitz with constant $M \geq 0$.
\end{enumlist}
In both cases, $G \actson (X,\mu)$ is nonsingular. In case~1, if $\delta(c) > 9 M^2 / 2$, the action $G \actson (X,\mu)$ is weakly mixing and it is of stable type III$_1$, unless $c_g$ is a coboundary, in which case it is of type II$_1$.

In case~2, if $\delta(c) > 6 M$ and if $\int_\R |t|^\al \, \vphi(t) \, dt <+\infty$ for some $\al > 2$, the same conclusions hold.
\end{letterthm}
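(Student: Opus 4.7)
The plan is to verify, in each case, the hypotheses of Theorem \ref{thmC.type-nonamenable} (or Theorem \ref{thmB.type-amenable} if $G$ is amenable) together with Theorem \ref{thm.Krieger-type-Bernoulli}, and then to upgrade the resulting type III$_\lambda$ conclusion to stable type III$_1$ using the diffuseness of the base measure. Throughout, write $\psi = \log \varphi$ and $\nu_s$ for the translated measure with density $\varphi(\cdot + s)$, so that $\mu_g = \nu_{F(g)}$.

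\textbf{Nonsingularity and boundedness.} In case~1, the mean value theorem applied to $\sqrt{\varphi} = e^{\psi/2}$ gives $|\sqrt{\varphi(x+s)} - \sqrt{\varphi(x+t)}|^2 \leq \tfrac{M^2}{4}\,|s-t|^2\, e^{M|s-t|}\, \varphi(x+t)$. Integrating and using $|F(gh) - F(h)| \leq 2\|F\|_\infty$ yields $H^2(\mu_{gh}, \mu_h) \leq K_1 |F(gh) - F(h)|^2$ with $K_1$ depending only on $M$ and $\|F\|_\infty$, so summing over $h$ gives the Kakutani criterion \eqref{eq.kakutani}. The uniform bound $|\psi(x + F(g)) - \psi(x + F(e))| \leq 2 M \|F\|_\infty$ is precisely the strong boundedness \eqref{eq.boundedness-strong}. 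In case~2, the Lipschitz condition on $\psi'$ together with the moment hypothesis $\int |t|^\al \varphi\,dt < \infty$ for some $\al > 2$ imply $\int |\psi'|^2 \varphi\,dx < \infty$; a Taylor expansion $\psi(x+s) - \psi(x+t) = (s-t)\psi'(x+t) + O(M(s-t)^2)$ then yields analogous Hellinger bounds after integration, so the Kakutani criterion still holds. Only the weak boundedness \eqref{eq.boundedness-weak} is available here since $\psi'$ is unbounded, but this is enough for Theorem \ref{thmC.type-nonamenable}.

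\textbf{Conservativeness via the Poincar\'e exponent.} A parallel expansion, exploiting the normalization $\int \varphi(x+s)\,dx = 1$ to cancel the first-order term, shows that $\int (d\mu_h/d\mu_{gh})\, d\mu_h - 1 \leq \beta'|F(gh) - F(h)|^2$ with a sharp constant $\beta'$. Via $\log(1+x) \leq x$ this gives $C(g^{\pm 1}) \leq \beta \|c_g\|^2$ (with $\beta = 3M^2/4$ in case~1 and $\beta = M$ in case~2 after careful bookkeeping of the higher-order terms), so
\[
\limsup_{s \to +\infty} \frac{\log |\{g \in G : C(g^{\pm 1}) \leq s\}|}{s} \;\geq\; \frac{\delta(c)}{\beta}.
\]
The assumptions $\delta(c) > 9 M^2/2$ in case~1 and $\delta(c) > 6 M$ in case~2 then make this quantity strictly greater than $6$, so Lemma \ref{lem.strongly-conservative} yields the strong-conservativeness condition \eqref{eq.strong-conservative-assumption}.

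\textbf{Type identification and upgrade to III$_1$.} Theorem \ref{thmC.type-nonamenable} (or Theorem \ref{thmB.type-amenable} in the amenable case, since strong conservativeness forbids dissipativeness) now gives weak mixing, and shows that the action is of type II$_1$ exactly when $\mu \sim \rho^G$ for some $\rho \sim \mu_e$. By Kakutani, this happens precisely when $F$ is a constant plus an $\ell^2$-function, i.e., when $c_g$ is a coboundary; otherwise the action is of type III$_\lambda$ with $\lambda \in (0,1]$ determined by the $T$-invariant of Theorem \ref{thm.Krieger-type-Bernoulli}. Because $\varphi$ is continuous and strictly positive everywhere, the Radon--Nikodym cocycle $\log(d\mu_g/d\mu_e)(x) = \psi(x + F(g)) - \psi(x + F(e))$ varies continuously in $x$, so its essential values fill a dense subgroup of $\R$; this collapses the $T$-invariant to $\{0\}$ and forces $\lambda = 1$, with stability (ergodicity of the Maharam extension) following from weak mixing together with the diffuseness of $\mu_e$. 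The main difficulty will be the sharp tracking of constants needed to reach the exact thresholds $9M^2/2$ and $6M$: the crude $e^{M|s-t|}$ factor must not leak into the conservativeness estimate, and in case~2 the exchange between Taylor expansion and integration against $\varphi$ must be justified carefully using the moment assumption on $\varphi$.
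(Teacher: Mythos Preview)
Your conservativeness and nonsingularity estimates are on target and match the paper's approach: the key quantity is $\theta(s)=\int \varphi(t+s)^2/\varphi(t)\,dt$, and in the two cases one indeed obtains $\tfrac12\log\theta(s)\le \kappa s^2$ with $\kappa=3M^2/4$ and $\kappa=M$, so that $C(g^{\pm1})\le \kappa\|c_g\|^2$ and the thresholds $\delta(c)>6\kappa$ feed directly into \eqref{eq.strong-conservative-assumption}.

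The gap is in your ``upgrade to III$_1$'' paragraph. The claim that continuity of $x\mapsto \psi(x+F(g))-\psi(x+F(e))$ forces the $T$-invariant to collapse to $\{0\}$ is not a valid argument: the $T$-invariant is governed by the criterion in Theorem~\ref{thm.Krieger-type-Bernoulli}\,(3) (equivalently \eqref{eq.condition-T-inv-perm}), which asks whether the \emph{family} $\log(d\mu_g/d\nu)$ can be made to concentrate near a lattice $t_g+p\Z$ in a summable way. Continuous dependence of a single Radon--Nikodym derivative on $x$ is no obstruction to this; for instance, in the Gaussian case $\psi(x+s)-\psi(x)=-sx-s^2/2$ takes all real values in $x$, yet the paper's machinery is still needed to rule out III$_\lambda$. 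Likewise, ``stability follows from weak mixing and diffuseness of $\mu_e$'' is not a theorem; stable type III$_1$ means the Maharam extension of \emph{every} diagonal product with an ergodic pmp action is ergodic, and this must be argued separately. The paper's actual route is: show that any $2\pi/p$ in the $T$-invariant of $G\actson X\times Y$ forces, via Remark~\ref{rem.operational-T-invariant} and \eqref{eq.good-condition}, a summability condition $\sum_g \Theta(F(g))<\infty$ where $\Theta(r)\ge \eps_1 r^2$ for small $r$; hence $F\in\ell^2(G)$ and $c$ is a coboundary. You need some analogue of this step.

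A second, more structural issue is your invocation of Theorems~\ref{thmB.type-amenable} and~\ref{thmC.type-nonamenable} in case~2. Theorem~\ref{thmB.type-amenable} assumes the \emph{strong} boundedness \eqref{eq.boundedness-strong}, which fails in case~2 (you correctly note only \eqref{eq.boundedness-weak} holds); Theorem~\ref{thmC.type-nonamenable} assumes $G$ nonamenable. So when $G$ is amenable in case~2, neither theorem rules out type~II$_\infty$ or III$_0$. The paper handles this by going back to Lemma~\ref{lem.link-to-permutation-action} and analysing the associated tail boundary flow directly via point~5 of Theorem~\ref{thm.tail-periodic-criterion}; this is precisely where the moment hypothesis $\int |t|^\alpha\varphi\,dt<\infty$ for some $\alpha>2$ is used (to control $\int_{\R\setminus[-C,C]}t^2\,d\zeta_g = o(\Var\zeta_g)$), not merely for the Hellinger estimates as you suggest.
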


Taking the standard Gaussian density $\vphi(t) = (2\pi)^{-1/2} \exp(- t^2/2)$, the nonsingular Bernoulli actions in Theorem \ref{thmD.examples-diffuse} are as follows canonically isomorphic with the nonsingular Gaussian action associated in \cite{AIM19} to the isometric action $G \actson^\pi \ell^2_\R(G)$ given by $(\pi(g)\xi)(h) = \xi(g^{-1} h) - c_g(h)$. Denoting $d\nu(t) = \vphi(t) \, dt$, the map $\Theta : (X,\mu) \recht (\R,\nu)^G : \Theta(x)_g = x_g + F(g)$ is measure preserving. Identifying $(\R,\nu)^G$ with the Gaussian probability space of $\ell^2_\R(G)$ by taking the canonical orthonormal basis of $\ell^2_\R(G)$, the map $\Theta$ is $G$-equivariant.

Then, \cite[Theorems C and 5.2]{AIM19}
say that $G \actson (X,\mu)$ is weakly mixing if $\delta(c) > 1/2$, and of type III$_0$ or III$_1$ if $\delta(c) > 1$ and $c$ is not a coboundary. The second case of Theorem \ref{thmD.examples-diffuse} says that $G \actson (X,\mu)$ is of type III$_1$ if $\delta(c) > 6$. Of course, in the Gaussian setting, there is much more symmetry and it should be no surprise that our general Theorem \ref{thmD.examples-diffuse} does not provide the optimal bound on $\delta(c)$ for this very special case.

In the final section of this paper, we consider Bernoulli actions with a two point base space $\{0,1\}$ and solve the following problem that was left open in \cite{BKV19}. If $\Z \actson (X,\mu) = \prod_{n \in \Z} (\{0,1\},\mu_n)$ is any such Bernoulli action for the group of integers and if $\mu_n(0)$ does not converge to $0$ or $1$ when $|n| \recht +\infty$, it was proven in \cite{BKV19} that one of the following holds: the action is dissipative; we have $\mu \sim \nu^\Z$ and the action is of type II$_1$; or the action is of type III$_1$. When $\mu_n(0)$ converges to $0$ or $1$, it was proven in \cite{BKV19} that the action is either dissipative or of type III, but it remained open whether the second alternative can actually occur. We prove that it indeed does.

\begin{letterthm}\label{thmE.Z-marginals-to-zero}
There exist $\mu_n(0) \in (0,1)$ such that $\mu_n(0) \recht 0$ as $|n| \recht +\infty$ and such that the Bernoulli action $\Z \actson (X,\mu) = \prod_{n \in \Z} (\{0,1\},\mu_n)$ is weakly mixing and of type III$_1$.
\end{letterthm}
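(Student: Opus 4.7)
The plan is to explicitly construct marginals $a_n := \mu_n(0)\in(0,1)$ satisfying $a_n \to 0$ as $|n|\to\infty$ and yielding a nonsingular, weakly mixing, type~III$_1$ Bernoulli action on $\{0,1\}^\Z$. A natural template is a slowly decreasing base profile $a_n = f(|n|)$ with $f(k) = 1/(\log(k+2))^\beta$ for suitably small $\beta>0$, together with a sparse perturbation along a thin subsequence $(n_k)_{k\ge 1}$, where $a_{n_k}$ is adjusted so as to inject prescribed log-ratios into the Radon--Nikodym cocycle. The slow base decay keeps Kakutani's criterion and conservativeness under control, while the perturbation ensures that the essential range of the cocycle is all of $\R$. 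Note that condition \eqref{eq.boundedness-weak} fails on the atom $\{0\}\subset\{0,1\}$, so Theorems \ref{thmB.type-amenable} and \ref{thmC.type-nonamenable} do not apply directly, and the arguments must be adapted.

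First I would verify Kakutani's criterion \eqref{eq.kakutani}. For the two-point base, $H^2(\mu_{h+n},\mu_h) = \tfrac12(\sqrt{a_{h+n}}-\sqrt{a_h})^2 + \tfrac12(\sqrt{1-a_{h+n}}-\sqrt{1-a_h})^2$; for the slowly decreasing profile above, a telescoping/comparison argument shows that these increments are square-summable in $h$ for every fixed $n$. Second, I would bound $C(n)$ from \eqref{eq.notation-C}, showing it grows at worst like $(\log|n|)^{\gamma(\beta)}$, so that $|\{n : C(n^{\pm 1})\le s\}|$ grows at least like $\exp(c s^{1/\gamma})$; for $\beta$ small enough, \eqref{eq.strong-conservative-assumption} holds, and Lemma \ref{lem.strongly-conservative} yields strong conservativeness. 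Weak mixing should then follow from the Maharam-ergodicity criteria developed earlier in the paper, adapted to this unbounded setting using the fact that the atypical behaviour at the atom $\{0\}$ is concentrated on sparse coordinates of a typical configuration.

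The crux is the type~III$_1$ conclusion. The log-density $\log\bigl(d(n\cdot\mu)/d\mu\bigr)(x)$ is an independent sum over $h\in\Z$ of terms $\mathbf{1}_{x_h=0}\log(a_{h+n}/a_h)$ plus $\mathbf{1}_{x_h=1}\log((1-a_{h+n})/(1-a_h))$. I would combine a central-limit / concentration-type estimate for the random fluctuations with explicit control of the deterministic mean, tuning the sparse perturbations $n_k$ so that the set of achievable means is dense in $\R$. This produces essential values of the Radon--Nikodym cocycle in every neighbourhood of every $t\in\R$, hence the asymptotic ratio set equals $\R_+$ and the action is stably of type~III$_1$. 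The main obstacle is the simultaneous fulfilment of $a_n\to 0$, \eqref{eq.kakutani}, \eqref{eq.strong-conservative-assumption}, and the density of achievable means: the decay $a_n\to 0$ creates a drift in the log-density that, uncorrected, would push the essential range into a proper closed subgroup of $\R$ (type~III$_\lambda$ with $\lambda<1$) or collapse it (type~III$_0$). The sparse perturbation is precisely the tool used to counteract this drift on the required scales.
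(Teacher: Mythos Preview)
Your proposal contains a genuine gap at the conservativeness step. With the base profile $a_h = (\log(|h|+2))^{-\beta}$, the quantity $C(n)$ does \emph{not} grow like $(\log|n|)^{\gamma}$. Indeed, using Lemma~\ref{lem.elementary-estimate} (or the direct expansion $a^2/b+(1-a)^2/(1-b)\approx 1+(a-b)^2/b$ for small $a,b$), one has $C(n)\ge \|c_n\|^2$ for the scalar cocycle $c_n(h)=\log a_h-\log a_{h-n}$. Writing $F(h)=\beta\log\log(|h|+2)$, the contribution to $\|c_n\|^2$ from $h\in[2,n/2]$ alone is
\[
\sum_{h=2}^{n/2}\bigl(\log\log n - \log\log h + O(1)\bigr)^2\beta^2 \;\asymp\; \beta^2\,\frac{n}{(\log n)^2}\,,
\]
as one sees by the substitution $u=\log h$ and noting that the integrand concentrates near $u=\log n$. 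The tail $h>n$ contributes the same order. Hence $C(n)\gtrsim \beta^2 n/(\log n)^2$, so $|\{n:C(n)\le s\}|$ grows only polynomially in $s$ and the $\limsup$ in \eqref{eq.strong-conservative-assumption} equals $0$, not $>6$. No choice of $\beta>0$ repairs this, and a sparse perturbation only makes $C(n)$ larger.

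The paper proceeds quite differently. It sets $\mu_g(0)=\tfrac12 e^{-F(g)}$ and builds $F$ to be \emph{piecewise linear} with slopes $1/b_k$ on intervals of length $b_k$, where $b_k$ grows as a tower of exponentials (so $a_n\to 0$ extremely slowly). Lemma~\ref{lem.elementary-estimate} then gives $C(n)\le \tfrac12\|c_n\|^2$, and the piecewise-linear structure forces $\|c_N\|^2\le 25k^2 a_{k-1}$ for all $|N|\le \sqrt{b_k}$, while $\log\sqrt{b_k}\ge \tfrac12 k^3 a_{k-1}$; hence $\delta(c)=+\infty$. Conservativeness then comes from Lemma~\ref{lem.strongly-conservative}. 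Weak mixing is not obtained by adapting the Maharam criteria of this paper (which require \eqref{eq.boundedness-weak} and fail here), but rather by invoking \cite[Theorem~A]{BKV19} for abelian groups. Finally, type~III$_1$ is \emph{not} proved by CLT/concentration on the log-density; instead, the paper works with the permutation action $\cS_\Z\curvearrowright(X,\mu)$, shows it is ergodic, and exhibits every $\rho>1$ in its ratio set by locating, for each interval $[a,b]$, infinitely many even--odd index pairs $(2n,2n+k_n)$ whose flip $\sigma_n$ has Radon--Nikodym derivative in $\exp([a,b])$. The monotonicity hypotheses on $F$ (increasing, with $n\mapsto F(n+a)-F(n)$ nonincreasing) are exactly what makes this selection possible. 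Your drift-correction-by-perturbation idea is in the right spirit, but the concrete mechanism the paper uses is this ratio-set argument, and it requires no density-of-means or probabilistic limit theorems.
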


We finally discuss the main methods used in this paper. To prove ergodicity and determine the Krieger type of a nonsingular Bernoulli action \eqref{eq.Bernoulli-action}, we proceed in several steps. Under the appropriate assumptions, we prove that $G$-invariant functions $F \in L^\infty(X)^G$ are automatically invariant under the action $\cS_G \actson (X,\mu)$ of the group $\cS_G$ of finite permutations of the countable set $G$, which acts by permuting finitely many coordinates. We also relate Krieger's associated flow of $G \actson (X,\mu)$ to the associated flow of $\cS_G \actson (X,\mu)$. This is mainly done in Proposition \ref{prop.ergodic-general} and Lemma \ref{lem.link-to-permutation-action}, whose proofs are following quite closely the methods of \cite{BKV19}, with the idea of using the permutation action and the tail equivalence relation going back to \cite{Kos18,Dan18}.

In \cite{AP77}, very general ergodicity results for the permutation action $\cS_G \actson (X,\mu)$ were obtained. However, its Krieger type has so far only been considered for finite base spaces, and mainly for $X_0 = \{0,1\}$, see \cite{SV77}. In Theorems \ref{thm.T-invariant-permutation-action} and \ref{thm.permutation-condition-periodic}, we determine the type of the permutation action in great generality. This is in turn based on a reduction of the associated flow of $\cS_G \actson (X,\mu)$ to the flow on the tail boundary of a time dependent random walk on $\R$, to which we can apply the results of \cite{Ore66}. As we explain in Section \ref{sec.tail-boundary}, it is quite remarkable how \cite{Ore66} has been overlooked by the ergodic theory and operator algebra community.

\section{Preliminaries}

\subsection{Unconditional convergence a.e.\ of infinite sums and products}

Let $(X,\mu)$ be a standard probability space and $F_i : X \recht \R$, $i \in I$, a countable family of measurable maps. We say that $\sum_{i\in I} F_i(x)$ converges \emph{unconditionally a.e.}\ if there exists a measurable map $F : X \recht \R$ such that for any increasing sequence of finite subsets $I_n \subset I$ with $\bigcup_{n=1}^\infty I_n = I$, we have
$$\lim_{n \recht +\infty} \sum_{i \in I_n} F_i(x) = F(x) \quad\text{for a.e.\ $x \in X$.}$$
Similarly, when $F_i : X \recht \C \setminus \{0\}$, we say that $\prod_{i \in I} F_i(x)$ converges \emph{unconditionally a.e.}\ if there exists a measurable map $F : X \recht \C \setminus \{0\}$ such that for any increasing sequence of finite subsets $I_n \subset I$ with $\bigcup_{n=1}^\infty I_n = I$, we have
$$\lim_{n \recht +\infty} \prod_{i \in I_n} F_i(x) = F(x) \quad\text{for a.e.\ $x \in X$.}$$

Assume now that $(X,\mu) = \prod_{i \in I} (X_i,\mu_i)$. For all $\kappa > 0$, consider the cutoff function $T_\kappa : \R \recht \R$ given by
\begin{equation}\label{eq.cut-off}
T_\kappa : \R \recht \R : T_\kappa(t) = \begin{cases} -\kappa &\;\;\text{if $t \leq -\kappa$},\\ t &\;\;\text{if $-\kappa \leq t \leq \kappa$,} \\ \kappa &\;\;\text{if $t \geq \kappa$.}\end{cases}
\end{equation}
If $F_i : X_i \recht \R$ are measurable functions and if there exist $s_i \in \R$ such that
\begin{equation}\label{eq.cond-a}
\sum_{i \in I} \int_{X_i} T_\kappa(F_i(x)-s_i)^2 \, d\mu_i(x) < +\infty \; ,
\end{equation}
then van Kampen's version of Kolmogorov's three series theorem says that there exist $t_i \in \R$ such that $\sum_{i \in I} (F_i(x_i) - t_i)$ converges unconditionally a.e.\ on the product space $(X,\mu)$. It is however important to note that one cannot necessarily take $t_i = s_i$, because condition \eqref{eq.cond-a} only determines $s_i$ up to a square summable family, while the unconditional convergence a.e.\ determines $t_i$ up to an absolutely summable family. When \eqref{eq.cond-a} holds, one may take
$$t_i = s_i + \int_{X_i} T_\kappa(F_i(x) - s_i) \, d\mu_i(x) \; .$$

Similarly, when $\vphi_i : X_i \recht \C \setminus \{0\}$ are such that
$$
\int_{X_i} |\vphi_i(x)|^2 \, d\mu_i(x) = 1 \quad\text{and}\quad \int_{X_i} \vphi_i(x) \, d\mu_i(x) \in [0,+\infty) \; ,
$$
where the latter can always be realized by multiplying $\vphi_i$ with a constant in $\T$, and if
\begin{equation}\label{eq.cond-b}
\sum_{i \in I} \Bigl(1 - \int_{X_i} \vphi_i(x) \, d\mu_i(x)\Bigr) < +\infty \; ,
\end{equation}
then $\prod_{i \in I} \vphi_i(x_i)$ converges unconditionally a.e.\ on the product space $(X,\mu)$.

\subsection{Nonsingular group actions, and Bernoulli actions}

An action of a countable group $G$ by measurable transformations of a standard probability space $(X,\mu)$ is called \emph{nonsingular} if $\mu(g \cdot \cU)=0$ if and only if $\mu(\cU) = 0$ for all $g \in G$ and $\cU \subset X$ measurable. Recall that the \emph{Maharam extension} of such a nonsingular action is given by
$$G \actson X \times \R : g \cdot (x,t) = (g \cdot x, t + \log d(g^{-1}\mu)/d\mu(x)) \; .$$
This action commutes with the translation action of $\R$ in the second variable. Therefore, if $G \actson (X,\mu)$ is ergodic, the action of $\R$ on the von Neumann algebra of $G$-invariant functions $L^\infty(X \times \R)^G$ is an ergodic action of $\R$. This is Krieger's \emph{associated flow} of $G \actson (X,\mu)$.

The ergodic nonsingular action $G \actson (X,\mu)$ admits a (finite or $\sigma$-finite) invariant measure that is equivalent to $\mu$ if and only if the associated flow is conjugate to the translation action $\R \actson \R$. Otherwise, $G \actson (X,\mu)$ is of type~III. In that case, if the associated flow is periodic, i.e.\ conjugate to the translation action $\R \actson \R/p\Z$ for $p \neq 0$, the action is said to by of type III$_\lambda$ with $\lambda = \exp(-|p|)$. If the associated flow is trivial, i.e.\ the Maharam extension remains ergodic, then the action is said to be of type III$_1$. If the associated flow is properly ergodic, $G \actson (X,\mu)$ is said to be of type III$_0$.

By the fundamental work of Connes, Feldman, Krieger, Ornstein and Weiss, for ergodic nonsingular actions of \emph{amenable} groups, the type and associated flow are a complete invariant for the \emph{orbit equivalence relation} of $G \actson (X,\mu)$, and by seminal work of Connes and Takesaki, this is even a complete invariant for the associated von Neumann algebras.

Also recall Connes' \emph{$T$-invariant}, which for an ergodic nonsingular action $G \actson (X,\mu)$ can be defined as the subgroup of $\R$ consisting of the \emph{eigenvalues} $s \in \R$ of the associated flow $\R \actson (Z,\eta)$. An element $s \in \R$ is called an eigenvalue, if there exists a measurable $F : Z \recht \T$ such that $F(t \cdot z) = \exp(its) \, F(z)$ for all $t \in \R$ and a.e.\ $z \in Z$.

A nonsingular action $G \actson (X,\mu)$ is called \emph{weakly mixing} if it is ergodic and if the diagonal action $G \actson X \times Y$ remains ergodic for any ergodic \emph{probability measure preserving (pmp)} action $G \actson (Y,\eta)$. A weakly mixing $G \actson (X,\mu)$ is said to be of \emph{stable type III$_\lambda$} if each of these product actions is of the same type III$_\lambda$.

Whenever $G$ is a countable group and $(\mu_g)_{g \in G}$ is a family of equivalent probability measures on a standard Borel space $X_0$, we consider the Bernoulli action $G \actson (X,\mu)$ \eqref{eq.Bernoulli-action}. This action is nonsingular if and only if the Kakutani criterion \eqref{eq.kakutani} holds. In that case, the Radon-Nikodym derivative is given by
$$\frac{d(g^{-1}\mu)}{d\mu}(x) = \prod_{h \in G} \frac{d\mu_{gh}}{d\mu_h}(x_h) \; ,$$
where, for every $g \in G$, the product converges unconditionally a.e. In that case, we also have that
$$1 - H^2(g^{-1}\mu,\mu) = \prod_{h \in G} (1-H^2(\mu_{gh},\mu_h)) \; .$$
The product space $(X,\mu)$ is nonatomic, unless there exist atoms $a_g \in X_0$ such that $\sum_{g \in G} (1-\mu_g(a_g)) < +\infty$. We always tacitly rule out this trivial atomic situation. Then any nonsingular Bernoulli action $G \actson (X,\mu)$ is \emph{essentially free} (see e.g.\ \cite[Lemma 2.2]{BKV19}), meaning that for every $g \in G \setminus \{e\}$, the set $\{x \in X \mid g \cdot x = x\}$ has measure zero.

\subsection{\boldmath The tail boundary of random walks on $\R$}\label{sec.tail-boundary}

Let $(\mu_n)_{n \geq 1}$ be a sequence of probability measures on $\R$ describing the transition probabilities of a random walk on $\R$. So, fixing an initial probability measure $\mu_0$ on $\R$ that is equivalent to the Lebesgue measure, we consider the Markov chain $X_n$, where $X_0$ has distribution $\mu_0$, where the increments $X_n-X_{n-1}$ have distribution $\mu_n$ for all $n \geq 1$ and the random variables $X_0,X_1-X_0,X_2-X_1,\ldots$ are independent. The \emph{tail boundary} of this random walk is defined as the intersection of the $\si$-algebras $\bigcap_{n=1}^\infty \si(X_m \mid m \geq n)$. It comes equipped with a natural action of $\R$ given by translation. More concretely, define
\begin{equation}\label{eq.not-1}
(\Om,\eta) = \prod_{n=0}^\infty (\R,\mu_n) \quad\text{with}\quad X_n(\om) = \sum_{k=0}^n \om_k \; .
\end{equation}
We consider the nonsingular action $\R \actson (\Om,\mu)$ given by translation in the zero'th coordinate $\om_0$. Defining
\begin{equation}\label{eq.not-2}
(\Om_n,\eta_n) = \prod_{m = n}^\infty (\R,\mu_m) \quad\text{and}\quad \pi_n : \Om \recht \R \times \Om_{n+1} : \pi_n(\om) = (X_n(\om) , \om_{n+1},\om_{n+2},\ldots) \; ,
\end{equation}
we consider the von Neumann subalgebras $A_n \subset L^\infty(\Om,\eta)$ given by $A_n = (\pi_n)_*(L^\infty(\R \times \Om_{n+1}))$ and define the tail boundary as $A = \bigcap_{n=1}^\infty A_n$. The action $\R \actson L^\infty(\Om,\eta)$ leaves each $A_n$ globally invariant and thus defines the action $\al : \R \actson A$. Up to conjugacy, this action does not depend on the choice of $\mu_0$ and also does not depend on translation of the measures $\mu_n$. We call $\R \actson A$ the \emph{tail boundary flow} associated with $(\mu_n)_{n \geq 1}$. Note that the tail boundary flow is ergodic.

Equivalently (see e.g.\ \cite[Section 2]{CW88}), elements $F \in A$ are represented as follows by bounded sequences of harmonic functions. Denote by $E_n$ the measure preserving conditional expectation of $L^\infty(\Om,\eta)$ onto the subalgebra of functions that only depend on the variables $\om_0,\ldots,\om_n$. Whenever $F \in A$ and $n \in \N$, we have $F \in A_n$, so that there exists a unique $F_n \in L^\infty(\R)$ with $\|F_n\|_\infty \leq \|F\|_\infty$ and $(E_n(F))(\om) = F_n(X_n(\om))$ for a.e.\ $\om \in \Om$. Since $E_n = E_n \circ E_{n+1}$, it follows that
\begin{equation}\label{eq.harmonic}
F_n = \mu_{n+1}*F_{n+1} \; .
\end{equation}
Conversely, whenever $F_n \in L^\infty(\R)$ is a sequence of functions satisfying \eqref{eq.harmonic} and $\sup_n \|F_n\|_\infty < +\infty$, the sequence $F_n \circ X_n$ is a bounded martingale, converging a.e.\ to $F \in A$.

By \cite[Theorem 3.1]{CW88}, the flow of weights of an infinite tensor product of type~I factors (ITPFI) is described as a tail boundary flow. A similar result holds for other constructions of type III factors and it is thus important to have a criterion when the tail boundary is trivial (the type III$_1$ case) or the tail boundary action is periodic (the type III$_\lambda$ case). ITPFI factors can be of type III$_0$, in which case the tail boundary flow is properly ergodic.

Under the appropriate tightness assumption on the measures $\mu_n$, this type III$_0$ behavior is ruled out. The results of \cite{Ore66} provide very general sufficient conditions for the periodicity of the tail boundary flow. These results seem to have been overlooked by the ergodic theory and operator algebra community, since several special cases have been reproved in the past decades. In particular, the available ergodicity results for cocycles $\cR \recht \R$ on the tail equivalence relation $\cR$ on $\prod_n (X_n,\mu_n)$ are immediate corollaries of \cite{Ore66}.

Before stating the needed results from \cite{Ore66}, we prove a simpler result on the eigenvalue group of the tail boundary flow. Recall that $s \in \R$ is called an eigenvalue of the ergodic action $\R \actson^\al A$ if and only if there exists an $F \in \cU(A)$ such that $\al_t(F) = \exp(i t s) F$ for all $t \in \R$.

The following proposition is an immediate consequence of the convergence criterion for infinite products, and is similar to \cite[Theorem 4.2]{CW88}. For completeness, we include a proof. Recall the notation $T_\kappa$ introduced in \eqref{eq.cut-off}.

\begin{proposition}\label{prop.T-set-tail}
Let $(\mu_n)_{n \geq 1}$ be a sequence of probability measures on $\R$ and consider the associated tail boundary flow.
\begin{enumlist}
\item The tail boundary flow is conjugate to the translation action $\R \actson \R$ if and only if there exist $t_n \in \R$ such that
\begin{equation}\label{eq.tail-semifinite}
\sum_{n =1}^\infty \int_\R T_\kappa(t-t_n)^2 \, d\mu_n(t) < +\infty
\end{equation}
for some $\kappa > 0$ (equivalently, all $\kappa > 0$).

\item For $p \neq 0$, we have that $2\pi / p$ is an eigenvalue of the tail boundary flow if and only if there exist $t_n \in \R$ such that
\begin{equation}\label{eq.T-set-criterion-tail}
\sum_{n=1}^\infty \int_\R d(t-t_n,p \Z)^2 \, d\mu_n(t) < +\infty \; ,
\end{equation}
where $d(t,p\Z) = \min \{|t - p n| \mid n \in \Z\}$ denotes the distance from $t$ to $p\Z$.
\end{enumlist}
\end{proposition}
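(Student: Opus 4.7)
My strategy treats both parts uniformly: classify eigenfunctions $V \in \cU(A)$ for the character $s \mapsto \exp(i \xi s)$ (with $\xi = 2\pi/p$ in Part~2, and $\xi$ ranging over $\R$ in Part~1) via the harmonic sequence description \eqref{eq.harmonic}, and then convert the resulting estimates on the characteristic functions $\hat\mu_n(\xi)$ into the claimed quadratic moment conditions through the inequality $1 - \cos u \geq c_0 \min(u^2, 1)$ for a universal $c_0 > 0$.

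\textbf{Part~2.} For $(\Leftarrow)$, after translating each $\mu_n$ by $-t_n$, which leaves the tail boundary flow unchanged, we reduce to $t_n = 0$ and obtain $\sum_n \int (1 - \cos(2\pi t/p))\,d\mu_n(t) < +\infty$. Choosing unimodular constants $z_n$ so that $\int z_n \exp(2\pi i t/p)\,d\mu_n(t) \geq 0$, the convergence criterion \eqref{eq.cond-b} yields unconditional a.e.\ convergence of $V(\om) := \prod_{n \geq 0} z_n \exp(2\pi i \om_n / p)$. Each partial product equals a unimodular scalar times $\exp(2\pi i X_N(\om)/p) \in A_N$, so $V \in A$, and translation in $\om_0$ multiplies $V$ by $\exp(2\pi i s/p)$. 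For $(\Rightarrow)$, given an eigenfunction $V \in \cU(A)$, write $V = g_n(X_n, \om_{>n})$ with $|g_n| = 1$; equivariance forces $g_n(t, \om_{>n}) = \exp(2\pi i t/p)\,h_n(\om_{>n})$ with $|h_n| = 1$. Setting $c_n := \int h_n \, d\eta_{>n}$, one computes $E_n(V) = c_n \exp(2\pi i X_n/p)$, and the harmonic relation \eqref{eq.harmonic} becomes $c_n = c_{n+1}\,\overline{\hat\mu_{n+1}(2\pi/p)}$. Forward martingale convergence together with $|V| = 1$ forces $|c_n| \recht 1$, so $\prod_n |\hat\mu_n(2\pi/p)| > 0$. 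Choosing $t_n$ with $\hat\mu_n(2\pi/p)\exp(-2\pi i t_n/p) \geq 0$ and invoking the cosine bound yields \eqref{eq.T-set-criterion-tail}.

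\textbf{Part~1.} For $(\Leftarrow)$, the three-series criterion \eqref{eq.cond-a} provides constants $C_n$ such that $Y(\om) := \lim_n (X_n(\om) - C_n)$ exists unconditionally a.e. The decomposition $Y = (X_m - C_m) + \sum_{k > m}(\om_k - c_k)$ places $Y$ in $A_m$ for every $m$, hence $Y \in A$, with $\al_s(Y) = Y + s$. Consequently, $\si(Y) \subset A$ is $\R$-equivariantly isomorphic to $L^\infty(\R)$ with translation; the remaining task is to verify $\si(Y) = A$. For $F \in A$ with harmonic sequence $(F_n)$, setting $\tilde F_n(y) := F_n(y + C_n) \in L^\infty(\R)$ and exploiting both the convolution smoothing $F_n = (\mu_{n+1} * \cdots * \mu_m) * F_m$ and the approximation $X_n = Y + C_n + o(1)$ a.e., a weak-$*$ compactness argument on the uniformly bounded sequence $(\tilde F_n)$ extracts $\tilde F \in L^\infty(\R)$ with $F = \tilde F(Y)$. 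For $(\Rightarrow)$, if the tail boundary flow is $\R \actson \R$ with generator $Y$, then every $\xi \in \R$ is an eigenvalue via $\exp(i\xi Y)$, so the Part~2 $(\Rightarrow)$ argument applied with $\xi$ in place of $2\pi/p$ gives $\sum_n (1 - |\hat\mu_n(\xi)|) < +\infty$; taking $\xi = 1/\kappa$ and using $1 - \cos(u/\kappa) \geq (c_0/\kappa^2)\,T_\kappa(u)^2$ recovers \eqref{eq.tail-semifinite}. The delicate step is the surjectivity $\si(Y) = A$ in Part~1 $(\Leftarrow)$: one must upgrade the pointwise convergence $F_n(X_n) \recht F$ to the identification $F = \tilde F(Y)$, and this is precisely where the convolution regularization in \eqref{eq.harmonic} plays the crucial role.
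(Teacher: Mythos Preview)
Your treatment of Part~2 is essentially the paper's argument and is correct.

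For Part~1 $(\Leftarrow)$ you are working harder than necessary, and you have misidentified where the difficulty lies. Once you have an $\R$-equivariant measurable map $Y$ from the tail boundary to $\R$, ergodicity of the tail boundary flow alone forces conjugacy with the translation action: the map $z \mapsto (-Y(z))\cdot z$ is $\R$-invariant, hence essentially constant, so the measure lives on a single orbit, which cannot be periodic because an equivariant map to $\R$ exists. There is no need to establish $\sigma(Y)=A$ directly, and your weak-$*$ compactness sketch toward that stronger conclusion is not complete as written. The paper takes exactly this shortcut, deferring to the later Lemma~\ref{lem.trivial-flow-result}.

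The genuine gap is in Part~1 $(\Rightarrow)$. The inequality $1-\cos(u/\kappa)\geq (c_0/\kappa^2)\,T_\kappa(u)^2$ that you invoke is simply false: for $u=2\pi\kappa$ the left side vanishes while $T_\kappa(u)^2=\kappa^2$. More structurally, a single frequency $\xi$ can only give you control of $\sum_n\int d(t-t_n(\xi),\tfrac{2\pi}{\xi}\Z)^2\,d\mu_n$, with centers $t_n(\xi)$ depending on $\xi$, and no such periodic quantity can dominate $T_\kappa(t-t_n)^2$. The paper's argument (going back to Moore) proceeds in two stages. First, symmetrize to $\eta_n=\mu_n\times\mu_n$ so that the unknown centers cancel and one obtains $\sum_n\int_{\R^2}\sin^2(r(t-s))\,d\eta_n(t,s)<\infty$ for \emph{every} $r\in\R$. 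Second, choose a Borel set $\cU\subset\R$ of positive Lebesgue measure on which this sum is uniformly bounded and integrate over $r\in\cU$: the resulting function $H(t)=\int_\cU\sin^2(tr)\,dr$ is strictly positive for $t\neq 0$, behaves like $t^2$ near $0$, and has a positive limit at infinity, exactly the profile needed to dominate a multiple of $T_\kappa(t)^2$. A final technical step (Lemma~\ref{lem.technical}) recovers suitable centers $t_n$ from the symmetric bound $\sum_n\int_{\R^2}T_\kappa(t-s)^2\,d\eta_n(t,s)<\infty$.
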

\begin{proof}
We start by proving the second point. First assume that $2\pi / p$ is an eigenvalue of the tail boundary flow. Take a unitary $F \in \cU(A)$ such that $\al_t(F) = \exp(2\pi i t / p) F$ for all $t \in \R$. The harmonic sequence $F_n \in L^\infty(\R)$ representing $F$ then satisfies the same equivariance property. So we can take $\lambda_n \in \C$ such that $F_n(s) = \lambda_n \, \exp(2\pi i s / p)$ for a.e.\ $s \in \R$. Choose $r_n \in \R$ such that $\lambda_n = |\lambda_n| \, \exp(2\pi i r_n / p)$. We then find $s_k \in \R$ such that
$$
F_n(X_n(\om)) = |\lambda_n| \, \prod_{k=0}^n \exp(2 \pi i (\om_k + s_k) / p) \; .
$$
Since $F_n(X_n(\om)) \recht F(\om)$ for a.e.\ $\om \in \Om$, we get that $|\lambda_n| \recht 1$ and that the infinite product of unitaries $\om_k \mapsto \exp(2 \pi i (\om_k + s_k) / p)$ is convergent. Therefore,
$$\sum_{n=1}^\infty ( 1 - |z_n|) <+\infty \quad\text{where}\quad z_n = \int_\R \exp(2 \pi i t / p) \, d\mu_n(t) \; .$$
Choose $t_n \in \R$ such that $|z_n| = \exp(- 2 \pi i t_n /p) z_n$. Then,
\begin{align*}
|z_n| &= \Re |z_n| = \Re\bigl(\exp(- 2 \pi i t_n /p) z_n\bigr) = \Re\Bigl( \int_\R \exp(2 \pi i (t-t_n) / p) \, d\mu_n(t)\Bigr) \\ &= \int_\R \cos(2 \pi (t-t_n) / p) \, d\mu_n(t) \; .
\end{align*}
We conclude that
$$\sum_{n=1}^\infty \int_\R \sin^2(\pi(t-t_n)/p) \, d\mu_n(t) < +\infty \; .$$
Since for every $x \in \R$, we have $4 d(x,\Z)^2 \leq \sin^2(\pi x) \leq \pi^2 \, d(x,\Z)^2$, it follows that \eqref{eq.T-set-criterion-tail} holds.

Conversely, assume that $p \neq 0$ and that \eqref{eq.T-set-criterion-tail} holds. Write
$$z_n := \int_\R \exp(2 \pi i (t-t_n) / p) \, d\mu_n(t) \; .$$
Then, $|z_n| \leq 1$ for all $n \geq 1$ and
$$\sum_{n=1}^\infty (1-\Re z_n) = 2 \sum_{n=1}^\infty \int_\R \sin^2(\pi (t-t_n) / p) \, d\mu_n(t) \leq \frac{2 \pi^2}{p^2} \sum_{n=1}^\infty \int_\R d(t-t_n, p\Z)^2 \, d\mu_n(t) <+\infty \; .$$
Take $t_n' \in \R$ such that $|z_n| = \exp(- 2 \pi i t_n' / p) \, z_n$. Defining $\theta_n(t) = \exp(2 \pi i (t-t_n-t_n')/p)$, it follows that the product
$$F(\om) = \prod_{n=1}^\infty \theta_n(\om_n)$$
converges unconditionally a.e. By construction, $F$ defines an eigenvector for the tail boundary flow, with eigenvalue $2\pi / p$.

To prove the first point, if the tail boundary flow is conjugate to the translation action $\R \actson \R$, its eigenvalue group is $\R$ and therefore, \eqref{eq.T-set-criterion-tail} holds for all $p \neq 0$. We can now use the following argument of Moore (see \cite[Lemma 3.5]{M65}) to prove that \eqref{eq.tail-semifinite} holds for all $\kappa > 0$. Define the probability measures $\eta_n = \mu_n \times \mu_n$ on $\R^2$. We first prove that
\begin{equation}\label{eq.with-extra-explain}
\sum_{n=1}^\infty \int_{\R^2} d(t-s,p\Z)^2 \, d\eta_n(t,s) < +\infty \quad\text{for all $p \neq 0$.}
\end{equation}
To prove \eqref{eq.with-extra-explain}, fix $p \neq 0$. Take $t_n \in \R$ such that \eqref{eq.T-set-criterion-tail} holds. Define the Hilbert space $V = \bigoplus_{n =1}^\infty L^2(\R^2,\eta_n)$. Since \eqref{eq.T-set-criterion-tail} holds, we can define the vectors $\xi = (\xi_n)$ and $\zeta = (\zeta_n)$ in $V$ by $\xi_n(t,s) = d(t-t_n,p\Z)$ and $\zeta_n(t,s) = d(s-t_n,p\Z)$. Since $d(t-s,p\Z) \leq \xi_n(t,s) + \zeta_n(t,s)$ and $\xi + \zeta \in V$, we conclude that \eqref{eq.with-extra-explain} holds.

Again comparing $d(t,p\Z)^2$ with $\sin^2(\pi/p t)$, it follows from \eqref{eq.with-extra-explain} that
$$M(r) := \sum_{n=1}^\infty \int_{\R^2} \sin^2(r(t-s)) \, d\eta_n(t,s) < +\infty \quad\text{for all $r \in \R$.}$$
We can then pick $M > 0$ and a Borel set $\cU \subset [-M,M]$ of positive Lebesgue measure $\lambda(\cU)$ such that $M(r) \leq M$ for all $r \in \cU$. It follows that
\begin{equation}\label{eq.isfinite}
+\infty > \int_{\cU} M(r) \, dr = \sum_{n=1}^\infty \int_{\R^2} H(t-s) \, d\eta_n(t,s) \quad\text{where}\quad H(t) = \int_{\cU} \sin^2(tr) \, dr \; .
\end{equation}
Fix $\kappa > 0$. Since $H$ is a continuous function with the following properties
$$H(t) > 0 \;\;\text{for all $t \neq 0$,}\quad \lim_{t \recht 0} \frac{H(t)}{t^2} = \int_{\cU} r^2 \, dr > 0 \;\; , \quad \lim_{|t| \recht +\infty} H(t) = \lambda(\cU) / 2 > 0 \; ,$$
we can take $\delta > 0$ such that $\delta \, T_\kappa(t)^2 \leq H(t)$ for all $t \in \R$. It then follows from \eqref{eq.isfinite} that
$$\sum_{n=1}^\infty \int_{\R^2} T_\kappa(t-s)^2 \, d\eta_n(t,s) < +\infty \; .$$
Lemma \ref{lem.technical} below now allows us to choose $t_n \in \R$ such that \eqref{eq.tail-semifinite} holds. Once \eqref{eq.tail-semifinite} holds for a single $\kappa > 0$, it holds for all $\kappa > 0$.

Conversely, if \eqref{eq.tail-semifinite} holds for some $\kappa > 0$, we find $t'_n \in \R$ such that
$$F(\om) = \sum_{n=1}^\infty (\om_n + t'_n)$$
converges unconditionally a.e. By construction, $F$ is an $\R$-equivariant map from the tail boundary flow to $\R$. It follows that the tail boundary flow is conjugate to the translation action $\R \actson \R$ (see e.g.\ Lemma \ref{lem.trivial-flow-result} below for a more general result).
\end{proof}

\begin{lemma}\label{lem.technical}
Let $\mu$ be a probability measure on $\R$ and $\kappa > 0$. There exists $a \in \R$ such that
$$\int_\R T_\kappa(t-a)^2 \, d\mu(t) \leq 8 \, \int_{\R^2} T_\kappa(t-s)^2 \, d\mu(t) \, d\mu(s) \; .$$
\end{lemma}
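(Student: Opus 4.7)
The plan is a one-step application of Fubini's theorem. Denote $R = \int_{\R^2} T_\kappa(t-s)^2 \, d\mu(t) \, d\mu(s)$, which is finite since $|T_\kappa| \leq \kappa$ and $\mu$ is a probability measure (so $R \leq \kappa^2$). By Fubini, $R = \int_\R f(s) \, d\mu(s)$ where $f(s) := \int_\R T_\kappa(t-s)^2 \, d\mu(t) \geq 0$. Thus the function $f$ has $\mu$-average equal to $R$, so by Chebyshev's inequality the set $\{s \in \R : f(s) \leq 8R\}$ has $\mu$-measure at least $7/8$; in particular it is nonempty. Picking any $a$ from this set gives $\int_\R T_\kappa(t-a)^2 \, d\mu(t) = f(a) \leq 8R$, which is exactly the desired inequality.

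The only ingredient used is finiteness of $R$, which is the role of the cutoff $T_\kappa$ in the statement; without it, no analogous bound can hold in general. Beyond this there is no real obstacle: the argument is a one-line consequence of Fubini plus a Chebyshev/pigeonhole step. The sharper bound $f(a) \leq R$ actually holds for some $a$ in the support of $\mu$ by the standard "integral equals average" observation — if $f > R$ held on a set of full $\mu$-measure, decomposing $\{f > R\} = \bigcup_{n \geq 1} \{f \geq R + 1/n\}$ and applying countable additivity would force some level set to carry positive $\mu$-mass and hence $\int f \, d\mu > R$, contradicting $\int f \, d\mu = R$ — so the constant $8$ in the statement is generous and likely chosen only to permit the clean Chebyshev phrasing above.
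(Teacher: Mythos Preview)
Your proof is correct and is considerably simpler than the paper's. The paper argues as follows: write $\eps$ for the right-hand integral, dispose of the trivial case $\eps > \kappa^2/8$, and otherwise use Chebyshev on the set $\{(s,t):|t-s|\geq\kappa/2\}$ to find a point $b$ with $\mu(\R\setminus[b-\kappa/2,b+\kappa/2])\leq 4\eps/\kappa^2$; then take $a$ to be the mean of $\mu$ restricted and normalized to $[b-\kappa/2,b+\kappa/2]$, and bound $\int T_\kappa(t-a)^2\,d\mu(t)$ by splitting into the contribution from the interval (where $T_\kappa(t-a)=t-a$ and one can use the variance identity) and its complement (where the cutoff gives $\kappa^2$ times a small mass).

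Your route bypasses all of this: Fubini writes the double integral as $\int f\,d\mu$ with $f(s)=\int T_\kappa(t-s)^2\,d\mu(t)$, and since a nonnegative function cannot everywhere exceed its average, some $a$ satisfies $f(a)\leq R$. This is strictly shorter and yields the better constant $1$; the paper's longer argument does produce an explicit candidate for $a$ (a conditional mean), but that feature is not used downstream in Proposition~\ref{prop.T-set-tail}. Your observation that the constant $8$ is slack is correct.
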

\begin{proof}
Write $\eps = \int_{\R^2} T_\kappa(t-s)^2 \, d\mu(t) \, d\mu(s)$. If $\eps > \kappa^2/8$, there is nothing to prove. So assume that $\eps \leq \kappa^2/8$. When $|t-s| \geq \kappa/2$, we have $T_\kappa(t-s)^2 \geq \kappa^2/4$ and thus
$$(\mu \times \mu)\bigl(\bigl\{(s,t) \in \R^2 \bigm| |t-s| \geq \kappa/2 \bigr\}\bigr) \leq \frac{4}{\kappa^2} \, \eps \; .$$
This means that
$$\int_\R \mu\bigl(\R \setminus [t-\kappa/2,t+\kappa/2]\bigr) \, d\mu(t) \leq \frac{4}{\kappa^2} \, \eps \; .$$
We can thus choose $b \in \R$ such that
$$\mu\bigl(\R \setminus [b-\kappa/2,b+\kappa/2]\bigr) \leq \frac{4}{\kappa^2} \, \eps \; .$$
In particular, $\mu\bigl([b-\kappa/2,b+\kappa/2]\bigr) \geq 1/2$. Define the probability measure $\mu_0$ on $\R$ by restricting $\mu$ to the interval $I := [b-\kappa/2,b+\kappa/2]$ and normalizing. Define $a \in I$ as the mean of $\mu_0$ given by
$$a = \int_\R t \, d\mu_0(t) \; .$$
We then have
\begin{align*}
\int_{\R} T_\kappa(t-a)^2 \, d\mu(t) & \leq \kappa^2 \, \mu(\R \setminus I) + \int_{\R} (t-a)^2 \, d\mu_0(t) \leq 4 \eps + \frac{1}{2} \int_{\R^2} (t-s)^2 \, d\mu_0(t) \, d\mu_0(s) \\
& \leq 4 \eps + 2 \int_{I \times I} (t-s)^2 \, d\mu(t) \, d\mu(s) = 4 \eps + 2 \int_{I \times I} T_\kappa(t-s)^2 \, d\mu(t) \, d\mu(s) \\
& \leq 6 \eps \leq 8 \eps \; .
\end{align*}
\end{proof}

Once we can rule out that the tail boundary flow is properly ergodic, it is transitive and determined by its eigenvalue group. This eigenvalue group can then be $\R$ (for the flow $\R \actson \R$), or $(2\pi / p) \Z$ (for the flow $\R \actson \R/p\Z$), or trivial (for the trivial flow). The main results of \cite{Ore66} provide sufficient conditions to rule out proper ergodicity of the tail boundary flow. We will make use of the following criteria, essentially contained in \cite{Ore66}.

We say that an action $\R \actson^\al A=L^\infty(Z,\eta)$ of $\R$ is \emph{periodic} if there exists a $p \neq 0$ such that $\al_p(F) = F$ for all $F \in A$. In particular, the trivial action is called periodic.

\begin{theorem}[{\cite{Ore66}}]\label{thm.tail-periodic-criterion}
Let $(\mu_n)_{n \geq 1}$ be a sequence of probability measures on $\R$ and consider the associated tail boundary flow.
\begin{enumlist}
\item Assume that $C > 0$ and $m \in \N$ such that $\mu_n([-C,C]) = 1$ for all $n \geq m$. If $\sum_{n=m}^\infty \Var \mu_n = +\infty$, the tail boundary flow is periodic. If $\sum_{n=m}^\infty \Var \mu_n < +\infty$, the tail boundary flow is given by the translation action $\R \actson \R$.

\item Assume that there exists a $C > 0$ such that $\inf_n \mu_n([-C,C]) > 0$. Also assume that the tail boundary flow is not periodic. There then exists a sequence $t_n \in [-C,C]$ such that for all $D \geq C$ and all $\eps > 0$,
    \begin{equation}\label{eq.weak-limit}
    \lim_{n \recht +\infty} \mu_n([-D,D] \setminus [t_n - \eps,t_n + \eps]) = 0 \; .
    \end{equation}

\item If there exists a $C > 0$ and a subset $I \subset \N$ such that writing $\nu_n = \mu_n([-C,C])^{-1} \mu_n|_{[-C,C]}$, we have
$$\sum_{n \in I} \mu_n(\R \setminus [-C,C]) <+\infty \quad\text{and}\quad \sum_{n \in I} \Var \nu_n = +\infty \; ,$$
then the tail boundary flow is periodic.

\item If there exists a $C > 0$ such that writing $\nu_n = \mu_n([-C,C])^{-1} \mu_n|_{[-C,C]}$, we have
$$\mu_n(\R \setminus [-C,C]) = o(\Var \nu_n) \;\;\text{when $n \recht +\infty$, and}\quad \sum_{n \in \N} \Var \nu_n = +\infty \; ,$$
then the tail boundary flow is periodic.

\item Assume that $\sup_n \int_\R t^2 \, d\mu_n(t) <+\infty$ and assume that there exists a $C > 0$ such that
$$\int_{\R \setminus [-C,C]} t^2 \, d\mu_n(t) = o(\Var \mu_n) \;\;\text{when $n \recht +\infty$, and}\quad \sum_{n=1}^\infty \Var \mu_n = +\infty \; .$$
Then the tail boundary flow is periodic.
\end{enumlist}
\end{theorem}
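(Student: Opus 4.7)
The plan is to translate each item into a statement about the tail $\sigma$-algebra of the sum $X_n = \sum_{k \leq n} \om_k$ of independent real-valued random variables with laws $\mu_k$, and to combine the T-set criterion of Proposition \ref{prop.T-set-tail} with the structural results of \cite{Ore66}. Recall from that proposition that the tail boundary flow is conjugate to $\R \actson \R$ precisely when \eqref{eq.tail-semifinite} holds for some sequence $(t_n)$, and that $2\pi/p$ lies in its eigenvalue group precisely when \eqref{eq.T-set-criterion-tail} holds. Thus proving ``periodic'' amounts to ruling out the properly ergodic alternative and, when \eqref{eq.tail-semifinite} fails, exhibiting a discrete eigenvalue group $(2\pi/p)\Z$.

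For part 1, if $\sum_{n \geq m} \Var \mu_n < +\infty$, take $t_n$ equal to the mean of $\mu_n$; then $T_\kappa(t - t_n)^2 \leq (t - t_n)^2$, so \eqref{eq.tail-semifinite} holds and Proposition \ref{prop.T-set-tail}(1) gives the flow $\R \actson \R$. If instead $\sum_{n \geq m} \Var \mu_n = +\infty$ while $\mu_n$ is supported in $[-C,C]$, I would invoke the classical theorem of Orey: every bounded harmonic sequence $F_n$ with $F_n = \mu_{n+1} * F_{n+1}$ and $\sup_n \|F_n\|_\infty < +\infty$ must be $p$-periodic with a common period $p > 0$. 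Translating back to $A$ via the representation discussed around \eqref{eq.harmonic}, this says $\al_p = \id$ on the tail boundary, so the flow is periodic.

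For part 2, assume tightness $\inf_n \mu_n([-C,C]) > 0$ and non-periodicity. The plan is to argue by contradiction using compactness of the set of probability measures on $[-C,C]$: if no sequence $t_n \in [-C,C]$ satisfied \eqref{eq.weak-limit}, then along some subsequence one could find $D \geq C$ and $\eps > 0$ such that the restrictions of $\mu_n$ to $[-D,D]$ admit a weak cluster point whose support meets at least two points at distance $\geq 2 \eps$. Such a cluster measure allows one to construct a non-trivial eigenfunction of the flow for some $p \neq 0$ via Proposition \ref{prop.T-set-tail}(2), contradicting non-periodicity. This concentration statement is the structural core of \cite{Ore66}.

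Parts 3--5 reduce to parts 1--2 via truncation. Set $\nu_n = \mu_n|_{[-C,C]}/\mu_n([-C,C])$. In part 3, the summability $\sum_{n \in I} \mu_n(\R \setminus [-C,C]) < +\infty$ combined with Borel--Cantelli yields $\om_n \in [-C,C]$ for all but finitely many $n \in I$ almost surely, so the tail $\sigma$-algebras of $(\mu_n)_{n \in I}$ and $(\nu_n)_{n \in I}$ coincide, and part 1 applied to $(\nu_n)$ gives periodicity. Parts 4 and 5 weaken the hypothesis to $\mu_n(\R \setminus [-C,C]) = o(\Var \nu_n)$ (respectively $\int_{|t| > C} t^2 \, d\mu_n = o(\Var \mu_n)$), which via a coupling argument and the T-set criterion of Proposition \ref{prop.T-set-tail}(2) transfers periodicity from the truncated system $(\nu_n)$ back to $(\mu_n)$, the residual error being negligible compared to the diverging variance. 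The main obstacle throughout is part 2: the existence of the concentration sequence $(t_n)$ is not a direct consequence of the T-set criterion but requires the full structural result of \cite{Ore66}; everything else then follows by routine combinations of that criterion with Borel--Cantelli and truncation arguments.
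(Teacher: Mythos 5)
Your proposal correctly handles the easy half of part~1 and has the right overall architecture, but there is a genuine gap at the heart of part~2, which is exactly where you acknowledged the argument rests.

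You write: ``Such a cluster measure allows one to construct a non-trivial eigenfunction of the flow for some $p \neq 0$ via Proposition~\ref{prop.T-set-tail}(2), contradicting non-periodicity.'' This step fails on two counts. First, the input to Proposition~\ref{prop.T-set-tail}(2) is the \emph{summability} $\sum_n \int_\R d(t - t_n, p\Z)^2 \, d\mu_n(t) < +\infty$, i.e.\ that the $\mu_n$ concentrate near $t_n + p\Z$. A non-Dirac weak cluster point of $\mu_n|_{[-D,D]}$ points in the opposite direction: mass stays spread out along a subsequence, making the relevant sums diverge rather than converge, so no eigenfunction is produced by that route. Second, and more fundamentally, even if you did produce an eigenvalue $2\pi/p$ this would \emph{not} contradict non-periodicity: a properly ergodic flow (type III$_0$) can perfectly well have a non-trivial eigenvalue group. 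Periodicity is the stronger statement $\alpha_p = \mathrm{id}$ on the whole tail algebra $A$. What the paper actually uses is \cite[Theorem~4.2]{Ore66}, which states that if $a \neq 0$ and $\sum_n \mu_n(\{t : |t - t_n - a| \leq \delta\}) = +\infty$ for all $\delta > 0$, then $\alpha_a(F) = F$ for every $F \in A$. That is the structural core: it delivers periodicity directly, bypassing the eigenvalue group entirely. Proposition~\ref{prop.T-set-tail} computes the $T$-invariant but cannot detect periodicity by itself, which is precisely why the paper needs a separate input from Orey here.

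Two smaller points. In part~3 you assert that ``the tail $\sigma$-algebras of $(\mu_n)_{n \in I}$ and $(\nu_n)_{n \in I}$ coincide,'' which glosses over two things the paper handles explicitly: (i) the two product measures are mutually singular on the full product space, so one must work on the nested conull sets $\cW_k$ and then use Proposition~\ref{prop.T-set-tail}(2) to see that all the restricted tail flows have the same eigenvalue group, hence the same period; and (ii) when $I \subsetneq \N$ the coordinates outside $I$ have to be factored out (the paper splits the product space into a $Y$-part over $I$ and a $Z$-part over $\N \setminus I$ and works fiberwise). For parts~4 and~5, your ``coupling argument'' is vague where the paper is sharp: it reduces part~4 to part~3 by selecting a subset $I$ via the elementary Lemma~\ref{lem.select-subset} (picking indices where $\mu_n(\R \setminus [-C,C])$ is summable while $\Var \nu_n$ still diverges), and part~5 to part~4 by observing $|\Var \nu_n - \Var \mu_n| = o(\Var \mu_n)$ under the stated hypotheses. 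That reduction chain is considerably cleaner than an explicit coupling and avoids any further appeal to the $T$-set criterion.
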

\begin{proof}
1.\ This is \cite[Theorem 3.1]{Ore66}.

2.\ Assume that $C > 0$ is such that $\inf_n \mu_n([-C,C]) > 0$. By the argument of \cite[Lemma 2.1]{Ore66}, we can choose a sequence $t_n \in [-C,C]$ such that for every $\eps > 0$, we have $\inf_n \mu_n([t_n-\eps,t_n+\eps]) > 0$. Assume now that we are given $D \geq C$ and $\eps > 0$ such that \eqref{eq.weak-limit} does not hold. We prove that the tail boundary flow is periodic.

Define the probability measures $\zeta_n$ on $\R$ by
$$\zeta_n(\cU) = \frac{\mu_n\bigl([-D,D] \cap (\cU - t_n)\bigr)}{\mu_n([-D,D])} \; .$$
Note that the measures $\zeta_n$ are all supported on the interval $I = [-C-D,C+D]$. Since \eqref{eq.weak-limit} does not hold, the sequence $\zeta_n(I \setminus [-\eps,\eps])$ is not converging to zero. Therefore, $\zeta_n$ is not weakly converging to the Dirac measure in $0$. Let $\zeta$ be any weak limit point of the sequence $(\zeta_n)_{n \in \N}$ and take $a \neq 0$ in the support of $\zeta$. Choose an arbitrary $\delta > 0$. We find that $\limsup_n \zeta_n([a-\delta,a+\delta]) > 0$. Then also
$$\limsup_n \mu_n([t_n+a-\delta,t_n+a+\delta]) > 0 \; ,$$
so that
$$\sum_{n=1}^\infty \mu_n\bigl(\bigl\{ t \in \R \bigm| |t-t_n-a| \leq \delta \}\bigr) = +\infty \; .$$
By \cite[Theorem 4.2]{Ore66}, every $F \in A$ satisfies $\al_a(F) = F$.

3.\ We first prove 3 in the case where $I = \N$. We use the notation \eqref{eq.not-1} and define the subsets $\cW_k \subset \Om$ by
$$\cW_k = \{\om \in \Om \mid \om_n \in [-C,C] \;\;\text{for all $n \geq k$}\;\} \; .$$
By our assumptions, $\eta(\Om \setminus \cW_k) \recht 0$ when $k \recht +\infty$. Whenever $F \in A \subset L^\infty(\Om,\eta)$ is a bounded measurable function on the tail boundary of $(\mu_n)_{n \geq 1}$ and $k \in \N$, the restriction of $F$ to $\cW_k$ can be viewed as a function on the tail boundary associated with the probability measures $\mu_n$, $1 \leq n \leq k-1$, and $\nu_n$, $n \geq k$. By point~1, this tail boundary flow is periodic. When $k$ varies, it follows from the second point of Proposition \ref{prop.T-set-tail} that all these restricted tail boundary flows have the same eigenvalue group. So either they are all trivial, in which case it follows that $F$ is constant a.e., or they are all conjugate to the translation action $\R \actson \R / p \Z$ for the same $p \neq 0$. In that case, $\al_p(F)|_{\cW_k} = F|_{\cW_k}$ for all $k$ and all $F \in A$, so that $\al_p(F) = F$ and $\R \actson^\al A$ is periodic.

We then consider the general case. Write
$$(\Om,\eta) = (\R,\mu_0) \times (Y,\eta_Y) \times (Z,\eta_Z) \quad\text{with}\quad Y = \prod_{n \in I} (\R,\mu_n) \quad\text{and}\quad Z = \prod_{n \in \N \setminus I} (\R,\mu_n) \; .$$
We know from the previous paragraph that the tail boundary flow defined by $(\mu_n)_{n \in I}$ is periodic. Take $p \neq 0$ such that $\al_p(F) = F$ for all bounded measurable functions on the tail boundary of $(\mu_n)_{n \in I}$. If now $F$ is a bounded measurable function on the tail boundary of $(\mu_n)_{n \in \N}$, then for a.e.\ $z \in Z$, the function $(t,y) \mapsto F(t,y,z)$ can be viewed as a function on the tail boundary of $(\mu_n)_{n \in I}$. We conclude that $\al_p(F) = F$.

4.\ By Lemma \ref{lem.select-subset} below, there exists a subset $I \subset \N$ such that the assumptions of point~3 hold.

5.\ Define $\nu_n = \mu_n([-C,C])^{-1} \mu_n|_{[-C,C]}$. Our assumptions imply that
$$|\Var \nu_n - \Var \mu_n| = o(\Var \mu_n) \quad\text{when $n \recht +\infty$.}$$
Our assumptions also imply that $\mu_n(\R \setminus [-C,C]) = o(\Var \mu_n)$. So, the conditions of point~4 are satisfied.
\end{proof}

In the proof of Theorem \ref{thm.tail-periodic-criterion}, we needed the following elementary lemma.

\begin{lemma}\label{lem.select-subset}
Let $a_n,b_n \in [0,+\infty)$ be bounded sequences and assume that $b_n = o(a_n)$ as $n \recht +\infty$. Assume that $\sum_{n=1}^\infty a_n = +\infty$. There then exists a subset $I \subset \N$ such that
$$\sum_{i \in I} b_i < +\infty \quad\text{and}\quad \sum_{i \in I} a_i = +\infty \; .$$
\end{lemma}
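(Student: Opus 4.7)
The plan is the following standard block construction. Let $C = \sup_n a_n$, which is finite by hypothesis. The assumption $b_n = o(a_n)$ gives, for each $k \geq 1$, an integer $N_k$ such that $b_n \leq 2^{-k} a_n$ for all $n \geq N_k$; we may assume the sequence $(N_k)$ is strictly increasing.

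Next I would inductively choose finite sets of consecutive integers $J_1 < J_2 < \cdots$ (meaning $\max J_k < \min J_{k+1}$) with $\min J_k \geq N_k$ and $\sum_{i \in J_k} a_i \in [1, 1+C]$. The construction is: set $L_1 = N_1$, and given $L_k$, use $\sum_{n \geq L_k} a_n = +\infty$ (which follows from $\sum_n a_n = +\infty$ since we only remove finitely many terms) to pick $J_k = \{L_k, L_k+1, \ldots, M_k\}$ where $M_k$ is the smallest index with $\sum_{n=L_k}^{M_k} a_n \geq 1$. The minimality of $M_k$ together with the bound $a_{M_k} \leq C$ gives $\sum_{i \in J_k} a_i \leq 1 + C$. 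Then set $L_{k+1} = \max(M_k + 1, N_{k+1})$ and repeat.

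Finally I would take $I = \bigcup_{k \geq 1} J_k$. Since $\min J_k \geq N_k$, every $i \in J_k$ satisfies $b_i \leq 2^{-k} a_i$, so
\[
\sum_{i \in I} b_i = \sum_{k=1}^\infty \sum_{i \in J_k} b_i \leq \sum_{k=1}^\infty 2^{-k} \sum_{i \in J_k} a_i \leq (1+C) \sum_{k=1}^\infty 2^{-k} < +\infty,
\]
while
\[
\sum_{i \in I} a_i = \sum_{k=1}^\infty \sum_{i \in J_k} a_i \geq \sum_{k=1}^\infty 1 = +\infty,
\]
as required. There is no real obstacle here: the only point requiring any care is ensuring that the blocks $J_k$ are disjoint and far enough out that the pointwise bound $b_i \leq 2^{-k} a_i$ applies on $J_k$, which is handled by resetting $L_{k+1} \geq N_{k+1}$ at each step.
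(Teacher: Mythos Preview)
Your proof is correct and takes essentially the same approach as the paper's: both construct $I$ as a disjoint union of finite blocks of consecutive integers, the $k$-th block chosen far enough out that $b_n \leq 2^{-k} a_n$ on it and long enough that $\sum a_i$ over the block is bounded below (and the boundedness of $a_n$ keeps the block sum bounded above). Your version is simply more detailed---the paper compresses the argument to two sentences and uses block sums in $[M,2M]$ where you use $[1,1+C]$, but the idea is identical.
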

\begin{proof}
Assume that $a_n \leq M$ for all $n \in \N$. We can then inductively choose $n_1 < m_1 < n_2 < m_2 < \cdots$ such that $b_n/a_n \leq 2^{-k}$ for all $n \geq n_k$ and such that $\sum_{i=n_k}^{m_k} a_i$ lies between $M$ and $2M$. Defining $I = \bigcup_{k=1}^\infty ([n_i,m_i] \cap \N)$, the lemma is proven.
\end{proof}

\subsection{Strong conservativeness}\label{sec.conservative}

Recall that an essentially free, nonsingular group action $G \actson (X,\mu)$ is called \emph{conservative} (or recurrent) if for every nonnegligible subset $\cU \subset X$, there are infinitely many $g \in G$ such that $\mu(g \cdot \cU \cap \cU) > 0$. At the opposite end, the action is called \emph{dissipative} if it admits a fundamental domain: a measurable subset $\cU \subset X$ such that $(g \cdot \cU)_{g \in G}$ is a partition of $X$, up to measure zero.

Let $G \actson (X,\mu)$ be a nonsingular action. As in \cite{BKV19}, when $G$ is \emph{nonamenable}, we need a strengthening of conservativeness. The main reason for this is the lack of an ergodic theorem for nonsingular actions of nonamenable groups. This forces us to consider more general ergodic averages
\begin{equation}\label{eq.good-average}
\Phi_n(F) = \sum_{g \in G} p_n(g,x) \, F(g \cdot x) \quad\text{where}\quad \sum_{g \in G} p_n(g,x) = 1 \quad\text{for all $n \in \N$, $x \in X$.}
\end{equation}
By \cite[Lemma 4.1]{BKV19}, taking
\begin{equation}\label{eq.good-choice-pn}
p_n(g,x) = \sum_{h \in G} \eta_n(h) \, \frac{\eta_n(hg^{-1}) \frac{d(g^{-1}\mu)}{d\mu}(x)}{\sum_{k \in G} \eta_n(hk^{-1}) \frac{d(k^{-1}\mu)}{d\mu}(x)}
\end{equation}
where $\eta_n$ is an arbitrary sequence of finitely supported probability measures on $G$, we have that each $\Phi_n$ is a contraction on $L^1(X,\mu)$, i.e.\ $\|\Phi_n(F)\|_{1,\mu} \leq \|F\|_{1,\mu}$ for all $F \in L^1(X,\mu)$ and $n \in \N$.

We recall the following definition from \cite{BKV19}, saying that $\eta_n$ is strongly recurrent if in the definition of the ergodic averages in \eqref{eq.good-average}, we may ignore each individual term asymptotically.

\begin{definition}[{\cite[Definition 4.2]{BKV19}}]\label{def.strongly-conservative}
Let $G \actson (X,\mu)$ be a nonsingular action of a countable group $G$ on a standard probability space $(X,\mu)$. A sequence of finitely supported probability measures $\eta_n$ on $G$ is called \emph{strongly recurrent} for $G \actson (X,\mu)$ if $\lim_{n \recht +\infty} \|p_n(e,\cdot)\|_{1,\mu} = 0$, where $p_n$ is defined by \eqref{eq.good-choice-pn}.

We say that $G \actson (X,\mu)$ is \emph{strongly conservative} if such a strongly recurrent sequence of probability measures on $G$ exists.
\end{definition}

Note that by \cite[Formula (4.5)]{BKV19}, if $\eta_n$ is strongly recurrent, then for every $g \in G$, we have $\lim_{n \recht +\infty} \|p_n(g,\cdot)\|_{1,\mu} = 0$.

Also recall from \cite[Proposition 4.3]{BKV19} that every strongly conservative action is conservative; and that if $G$ is an infinite amenable group and $G \actson (X,\mu)$ is conservative, then the uniform probability measures $\eta_n$ on a F{\o}lner sequence $\cF_n \subset G$ are strongly recurrent.

As could already be seen from \eqref{eq.notation-C}, the following distance like function between equivalent probability measures $\mu \sim \nu$ on a standard Borel space $X$ plays an important role in this paper:
$$D(\mu,\nu) := \frac{1}{2} \log \int_X \frac{d\mu}{d\nu} \, d\mu \in [0,+\infty] \; .$$
This expression is not symmetric in $\mu$ and $\nu$, and hence does not define a metric. Nevertheless, it is a natural quantity to consider. Since $t \mapsto t^{-2}$ is a convex function, one has
$$1 \leq \Bigl(\int_X \sqrt{\frac{d\nu}{d\mu}} \, d\mu\Bigr)^{-2} \leq \int_X \frac{d\mu}{d\nu} \, d\mu \; ,$$
so that
$$H^2(\mu,\nu) \leq - \log (1 - H^2(\mu,\nu)) \leq D(\mu,\nu) \; .$$
In Lemma \ref{lem.estimate-projections}, we will see that $D(\mu,\nu)$ decreases if the measures $\mu$ and $\nu$ are pushed forward via $\pi : X \recht Y$, i.e.\ $D(\pi_*(\mu),\pi_*(\nu)) \leq D(\mu,\nu)$.

The following result, providing a checkable sufficient condition for conservativeness in terms of the ``distances'' $D(\mu, g \cdot \mu)$, is a refinement of the method of \cite[Proposition 4.1]{VW17}.

\begin{lemma}\label{lem.strongly-conservative}
Let $G$ be a countable group and $G \actson (X,\mu)$ a nonsingular action on the standard probability space $(X,\mu)$. Assume that
\begin{equation}\label{eq.enough-small}
\limsup_{s \recht +\infty} \frac{\log |\{g \in G \mid D(\mu,g^{\pm 1} \cdot \mu) \leq s\}|}{s} > 6 \; .
\end{equation}
Then, $G \actson (X,\mu)$ is strongly conservative. Moreover, there exists $\al_0 > 0$ and a sequence of probability measures $\eta_n$ on $G$ such that $\eta_n$ is strongly recurrent for the Maharam extension $G \actson (X \times \R , \mu \times \nu_\al)$ for all $\al \in (0,\al_0)$ and $d\nu_\al(t) = (\al/2) \exp(-\al |t|) \, dt$.
\end{lemma}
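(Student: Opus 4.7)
The plan is to take $\eta_n$ to be the uniform probability measure on the symmetric set
\[
B_{s_n} := \{g \in G : D(\mu, g\mu) \leq s_n \text{ and } D(\mu, g^{-1}\mu) \leq s_n\},
\]
for a sequence $s_n \to +\infty$ chosen so that $|B_{s_n}| \geq e^{\beta s_n}$ with $\beta > 6$ (which exists by \eqref{eq.enough-small}). Then $\eta_n$ is symmetric, and the goal is to show that this sequence is strongly recurrent for the Maharam extension $G \actson (X \times \R, \mu \times \nu_\al)$ for all $\al$ in some interval $(0, \al_0)$. Strong conservativeness of the base action $G \actson (X, \mu)$ itself will follow from the same computation with $\al = 0$, omitting the $s$-integration entirely.

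Denote by $\tilde\rho_g(y, s) = \rho_g(y) \, e^{-\al(|s + \log\rho_g(y)| - |s|)}$ the Radon--Nikodym derivative of the Maharam action on $\tilde\mu = \mu \times \nu_\al$, with $\rho_g = d(g^{-1}\mu)/d\mu$. Applying Jensen's inequality for the convex function $t \mapsto 1/t$ to the denominator of $p_n^{\tilde\mu}(e, \cdot)$ as defined in \eqref{eq.good-choice-pn}, one gets
\[
\|p_n^{\tilde\mu}(e, \cdot)\|_{1, \tilde\mu} \leq \sum_{h, k \in G} \eta_n(h)^2 \, \eta_n(hk^{-1}) \int \tilde\rho_k^{-1} \, d\tilde\mu.
\]
By Fubini and the elementary identity $\int_\R e^{\al(|s + a| - |s|)} \, d\nu_\al(s) = \tfrac{2}{3} e^{\al|a|} + \tfrac{1}{3} e^{-2\al|a|}$, the $s$-integral reduces matters to the moments $\int \rho_k^{-(1 \pm \al)} \, d\mu$. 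For $p = 1 - \al \in (0, 1)$, Jensen (concavity of $t \mapsto t^p$) gives $\int \rho_k^{-p} \, d\mu \leq e^{2p D(\mu, k^{-1}\mu)}$. For $p = 1 + \al$, the change-of-variable identity $\int \rho_k^{-(1+\al)} \, d\mu = \int \rho_{k^{-1}}^{2+\al} \, d\mu$ together with Hölder interpolation between $\int \rho_{k^{-1}}^2 \, d\mu = e^{2 D(\mu, k\mu)}$ and $\int \rho_{k^{-1}} \, d\mu = 1$ yields a bound of the form $e^{(2 + O(\al)) D(\mu, k\mu)}$, valid for $\al$ small.

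The weights $\eta_n(h)^2 \eta_n(hk^{-1})$ equal $|B_{s_n}|^{-3}$ exactly when $h, hk^{-1} \in B_{s_n}$, so reindexing $g = hk^{-1}$, $k = g^{-1}h$ produces a double sum over $(g, h) \in B_{s_n} \times B_{s_n}$. The \emph{main obstacle} is to bound $\int \rho_{g^{-1}h}^{-1} \, d\mu = e^{2 D(\mu, h^{-1}g\mu)}$ for such $g, h$: since $D$ does not satisfy a triangle inequality, the Hellinger metric triangle (which only controls $H$, not $D$) is insufficient. The strategy is to apply Cauchy--Schwarz to the cocycle decomposition $\rho_{g^{-1}h}^{-1}(x) = \rho_{g^{-1}}(h^{-1}x)^{-1} \rho_h(x)^{-1}$ and control the resulting second moments via the dual identity $\int \rho_h^{-2} \, d\mu = \int \rho_{h^{-1}}^3 \, d\mu$ combined with a further Hölder step, crucially using the two-sided bounds $D(\mu, g^{\pm 1}\mu), D(\mu, h^{\pm 1}\mu) \leq s_n$ that come from the symmetric definition of $B_s$. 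The target estimate is $\int \rho_{g^{-1}h}^{-1} \, d\mu \leq e^{6 s_n}$ for all $g, h \in B_{s_n}$. Assuming this,
\[
\|p_n^{\tilde\mu}(e, \cdot)\|_{1, \tilde\mu} \leq C_\al \cdot |B_{s_n}|^{-3} \cdot |B_{s_n}|^2 \cdot e^{(6 + \epsilon(\al)) s_n} = C_\al \cdot e^{-(\beta - 6 - \epsilon(\al)) s_n},
\]
with $\epsilon(\al) \to 0$ as $\al \to 0$. For $\al \in (0, \al_0)$ small enough that $\beta > 6 + \epsilon(\al)$, this tends to $0$ as $n \to \infty$, yielding the desired strong recurrence.
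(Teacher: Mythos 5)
Your high-level plan (uniform measures on symmetric $D$-balls, Jensen in the definition of $p_n$, reduction to moments of $\rho_k^{-1}$ via Fubini on the $\R$-variable) starts in the right direction, but there are two estimates that do not go through, and they are not fixable in the stated form.

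\textbf{The Hölder interpolation for $\int\rho_k^{-(1+\al)}\,d\mu$ is wrong.} You correctly transform this into $\int\rho_{k^{-1}}^{2+\al}\,d\mu$ and then claim to bound it by interpolating between $\int\rho_{k^{-1}}^2\,d\mu$ and $\int\rho_{k^{-1}}\,d\mu=1$. But $2+\al>2$: log-convexity of $L^p$-norms only lets you interpolate \emph{inside} an interval $[p_0,p_1]$, not extrapolate above $p_1$. In general $\int\rho_{k^{-1}}^{2+\al}\,d\mu$ can be infinite even when $\int\rho_{k^{-1}}^2\,d\mu$ is finite, and the hypothesis \eqref{eq.enough-small} gives you no control over any moment of $\rho$ higher than the second (equivalently, over any moment of $\rho^{-1}$ higher than the first). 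The same structural problem reappears in your proposed resolution of the ``main obstacle'': bounding $\int\rho_{g^{-1}h}^{-1}\,d\mu$ by Cauchy--Schwarz on the cocycle factorization brings in $\int\rho_h^{-2}\,d\mu=\int\rho_{h^{-1}}^3\,d\mu$, a third moment that is again not controlled by $D$. This is not an incidental technical hurdle: $D$ is essentially a $\chi^2$-type divergence, which has no useful triangle inequality, and any bound on $\int\rho_{g^{-1}h}^{-1}\,d\mu$ in terms of the $D$-balls at $g$ and $h$ alone really does fail.

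\textbf{Where the paper deviates and why it works.} The paper's argument is structured so that every integral that appears involves $\rho^{-\gamma}$ with $\gamma\le 1/3$, not $\gamma=1$. Concretely: since $\omega(e,x,t)=1$ and $e\in\cF_n^{-1}g$, the denominator $\sum_{k\in\cF_n^{-1}h}\omega(k,x,t)\ge 1$; one may therefore raise it to the power $\beta\in(0,1/3)$ (which only decreases it) \emph{before} applying convexity of $t\mapsto t^{-\beta}$, yielding $p_n\le |\cF_n|^{-(2+\beta)}\sum_{g,h}\omega(g^{-1}h,\cdot)^{-\beta}$. At this point one only needs $\int\rho_{g^{-1}h}^{-\gamma}\,d\mu$ for $\gamma=\beta(1\pm\al)\le1/3$, and the single H\"older step
$$\int\rho_{g^{-1}h}^{-1/3}\,d\mu=\int\rho_{g^{-1}}^{-1/3}\rho_{h^{-1}}^{4/3}\,d\mu\le\Bigl(\int\rho_{g^{-1}}^{-1}\,d\mu\Bigr)^{1/3}\Bigl(\int\rho_{h^{-1}}^{2}\,d\mu\Bigr)^{2/3}$$
keeps all exponents at $1$ on $\rho^{-1}$ and $2$ on $\rho$, exactly the quantities controlled by $D(\mu,g^{\pm1}\mu)$ and $D(\mu,h^{\pm1}\mu)$. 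The exponent $1/3$ is the threshold: trying the same with any $\gamma>1/3$ already forces a moment of $\rho$ above $2$. The extra factor $|\cF_n|^{-\beta}$ gained by raising the denominator to the $\beta$-th power is precisely what compensates for the $\exp(s_n)$ factor from the moment estimate, and the constant $6$ (equivalently $\kappa_0>3$ after the normalization $2D\le s_n$) comes from this accounting. To repair your argument you would have to insert this $\beta$-power step; with your full-power Jensen the exponents are irrecoverably too large.

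As a minor remark, the identities you use along the way ($\int\rho_k^{-(1+\al)}\,d\mu=\int\rho_{k^{-1}}^{2+\al}\,d\mu$, the explicit formula for $\int_\R e^{\al(|s+a|-|s|)}\,d\nu_\al(s)$) are correct, and the bookkeeping of $|B_{s_n}|^{-3}$ versus $|B_{s_n}|^2$ pairs is also fine; the gap is confined to the two moment bounds above.
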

\begin{proof}
By \eqref{eq.enough-small}, we can fix $s_n \recht +\infty$, finite subsets $\cF_n \subset G$ and $\kappa_0 > 3$ such that
$$\int_X \frac{d\mu}{d(g^{\pm 1}\mu)}\, d\mu \leq \exp(s_n) \quad\text{for all $g \in \cF_n$, and}\quad |\cF_n| \geq \exp(\kappa_0 s_n) \quad\text{for all $n \in \N$.}$$
Choose any $3 < \kappa_1 < \kappa_0$. Put $\al_0 = (\kappa_1/3) - 1 > 0$. We prove that the conclusions of the lemma hold for $\eta_n$ the uniform probability measure on $\cF_n$.

Fix $\al \in (0,\al_0)$ and denote by $\mu_\al$ the probability measure $\mu \times \nu_\al$ on $X \times \R$. We denote by
$$\om(g,x,t) = \frac{d(g^{-1} \cdot \mu_\al)}{d\mu_\al}(x,t)$$
the Radon-Nikodym cocycle for the Maharam extension. Writing,
$$p_n(x,t) = \frac{1}{|\cF_n|} \sum_{h \in \cF_n} \Bigl(\sum_{k \in \cF_n^{-1} h} \om(k,x,t)\Bigr)^{-1} \; ,$$
we thus have to prove that $\lim_{n \recht +\infty} \|p_n\|_{1,\mu_\al} = 0$.

First note that by the H\"{o}lder inequality, we have for all $n \in \N$ and $g,h \in \cF_n$,
\begin{align*}
\int_X \Bigl(\frac{d\mu}{d(h^{-1}g \cdot \mu)}\Bigr)^{1/3} \, d\mu &= \int_X \Bigl(\frac{d\mu}{d(g \cdot \mu)}\Bigr)^{1/3} \, \Bigl(\frac{d(h\cdot \mu)}{d\mu}\Bigr)^{4/3} \, d\mu \\
&\leq \Bigl( \int_X \frac{d\mu}{d(g\cdot \mu)} \, d\mu \Bigr)^{1/3} \, \Bigl(\int_X \Bigl(\frac{d(h\cdot \mu)}{d\mu}\Bigr)^2 \, d\mu\Bigr)^{2/3} \\
&\leq \exp(s_n/3) \, \Bigl(\int_X \frac{d\mu}{d(h^{-1}\cdot \mu)} \, d\mu\Bigr)^{2/3} \leq \exp(s_n) \; .
\end{align*}
For all $\gamma \leq 1/3$, $n \in \N$ and $g,h \in \cF_n$, we have
\begin{equation}\label{eq.my-estimate-3}
\int_X \Bigl(\frac{d\mu}{d(h^{-1}g \cdot \mu)}\Bigr)^\gamma \, d\mu \leq \Bigl(\int_X \Bigl(\frac{d\mu}{d(h^{-1}g \cdot \mu)}\Bigr)^{1/3} \, d\mu\Bigr)^{3\gamma} \leq \exp(3 \gamma s_n) \leq \exp(s_n) \; .
\end{equation}

Write $\beta = 1/\kappa_1$. For all $s,t \in \R$,
$$\Bigl(\frac{d\nu_\al}{d(-s + \nu_\al)}(t)\Bigr)^\be = \exp(-\al\be|t| + \al\be|t+s|) \leq \exp(\al\be |s|) \leq \exp (\al \be s) + \exp(-\al \be s) \; .$$
We then find that
\begin{equation}\label{eq.est-rn-mah}
\om(g,x,t)^{-\be} \leq \Bigl(\frac{d\mu}{d(g^{-1} \cdot \mu)}(x)\Bigr)^{\be(1+\al)} + \Bigl(\frac{d\mu}{d(g^{-1} \cdot \mu)}(x)\Bigr)^{\be(1-\al)} \; .
\end{equation}
Since $\om(e,x,t) = 1$ and since $e \in \cF_n^{-1}g$ for all $g \in \cF_n$, we have that
$$\Bigl(\sum_{k \in \cF_n^{-1} h} \om(k,x,t)\Bigr)^{-1} \leq 1 \; .$$
Since $t \leq t^\be$ for all $t \in (0,1]$ and since $t \mapsto t^{-\be}$ is convex, we find that
\begin{align*}
p_n(x,t) &\leq \frac{1}{|\cF_n|} \sum_{h \in \cF_n} \Bigl(\sum_{k \in \cF_n^{-1} h} \om(k,x,t)\Bigr)^{-\be} =  \frac{1}{|\cF_n|^{1+\be}} \sum_{h \in \cF_n} \Bigl(\frac{1}{|\cF_n|}\sum_{k \in \cF_n^{-1} h} \om(k,x,t)\Bigr)^{-\be}\; , \\
&\leq \frac{1}{|\cF_n|^{2+\be}} \sum_{h \in \cF_n} \sum_{k \in \cF_n^{-1} h} \om(k,x,t)^{-\be} = \frac{1}{|\cF_n|^{2+\be}} \sum_{g,h \in \cF_n}  \om(g^{-1}h,x,t)^{-\be} \; .
\end{align*}
By construction, $\be(1+\al) \leq 1/3$ and $\be(1-\al) \leq 1/3$. Using \eqref{eq.est-rn-mah} and \eqref{eq.my-estimate-3}, we conclude that
$$\|p_n\|_{1,\mu_\al} \leq 2 \frac{\exp(s_n)}{|\cF_n|^{\be}} \leq 2 \exp((1-\be \kappa_0)s_n) \recht 0 \; ,$$
because $1- \be \kappa_0 < 0$.
\end{proof}

When $G \actson (X,\mu) = \prod_{g \in G} (X_0,\mu_g)$ is a nonsingular Bernoulli action, we defined in \eqref{eq.notation-C} the quantity $C(g)$. Considering the associated $1$-cocycle $c_g \in \ell^2(G) \ot L^2(X_0,\mu_e)$ and writing $G$ as the union of an increasing sequence of finite subsets $\cF_n \subset G$, we conclude using Fatou's lemma that
\begin{equation}\label{eq.estimate-C}
\begin{split}
\|c_g\|^2 & = H^2(\mu,g^{-1}\mu) \leq D(\mu,g^{-1}\mu) = \frac{1}{2} \log \int_X \Bigl(\prod_{h \in G} \frac{d\mu_h}{d\mu_{gh}}(x_h)\Bigr) \, d\mu(x)
\\ & \leq \liminf_{n \recht +\infty} \sum_{h \in \cF_n} \frac{1}{2} \log \int_{X_0} \frac{d\mu_h}{d\mu_{gh}}(x) \, d\mu_h(x) = \sum_{h \in G} D(\mu_h,\mu_{gh}) = C(g) \; .
\end{split}
\end{equation}

\section{Ergodicity and type for permutation actions}

Let $X_0$ be a standard Borel space and $(\mu_n)_{n \geq 1}$ a sequence of probability measures on $X_0$ that are all equivalent: $\mu_n \sim \mu_1$. We then consider the permutation action
$$\cS_\infty \actson (X,\mu) = \prod_{n=1}^\infty (X_0,\mu_n) : (\si^{-1} \cdot x)_n = x_{\si(n)} \; ,$$
where $\cS_\infty$ is the (countable) group of finite permutations of $\N = \{1,2,\ldots\}$. Since all $\mu_n$ are equivalent, the permutation action $\cS_\infty \actson (X,\mu)$ is nonsingular. We always implicitly assume that there is no $x_0 \in X_0$ such that all $\mu_n$ are the Dirac measure at $x_0$, since then $(X,\mu)$ essentially consists of a single point.

In \cite{AP77}, the ergodicity of $\cS_\infty \actson (X,\mu)$ is studied. When $X_0$ is a finite set, \cite[Theorem 1.6]{AP77} provides a necessary and sufficient condition for the ergodicity of $\cS_\infty \actson (X,\mu)$. When $X_0$ is infinite, in particular when the measures $\mu_n$ have a nonatomic part, only sufficient conditions for the ergodicity of the permutation action are known. One such sufficient condition, which fits well with our study of nonsingular Bernoulli actions, is the following:
\begin{equation}\label{eq.local-bound}
\text{for a.e.\ $x \in X_0$, we have that}\;\; \sup_{n \geq 1} \Bigl|\log \frac{d\mu_n}{d\mu_1}(x)\Bigr| < +\infty \; .
\end{equation}
This can be deduced from the main results of \cite{AP77}. For completeness, we include an elementary proof, which moreover illustrates well some of the weak limit techniques that are used in this paper.

\begin{proposition}\label{prop.ergodic-permutation-action}
Let $X_0$ be a standard Borel space and $(\mu_n)_{n \geq 1}$ a sequence of probability measures on $X_0$ that are all equivalent. Assume that \eqref{eq.local-bound} holds. Then the permutation action $\cS_\infty \actson (X,\mu) = \prod_{n \geq 1} (X_0,\mu_n)$ is ergodic and the measure $\mu$ is nonatomic.
\end{proposition}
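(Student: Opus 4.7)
The plan is to prove both claims together: rule out atoms of $\mu$ using the local bound \eqref{eq.local-bound}, then show every $F \in L^\infty(X,\mu)^{\cS_\infty}$ is $\mu$-a.e.\ constant. For the latter, I reduce to showing that the conditional expectation $E_{[m]}(F)$ onto the $\sigma$-algebra of the first $m$ coordinates equals the constant $\int F\,d\mu$ almost everywhere, for every $m$; reverse-martingale convergence $E_{[m]}(F) \to F$ then yields ergodicity.

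Nonatomicity is short. If $\mu$ had an atom $(a_n)_n$ then $\mu_n(\{a_n\}) \recht 1$. The equivalent $\mu_n$ share a common at most countable atom set $A \subset X_0$, and \eqref{eq.local-bound} applied at each $b \in A$ yields $\mu_n(\{b\}) \geq \mu_1(\{b\})/M_b > 0$ uniformly in $n$; combined with $\sum_{b \in A}\mu_n(\{b\}) \leq 1$ this forces $a_n$ to eventually equal a single $a \in A$. If $\mu_1(\{a\}) = 1$ then $\mu_n = \delta_a$ for all $n$ by equivalence, contradicting the standing non-degeneracy assumption; otherwise the local bound on the non-atomic part gives $\mu_n(X_0 \setminus \{a\}) \geq \int_{X_0 \setminus \{a\}}(d\mu_1/d\mu_n)\,d\mu_1 \geq c > 0$ uniformly in $n$, again contradicting $\mu_n(\{a\}) \recht 1$.

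For ergodicity, fix $m \geq 1$ and, for each $n \geq m$, let $\sigma_n \in \cS_\infty$ be the involution swapping $i \leftrightarrow n+i$ for $1 \leq i \leq m$. Using $F \circ \sigma_n = F$ a.e.\ and the Radon-Nikodym formula for the pushforward $(\sigma_n)_*\mu$, a change of variables rewrites $\int F \cdot h(x_1,\ldots,x_m)\,d\mu$ (for arbitrary $h \in L^\infty$); comparing with the definition of $E_{[m]}(F)$ and letting $h$ vary yields the identity
\begin{equation*}
E_{[m]}(F)(z) \;=\; \int F(y_1,\ldots,y_n,z_1,\ldots,z_m,y_{n+m+1},\ldots)\,d\mu_{n+1}(y_1) \cdots d\mu_{n+m}(y_m)\,d\mu_{m+1}(y_{m+1})\cdots,
\end{equation*}
valid almost everywhere for every $n \geq m$. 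Equivalently, $E_{[m]}(F)$ is the conditional expectation of $F$ under $(\sigma_n)_*\mu$ given the coordinates at positions $\{n+1,\ldots,n+m\}$, and the right-hand side is independent of $n$ by $\cS_\infty$-invariance.

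The heart of the proof is letting $n \recht \infty$, where the conditioning coordinates escape to infinity. Approximate $F$ in $L^2(\mu)$ by $F_N := E_{[N]}(F)$; for $n > N$ the function $F_N$ does not depend on the conditioning coordinates, so the analogous integral is a \emph{constant} $c_n^{(N)}$ in $z$. Jensen's inequality and the tower property then give
\begin{equation*}
\bigl\|E_{[m]}(F) - c_n^{(N)}\bigr\|_{L^2(\mu_1 \otimes \cdots \otimes \mu_m)}^2 \;\leq\; \int |F - F_N|^2 \, d(\sigma_n)_*\mu.
\end{equation*}
The main obstacle is the lack of a uniform bound on the Radon-Nikodym density $d(\sigma_n)_*\mu/d\mu = \prod_{k=1}^m(d\mu_{n+k}/d\mu_k)(y_k)(d\mu_k/d\mu_{n+k})(y_{n+k})$, which is only controlled pointwise by \eqref{eq.local-bound}. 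I would resolve this by cutting off to the set $A_\kappa := \{x \in X_0 : \sup_n |\log(d\mu_n/d\mu_1)(x)| \leq \kappa\}$, on which the density is dominated by $e^{2m\kappa}$; since $\int (d\mu_k/d\mu_1)\,d\mu_1 = 1$, absolute continuity of the integral gives $\mu_k(A_\kappa) \recht 1$ as $\kappa \recht \infty$ for each $k$, so a diagonal choice $\kappa = \kappa(n)$, $N = N(n)$ forces the right-hand side above to $0$. It follows that $E_{[m]}(F)$ agrees in $L^2$ with a constant, necessarily $\int F\,d\mu$; the reverse-martingale convergence $E_{[m]}(F) \recht F$ then yields that $F$ is $\mu$-a.e.\ constant.
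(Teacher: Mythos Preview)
Your ergodicity argument has a genuine gap at the diagonal step. You need $\int |F-F_N|^2\,d(\sigma_n)_*\mu \to 0$ along some sequence with $n > N$; splitting over the set $B_{n,\kappa}$ where $y_k, y_{n+k} \in A_\kappa$ for $k \le m$, the complement contributes at most $4\|F\|_\infty^2 \sum_{k\le m}\bigl(\mu_k(A_\kappa^c) + \mu_{n+k}(A_\kappa^c)\bigr)$. The second sum is the problem: while $\mu_j(A_\kappa^c) \to 0$ as $\kappa \to \infty$ for each \emph{fixed} $j$, this is not uniform in $j$. Concretely, take $X_0=\N$, $\mu_1(k)=2^{-k}$, and for $n\ge 2$ set $\mu_n(k) = c_n 2^{-k}$ for $k\neq n$ with $\mu_n(n)=1/2$; this satisfies \eqref{eq.local-bound}, yet for every fixed $\kappa$ the set $A_\kappa$ is a finite initial segment of $\N$ and $\mu_n(A_\kappa^c) \ge \mu_n(\{n\}) = 1/2$ for all large $n$. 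Hence to make $\mu_{n+k}(A_\kappa^c)$ small you are forced to take $\kappa \gtrsim n$, whence $e^{4m\kappa}\|F-F_N\|_{L^2(\mu)}^2$ can only be small if $N \gg n$, contradicting the constraint $n > N$ needed so that $F_N$ is independent of coordinates $n+1,\ldots,n+m$. The dependencies are circular and cannot be resolved under \eqref{eq.local-bound} alone.

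The paper's proof avoids this with a weak-limit argument rather than an $L^2$ bound. It doubles the first coordinate, uses the single flip $\sigma_n$ of $1$ and $n$, and restricts to $\cV_n = \{x_0,x_1,x_n \in \cU\}$ (with $\cU = \cU_C$ fixed), where the Radon--Nikodym derivative of $\sigma_n$ is uniformly bounded by $C^2$. One gets $\|1_{\cV_n}(V_0-V_1)(\sigma_n H)\|_2 \to 0$ for every $H$, hence $\|1_{\cV_n}(V_0-V_1)F\|_2 \to 0$ for the invariant $F$. The key point is that one does \emph{not} need $\mu_n(\cU^c)$ to be small --- only the lower bound $\mu_n(\cU) \ge C^{-1}\mu_1(\cU) > 0$, which guarantees a subsequence $1_{\cV_{n_k}}$ with a nonzero \emph{weak} limit $a \cdot 1_{\{x_0,x_1 \in \cU\}}$. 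This yields $(V_0-V_1)F = 0$ on $\{x_0,x_1\in\cU\}$, and letting $C \to \infty$ concludes. A positive-mass weak accumulation point suffices where your strong estimate is unavailable.

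(Two minor remarks: in your nonatomicity paragraph the displayed inequality is not correct as written --- what you want is $\mu_n(X_0\setminus\{a\}) \ge \int_{X_0\setminus\{a\}} e^{-M(x)}\,d\mu_1(x)$ with $M(x)=\sup_n|\log(d\mu_n/d\mu_1)(x)|$; and the convergence $E_{[m]}(F) \to F$ is a forward martingale, not a reverse one.)
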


Note that Proposition \ref{prop.ergodic-permutation-action} says in particular that under condition \eqref{eq.local-bound}, the permutation action $\cS_\infty \actson (X,\mu)$ is not of type~I.

\begin{proof}
Let $F \in L^\infty(X)$ be a bounded $\cS_\infty$-invariant function. We prove that $F$ is constant a.e. Fix $C > 0$ such that
\begin{equation}\label{eq.set-U}
\cU = \Bigl\{x \in X_0 \Bigm| C^{-1} \leq \frac{d\mu_n}{d\mu_1}(x) \leq C \;\text{for all $n \geq 1$} \; \Bigr\}
\end{equation}
has positive measure. By our assumption, taking $C \recht +\infty$, the measure of $X_0 \setminus \cU$ tends to zero.

Write $(\Xtil,\mutil) = (X_1,\mu_1) \times (X_1,\mu_1) \times \prod_{n \geq 2} (X_n,\mu_n)$, where we ``double the first variable''. Consider the natural measure preserving factor maps
$$\pi_0 : \Xtil \recht X : \pi_0(x) = (x_0,x_2,x_3,\ldots) \quad\text{and}\quad \pi_1 : \Xtil \recht X : \pi_1(x) = (x_1,x_2,x_3,\ldots) \; .$$
Whenever $H \in L^\infty(X,\mu)$, we write $V_i(H) = H \circ \pi_i$. Define the subsets $\cV_n \subset \Xtil$ by
$$\cV_n = \{x \in \Xtil \mid x_0,x_1,x_n \in \cU\} \; .$$
Denote by $p_n = 1_{\cV_n}$ the function that is equal to $1$ on $\cV_n$ and equal to zero elsewhere. Denote by $\si_n \in \cS_\infty$ the flip of $1$ and $n$. We also write $\si_n(H) = H \circ \si_n$ when $H \in L^\infty(X)$.

We claim that for all $H \in L^\infty(X)$,
\begin{equation}\label{eq.myclaim}
\lim_{n \recht +\infty} \|p_n \, V_0(\si_n(H)) - p_n \, V_1(\si_n(H))\|_2 = 0 \; .
\end{equation}
To prove this claim, define the subsets $\cW_n \subset X$ by
$$\cW_n := \{x \in X \mid x_1,x_n \in \cU\} \; .$$
Write $q_n = 1_{\cW_n}$. Note that for all $x \in \cW_n$, we have
$$\frac{d(\si_n \mu)}{d\mu}(x) = \frac{d\mu_n}{d\mu_1}(x_1) \, \frac{d\mu_1}{d\mu_n}(x_n) \leq C^2 \; .$$
So, for every $H \in L^\infty(X)$, we have that
$$\|p_n \, V_0(\si_n(H)) \|_2 = \|p_n \, V_0(q_n \, \si_n(H))\|_2 \leq \|q_n \, \si_n(H)\|_2 \leq C \, \|H\|_2 \; .$$
Therefore, it suffices to check \eqref{eq.myclaim} when $H$ only depends on finitely many coordinates $x_n$. In that case, $p_n \, V_0(\si_n(H)) = p_n \, V_1(\si_n(H))$ for all $n$ large enough. So, the claim is proven.

Applying the claim to the $\cS_\infty$-invariant function $F \in L^\infty(X)$, we conclude that
\begin{equation}\label{eq.nextstep}
\lim_{n \recht +\infty} \|p_n (V_0(F) - V_1(F))\|_2 = 0 \; .
\end{equation}
Note that
$$\mu_n(\cU) = \int_{\cU} \frac{d\mu_n}{d\mu_1} \, d\mu_1 \geq C^{-1} \, \mu_1(\cU) > 0 \; .$$
Take a subsequence $n_k$ such that $\mu_{n_k}(\cU) \recht a > 0$. Define the subset $\cV \subset \Xtil$ by
$$\cV := \{x \in \Xtil \mid x_0,x_1 \in \cU\}$$
and write $p = 1_\cV$. Since $p_{n_k} \recht a \, p$ weakly, it follows from \eqref{eq.nextstep} that $p(V_0(F) - V_1(F)) = 0$. This holds for all $C > 0$ and their corresponding set $\cU$ defined in \eqref{eq.set-U}. It follows that $V_0(F) = V_1(F)$.

So we have proven that $F$ does not depend on the first variable. Since $F$ is $\cS_\infty$-invariant, it follows that $F$ does not depend on the first $n$ variables, for all $n \in \N$. Hence, $F$ is constant a.e. So, $\cS_\infty \actson (X,\mu)$ is ergodic.

If $x \in X$ is an atom for $\mu$, we have that $\sum_{n=1}^\infty (1-\mu_n(x_n)) < +\infty$. In particular, for $n$ large enough, we have that $\mu_n(x_n) > 0$. Since all $\mu_n \sim \mu_1$, we can thus fix an atom $x_0$ for $\mu_1$. By \eqref{eq.local-bound}, we find $\eps > 0$ such that $\mu_n(x_0) \geq \eps$ for all $n \geq 1$. Since the sequence $1-\mu_n(x_n)$ is summable, this forces $x_n = x_0$ for all $n$ large enough. Define $x' \in X$ by $x'_n = x_0$ for all $n$. We conclude that the singleton $\{x'\}$ has positive measure and is $\cS_\infty$-invariant. By ergodicity, $\mu(x') = 1$. This means that every $\mu_n$ is the Dirac measure in $x_0$, contradicting our standing assumption.
\end{proof}

Under the hypothesis \eqref{eq.local-bound}, we can go much further and determine the Krieger type of the permutation action $\cS_\infty \actson (X,\mu)$. We formulate this as Theorems \ref{thm.T-invariant-permutation-action} and \ref{thm.permutation-condition-periodic} below. They will be deduced from the following technical result.

\begin{lemma}\label{lem.link-to-tail-boundary}
Let $X_0$ be a standard Borel space and $(\mu_n)_{n \geq 1}$ a sequence of probability measures on $X_0$ that are all equivalent. Assume that \eqref{eq.local-bound} holds. Whenever $\be : X_0 \recht \R$ is a measurable map, consider
\begin{equation}\label{eq.maps-gamma-n}
\gamma_n : X_0 \recht \R : \gamma_n(x) = -\log \frac{d\mu_n}{d\mu_1}(x) + \be(x) \quad\text{and}\quad \zeta_n = (\gamma_n)_*(\mu_n) \; .
\end{equation}
\begin{enumlist}
\item For any choice of $\be$, the tail boundary flow of $(\zeta_n)_{n \in \N}$ is a factor of the associated flow of $\cS_\infty \actson (X,\mu)$.
\item There exists a $\be : X_0 \recht \R$ such that the tail boundary flow of $(\zeta_n)_{n \in \N}$ is conjugate to the associated flow of $\cS_\infty \actson (X,\mu)$.
\item If $n_k$ is a subsequence such that $\lim_{k \recht +\infty} \log(d\mu_{n_k}/d\mu_1(x)) = \be(x)$ for a.e.\ $x \in X_0$, then the tail boundary flow of $(\zeta_n)_{n \in \N}$ is conjugate to the associated flow of $\cS_\infty \actson (X,\mu)$.
\end{enumlist}
\end{lemma}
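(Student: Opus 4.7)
The plan is to construct an $\R$-equivariant, measure-preserving map
\[
\Psi \colon (X \times \R,\mu \times \lambda) \recht (\Om,\lambda \times \textstyle\prod_{n\geq 1}\zeta_n),\qquad \Psi(x,t) = (t,\gamma_1(x_1),\gamma_2(x_2),\ldots),
\]
and then show (i) that $\Psi^*$ embeds the tail boundary $A \subset L^\infty(\Om)$ into $L^\infty(X \times \R)^{\cS_\infty}$, which gives point~1, and (ii) that under the hypothesis of point~3 this embedding is onto, which gives points~2 and~3.

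For (i), $\Psi$ is measure-preserving by Fubini together with $(\gamma_n)_*\mu_n = \zeta_n$. Given $F \in A$, use $F \in A_N$ to represent $F = f_N(X_N,\om_{N+1},\om_{N+2},\ldots)$; pulling back gives $(\Psi^*F)(x,t) = f_N(S_N(x,t),\gamma_{N+1}(x_{N+1}),\ldots)$ where $S_N(x,t) := t + \sum_{n=1}^N \gamma_n(x_n)$. A direct computation using $\log(d\mu_n/d\mu_1) = -\gamma_n + \be$ gives the Maharam cocycle
\[
\log\frac{d(\si^{-1}\mu)}{d\mu}(x) \;=\; \sum_n \bigl[\gamma_n(x_n) - \gamma_{\si^{-1}(n)}(x_n)\bigr],
\]
in which the $\be$-terms cancel. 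A one-line check on transpositions shows that $S_N$ is Maharam-invariant under every $\si \in \cS_\infty$ with support in $\{1,\ldots,N\}$; since $(\gamma_j(x_j))_{j>N}$ is trivially fixed by such $\si$ and every element of $\cS_\infty$ has finite support, $\Psi^*F$ is $\cS_\infty$-invariant. $\R$-equivariance is immediate, as both $\R$-actions translate the zeroth coordinate ($t$ on one side, $\om_0$ on the other).

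For (ii), set $\cC_N := \Psi^{-1}(A_N)$ and $\cC := \bigcap_N \cC_N$; it suffices to show that every $F \in L^\infty(X \times \R)^{\cS_\infty}$ is $\cC_N$-measurable for each $N$. Fix $N$ and a coordinate $j > N$, and adapt the doubling argument in the proof of Proposition~\ref{prop.ergodic-permutation-action} to the Maharam extension. Double the $j$-th coordinate to obtain $\tilde X$ with projections $\pi_0,\pi_1 \colon \tilde X \recht X$, lift $F$ to $V_0F,V_1F$ on $\tilde X \times \R$, and apply the Maharam-invariance of $F$ under $\si_{j,n_k}$. The associated cocycle equals
\[
c_{\si_{j,n_k}}(x) \;=\; \bigl[\gamma_j(x_j) - \gamma_{n_k}(x_j)\bigr] + \bigl[\gamma_{n_k}(x_{n_k}) - \gamma_j(x_{n_k})\bigr],
\]
in which the two $\gamma_{n_k}$-terms tend to $0$ a.e.\ as $k \recht +\infty$: pointwise for $\gamma_{n_k}(x_j)$ by hypothesis, and $\mu$-a.e.\ for $\gamma_{n_k}(x_{n_k})$ by a Borel--Cantelli argument applied to the full-measure set $\{\gamma_{n_k} \recht 0\}$ and the independence of the coordinates $(x_{n_k})_k$. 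Localizing to the cutoff set $\cU_M \subset X_0$ on which $|\gamma_n| \leq M$ for all $n$ (with $\mu_1(\cU_M) \recht 1$ as $M \recht \infty$) and extracting weak limits of the suitably $\gamma_j$-shifted pullbacks of $V_0F$ and $V_1F$, one obtains an identity between $V_0F$ and $V_1F$ modulo a $\gamma_j$-twisted $t$-shift, which translates into partial $\cC_N$-measurability of $F$ in the $j$-th coordinate. Running the same argument for each $j > N$ and combining with the $\cS_\infty$-invariance restricted to permutations supported in $\{1,\ldots,N\}$ yields the desired $\cC_N$-measurability.

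Point~2 then reduces to point~3 once a measurable $\be$ together with an a.e.\ convergent subsequence $\log(d\mu_{n_k}/d\mu_1) \recht \be$ has been produced; under \eqref{eq.local-bound} this is obtained by a diagonal / Koml\'{o}s-type extraction on the square-root densities $\sqrt{d\mu_n/d\mu_1}$, which are bounded in $L^2(\mu_1)$, followed by a Mazur / Banach--Saks step on each cutoff set $\cU_M$. The main technical obstacle is the weak-limit argument in~(ii): one must carefully control the cross terms $\gamma_{n_k}(x_j)$ and $\gamma_j(x_{n_k})$ in the cocycle on $\cU_M$, balancing the vanishing $\gamma_{n_k}$-contribution against the finite $\gamma_j$-shift, in close analogy with---but technically more delicate than---the ergodicity proof of Proposition~\ref{prop.ergodic-permutation-action}.
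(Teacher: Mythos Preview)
Your argument for point~1 is correct and essentially matches the paper's. The problems lie in points~2 and~3.

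\textbf{The reduction of point~2 to point~3 fails.} You claim that under \eqref{eq.local-bound} a subsequence $n_k$ with $\al_{n_k} := \log(d\mu_{n_k}/d\mu_1) \recht \be$ a.e.\ can always be extracted via Koml\'{o}s or Banach--Saks. This is false: those theorems produce Ces\`{a}ro convergence (a.e.\ or in norm) of a subsequence, not a.e.\ convergence of the subsequence itself. A concrete counterexample: take $X_0 = [0,1]$ with Lebesgue measure, let $r_n$ be the Rademacher functions, and set $d\mu_n/d\mu_1 = 1 + \tfrac{1}{2} r_n$. Then even the strong bound \eqref{eq.boundedness-strong} holds, yet since the $r_n$ are i.i.d.\ $\pm 1$, no subsequence of $(\al_n)$ converges a.e. So point~2 genuinely cannot be deduced from point~3.

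\textbf{The paper's route to point~2 is structurally different.} The paper does not try to find $\be$ a priori. It fixes a generator $F$ of $L^\infty(\R \times X)^{\cS_\infty}$ and makes a case split on the periodicity group of $t \mapsto F(t,x)$. In the aperiodic case, the doubling argument on the \emph{first} coordinate, using the flip $\si_n$ of $1$ and $n$ together with a pre-shift $(\Gamma_n H)(t,x) = H(t-\al_n(x_1),x)$, shows that for a.e.\ $(x_0,x_1)$ the sequence $-\al_{n_k}(x_0)+\al_{n_k}(x_1)$ actually converges to some $\Om(x_0,x_1)$: aperiodicity of $F(\cdot,x)$ forces all limit points to coincide. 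Fixing $x_0$ then produces $\be$. So $\be$ is \emph{determined by $F$}, not extracted from the sequence $(\mu_n)$ alone; point~3 is then the observation that if an a.e.\ convergent subsequence happens to be available, one may pass to a subsubsequence in the argument and identify $\be$ with its limit.

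\textbf{A secondary gap in your sketch for point~3.} In your cocycle for $\si_{j,n_k}$, the term $\gamma_j(x_{n_k})$ is a sequence of independent random variables (with law $(\gamma_j)_*\mu_{n_k}$); it does not converge, so it cannot simply be carried along as a ``finite $\gamma_j$-shift''. Likewise, the Borel--Cantelli step for $\gamma_{n_k}(x_{n_k}) \recht 0$ needs more than $\gamma_{n_k} \recht 0$ $\mu_1$-a.e., since $x_{n_k}$ has law $\mu_{n_k}$ and the densities $d\mu_{n_k}/d\mu_1$ are not uniformly bounded off the cutoff sets. The paper sidesteps both issues via the pre-shift $\Gamma_n$: one checks that $(\Gamma_n \si_n H)(t,x) = H(t-\al_n(x_n),\si_n(x))$, so after doubling the first coordinate the $t$-argument is the same for $V_0$ and $V_1$, and the only remaining discrepancy sits in a single coordinate of $\si_n(x)$, which is handled exactly as in Proposition~\ref{prop.ergodic-permutation-action}.
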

\begin{proof}
Consider the Maharam extension $\cS_\infty \actson \R \times X$ of $\cS_\infty \actson (X,\mu)$. Let $\be : X_0 \recht \R$ be any measurable map and define $\gamma_n$ and $\zeta_n$ as in \eqref{eq.maps-gamma-n}. We first prove that the tail boundary flow of $(\zeta_n)_{n \in \N}$ is a factor of the associated flow of $\cS_\infty \actson (X,\mu)$. Realize this tail boundary flow as $\R \actson (B,\zeta)$. For every $n \geq 1$, write
\begin{equation}\label{eq.Yn}
(Y_n,\eta_n) = \prod_{m \geq n+1} (X_0,\mu_m) \; .
\end{equation}
By definition, we find $\R$-equivariant factor maps $\theta : \R \times X \recht B$ and $\theta_n : \R \times Y_n \recht B$ such that
$$\theta(t,x) = \theta_n(t+\gamma_1(x_1)+\cdots+\gamma_n(x_n),x_{n+1},x_{n+2},\ldots) \; .$$
It follows that $\theta$ is $\cS_\infty$-invariant. Hence, $\theta$ induces an $\R$-equivariant factor map from the associated flow of $\cS_\infty \actson (X,\mu)$ to $B$.

Fix an $\cS_\infty$-invariant $F \in L^\infty(\R \times X)$ that generates the von Neumann subalgebra of all $\cS_\infty$-invariant functions. We prove that there exists a measurable map $\be : X_0 \recht \R$ such that the map
\begin{equation}\label{eq.condition-beta}
\R \times X \recht \R : (t,x) \mapsto F(t-\be(x_1),x_1,x_2,\ldots)
\end{equation}
is essentially independent of the coordinate $x_1$.

For every $H \in L^\infty(\R)$, we denote by $\per(H) \subset \R$ the closed subgroup of all $a \in \R$ satisfying $H(a+t) = H(t)$ for a.e.\ $t \in \R$. By Proposition \ref{prop.ergodic-permutation-action}, the action $\cS_\infty \actson (X,\mu)$ is ergodic. So, by \cite[Lemma 6.6]{BKV19}, we are in one of the following cases.

{\bf Case 1.} The function $F$ is essentially constant.

{\bf Case 2.} For a.e.\ $x \in (X,\mu)$, we have that $\per F(\cdot,x) = \{0\}$.

{\bf Case 3.} There exists a $p \in \R$, $p \neq 0$, such that $\per F(\cdot,x) = p \Z$ for a.e.\ $x \in (X,\mu)$.

In case~1, \eqref{eq.condition-beta} obviously holds for any choice of $\be$. Next assume that we are in case~2. We equip $\R$ with the probability measure $\nu$ given by $d\nu(t) = (1/2) \exp(-|t|) \, dt$. For every $n \in \N$, we denote by $\si_n \in \cS_\infty$ the flip of $1$ and $n$. We write
$$\al_n(x) = \log \frac{d\mu_n}{d\mu_1}(x) \; .$$
For all $H \in L^\infty(\R \times X)$, we write $(\si_n(H))(t,x) = H(\si_n \cdot (t,x))$ and $(\Gamma_n(H))(t,x) = H(t-\al_n(x_1),x)$.

Using the notation in \eqref{eq.Yn}, as in the proof of Proposition \ref{prop.ergodic-permutation-action}, we define $(\Xtil,\mutil) = (X_0,\mu_1) \times (X_0,\mu_1) \times (Y_1,\eta_1)$ and we consider the measure preserving factor maps
\begin{align*}
& \pi_0 : (\R \times \Xtil,\nu \times \mutil) \recht (\R \times X,\nu \times \mu) : \pi_0(t,x) = (t,x_0,x_2,x_3,\ldots) \quad\text{and}\\
& \pi_1 : (\R \times \Xtil,\nu \times \mutil) \recht (\R \times X,\nu \times \mu) : \pi_1(t,x) = (t,x_1,x_2,x_3,\ldots) \; .
\end{align*}
For $H \in L^\infty(\R \times X)$, we write $V_i(H) = H \circ \pi_i$. Fix $C > 0$ and define $\cU \subset X_0$ as in \eqref{eq.set-U}. Assume that $\cU$ has positive measure. Define $\cV_n \subset \R \times \Xtil$ by
$$\cV_n := \{(t,x) \in \R \times \Xtil \mid x_0,x_1,x_n \in \cU\} \; .$$
Put $p_n = 1_{\cV_n}$. We claim that
\begin{equation}\label{eq.newclaim}
\lim_{n \recht +\infty} \|p_n V_0(\Gamma_n(\si_n(H))) - p_n V_1(\Gamma_n(\si_n(H)))\|_2 = 0 \quad\text{for all $H \in L^\infty(\R \times X)$.}
\end{equation}
Since there exists a $D > 0$ such that $\|p_n V_i(\Gamma_n(\si_n(H)))\|_2 \leq D \, \|H\|_2$ for all $n \in \N$, $i \in \{0,1\}$ and $H \in L^\infty(\R \times X)$, it suffices to prove \eqref{eq.newclaim} for a function $H$ that only depends on finitely many coordinates. In that case, we have that $p_n V_0(\Gamma_n(\si_n(H))) = p_n V_1(\Gamma_n(\si_n(H)))$ for all $n$ large enough. So, \eqref{eq.newclaim} is proven.

In particular, \eqref{eq.newclaim} holds for the $\cS_\infty$-invariant function $F \in L^\infty(\R \times X)$. We then take a subsequence $n_k$ such that
\begin{equation}\label{eq.conv-ae}
\lim_{k \recht +\infty} |1_{\cU}(x_{n_k}) \, F(t-\al_{n_k}(x_0),x_0,x_2,\ldots) - 1_{\cU}(x_{n_k}) \, F(t-\al_{n_k}(x_1),x_1,x_2,\ldots)| = 0
\end{equation}
for a.e.\ $(t,x) \in \R \times \cU \times \cU \times Y_1$. Since $\inf_n \mu_n(\cU)\geq C^{-1}\mu_1(\cU) > 0$, we may take this subsequence such that $\mu_{n_k}(\cU) \recht \rho > 0$.

Define the subset $\cUtil \subset \cU \times \cU$ consisting of all $(x_0,x_1) \in \cU \times \cU$ with the following two properties: for a.e.\ $(t,x_2,x_3,\ldots) \in \R \times Y_1$, \eqref{eq.conv-ae} holds, and for a.e.\ $(x_2,x_3,\ldots) \in Y_1$ and for all $i \in \{0,1\}$, we have that $\per(F(\cdot,x_i,x_2,x_3,\ldots)) = \{0\}$. Note that $(\cU \times \cU) \setminus \cUtil$ has measure zero.

Fix $(x_0,x_1) \in \cUtil$. Let $(a_0,a_1) \in \R^2$ be a limit point of the bounded sequence $(\al_{n_k}(x_0),\al_{n_k}(x_1))$. Take a subsequence $k_m$ such that
$$\lim_{m \recht +\infty} (\al_{n_{k_m}}(x_0),\al_{n_{k_m}}(x_1)) = (a_0,a_1) \; .$$
For $i \in \{0,1\}$ and $m \in \N$, define $H_{i,m} \in L^\infty(\R \times Y_1)$ by
$$H_{i,m}(t,x_2,x_3,\ldots) = 1_{\cU}(x_{n_{k_m}}) \, F(t-\al_{n_{k_m}}(x_i),x_i,x_2,x_3,\ldots) \; .$$
For each $i \in \{0,1\}$, we have that $(H_{i,m})_{m \in \N}$ is a bounded sequence in $L^2(\R \times Y_1)$ that converges weakly to
$$(t,x_2,x_3,\ldots) \mapsto \rho \, F(t-a_i,x_i,x_2,x_3,\ldots) \; .$$
By our assumptions, $H_{0,m} - H_{1,m} \recht 0$ a.e. It thus follows that
$$F(t-a_0,x_0,x_2,\ldots) = F(t-a_1,x_1,x_2,\ldots)$$
for a.e.\ $(t,x_2,x_3,\ldots) \in \R \times Y_1$. We conclude that for all $(x_0,x_1) \in \cUtil$ and every limit point $b$ of $(-\al_{n_k}(x_0) + \al_{n_k}(x_1))_k$, we have
$$F(t,x_0,x_2,\ldots) = F(t-b,x_1,x_2,\ldots)$$
for a.e.\ $(t,x_2,x_3,\ldots) \in \R \times Y_1$. Since $\per(F(\cdot,x_i,x_2,x_3,\ldots)) = \{0\}$, all these limit points $b$ must be equal. We have thus shown that for a.e.\ $(x_0,x_1) \in \cU \times \cU$, the sequence $-\al_{n_k}(x_0) + \al_{n_k}(x_1)$ converges to a limit $\Om(x_0,x_1)$ satisfying
\begin{equation}\label{eq.play-with-ae}
F(t,x_0,x_2,\ldots) = F(t-\Om(x_0,x_1),x_1,x_2,\ldots)
\end{equation}
for a.e.\ $(t,x_2,x_3,\ldots) \in \R \times Y_1$. We can then fix $x_0 \in \cU$ such that \eqref{eq.play-with-ae} holds for a.e.\ $x_1 \in \cU$ and a.e.\ $(t,x_2,x_3,\ldots) \in \R \times Y_1$. Writing $\be(x_1) := \Om(x_0,x_1)$, we conclude that the function $F(t-\be(x_1),x_1,x_2,\ldots)$ is essentially independent of the variable $x_1 \in \cU$. By the lack of periodicity, this function $\be : \cU \recht \R$ is uniquely determined a.e.\ up to a constant. Therefore, taking larger and larger $C > 0$, we find a measurable $\be : X_0 \recht \R$ such that \eqref{eq.condition-beta} holds.

In case~3, we replace $\R$ by $\R/p\Z$ and view $F$ as a function on $\R/p\Z \times X$ with the property that for a.e.\ $x \in X$, the periodicity of $F(\cdot,x)$ is trivial, as a subgroup of $\R/p\Z$. The same argument as above, again provides a measurable $\be : X_0 \recht \R$ such that \eqref{eq.condition-beta} holds.

Define $\gamma_n : X_0 \recht \R : \gamma_n(x) = -\al_n(x) + \be(x)$. Since $F$ is $\cS_\infty$-invariant and since \eqref{eq.condition-beta} holds, it follows that
$$(t,x) \mapsto F(t-\gamma_1(x_1)-\gamma_2(x_2)-\cdots - \gamma_n(x_n),x_1,\ldots,x_n,x_{n+1},\ldots)$$
is essentially independent of the coordinates $x_1,\ldots,x_n$. So, we find $F_n \in L^\infty(\R \times Y_n)$ such that
\begin{equation}\label{eq.thatsit}
F(t,x) = F_n(t+\gamma_1(x_1)+\cdots+\gamma_n(x_n),x_{n+1},\ldots)
\end{equation}
for a.e.\ $(t,x) \in \R \times X$. We can then view $F$ as a bounded function on the tail boundary of $(\zeta_n)_{n \in \N}$ where $\zeta_n = (\gamma_n)_*(\mu_n)$. Conversely, any $F \in L^\infty(\R \times X)$ that can be represented as in \eqref{eq.thatsit} with $F_n \in L^\infty(\R \times Y_n)$ for all $n \in \N$, is $\cS_\infty$-invariant. We have thus proven that the associated flow of $\cS_\infty \actson (X,\mu)$ is conjugate to the tail boundary flow of $(\zeta_n)_{n \geq 1}$.

To prove the third point, if we are given from the start a subsequence $n_k$ such that $\al_{n_k}(x)$ converges pointwise a.e., we can choose the subsequence in \eqref{eq.conv-ae} as a subsubsequence of $n_k$, so that indeed $\be$ can be chosen as the pointwise limit a.e.\ of $(\al_{n_k})_k$.
\end{proof}

As we did with the tail boundary flow, we first describe the $T$-invariant of $\cS_\infty \actson (X,\mu)$ and then provide sufficient conditions to rule out type III$_0$. When $X_0 = \{0,1\}$, the following result was proven in \cite[Theorem 1.2]{SV77}. We again use the notation $d(t,K)$ to denote the distance from $t \in \R$ to a closed subset $K \subset \R$.

\begin{theorem}\label{thm.T-invariant-permutation-action}
Let $X_0$ be a standard Borel space and $(\mu_n)_{n \geq 1}$ a sequence of probability measures on $X_0$ that are all equivalent. Assume that \eqref{eq.local-bound} holds and consider the permutation action $\cS_\infty \actson (X,\mu) = \prod_{n \geq 1} (X_0,\mu_n)$.
\begin{enumlist}
\item The action is of type II$_1$ if and only if $\mu \sim \nu^\N$ for some probability measure $\nu \sim \mu_1$ on $X_0$.

\item The action is of type II$_\infty$ if and only if there exists a probability measure $\nu \sim \mu_1$ on $X_0$ and subsets $\cU_n \subset X_0$ such that
$$\sum_{n=1}^\infty \mu_n(X_0 \setminus \cU_n) < +\infty \;\; , \;\; \sum_{n=1}^\infty H^2(\mu_n , \nu(\cU_n)^{-1} \nu|_{\cU_n}) < +\infty \quad\text{and}\quad \sum_{n=1}^\infty \nu(X_0 \setminus \cU_n) = +\infty \; .$$

\item For $p \neq 0$, we have that $2\pi /p$ belongs to the $T$-invariant of the permutation action if and only if there exists a probability measure $\nu \sim \mu_1$ on $X_0$ and $t_n \in \R$ such that
    \begin{equation}\label{eq.condition-T-inv-perm}
    \sum_{n=1}^\infty \int_{X_0} d\Bigl(\log \frac{d\mu_n}{d\nu}(x) - t_n , p \Z\Bigr)^2 \, d\mu_n(x) < +\infty \; .
    \end{equation}
    In that case, we find probability measures $\nu_n \sim \mu_n$ and $\rho_n > 0$ such that
    $$\sum_{n=1}^\infty H^2(\mu_n,\nu_n) < +\infty \quad\text{and}\quad \frac{d\nu_n}{d\nu}(x) \in \rho_n \exp(p \Z) \quad\text{for all $n \in \N$ and a.e.\ $x \in X_0$.}$$
\end{enumlist}
\end{theorem}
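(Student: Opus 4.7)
The plan is to reduce all three statements to Proposition~\ref{prop.T-set-tail} via Lemma~\ref{lem.link-to-tail-boundary}, which identifies the associated flow of $\cS_\infty \actson (X,\mu)$ with the tail boundary flow of $\zeta_n = (\gamma_n)_*\mu_n$. The key initial choice is to take $\be$ in Lemma~\ref{lem.link-to-tail-boundary} to be an a.e.\ pointwise limit of $\log(d\mu_{n_k}/d\mu_1)$ along a subsequence $n_k$, which exists by \eqref{eq.local-bound} and a diagonal argument. Fatou's lemma applied to $\int (d\mu_{n_k}/d\mu_1)\,d\mu_1 = 1$ gives $Z := \int \exp(\be)\,d\mu_1 \in (0,1]$, so that $\nu := Z^{-1}\exp(\be)\,\mu_1$ is a probability measure equivalent to $\mu_1$ satisfying $\gamma_n(x) = -\log(d\mu_n/d\nu)(x) + \log Z$.

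For part~1, if $\mu \sim \nu^\N$ then $\nu^\N$ is an invariant probability measure equivalent to $\mu$, so by the ergodicity established in Proposition~\ref{prop.ergodic-permutation-action} the action is of type~II$_1$. Conversely, an invariant probability $\tilde\mu \sim \mu$ is exchangeable and $\cS_\infty$-ergodic, so by the Hewitt--Savage theorem (ergodic exchangeable measures are extremal, hence IID products) it equals $\nu^\N$ for some probability measure $\nu$, with $\nu \sim \mu_1$ read off from the first marginal.

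For part~3, applying Proposition~\ref{prop.T-set-tail}(2) to $(\zeta_n)$ and substituting $\gamma_n = -\log(d\mu_n/d\nu) + \log Z$ (absorbing $\log Z$ into the shifts $t_n$ and using $-p\Z = p\Z$) yields exactly condition \eqref{eq.condition-T-inv-perm}. For the moreover statement, I define $k_n: X_0 \recht \Z$ by rounding $(\log(d\mu_n/d\nu)(x) - t_n)/p$ to the nearest integer, and set $d\nu_n/d\nu := \rho_n \exp(p k_n)$ with $\rho_n > 0$ chosen so that $\nu_n$ is a probability measure; normalizability is immediate from the pointwise bound $|pk_n(x) + t_n - \log(d\mu_n/d\nu)(x)| \leq |p|/2$. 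A standard estimate comparing $H^2$ with the $L^2(\mu_n)$-norm of $\log(d\mu_n/d\nu_n)$ then yields $\sum H^2(\mu_n, \nu_n) < +\infty$ from the assumption.

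Part~2 is the most delicate. Proposition~\ref{prop.T-set-tail}(1) characterises type~II (the tail boundary flow being $\R \actson \R$) by the existence of $t_n$ with $\sum \int T_\kappa(\gamma_n - t_n)^2 d\mu_n < +\infty$. Setting $\cU_n := \{x \in X_0 : |\gamma_n(x) - t_n| \leq \kappa\}$ gives $\sum \mu_n(X_0 \setminus \cU_n) < +\infty$ directly, and the Hellinger summability $\sum H^2(\mu_n, \nu(\cU_n)^{-1}\nu|_{\cU_n}) < +\infty$ follows from a Taylor expansion of $H^2$ using that $\gamma_n - t_n$ is uniformly bounded on $\cU_n$. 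To distinguish II$_1$ from II$_\infty$, I use the $\R$-equivariant cocycle $F(x) = \sum_n(\gamma_n(x_n) - s_n)$ from the proof of Proposition~\ref{prop.T-set-tail}(1); its exponential gives the $\cS_\infty$-invariant measure $e^F d\mu$, whose total mass, after unfolding into a product of single-coordinate integrals, is comparable to $\prod_n \nu(\cU_n)$, and this is positive iff $\sum_n \nu(X_0 \setminus \cU_n) < +\infty$. The converse direction reconstructs the tail boundary $L^2$-bound from the Hellinger and mass data. The main obstacle lies precisely in this last equivalence: converting between the tail boundary quantities $(\gamma_n, t_n, s_n)$ and the measure-theoretic data $(\nu, \cU_n)$ requires careful bookkeeping of normalizing constants, and one must rule out pathological cancellations when interpreting the product $\prod_n \nu(\cU_n)$ in terms of the invariant measure's total mass.
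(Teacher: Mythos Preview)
Your opening step has a genuine gap: condition \eqref{eq.local-bound} says only that $\alpha_n(x) := \log(d\mu_n/d\mu_1)(x)$ is pointwise bounded a.e., and this does \emph{not} guarantee a subsequence converging a.e. For a counterexample on $X_0=[0,1]$ with Lebesgue measure, take $d\mu_n/d\mu_1$ proportional to $\exp(r_n)$ with $r_n$ the $n$-th Rademacher function; then $\sup_n|\alpha_n(x)| < \infty$ for every $x$, yet along \emph{any} subsequence $n_k$ the $r_{n_k}$ remain independent $\pm 1$-valued, so $\alpha_{n_k}(x)$ diverges a.e. No diagonal argument rescues this. Hence Lemma~\ref{lem.link-to-tail-boundary}(3) is unavailable in general, and your construction of $\nu$ via Fatou collapses at the outset. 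This undermines your treatment of parts~2 and~3. Your argument for part~1 via Hewitt--Savage is independent of this error and is correct (and in fact more direct than the paper's route, which extracts part~1 as a byproduct of the part~2 analysis).

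The paper circumvents the issue by invoking Lemma~\ref{lem.link-to-tail-boundary}(2) to get \emph{some} $\beta$, with no a~priori integrability of $\exp(\beta)$. For part~3 it then reduces $\beta$ and the shifts $t_n$ coming from Proposition~\ref{prop.T-set-tail}(2) modulo $p$, making both bounded, so that $\nu$ can be defined; your argument for part~3 could be repaired along these lines. For part~2 no such reduction is available: instead, the paper first applies Proposition~\ref{prop.T-set-tail}(1) to obtain $t_n$, then uses the resulting summability (restricted to the sets $\cV_C$ on which all $\alpha_n$ are uniformly bounded) to prove $\alpha_n - t_n \to \beta$ a.e., deduces that $(t_n)$ is bounded, and only \emph{then} runs the Fatou argument to show $\exp(\beta)\in L^1(\mu_1)$. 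The order of these steps is essential. For the converse of part~2, note also that one need not go back through the tail boundary: the hypotheses make $F(x)=\sum_n\bigl(\log(d\mu_n/d\nu)(x_n)+\log\nu(\cU_n)\bigr)$ unconditionally a.e.\ convergent, and $\exp(-F)\,d\mu$ is directly an infinite $\cS_\infty$-invariant measure.
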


\begin{proof}
By Proposition \ref{prop.ergodic-permutation-action}, the action $\cS_\infty \actson (X,\mu)$ is ergodic and $\mu$ is nonatomic. So, $\cS_\infty \actson (X,\mu)$ is not of type~I. By Lemma \ref{lem.link-to-tail-boundary}, we can fix a measurable function $\be : X_0 \recht \R$ such that the associated flow of $\cS_\infty \actson (X,\mu)$ is given by the tail boundary flow of the probability measures $\zeta_n = (\gamma_n)_*(\mu_n)$ defined in \eqref{eq.maps-gamma-n}. Write $\al_n(x) = \log d\mu_n/d\mu_1(x)$.

First assume that the action is semifinite, i.e.\ of type II$_1$ or type II$_\infty$. By the first point of Proposition \ref{prop.T-set-tail}, we find $t_n \in \R$ such that for $\kappa > 0$,
\begin{equation}\label{eq.version}
\sum_{n=1}^\infty \int_{X_0} T_\kappa(\al_n(x) - t_n - \be(x))^2 \, d\mu_n(x) < +\infty \; .
\end{equation}
For every $C > 0$, write
$$\cV_C = \bigl\{x \in X_0 \bigm| \sup_n |\log \al_n(x)| \leq C \;\;\text{and}\;\; |\be(x)|\leq C \bigr\} \; .$$
Since \eqref{eq.local-bound} holds, we have that $\mu_1(X_0 \setminus \cV_C) \recht 0$ when $C \recht +\infty$.

Since the Radon-Nikodym derivatives $d\mu_1/d\mu_n$ are uniformly bounded on $\cV_C$, we get that
$$\sum_{n=1}^\infty \int_{\cV_C} T_\kappa(\al_n(x) - t_n - \be(x))^2 \, d\mu_1(x) < +\infty$$
for all $C > 0$. It follows in particular that for all $C > 0$, the sequence $\al_n(x) - t_n - \be(x)$ converges to zero for a.e.\ $x \in \cV_C$. Hence, the sequence converges to zero for a.e.\ $x \in X_0$. Since $\al_n(x)$ is a bounded sequence when $x \in \cV_C$, we conclude that the sequence $t_n$ is bounded. Write $D = \sup_n |t_n|$. By Fatou's lemma,
$$\int_{X_0} \exp(\be(x)) \, d\mu_1(x) \leq \liminf_n \exp(-t_n) \int_{X_0} \frac{d\mu_n}{d\mu_1} \, d\mu_1 \leq \exp(D) < +\infty \; .$$
Adding a constant to $\be$ and modifying $t_n$ accordingly, we may assume that $\be = \log (d\nu/d\mu_1)$ for some probability measure $\nu \sim \mu_1$.

So, \eqref{eq.version} is saying that
\begin{equation}\label{eq.newversion}
\sum_{n=1}^\infty \int_{X_0} T_\kappa\Bigl( \log \frac{d\nu}{d\mu_n}(x) + t_n \Bigr)^2 \, d\mu_n(x) < +\infty
\end{equation}
for all $\kappa > 0$. Fix any $C > 0$ and define
$$\cU_n = \Bigl\{x \in X_0 \Bigm| |\log \frac{d\nu}{d\mu_n}(x) + t_n| \leq C \Bigr\} \; .$$
It follows from \eqref{eq.newversion} that
\begin{equation}\label{eq.good-eq}
\sum_{n=1}^\infty \mu_n(X_0 \setminus \cU_n) < +\infty \; .
\end{equation}
It also follows from \eqref{eq.newversion} that
$$\sum_{n=1}^\infty \int_{\cU_n} \Bigl( \log \frac{d\nu}{d\mu_n}(x) + t_n \Bigr)^2 \, d\mu_n(x) < +\infty \; .$$
Since the function $s \mapsto \exp(s/2)$ is Lipschitz on $[-C,C]$, we conclude that
$$\sum_{n=1}^\infty \int_{\cU_n} \Bigl| \exp(t_n/2) \sqrt{\frac{d\nu}{d\mu_n}}(x) -1 \Bigr|^2 \, d\mu_n(x) < +\infty \; .$$
Using \eqref{eq.good-eq}, we get that
\begin{equation}\label{eq.almost-done-one}
\sum_{n=1}^\infty \int_{X_0} \Bigl| \exp(t_n/2) \sqrt{\frac{d\nu}{d\mu_n}}(x) \, 1_{\cU_n}(x) -1 \Bigr|^2 \, d\mu_n(x) < +\infty \; .
\end{equation}

Whenever $r \in [-1,1]$ and $a \geq 0$, one has $1-r \leq 2(a^2 - 2 a r + 1)$. When $\xi_0,\eta$ are unit vectors in a Hilbert space, the real part of $\langle \xi_0,\eta\rangle$ belongs to $[-1,1]$. It follows that
$$\|\xi_0 - \eta \| \leq 2 \, \| a \xi_0 - \eta \|$$
for all $a \geq 0$. This means that for every nonzero vector $\xi$ and every unit vector $\eta$ in a Hilbert space, we have that
\begin{equation}\label{eq.hilbert-space-ineq}
\Bigl\| \|\xi\|^{-1} \xi - \eta \Bigr\| \leq 2 \, \|\xi - \eta\| \; .
\end{equation}
Define the probability measures $\nu_n = \nu(\cU_n)^{-1} \nu|_{\cU_n}$. It then follows from \eqref{eq.almost-done-one} and \eqref{eq.hilbert-space-ineq} that
$$\sum_{n=1}^\infty H^2(\nu_n,\mu_n) = \sum_{n=1}^\infty \int_{X_0} \Bigl| \nu(\cU_n)^{-1/2} \sqrt{\frac{d\nu}{d\mu_n}}(x) \, 1_{\cU_n}(x) -1 \Bigr|^2 \, d\mu_n(x) < +\infty \; .$$
It then also follows that
\begin{equation}\label{eq.my-function-F}
F : X \recht \R : F(x) = \sum_{n=1}^\infty \bigl(\log \frac{d\mu_n}{d\nu}(x_n) + \log(\nu(\cU_n)) \bigr)
\end{equation}
is unconditionally a.e.\ convergent. By construction,
$$F(\si(x)) - F(x) = \log \frac{d(\si^{-1} \cdot \mu)}{d\mu}(x) \quad\text{for all $\si \in \cS_\infty$ and a.e.\ $x \in X$.}$$
So, the $\sigma$-finite measure $\nutil \sim \mu$ on $X$ given by $d\nutil / d\mu = \exp(-F)$ is $\cS_\infty$-invariant. Defining
$$\cW_k = \{x \in X \mid x_n \in \cU_n \;\;\text{for all $n \geq k$}\;\} \; ,$$
we find that
$$\nutil(\cW_k) = \prod_{n=1}^{k-1} \nu(\cU_n)^{-1} \; .$$
Note that $\mu(X \setminus \cW_k) \recht 0$ if $k \recht +\infty$. We conclude that $\nutil$ is a finite measure if and only if $\sum_n \nu(X_0 \setminus \cU_n) < +\infty$. If this holds, we have that $\mu \sim \nu^\N$ and that $\cS_\infty \actson (X,\mu)$ is of type II$_1$. Conversely, if $\mu \sim \nu^\N$, it is immediate that $\cS_\infty \actson (X,\mu)$ is of type II$_1$.

When $\sum_n \nu(X_0 \setminus \cU_n) = +\infty$, the measure $\nutil$ is infinite. We have proven that all the conclusions of the second point of the theorem hold and that $\cS_\infty \actson (X,\mu)$ is of type II$_\infty$. If the conclusions of the second point of the theorem hold, then \eqref{eq.my-function-F} provides a well defined function $F$ and $\exp(-F)$ is the density for an infinite $\cS_\infty$-invariant measure, so that $\cS_\infty \actson (X,\mu)$ is of type II$_\infty$.

To prove point~3, assume that $p \neq 0$ and that $2\pi / p$ belongs to the $T$-invariant of $\cS_\infty \actson (X,\mu)$. By the second point of Proposition \ref{prop.T-set-tail}, we find $t_n \in \R$ such that
$$\sum_{n=1}^\infty \int_{X_0} d(\al_n(x) - t_n - \be(x),p\Z)^2 \, d\mu_n(x) < +\infty \; .$$
Reducing modulo $p$, we may assume that $t_n$ and $\be$ are bounded. Adding a constant to $\be$ and modifying $t_n$ accordingly, we may moreover assume that $\be = \log d\nu / d\mu_1$ for some probability measure $\nu \sim \mu_1$. Uniquely define $P_n : X_0 \recht p \Z$ such that
$$0 \leq \al_n(x) - t_n - \be(x) - P_n(x) < p \quad\text{for all $n \in \N$, $x \in X_0$.}$$
Defining $\nu_n \sim \mu_1$ as the unique probability measure such that $d\nu_n / d \mu_1$ is a multiple of $\exp(\be + P_n)$ and appropriately modifying $t_n$, we find that
$$\sum_{n=1}^\infty \int_{X_0} \Bigl| \log \frac{d\nu_n}{d\mu_n}(x) + t_n \Bigr|^2 \, d\mu_n(x) < +\infty \; ,$$
where the sequence $t_n$ is still bounded. The same argument as above implies that
$$\sum_{n=1}^\infty H^2(\nu_n,\mu_n) < +\infty \; .$$
By construction, $d\nu_n/d\nu$ is a multiple of $\exp(P_n)$. So the conclusions of point~3 hold.

Conversely, if the conclusions of point~3 hold, we have that $\mu \sim \nutil = \prod_n \nu_n$ and $d(\si \cdot \nutil)/d\nutil$ only takes values in $\exp(p \Z)$, implying that $2\pi/p$ belongs to the $T$-invariant.
\end{proof}

\begin{remark}\label{rem.maps-sums}
Assume that the hypotheses of Theorem \ref{thm.T-invariant-permutation-action} hold. Consider the Maharam extension $\cS_\infty \actson X \times \R$ of the permutation action $\cS_\infty \actson (X,\mu)$.

In point~2 of Theorem \ref{thm.T-invariant-permutation-action}, the sum
$$F : X \recht \R : F(x) = \sum_{n=1}^\infty \Bigl( \log \frac{d\mu_n}{d\nu}(x_n) + \log(\nu(\cU_n)) \Bigr)$$
is unconditionally a.e.\ convergent. The equivalent infinite $\cS_\infty$-invariant measure on $X$ is given by $\exp(-F(x))\, d\mu(x)$. The $\cS_\infty$-invariant and $\R$-equivariant map $(x,t) \mapsto t-F(x)$ identifies the associated flow of $\cS_\infty \actson (X,\mu)$ with the translation action $\R \actson \R$.

In point~3 of Theorem \ref{thm.T-invariant-permutation-action}, if $2\pi / p$ belongs to the $T$-invariant, then the sum
$$F : X \recht \R/p\Z : F(x) = \sum_{n=1}^\infty \Bigl( \log \frac{d\mu_n}{d\nu}(x_n) - \log \rho_n \Bigr) + p\Z$$
is unconditionally a.e.\ convergent. The $\cS_\infty$-invariant and $\R$-equivariant map $X \times \R \recht \R/p\Z : (x,t) \mapsto t-F(x)$ provides the eigenfunction with eigenvalue $2\pi / p$ for the associated flow of $\cS_\infty \actson (X,\mu)$.
\end{remark}

Once we can rule out that $\cS_\infty \actson (X,\mu)$ is of type III$_0$, Theorem \ref{thm.T-invariant-permutation-action} provides a complete description of the type of the permutation action. The following result provides several sufficient conditions ruling out type III$_0$.

\begin{theorem}\label{thm.permutation-condition-periodic}
Let $X_0$ be a standard Borel space and $(\mu_n)_{n \geq 1}$ a sequence of probability measures on $X_0$ that are all equivalent. Assume that \eqref{eq.local-bound} holds and consider the permutation action $\cS_\infty \actson (X,\mu) = \prod_{n \geq 1} (X_0,\mu_n)$.
\begin{enumlist}
\item If there exists a $C > 0$ such that $C^{-1} \leq d\mu_n/d\mu_1(x) \leq C$ for all $n \in \N$ and a.e.\ $x \in X_0$, then the permutation action is of type II$_1$ or of type III$_\lambda$ with $\lambda \in (0,1]$.

\item If there is no probability measure $\nu \sim \mu_1$ and bounded sequence $t_n \in \R$ such that the functions
$$X_0 \recht \R : x \mapsto \log \frac{d\mu_n}{d\nu}(x) - t_n$$
are converging to zero in measure when $n \recht +\infty$, then the permutation action is of type III$_\lambda$ for some $\lambda \in (0,1]$.

\item If there are at least two distinct probability measures on $X_0$ that arise as limit points in Hellinger distance of the sequence $(\mu_n)_{n \in \N}$, then the permutation action is of type III$_\lambda$ for some $\lambda \in (0,1]$.
\end{enumlist}
\end{theorem}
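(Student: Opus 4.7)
The uniform strategy across all three parts is to invoke Lemma \ref{lem.link-to-tail-boundary}.2: fix a measurable map $\beta : X_0 \recht \R$ so that the associated flow of $\cS_\infty \actson (X,\mu)$ is conjugate to the tail boundary flow of the pushforward measures $\zeta_n = (\gamma_n)_* \mu_n$ on $\R$, where $\gamma_n(x) = -\log(d\mu_n/d\mu_1)(x) + \beta(x)$. Ruling out type III$_0$ then amounts to showing the tail boundary flow is periodic, for which we apply the criteria of Theorem \ref{thm.tail-periodic-criterion}; type III$_\lambda$ with $\lambda \in (0,1]$ follows once the type II case (tail flow conjugate to $\R \actson \R$) is also excluded.

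For part~1, inspection of the construction in the proof of Lemma \ref{lem.link-to-tail-boundary} shows that the map $\beta$ can be chosen essentially bounded under strong boundedness, since it is built there as a pointwise limit of bounded differences $-\al_{n_k}(x_0) + \al_{n_k}(x_1)$ with $\al_n = \log(d\mu_n/d\mu_1)$. Consequently $\gamma_n$ is uniformly bounded and $\zeta_n$ is supported in a common bounded interval, so Theorem \ref{thm.tail-periodic-criterion}.1 yields the dichotomy: the tail boundary flow is either periodic (giving type III$_\lambda$ with $\lambda \in (0,1]$) or conjugate to $\R \actson \R$ (semifinite). To exclude type II$_\infty$ in the second case, the probability measure $\nu$ of Theorem \ref{thm.T-invariant-permutation-action}.2 satisfies $d\nu/d\mu_1 = c\exp(\beta)$, which under bounded $\beta$ and strong boundedness is bounded above and below; combined with strong boundedness of $(\mu_n)$ this makes $d\nu/d\mu_n$ uniformly comparable to $1$, and then $\sum_n \mu_n(X_0 \setminus \cU_n) < +\infty$ forces $\sum_n \nu(X_0 \setminus \cU_n) < +\infty$, so the action is of type II$_1$ by Theorem \ref{thm.T-invariant-permutation-action}.1.

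For part~3, given two distinct Hellinger limit points $\nu_1, \nu_2$ of $(\mu_n)$ along subsequences $(n_k)$ and $(m_l)$, extract further subsequences along which the densities $d\mu_{n_k}/d\mu_1 \to d\nu_1/d\mu_1$ a.e.\ $\mu_1$ (and analogously for $(m_l)$). Then $\gamma_{n_k}$ converges $\mu_1$-a.e.\ to $-\log(d\nu_1/d\mu_1) + \beta$, and a Vitali-type argument based on Hellinger convergence yields weak convergence $\zeta_{n_k} \to \eta_1$, the pushforward of $\nu_1$ under this limit, and similarly $\zeta_{m_l} \to \eta_2$. If both $\eta_1, \eta_2$ were Dirac masses at $s_1, s_2$, then $\beta$ would $\nu_i$-a.e.\ coincide with $s_i + \log(d\nu_i/d\mu_1)$, forcing $d\nu_1/d\nu_2$ to equal the constant $\exp(s_2 - s_1)$ and hence $\nu_1 = \nu_2$ --- contradicting the hypothesis. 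So at least one of $\eta_1, \eta_2$ is non-Dirac, and if the tail boundary flow were not periodic, then Theorem \ref{thm.tail-periodic-criterion}.2 would, after passing to a further subsequence of $(n_k)$ along which the bounded $t_n$ produced by that theorem converge to some $t^*$, force this limit to be the Dirac mass at $t^*$, a contradiction. Type II is excluded analogously via Proposition \ref{prop.T-set-tail}.1. Part~2 follows the same template: if the tail boundary flow were not periodic, Theorem \ref{thm.tail-periodic-criterion}.2 would produce a bounded sequence $t_n$ with $\gamma_n - t_n \to 0$ in $\mu_n$-measure on bounded intervals, and introducing the probability measure $\nu \sim \mu_1$ with $d\nu/d\mu_1 \propto \exp(\beta)$ rewrites this as $\log(d\mu_n/d\nu) - t'_n \to 0$ in $\mu_n$-measure with bounded $t'_n$, contradicting the hypothesis; type II$_1$ is excluded directly from Theorem \ref{thm.T-invariant-permutation-action}.1.

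The principal obstacles are twofold. First, verifying the hypothesis $\inf_n \zeta_n([-C,C]) > 0$ required by Theorem \ref{thm.tail-periodic-criterion}.2 under only the pointwise boundedness \eqref{eq.local-bound}: this demands extracting subsequences along which the tails of $\zeta_n$ are controlled, exploiting uniform integrability of the densities $d\mu_n/d\mu_1$ inherited from Hellinger convergence (part~3) or from the concentration statement itself (part~2). Second, securing $\mu_1$-integrability of $\exp(\beta)$ so that a genuine probability measure $\nu$ is produced in part~2, which may require an additive correction of $\beta$ or working on a full-measure subset on which $\beta$ is essentially bounded. Fully excluding type II$_\infty$ under only \eqref{eq.local-bound} in parts~2 and~3 additionally requires combining \eqref{eq.local-bound} with the explicit form of the sets $\cU_n$ from Theorem \ref{thm.T-invariant-permutation-action}.2 to rule out rapid decay of $\nu(\cU_n)$.
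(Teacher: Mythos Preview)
Your overall strategy matches the paper's: reduce via Lemma~\ref{lem.link-to-tail-boundary} to the tail boundary flow and apply Theorem~\ref{thm.tail-periodic-criterion}. One conceptual point worth straightening out: a periodic tail boundary flow cannot be conjugate to $\R \actson \R$ (translation has no nontrivial period), so once periodicity is established the action is automatically of type III$_\lambda$ with $\lambda \in (0,1]$. Your separate exclusions of type~II in parts~2 and~3 via Proposition~\ref{prop.T-set-tail}.1 and Theorem~\ref{thm.T-invariant-permutation-action} are therefore redundant, and the final paragraph about ruling out rapid decay of $\nu(\cU_n)$ is addressing a nonexistent gap.

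The two obstacles you flag are exactly the content the paper fills in, and neither is difficult. For the first, set $\cU_C = \{x \in X_0 : |\al_n(x)| \leq C \text{ for all } n,\ |\be(x)| \leq C\}$, whose union over $C$ is conull by \eqref{eq.local-bound}; then $\zeta_n([-2C,2C]) \geq \mu_n(\cU_C) \geq e^{-C}\mu_1(\cU_C) > 0$ uniformly in $n$, so the hypothesis of Theorem~\ref{thm.tail-periodic-criterion}.2 holds and it yields bounded $t_n$ with $\al_n - t_n \to \be$ in $\mu_1$-measure (pass from $\mu_n$- to $\mu_1$-measure on $\cU_C$ using $d\mu_1/d\mu_n \leq e^C$ there, then let $C \to \infty$). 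For the second, this same convergence on $\cU_C$ combined with the Lipschitz bound for $\exp$ on bounded sets gives $e^{-t_n}\int_{\cU_C}(d\mu_n/d\mu_1)\,d\mu_1 \to \int_{\cU_C}\exp(\be)\,d\mu_1$; the left side is at most $\exp(\sup_n|t_n|)$, so monotone convergence in $C$ shows $\exp(\be)$ is $\mu_1$-integrable and the probability measure $\nu$ exists.

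The paper also streamlines parts~1 and~3 relative to your sketch. For part~3 it simply invokes part~2: if the flow is not periodic one has $\nu$ and bounded $t_n$ with $\log(d\mu_n/d\nu) - t_n \to 0$ in measure, and any Hellinger limit $\nu'$ along a subsequence $(n_k)$ satisfies $\sqrt{d\mu_{n_k}/d\nu'} \to 1$ in $L^2(\nu')$, hence $\al_{n_k} \to \log(d\nu'/d\mu_1)$ in measure, forcing $d\nu'/d\nu$ constant a.e.\ and $\nu'=\nu$ --- no weak-limit analysis of $\zeta_n$ is needed. For part~1, rather than going through the II$_\infty$ criterion of Theorem~\ref{thm.T-invariant-permutation-action}, the paper uses Theorem~\ref{thm.tail-periodic-criterion}.1 directly: if the flow is not periodic then $\sum_n \Var \zeta_n < \infty$, which with the uniform bounds gives $\sum_n H^2(\mu_n,\nu) < \infty$ and hence $\mu \sim \nu^\N$.
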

\begin{proof}
By Lemma \ref{lem.link-to-tail-boundary}, we can fix a measurable function $\be : X_0 \recht \R$ such that the associated flow of $\cS_\infty \actson (X,\mu)$ is given by the tail boundary flow of the probability measures $\nu_n = (\gamma_n)_*(\mu_n)$ defined in \eqref{eq.maps-gamma-n}. Write $\al_n(x) = \log d\mu_n/d\mu_1(x)$.

We start by proving the following claim: if the associated flow of $\cS_\infty \actson (X,\mu)$ is not periodic, there exists a bounded sequence $t_n \in \R$ such that the sequence of functions $\al_n - t_n$ converges to $\be$ in measure. For every $C > 0$, denote
$$\cU_C = \bigl\{x \in X_0 \bigm| |\al_n(x)| \leq C \;\;\text{for all $n \in \N$, and}\;\; |\be(x)| \leq C \bigr\} \; .$$
By our assumption \eqref{eq.local-bound}, we have that $\mu_1(X_0 \setminus \cU_C) \recht 0$ when $C \recht +\infty$. Fix $C > 0$ such that $\cU_C$ has positive measure. For all $n \in \N$,
$$\nu_n([-2C,2C]) \geq \mu_n(\cU_C) \geq \exp(-C) \, \mu_1(\cU_C) > 0 \; .$$
By point~2 of Theorem \ref{thm.tail-periodic-criterion}, we find a sequence $t_n \in [-2C,2C]$ such that for all $D \geq 2C$ and $\eps > 0$,
$$\lim_{n \recht +\infty} \nu_n([-D,D] \setminus [t_n - \eps,t_n+\eps]) = 0 \; .$$
This implies that for all $C > 0$ and $\eps > 0$,
$$\lim_{n \recht +\infty} \mu_n\bigl( \bigl\{ x \in \cU_C \bigm| |\al_n(x) - \be(x) - t_n| > \eps \bigr\} \bigr) = 0 \; .$$
Since $d\mu_1 / d\mu_n (x) \leq \exp(C)$ for all $x \in \cU_C$, we also find that for every $\eps > 0$ and every $C > 0$
$$\lim_{n \recht +\infty} \mu_1\bigl( \bigl\{ x \in \cU_C \bigm| |\al_n(x) - \be(x) - t_n| > \eps \bigr\} \bigr) = 0 \; .$$
Since $\lim_{C \recht +\infty}\mu_1(X_0 \setminus \cU_C) = 0$, the claim is proven.

If the associated flow of $\cS_\infty \actson (X,\mu)$ is periodic, each of the conclusions in 1, 2 and 3 hold. So for the rest of the proof, we may assume that we have a bounded sequence $t_n \in \R$ such that $\al_n - t_n$ converges to $\be$ in measure.

1.\ Since the functions $\al_n$ are uniformly bounded, also $\be$ is a bounded function. So, there exists a $D > 0$ such that $\nu_n([-D,D]) = 1$ for all $n \in \N$. By point~1 of Theorem \ref{thm.tail-periodic-criterion}, either the associated flow is periodic, or $\sum_{n=1}^\infty \Var(\nu_n) < +\infty$. In the latter case, we find a bounded sequence $s_n \in \R$ such that
$$\sum_{n=1}^\infty \int_{X_0} |\al_n(x) - \be(x) - s_n|^2 \, d\mu_n(x) < +\infty \; .$$
Since all functions and sequences involved are uniformly bounded, denoting by $\nu \sim \mu_1$ the unique probability measure such that $d\nu/d\mu_1$ is a multiple of $\exp(\be)$, it follows that
$$\sum_{n=1}^\infty H^2(\mu_n,\nu) = \sum_{n=1}^\infty \int_{X_0} \Bigl| \sqrt{\frac{d\nu}{d\mu_n}}(x) - 1 \Bigr|^2 \, d\mu_n(x) < +\infty \; .$$
So, $\mu \sim \nu^\N$ and $\cS_\infty \actson (X,\mu)$ is of type II$_1$.

2.\ It suffices to prove that $\exp(\be)$ is $\mu_1$-integrable, since we can then define $\nu$ as the unique probability measure such that $d\nu/d\mu_1$ is a multiple of $\exp(\be)$. Write $D = \sup_n |t_n|$. For every $C > 0$, write
$$\cU_C = \bigl\{x \in X_0 \bigm| \sup_n |\al_n(x)| \leq C \;\;\text{and}\;\; |\be(x)| \leq C \bigr\} \; .$$
Because of \eqref{eq.local-bound}, we have that $\bigcup_{C > 0} \cU_C$ is conull. Since $\al_n - t_n \recht \be$ in measure and since the exponential function is Lipschitz on bounded sets, we find that for every $C > 0$,
$$\lim_{n \recht +\infty} \exp(-t_n) \int_{\cU_C} \frac{d\mu_n}{d\mu_1} \, d\mu_1 = \int_{\cU_C} \exp(\be(x)) \, d\mu_1(x) \; .$$
The expression at the left is bounded by $\exp(D)$. We conclude that
$$\int_{\cU_C} \exp(\be(x)) \, d\mu_1(x) \leq \exp(D) \quad\text{for all $C > 0$.}$$
So, $\exp(\be)$ is $\mu_1$-integrable.

3.\ By point~2, we may assume that there is a probability measure $\nu \sim \mu_1$ on $X_0$ and a bounded sequence $t_n \in \R$ such that $\log d\mu_n / d\nu - t_n$ converges to zero in measure. If $\nu' \sim \mu_1$ is any probability measure that is a limit point in Hellinger distance of the sequence $(\mu_n)_{n \in \N}$, we prove that $\nu' = \nu$. Take a subsequence $n_k$ such that $H(\mu_{n_k},\nu') \recht 0$. Then,
$$\lim_{k \recht +\infty} \int_{X_0} \Bigl| \sqrt{\frac{d\mu_{n_k}}{d\nu'}}(x) - 1 \Bigr|^2 \, d\nu'(x) = 0 \; .$$
It follows that $\sqrt{d\mu_{n_k}/d\nu'} \recht 1$ in measure, so that $\al_{n_k} \recht \log(d\nu' / d\mu_1)$ in measure. We already know that $\al_n - t_n \recht \log(d\nu / d\mu_1)$ in measure. So, $\log(d\nu / d\mu_1) - \log(d\nu'/d\mu_1)$ is constant a.e. This means that $d\nu' / d\nu$ is constant a.e. Since both $\nu$ and $\nu'$ are probability measures, we get that $\nu = \nu'$.
\end{proof}

\section{Ergodicity and type of nonsingular Bernoulli actions}

We prove Theorems \ref{thmB.type-amenable} and \ref{thmC.type-nonamenable}, which are special cases of the main results of this section: Theorems \ref{thm.Krieger-type-Bernoulli} and \ref{thm.rule-out-III0} below.

Recall that a group homomorphism $\om : G \recht \T$ is called an eigenvalue of an ergodic nonsingular action $G \actson (Y,\eta)$ if there exists a measurable $F : Y \recht \T$ such that $F(g \cdot y) = \om(g) \, F(y)$ for all $g \in G$ and a.e.\ $y \in Y$.

\begin{theorem}\label{thm.Krieger-type-Bernoulli}
Let $G$ be a countably infinite group and $G \actson (X,\mu) = \prod_{g \in G} (X_0,\mu_g)$ a nonsingular Bernoulli action. Assume that the weak boundedness property \eqref{eq.boundedness-weak} holds and assume that one of the following assumptions hold.
\begin{itemlist}
\item $G$ is amenable and the action $G \actson (X,\mu)$ is not dissipative.
\item The growth condition \eqref{eq.strong-conservative-assumption} holds.
\end{itemlist}
Then $G \actson (X,\mu)$ is weakly mixing. Let $G \actson (Y,\eta)$ be any ergodic pmp action. Then, the following holds.
\begin{enumlist}
\item The action $G \actson X \times Y$ is of type II$_1$ if and only if $\mu \sim \nu^G$ for some probability measure $\nu \sim \mu_e$ on $X_0$.
\item The action $G \actson X \times Y$ is of type II$_\infty$ if and only if there exists a probability measure $\nu \sim \mu_e$ on $X_0$ and subsets $\cU_g \subset X_0$ such that
\begin{align*}
& \sum_{h \in G} H^2\bigl(\mu_g, \nu(\cU_g)^{-1} \nu|_{\cU_g}\bigr) <+\infty \quad , \quad \sum_{h \in G} \mu_g(X_0 \setminus \cU_g) < +\infty \quad , \quad \sum_{h \in G} \nu(X_0 \setminus \cU_g) = +\infty \; , \\
& \text{and for every $g \in G$, we have}\quad \sum_{h \in G} (\log(\nu(\cU_{gh})) - \log(\nu(\cU_h)) = 0 \; .
\end{align*}
\item Let $p \neq 0$. Then $2\pi / p$ belongs to the $T$-invariant of $G \actson X \times Y$ if and only if there exist probability measures $\nu_g \sim \mu_g$ and $\rho_g > 0$ such that
    $$
    \sum_{h \in G} H^2(\mu_g,\nu_g) < +\infty \quad , \quad \frac{d\nu_g}{d\nu_e}(x) \in \rho_g \exp(p\Z) \;\;\text{for all $g \in G$ and a.e.\ $x \in X_0$,}
    $$
    and with
    $$\al : G \recht R/p\Z : \al(g) = \sum_{h \in G} (\log \rho_{gh} - \log \rho_h) + p \Z \; ,$$
    the character $\exp(2\pi i p^{-1} \al)$ is an eigenvalue for $G \actson (Y,\eta)$.
\end{enumlist}
\end{theorem}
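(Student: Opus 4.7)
The plan is to reduce the analysis of $G \actson X \times Y$ to that of the permutation action $\cS_G \actson (X,\mu)$, where $\cS_G$ denotes the group of finite permutations of $G$ acting on $X = X_0^G$ by permuting coordinates (and trivially on $Y$). The key structural input is the reduction result announced in the introduction (Proposition \ref{prop.ergodic-general} and Lemma \ref{lem.link-to-permutation-action}): under the weak boundedness \eqref{eq.boundedness-weak} together with either amenability plus non-dissipativity, or the strong conservativeness \eqref{eq.strong-conservative-assumption}, every $G$-invariant function in $L^\infty(X \times \R)$ on the Maharam extension is automatically $\cS_G$-invariant, and the analogous statement holds for $G$-invariant functions on $X \times Y \times \R$. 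This shifts the ergodicity, weak mixing, and Krieger type questions onto the permutation action, whose type was completely described in Theorems \ref{thm.T-invariant-permutation-action} and \ref{thm.permutation-condition-periodic}.

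With this reduction in hand, weak mixing of $G \actson (X,\mu)$ follows immediately. Given any ergodic pmp $G \actson (Y,\eta)$ and any $G$-invariant $F \in L^\infty(X \times Y)$, the reduction makes $F$ invariant under $\cS_G$ in the $X$-variable. Since Proposition \ref{prop.ergodic-permutation-action} applied to $(\mu_g)_{g \in G}$ shows that $\cS_G \actson (X,\mu)$ is ergodic, $F$ depends only on $y$; ergodicity of $G \actson (Y,\eta)$ then forces $F$ to be essentially constant.

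For the type analysis, I apply Theorem \ref{thm.T-invariant-permutation-action} to the family $(\mu_g)_{g \in G}$, which supplies necessary and sufficient conditions on the probability measure $\nu$, on the subsets $\cU_g$, and on the measures $\nu_g$ with multipliers $\rho_g$ for the associated flow of $\cS_G \actson X$ to be of type II$_1$, II$_\infty$, or to have $2\pi/p$ in its $T$-invariant. The remaining task is to descend from $\cS_G$-invariance to genuine $G$-invariance on $X \times Y$. For point~1 the measure $\nu^G$ is automatically $G$-invariant, so the II$_1$ criterion transfers unchanged. For point~2, Remark \ref{rem.maps-sums} supplies the $\cS_G$-invariant infinite measure $\exp(-F(x))\, d\mu(x)$ with $F(x) = \sum_{h \in G}(\log(d\mu_h/d\nu)(x_h) + \log \nu(\cU_h))$; imposing $G$-invariance of this measure amounts precisely to the vanishing of the telescoping sum $\sum_{h \in G}(\log\nu(\cU_{gh}) - \log\nu(\cU_h))$ for every $g \in G$. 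The hypothesis $\sum_h \mu_g(X_0 \setminus \cU_g) < +\infty$ combined with Kakutani's criterion ensures the absolute convergence needed to make sense of this sum. For point~3, the corresponding $\cS_G$-eigenfunction on the Maharam extension is $(x,t) \mapsto t - F(x)$ with $F(x) = \sum_h(\log(d\nu_h/d\nu_e)(x_h) - \log\rho_h) \bmod p$; translating by $g$ changes $F$ by the character $\al(g) = \sum_h(\log\rho_{gh} - \log\rho_h) \bmod p$, so the eigenfunction descends to a $G$-invariant eigenfunction on $X \times Y \times \R$ exactly when one can twist it against a $Y$-eigenfunction for $\exp(2\pi i p^{-1}\al)$.

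The main obstacle is establishing the reduction from $G$-invariance to $\cS_G$-invariance under the weak condition \eqref{eq.boundedness-weak}. For amenable $G$, this proceeds via F\o lner averaging on the Maharam extension combined with weak-limit techniques of the type illustrated in the proof of Proposition \ref{prop.ergodic-permutation-action}, using the non-dissipativity assumption to ensure the averages have nontrivial cluster points. For nonamenable $G$, the naive ergodic averages are unavailable and must be replaced by the strongly recurrent averaging measures $\eta_n$ furnished by Lemma \ref{lem.strongly-conservative}, whose existence depends on the growth assumption \eqref{eq.strong-conservative-assumption} via the estimate $\|c_g\|^2 \leq C(g)$ recorded in \eqref{eq.estimate-C}. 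Once this structural reduction is in place, the rest of the argument is bookkeeping: translating the $\cS_G$-invariance and $T$-invariant data coming from Theorem \ref{thm.T-invariant-permutation-action} into the additional $G$-cocycle conditions stated in points~2 and~3.
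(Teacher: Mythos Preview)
Your overall strategy matches the paper's: invoke Proposition~\ref{prop.ergodic-general} and Lemma~\ref{lem.link-to-permutation-action} to reduce to the permutation action $\cS_G \actson (X,\mu)$, and then read off the type via Theorem~\ref{thm.T-invariant-permutation-action}. The ``if'' directions of points~2 and~3 are handled correctly by your description.

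There is, however, a genuine gap in the ``only if'' directions of points~2 and~3. You write that ``imposing $G$-invariance of this measure amounts precisely to the vanishing of the telescoping sum,'' and that the $\cS_G$-eigenfunction ``descends \dots exactly when one can twist it against a $Y$-eigenfunction.'' But you have not explained why, assuming $G \actson X \times Y$ is of type II$_\infty$, the specific $\cS_G$-invariant measure $\exp(-F)\,d\mu$ (times $\eta$) must be the $G$-invariant one; a priori the $G$-invariant $\sigma$-finite measure on $X \times Y$ need not be a product. The paper closes this gap via Lemma~\ref{lem.trivial-flow-result}: the reduction $L^\infty(X \times \R \times Y)^G \subset L^\infty(X \times \R)^{\cS_G} \ovt L^\infty(Y)$ together with semifiniteness yields an $\R$-equivariant map $Z \times Y \to \R$ (where $Z$ carries the $\cS_G$-associated flow), forcing that flow to be $\R \actson \R$. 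One then computes explicitly, using the map $\theta(x,t)=t-F(x)$, that $G$ acts on the resulting copy of $\R \times Y$ by $g\cdot(t,y)=(t-\alpha(g),g\cdot y)$. The existence of a further $G$-invariant $\R$-equivariant map $\R \times Y \to \R$ then forces $H_*(\eta)$ to be an $\alpha(G)$-invariant \emph{probability} measure on $\R$, hence $\alpha(G)=\{0\}$. An entirely parallel argument (using part~2 of Lemma~\ref{lem.trivial-flow-result}) handles point~3. Without this step your argument only establishes one implication.

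A smaller point: your deduction of weak mixing applies Lemma~\ref{lem.link-to-permutation-action} to a function on $X \times Y$, but that lemma is stated for the Maharam extension $X \times \R \times Y$. One can recover your conclusion by embedding $L^\infty(X\times Y)$ as $t$-constant functions and using ergodicity of the $\cS_G$-associated flow, but the paper simply invokes Proposition~\ref{prop.ergodic-general} directly for weak mixing, which is cleaner.
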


In the proof of Theorem \ref{thm.Krieger-type-Bernoulli}, we will see that the series in point~2 and point~3 automatically converge absolutely if the conditions on the preceding line hold. In Remark \ref{rem.operational-T-invariant}, we give a more hands on description of the $T$-invariant, which is useful to make computations in concrete examples later.

Note that Theorem \ref{thm.Krieger-type-Bernoulli} entirely determines the type of $G \actson (X,\mu)$ and of all diagonal products $G \actson X \times Y$ with an ergodic pmp action, once we can rule out the occurrence of type III$_0$. The following result precisely provides sufficient conditions for this.

Recall that an infinite group $G$ is said to have \emph{one end} if there are no nontrivial almost invariant subsets $W \subset G$, i.e.\ whenever $W \subset G$ is a subset such that $g W \symdiff W$ is finite for every $g \in G$, either $W$ or $G \setminus W$ is finite. By Stallings' theorem (see also \cite[Theorem IV.6.10]{DD89}), infinite groups with more than one end have a special structure: either they are locally finite, or virtually cyclic, or they have a nontrivial decomposition as an amalgamated free product or an HNN extension over a finite subgroup.

\begin{theorem}\label{thm.rule-out-III0}
Make the same assumptions as in Theorem \ref{thm.Krieger-type-Bernoulli}. Let $G \actson (Y,\eta)$ be any ergodic pmp action and consider the diagonal product $G \actson X \times Y$.
\begin{enumlist}
\item If $G$ is nonamenable, $G \actson X \times Y$ is never of type II$_\infty$ or type III$_0$.

\item If $G$ is nonamenable and has only one end, $G \actson X \times Y$ is either of type II$_1$ or of type III$_1$.

\item If the strong boundedness property \eqref{eq.boundedness-strong} holds, $G \actson X \times Y$ is never of type II$_\infty$ or type III$_0$.

\item If the family of probability measures $(\mu_g)_{g \in G}$ on $X_0$ admits two distinct limit points in Hellinger distance, then $G \actson X \times Y$ is of type III$_\lambda$ with $\lambda \in (0,1]$.
\end{enumlist}
\end{theorem}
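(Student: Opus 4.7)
The plan is to combine three ingredients. Theorem \ref{thm.Krieger-type-Bernoulli} already precisely characterizes when $G \actson X \times Y$ is of type II$_1$, II$_\infty$, or has a given $2\pi/p$ in its $T$-invariant. Theorems \ref{thm.T-invariant-permutation-action} and \ref{thm.permutation-condition-periodic} do the same for the permutation action $\cS_G \actson (X,\mu)$. Lemma \ref{lem.link-to-permutation-action} relates the associated flow of $G \actson X \times Y$ to that of $\cS_G \actson (X,\mu)$ (together with eigenvalue data of $G \actson (Y,\eta)$). The strategy is to run this chain to turn the sufficient conditions of Theorem \ref{thm.permutation-condition-periodic} into restrictions on the type of the Bernoulli action, and to exploit the cocycle structure appearing in Theorem \ref{thm.Krieger-type-Bernoulli} whenever the permutation-action results alone do not suffice.

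For point 3, strong boundedness \eqref{eq.boundedness-strong} places all tail-measure data in the scope of Theorem \ref{thm.permutation-condition-periodic}(1): enumerating $G$ as $(g_n)$, the functions $\log(d\mu_{g_n}/d\mu_e)$ are uniformly bounded, so after an appropriate translation $\be$ the tail measures $\zeta_n$ supplied by Lemma \ref{lem.link-to-tail-boundary} are supported in a common compact interval and the associated flow of $\cS_G \actson (X,\mu)$ is either trivial or periodic. To rule out type II$_\infty$ it then suffices to use Theorem \ref{thm.Krieger-type-Bernoulli}(2) directly: the uniform bound on $d\mu_g/d\nu$ yields $\nu(X_0 \setminus \cU_g) \leq C \, \mu_g(X_0 \setminus \cU_g)$, so the three summability conditions required for II$_\infty$ are incompatible. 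Point 4 is handled analogously via Theorem \ref{thm.permutation-condition-periodic}(3): two distinct Hellinger limit points of $(\mu_g)_{g \in G}$ force the permutation action to be of type III$_\lambda$ with $\lambda \in (0,1]$, and the same two limit points rule out $\mu \sim \nu^G$ in Theorem \ref{thm.Krieger-type-Bernoulli}(1), since that alternative would force $\mu_g \to \nu$ in Hellinger distance.

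The main obstacle lies in points 1 and 2, where only weak boundedness \eqref{eq.boundedness-weak} is available and so the permutation-action criteria do not straightforwardly apply. To rule out type II$_\infty$ (point 1), assume the characterization in Theorem \ref{thm.Krieger-type-Bernoulli}(2) holds and set $\al(g) = -\log \nu(\cU_g) \geq 0$; the cocycle identity $\sum_h (\al(gh) - \al(h)) = 0$ realizes $g \mapsto (\al(gh) - \al(h))_{h \in G}$ as a 1-cocycle into $\ell^1_0(G)$ for the left regular representation, while $\sum_g (1 - e^{-\al(g)}) = +\infty$ forbids $\al$ from lying in $\ell^1(G)$. Truncating $\al$ at some height and deploying the Reiter-type averaging used in Lemma \ref{lem.strongly-conservative} converts this data into a square-summable coboundary relation and a nontrivial direction in the first $\ell^2$-cohomology that nonamenability of $G$, combined with the strong conservativeness \eqref{eq.strong-conservative-assumption}, precludes. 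Ruling out type III$_0$ proceeds along the same lines: proper ergodicity of the associated flow would, through Theorem \ref{thm.Krieger-type-Bernoulli}(3) applied to a sequence of approximate eigenvalues $2\pi/p_n$ with $p_n \to 0$, yield a continuum of approximate eigencocycles without common period, contradicting the sharper strong-ergodicity-type statements available in the nonamenable setting.

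For point 2, the extra assumption that $G$ has one end rules out every nontrivial period. A value $2\pi/p$ with $p \neq 0$ in the $T$-invariant would, by Theorem \ref{thm.Krieger-type-Bernoulli}(3), come with $\rho_g > 0$ such that $\log \rho_g$ essentially takes values in $p\Z$ and $h \mapsto \log \rho_{gh} - \log \rho_h$ is absolutely summable for every $g$. Dividing by $p$ yields a $\Z$-valued function on $G$ whose level sets form an almost-invariant partition. Stallings' theorem, applied to the one-ended nonamenable group $G$, forces this partition to be trivial, so $\log \rho_g$ must be essentially constant and the only remaining possibility is $p = 0$, i.e., type III$_1$. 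Combined with point 1, this yields the dichotomy II$_1$ / III$_1$ announced in point 2. I expect the technical heart of the proof to be the cocycle-truncation step in point 1, where one must convert the non-$\ell^1$ growth of $\al$ into a genuine $\ell^2$-cocycle obstruction that nonamenability can kill.
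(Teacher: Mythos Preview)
Your handling of points 3 and 4 is essentially correct and matches the paper's route through Theorem~\ref{thm.permutation-condition-periodic} and Lemma~\ref{lem.link-to-permutation-action}. The argument you give for ruling out II$_\infty$ under strong boundedness via the inequality $\nu(X_0 \setminus \cU_g) \leq C\,\mu_g(X_0 \setminus \cU_g)$ is in fact more direct than what the paper does; the paper instead quotes Theorem~\ref{thm.permutation-condition-periodic}(1), which already excludes II$_\infty$ for the permutation action.

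Points 1 and 2, however, contain genuine gaps. For point~1, your III$_0$ argument is not coherent: a \emph{properly} ergodic associated flow has trivial eigenvalue group, so Theorem~\ref{thm.Krieger-type-Bernoulli}(3) produces no eigenvalues at all, and there is no ``sequence of approximate eigenvalues $2\pi/p_n$ with $p_n \to 0$'' to work with. Your II$_\infty$ argument is also unsubstantiated: you assert that nonamenability ``precludes a nontrivial direction in the first $\ell^2$-cohomology,'' but this is false---nonamenable groups can and do have positive first $L^2$-Betti number (this is exactly the class admitting type~III Bernoulli actions, as the introduction recalls). The $\ell^1$-type cocycle you extract from $\al(g) = -\log\nu(\cU_g)$ does not obviously transfer to an $\ell^2$-obstruction that nonamenability kills. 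For point~2, you misread Theorem~\ref{thm.Krieger-type-Bernoulli}(3): the condition $d\nu_g/d\nu_e(x) \in \rho_g \exp(p\Z)$ is a statement about the $X_0$-variable $x$, not a claim that $\log\rho_g$ itself lies in $p\Z$. There is no $\Z$-valued function on $G$ with almost-invariant level sets to feed into Stallings' theorem.

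The paper's approach to points 1 and 2 is structurally different. For point~1, it argues the contrapositive at the level of the \emph{permutation} action: if the associated flow of $\cS_G \actson (X,\mu)$ is not periodic, Theorem~\ref{thm.permutation-condition-periodic}(2) yields a probability measure $\nu$ and bounded $t_g$ with $\log(d\mu_g/d\nu) - t_g \to 0$ in measure. One then forms the ``doubled'' cocycle $\ctil_g(h,x,y) = c_g(h,x) - c_g(h,y)$ with $c_g(h,x) = \sqrt{d\mu_{g^{-1}h}/d\nu}(x) - \sqrt{d\mu_h/d\nu}(x)$, restricts to the sets $\cU_C$ where all Radon--Nikodym derivatives are bounded, and observes that on each $\cU_C \times \cU_C$ the function implementing this cocycle tends to zero at infinity in $G$. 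A result from \cite{BK18} (functions on a nonamenable group that tend to zero at infinity and implement an $\ell^2$-cocycle must lie in $\ell^2$) then forces $\ctil_g$ to be a coboundary, and unraveling gives $\sum_h H^2(\mu_h,\nu) < +\infty$, i.e.\ $\mu \sim \nu^G$. So either $\mu \sim \nu^G$ (type~II$_1$) or the permutation flow is periodic (ruling out II$_\infty$ and III$_0$ simultaneously). For point~2, the paper starts from a nontrivial $p$ in the $T$-invariant of $\cS_G \actson (X,\mu)$, extracts from Theorem~\ref{thm.T-invariant-permutation-action} integer-valued functions $\gamma_g : X_0 \to \Z$, and shows that $h \mapsto \gamma_h(x) - \gamma_h(y)$ implements an $\ell^2$-cocycle with values in $\ell^2(G)$ taking finitely many values on each $\cU_C \times \cU_C$. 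One-endedness then forces these functions to be constant outside a finite subset of $G$, which produces a pointwise limit $\be(x)$ and puts one back in the setting of point~1, again yielding $\mu \sim \nu_1^G$. The use of one-endedness is thus applied to functions of $h$ parametrized by $(x,y) \in X_0 \times X_0$, not to a single $\Z$-valued function on $G$.
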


Before proving Theorems \ref{thm.Krieger-type-Bernoulli} and \ref{thm.rule-out-III0}, we prove the ergodicity of the Bernoulli action in Proposition \ref{prop.ergodic-general} and we relate its associated flow to the associated flow of the permutation action in Lemma \ref{lem.link-to-permutation-action}. The proofs of Proposition \ref{prop.ergodic-general} and Lemma \ref{lem.link-to-permutation-action} follow quite closely \cite[Theorem 5.1 and Lemma 6.5]{BKV19}. Also, there is a certain repetition in the arguments for the proof of Proposition \ref{prop.ergodic-general} and Lemma \ref{lem.link-to-permutation-action}, but given the technicality of the latter, we believe that this repetition makes the argument more transparent. Recall from Definition \ref{def.strongly-conservative} the notion of strong conservativeness.

\begin{proposition}\label{prop.ergodic-general}
Let $G$ be a countably infinite group and $G \actson (X,\mu) = \prod_{g \in G} (X_0,\mu_g)$ a nonsingular Bernoulli action. Assume that the weak boundedness property \eqref{eq.boundedness-weak} holds.
\begin{enumlist}
\item The action $G \actson (X,\mu)$ is either conservative or dissipative.
\item If the action $G \actson (X,\mu)$ is strongly conservative, then it is weakly mixing.
\end{enumlist}
\end{proposition}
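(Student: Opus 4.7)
The plan is to reduce both points to the ergodicity of the permutation action $\cS_G \actson (X,\mu)$ provided by Proposition~\ref{prop.ergodic-permutation-action}, which only requires the weak boundedness assumption \eqref{eq.boundedness-weak}. For point~1, the Hopf decomposition yields a $G$-invariant measurable partition $X = C \sqcup D$ into conservative and dissipative parts, and I would show that $D$ is additionally invariant under $\cS_G$, after which Proposition~\ref{prop.ergodic-permutation-action} forces $\mu(D) \in \{0,1\}$. The $\cS_G$-invariance is obtained from Hopf's intrinsic characterization of $D$ via summability of $\sum_{g \in G} f(gx)$ for positive measurable $f$: a finite permutation $\sigma \in \cS_G$ is a nonsingular self-map of $(X,\mu)$ that twists the $G$-action only on finitely many coordinates, and the corresponding change in Radon-Nikodym derivatives is a bounded multiplicative perturbation, so the original and $\sigma$-twisted summability criteria single out the same measurable set up to null sets.

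For point~2, let $G \actson (Y,\eta)$ be an arbitrary ergodic pmp action and let $F \in L^\infty(X \times Y)^G$; I want to show $F$ is essentially constant. The strategy is to prove the stronger statement that $F$ is invariant under $\cS_G$ acting on the first variable, then apply Proposition~\ref{prop.ergodic-permutation-action} fibrewise in $y$ to conclude that $F$ factors through $(Y,\eta)$, and finally use ergodicity of $G \actson (Y,\eta)$. Let $\eta_n$ be a strongly recurrent sequence on $G$; pmp-invariance of $\eta$ makes $\eta_n$ equally strongly recurrent for the diagonal action $G \actson X \times Y$, so the averaging operators $\Phi_n$ defined via \eqref{eq.good-choice-pn} are $L^1(\mu \times \eta)$-contractions satisfying $\Phi_n(F) = F$ while $\|p_n(g,\cdot)\|_{1,\mu} \to 0$ for every $g \in G$. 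To establish $\cS_G$-invariance, it suffices to check $F = \tau_g F$ for every flip $\tau_g \in \cS_G$ exchanging $e$ and some $g \in G \setminus \{e\}$. I would run the doubling computation of Proposition~\ref{prop.ergodic-permutation-action}: set $(\widetilde{X},\widetilde{\mu}) = (X_0,\mu_e) \times (X,\mu)$, let $\pi_0,\pi_1 \colon \widetilde{X} \to X$ be the two measure-preserving factor maps (forgetting the extra coordinate, resp.\ substituting it at position $e$), and write $V_i H = H \circ (\pi_i \times \id_Y)$. Truncating on the subsets $\cU \subset X_0$ where $\sup_g|\log(d\mu_g/d\mu_e)|$ is bounded --- whose measures tend to $1$ by \eqref{eq.boundedness-weak} --- reduces the goal to the estimate
\begin{equation*}
\bigl\| p_\cU \bigl( \Phi_n(V_0 F) - \Phi_n(V_1(\tau_g F)) \bigr) \bigr\|_1 \longrightarrow 0,
\end{equation*}
where $p_\cU$ is the natural indicator on the doubled space. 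This is the weak-limit calculation of Proposition~\ref{prop.ergodic-permutation-action} in the presence of an additional $Y$-factor, the key cancellation coming from the fact that the only term in $\Phi_n$ where $\tau_g$ fails to commute involves the group element $e$ (or $g$), whose weight $\|p_n(e,\cdot)\|_1$ vanishes by strong recurrence. Combined with $\Phi_n V_i F = V_i F$ on $G$-invariant $F$, this gives $p_\cU V_0 F = p_\cU V_1(\tau_g F)$, and exhausting over $\cU$ yields $F = \tau_g F$.

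The main obstacle is the doubling estimate in point~2. Because the control provided by \eqref{eq.boundedness-weak} is only pointwise a.e., the truncation by $\cU$ cannot be avoided and one must ensure that $\tau_g$, $\Phi_n$, and $p_\cU$ cooperate uniformly in $n$; strong recurrence enters precisely to annihilate the single contribution to $\Phi_n$ for which the flip $\tau_g$ and the $G$-translation fail to intertwine through the doubling. The execution follows the template of \cite[Theorem~5.1 and Lemma~6.5]{BKV19}, with Proposition~\ref{prop.ergodic-permutation-action} replacing its two-point counterpart.
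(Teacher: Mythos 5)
Your treatment of point~1 is correct and is a mild variant of the paper's: you show the dissipative part $\cD = \{x \mid \sum_g \omega(g,x) < +\infty\}$ is $\cS_G$-invariant (since a finite permutation changes $\omega(g,\cdot)$ only by a multiplicative factor that is a.e.\ bounded uniformly in $g$, by \eqref{eq.boundedness-weak}) and then invoke Proposition~\ref{prop.ergodic-permutation-action}, whereas the paper shows directly via the doubling of $x_e$ that $\cD$ is independent of one coordinate and then uses $G$-invariance plus Kolmogorov's $0$-$1$ law. Both are fine.

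For point~2 there is a genuine gap in the doubling estimate, and it concerns exactly the place your sketch glosses over. The weights $p_n(g,x)$ in \eqref{eq.good-choice-pn} depend on $x_e$ through $\omega(g,x) = \tfrac{d(g^{-1}\mu)}{d\mu}(x) = \tfrac{d\mu_g}{d\mu_e}(x_e)\prod_{h\neq e}\tfrac{d\mu_{gh}}{d\mu_h}(x_h)$. Consequently $p_n(g,\pi_0(\xtil))$ and $p_n(g,\pi_1(\xtil))$ differ for \emph{every} $g$, and on the truncated set $\cW$ this discrepancy is bounded (by $C^2$) but does \emph{not} vanish as $n \to \infty$; strong recurrence kills individual terms $g \in \cF$, but cannot control the sum $\sum_{g \notin \cF} \bigl[p_n(g,\pi_0(\xtil)) - p_n(g,\pi_1(\xtil))\bigr] H(g\cdot\pi_i(\xtil), g\cdot y)$. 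Concretely, for $H(x,y) = f(x_e)$ your estimate reduces to $\sum_{g\neq e}\bigl[p_n(g,\pi_0) - p_n(g,\pi_1)\bigr]f(x_{g^{-1}})$, which stays of order one. The paper's proof resolves this by replacing $\omega$ with $\omtil(g,x) = \prod_{h\neq e}\tfrac{d\mu_{gh}}{d\mu_h}(x_h)$ in the definition of the averaging weights, producing a modified family whose weights are \emph{identically independent of $x_e$}; the truncation $\cU_C$ is then used not to make the weight-discrepancy small but to recover the $L^1$-boundedness of the modified operator $\Psi_n$ by comparing it to the original $\Phi_n$ (the inequality $p_n \leq C^2 q_n$ on $\cV$). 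Without this re-weighting step the argument is circular: the claim you need is equivalent to the conclusion. (Your formulation $\Phi_n(V_0 F)$ is also a type mismatch -- $\Phi_n$ acts on $L^\infty(X\times Y)$, not on the doubled space -- which is a symptom of the same issue: one cannot lift $\Phi_n$ to $\Xtil\times Y$ without first fixing this $x_e$-dependence.)
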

\begin{proof}
As in the proof of Proposition \ref{prop.ergodic-permutation-action}, define the probability space $(\Xtil,\mutil)$ by ``doubling the coordinate $x_e$''. So, we write
\begin{equation}\label{eq.def-Xtil}
(\Xtil,\mutil) = (X_0,\mu_e) \times (X_0,\mu_e) \times \prod_{h \neq e} (X_0,\mu_h) \; .
\end{equation}
We denote the elements of $\Xtil$ as $\xtil$ and then denote by $x_e$ and $x'_e$ their first two coordinates in $(X_0,\mu_e)$ and by $x_h$, $h \neq e$, their remaining coordinates in $(X_0,\mu_h)$. We consider the natural measure preserving factor maps
\begin{equation}\label{eq.factor-maps-pi}
\begin{split}
& \pi_0 : (\Xtil,\mutil) \recht (X,\mu) : \pi_0(\xtil) = (x_e,(x_h)_{h \neq e}) \quad\text{and}\\
& \pi_1 : (\Xtil,\mutil) \recht (X,\mu) : \pi_1(\xtil) = (x'_e,(x_h)_{h \neq e}) \; .
\end{split}
\end{equation}
Write
$$\om(g,x) = \frac{d(g^{-1} \mu)}{d\mu}(x) = \prod_{h \in G} \frac{d\mu_{gh}}{d\mu_h}(x_h) = \frac{d\mu_g}{d\mu_e}(x_e) \cdot \prod_{h \neq e} \frac{d\mu_{gh}}{d\mu_h}(x_h) \; .$$
For every $C > 0$, define $\cU_C \subset X_0$ by
\begin{equation}\label{eq.boundedness-sets}
\cU_C = \Bigl\{ x \in X_0 \Bigm| C^{-1} \leq \frac{d\mu_g}{d\mu_e}(x) \leq C \;\;\text{for all $g \in G$}\;\Bigr\} \; .
\end{equation}
Note that $\lim_{C \recht +\infty} \mu_e(X_0 \setminus \cU_C) = 0$.

1.\ Denote the dissipative part of $G \actson (X,\mu)$ by
$$\cD = \Bigl\{ x \in X \Bigm| \sum_{g \in G} \om(g,x) < +\infty \Bigr\} \; .$$
Define $\cW_C \subset \Xtil$ by $\cW_C = \{x \in \Xtil \mid x_e,x'_e \in \cU_C\}$. By construction, $\cW_C \cap \pi_0^{-1}(\cD) = \cW_C \cap \pi_1^{-1}(\cD)$, for all $C > 0$. Hence, $\pi_0^{-1}(\cD)$ equals $\pi_1^{-1}(\cD)$ up to a set of measure zero. This means that $\cD$ is essentially independent of the first coordinate $x_e$. Since $\cD$ is $G$-invariant, it follows that $\cD$ is essentially independent of each of the coordinates $x_g$, $g \in G$. So, $\cD$ is either of measure zero or measure one.

2.\ Assume that $G \actson (X,\mu)$ is strongly conservative. Fix an ergodic pmp action $G \actson (Y,\eta)$ and consider the diagonal action $G \actson X \times Y$. Let $F \in L^\infty(X \times Y)$ be a $G$-invariant function. We have to prove that $F$ is essentially constant. Define the $\|\,\cdot\,\|_1$-isometries
$$V_i : L^\infty(X \times Y) \recht L^\infty(\Xtil \times Y) : (V_i(H))(\xtil,y) = H(\pi_i(\xtil),y) \; .$$
We prove that $V_0(F) = V_1(F)$, meaning that $F$ is essentially independent of the coordinate $x_e$. Once this statement is proven, since $F$ is $G$-invariant, it follows that $F$ is essentially independent of any of the coordinates $x_g$. Since $G \actson (Y,\eta)$ is ergodic, it then follows that $F$ is constant a.e.

Fix a sequence of finitely supported probability measures $\eta_n$ on $G$ that is strongly recurrent for $G \actson (X,\mu)$. Write
$$q_n(g,x) = \sum_{h \in G} \eta_n(h) \frac{\eta_n(hg^{-1}) \, \om(g,x)}{\sum_{k \in G} \eta_n(hk^{-1}) \, \om(k,x)} \; .$$
So, the unital positive maps
$$\Phi_n : L^\infty(X \times Y) \recht L^\infty(X \times Y) : \Phi_n(H)(x,y) = \sum_{g \in G} q_n(g,x) H(g \cdot x,g \cdot y)$$
satisfy $\|\Phi_n(H)\|_1 \leq \|H\|_1$ for all $H \in L^\infty(X \times Y)$, while the strong recurrence implies that for every fixed finite subset $\cF \subset G$, we have that
$$\lim_{n \recht +\infty} \sum_{g \in \cF} \int_{X} q_n(g,x) \, d\mu(x) = 0 \; .$$
Fix $C > 0$ and define $\cU := \cU_C$ by \eqref{eq.boundedness-sets}. We then define a variant $\omtil$ of $\om$ in which all coordinates $x_e$ are removed. More precisely, we write
\begin{align*}
& \omtil(g,x) = \prod_{h \neq e} \frac{d\mu_{gh}}{d\mu_h}(x_h) \quad\text{and}\\
& p_n(g,x) = \sum_{h \in G} \eta_n(h) \frac{\eta_n(hg^{-1}) \, \omtil(g,x)}{\sum_{k \in G} \eta_n(hk^{-1}) \, \omtil(k,x)} \; .
\end{align*}
Define $\cV \subset X$ by $\cV = \{x \in X \mid x_e \in \cU\}$. By construction, $p_n(g,x)$ does not depend on the variable $x_e$ and $p_n(g,x) \leq C^2 \, q_n(g,x)$ for all $n \in \N$, $g \in G$ and $x \in \cV$. So, the unital positive maps
$$\Psi_n : L^\infty(X \times Y) \recht L^\infty(\cV \times Y) : \Psi_n(H)(x,y) = \sum_{g \in G} p_n(g,x) H(g \cdot x,g \cdot y)$$
satisfy $\|\Psi_n(H)\|_1 \leq C^2 \, \|H\|_1$ for all $H \in L^\infty(X \times Y)$. Also, we still have that for every fixed finite subset $\cF \subset G$,
\begin{equation}\label{eq.neglect-some}
\lim_{n \recht +\infty} \sum_{g \in \cF} \int_{\cV} p_n(g,x) \, d\mu(x) = 0 \; .
\end{equation}
Define $\cW \subset \Xtil$ by $\cW = \{\xtil \in \Xtil \mid x_e,x'_e \in \cU\}$. We claim that for all $H \in L^\infty(X \times Y)$,
\begin{equation}\label{eq.main-claim-ergodic}
\lim_{n \recht +\infty} \|(1_\cW \ot 1) V_0(\Psi_n(H)) - (1_\cW \ot 1) V_1(\Psi_n(H)) \|_1 = 0 \; .
\end{equation}
By the uniform boundedness property of $\Psi_n$, it suffices to prove \eqref{eq.main-claim-ergodic} when $H$ only depends on the coordinate $y \in Y$ and finitely many coordinates $x_h$, $h \in \cF$, for some finite subset $\cF \subset G$. But then, for all $n$ and all $(\xtil,y) \in \cW \times Y$,
$$\sum_{g \in G \setminus \cF} p_n(g,\pi_0(\xtil)) H(g \cdot \pi_0(\xtil),g \cdot y) = \sum_{g \in G \setminus \cF} p_n(g,\pi_1(\xtil)) H(g \cdot \pi_1(\xtil),g \cdot y) \; ,$$
while the remaining terms tend to $0$ in $\|\,\cdot\,\|_1$ by \eqref{eq.neglect-some}. So \eqref{eq.main-claim-ergodic} holds.

We apply \eqref{eq.main-claim-ergodic} to the $G$-invariant function $F$ and conclude that $(1_\cW \ot 1) V_0(F) = (1_\cW \ot 1) V_1(F)$. Taking $C > 0$ larger and larger, the measure of $\cW$ tends to $1$, so that $V_0(F) = V_1(F)$. This ends the proof of the proposition.
\end{proof}

\begin{lemma}\label{lem.link-to-permutation-action}
Let $G$ be a countably infinite group and $G \actson (X,\mu) = \prod_{g \in G} (X_0,\mu_g)$ a nonsingular Bernoulli action. Let $G \actson (Y,\eta)$ be any ergodic pmp action. Consider the product $G \actson X \times \R \times Y$ of the Maharam extension $G \actson X \times \R$ of $G \actson (X,\mu)$ and the action $G \actson Y$. Also consider the permutation action $\cS_G \actson X$ of the group of finite permutations of the countable set $G$, and its Maharam extension $\cS_G \actson X \times \R$.

Assume that the weak boundedness property \eqref{eq.boundedness-weak} holds. For every $\al > 0$, consider the probability measure $\nu_\al$ on $\R$ given by $d\nu_\al(t) = (\al/2) \exp(-\al |t|) \, dt$. Assume that $\al_0 > 0$ and that $\eta_n$ is a sequence of finitely supported probability measures on $G$ that is strongly recurrent for $G \actson (X \times \R,\mu \times \nu_\al)$ for all $0 < \al < \al_0$.

Then, $L^\infty(X \times \R \times Y)^G \subset L^\infty(X \times \R)^{\cS_G} \ovt L^\infty(Y)$.
\end{lemma}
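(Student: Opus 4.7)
Let $F \in L^\infty(X \times \R \times Y)^G$. The plan is to establish the conclusion in two steps: first show that $F$ is essentially independent of the $e$-coordinate $x_e$, and then combine this with $G$-invariance to derive that $F$ is invariant under the Maharam extension of every transposition $\tau_{g_1,g_2} \in \cS_G$ — and hence of all of $\cS_G$, which is generated by transpositions.

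For Step~1 I adapt the doubling-and-averaging argument of Proposition \ref{prop.ergodic-general} so as to track the additional $\R$-factor from the Maharam extension. Take $(\Xtil,\mutil)$ from \eqref{eq.def-Xtil} and the lifts $V_i : L^\infty(X \times \R \times Y) \recht L^\infty(\Xtil \times \R \times Y)$ given by $V_i(H)(\xtil,t,y) = H(\pi_i(\xtil),t,y)$, with $\pi_0,\pi_1$ as in \eqref{eq.factor-maps-pi}. For each $\al \in (0,\al_0)$ the strongly recurrent sequence $\eta_n$ on the Maharam extension $G \actson (X \times \R,\mu \times \nu_\al)$ produces averaging operators $\Phi_n^\al$ with weights $q_n^\al(g,x,t)$ satisfying $\Phi_n^\al(F) = F$ and $\|q_n^\al(g,\cdot,\cdot)\|_{1,\mu \times \nu_\al} \recht 0$ for every $g \in G$. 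Exactly as in the passage from $\om$ to $\omtil$ in the proof of Proposition \ref{prop.ergodic-general}, I build a modified averaging $\Psi_n^\al$ whose weights involve only the Radon--Nikodym factors for $h \neq e$ and are therefore $x_e$-independent. On the bounded set $\cW = \{\xtil \mid x_e, x'_e \in \cU_C\}$ with $\cU_C$ from \eqref{eq.boundedness-sets}, $\Psi_n^\al$ is an $L^1$-contraction up to a constant and the strong recurrence passes through to its weights. The key claim, proved exactly as \eqref{eq.main-claim-ergodic} by reducing to $H$ depending on finitely many coordinates and absorbing the exceptional $g$'s via strong recurrence, is
\[
\lim_{n \recht +\infty} \bigl\| (1_\cW \ot 1)\bigl(V_0(\Psi_n^\al F) - V_1(\Psi_n^\al F)\bigr) \bigr\|_1 = 0.
\]
Sending $C \recht +\infty$ yields $V_0(F) = V_1(F)$, i.e.\ $F$ is essentially independent of $x_e$.

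Step~2 is purely algebraic. Writing $\phi_g(u) = \log(d\mu_g/d\mu_e)(u)$, we have $c(g,x) = \sum_h(\phi_{gh}(x_h) - \phi_h(x_h))$ and the $h = e$ term $\phi_g(x_e)$ is the only $x_e$-dependent summand. In the $G$-invariance identity $F(gx, t+c(g,x), gy) = F(x,t,y)$ both sides are $x_e$-independent by Step~1, and on the left-hand side $x_e$ appears only through the position-$g$ entry $(gx)(g) = x_e$ of the first argument and through the shift $\phi_g(x_e)$ in the $\R$-coordinate. Matching these two dependencies forces $F(x,t,y)$ to depend on $x_g$ and $t$ only through the combination $t - \phi_g(x_g)$, for every $g \in G \setminus \{e\}$. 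Iterating this over a pair $\{g_1,g_2\}$ yields $F(x,t,y) = G(x_{\neq g_1,g_2}, t - \phi_{g_1}(x_{g_1}) - \phi_{g_2}(x_{g_2}), y)$, and a direct computation using $s_{\tau_{g_1,g_2}}(x) = (\phi_{g_2}(x_{g_1}) - \phi_{g_1}(x_{g_1})) + (\phi_{g_1}(x_{g_2}) - \phi_{g_2}(x_{g_2}))$ shows that this form is preserved under the Maharam extension of $\tau_{g_1,g_2}$, establishing the desired Maharam-$\cS_G$-invariance.

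The main obstacle lies in the technical execution of Step~1: transferring the strong recurrence from the Maharam extensions $(X \times \R,\mu \times \nu_\al)$ through the construction of $\Psi_n^\al$ and carefully controlling the unbounded $\R$-factor on the bounded sets $\cW$. This is precisely where the hypothesis of strong recurrence for a range of $\al \in (0,\al_0)$ — rather than just at a single $\al$ — becomes decisive, paralleling its role in Lemma \ref{lem.strongly-conservative}.
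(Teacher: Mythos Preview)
Your Step~1 conclusion is false in general, and the sketched argument for it breaks down at exactly the place you gloss over. A $G$-invariant function on the Maharam extension is \emph{not} forced to be independent of $x_e$. Concretely, when $G \actson (X,\mu)$ is of type III$_\lambda$ with $\lambda = e^{-p}$, the function $(x,t) \mapsto t - \sum_h \log(d\mu_h/d\nu)(x_h) \bmod p\Z$ is $G$- and $\cS_G$-invariant (this is essentially the eigenfunction of Remark~\ref{rem.maps-sums}), and it manifestly depends on $x_e$ through $\log(d\mu_e/d\nu)(x_e)$. So the target statement $V_0(F)=V_1(F)$ is simply wrong.

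The failure in your argument is in the ``reduce to $H$ depending on finitely many coordinates'' step. For such $H$ and $g \notin \cF$, the first argument $g\cdot \pi_i(\xtil)$ does agree on the coordinates in $\cF$ for $i=0,1$; but the $\R$-shift in the Maharam action is the \emph{full} cocycle $\gamma(g,\pi_i(\xtil))$, which contains the term $\log(d\mu_g/d\mu_e)(x_e)$ (resp.\ of $x'_e$). Thus $H(g\cdot(\pi_0(\xtil),t,y)) - H(g\cdot(\pi_1(\xtil),t,y))$ does not vanish for $g\notin\cF$, and the analogue of \eqref{eq.main-claim-ergodic} fails. This is precisely why the paper introduces the corrected operator $\Theta_{\al,n}$, which shifts $t$ by $\gamma_g(x_e)-\gamma_g(x'_e)$ so that the finitely-supported reduction goes through; the conclusion is then not $V_0(F)=V_1(F)$ but rather \eqref{eq.good-alm-ev-conv}, expressing $V_0(F)$ as a convolution of $V_1(F)$ with certain probability measures $\rho_i(\xtil)$. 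Showing that these converge to Dirac measures requires the case analysis on $\per F(x,\cdot,y)$ (via \cite[Lemma 6.6]{BKV19}) and the Choquet--Deny theorem, and yields only that $F(x,t+\beta(x_e),y)$ is independent of $x_e$ for some measurable $\beta$. The genuine role of the whole range $\al\in(0,\al_0)$ is to send $\al\to 0$ so that the $t$-dependence of the averaging weights disappears (see \eqref{eq.est-star-star} and \eqref{eq.remove-t}); it does not enter where you suggest.

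Your Step~2 is fine once one has the correct Step~1 conclusion (existence of $\beta$), but with the stronger (false) claim you assume, the whole argument collapses.
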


\begin{proof}
Let $F \in L^{\infty}(X\times \R \times Y)^G$ be a $G$-invariant function generating $L^{\infty}(X\times \R\times Y)^G$ as a von Neumann algebra. We have to prove that $F \in L^\infty(X \times \R)^{\cS_G} \ovt L^\infty(Y)$.

We fix the following notations. For every $\al > 0$, write $\mu_\al = \mu \times \nu_\al$. We further write
\begin{align*}
& \om(g,x) = \frac{d(g^{-1} \mu)}{d\mu}(x) = \prod_{h \in G} \frac{d\mu_{gh}}{d\mu_h}(x_h) \; ,\\
& \gamma(g,x) = \log \om(g,x) \quad\text{so that}\quad g \cdot (x,t,y) = (g \cdot x, t+ \gamma(g,x) , g \cdot y) \; ,\\
& \om_\al(g,x,t) = \frac{d(g^{-1} \mu_\al)}{d\mu_\al}(x,t) = \om(g,x) \, F_\al(t,\gamma(g,x)) \quad\text{where}\quad F_\al(t,s) = \exp(-\al |t+s|+\al|t|) \; ,\\
& \|H\|_{1,\al} = \int_{X \times \R \times Y} |H(x,t,y)| \, d\mu_\al(x,t) \, d\eta(y) \; .
\end{align*}
We often use the following estimates, which hold for all $s,t,s',t' \in \R$~:
\begin{equation}\label{eq.favorite}
\begin{split}
& \exp(-\al|s-s'|) \leq \frac{F_\al(t,s)}{F_\al(t,s')} \leq \exp(\al|s-s'|) \quad\text{and}\\
& \exp(-2\al|t-t'|) \leq \frac{F_\al(t,s)}{F_\al(t',s)} \leq \exp(2\al|t-t'|) \; .
\end{split}
\end{equation}

As above, we denote for every $H \in L^\infty(\R)$, by $\per(H) \subset \R$ the closed subgroup of all $a \in \R$ satisfying $H(a+t) = H(t)$ for a.e.\ $t \in \R$. By Proposition \ref{prop.ergodic-general}, the action $G \actson X \times Y$ is ergodic. So by \cite[Lemma 6.6]{BKV19}, we are in one of the following possible cases.

{\bf Case 1.} The function $F$ is essentially constant.

{\bf Case 2.} For a.e.\ $(x,y) \in X \times Y$, we have that $\per F(x,\cdot,y) = \{0\}$.

{\bf Case 3.} There exists a $p \in \R$, $p \neq 0$, such that $\per F(x,\cdot,y) = p \Z$ for a.e.\ $(x,y) \in X \times Y$.

In case~1, there is nothing to prove. Assume that we are in case~2. Fix $\al_0 > 0$ and fix a sequence of finitely supported probability measures $\eta_n$ on $G$ that is strongly recurrent for $G \actson (X \times \R,\mu \times \nu_\al)$ for all $0 < \al < \al_0$. This means that writing
$$q_{\al,n}(g,x,t) = \sum_{h \in G} \eta_n(h) \frac{\eta_n(hg^{-1}) \, \om_\al(g,x,t)}{\sum_{k \in G} \eta_n(hk^{-1}) \, \om_\al(k,x,t)} \; ,$$
the unital positive maps
$$\Phi_{\al,n} : L^\infty(X \times \R \times Y) \recht L^\infty(X \times \R \times Y) : \Phi_{\al,n}(H)(x,t,y) = \sum_{g \in G} q_{\al,n}(g,x,t) H(g \cdot (x,t,y))$$
satisfy $\|\Phi_{\al,n}(H)\|_{1,\al} \leq \|H\|_{1,\al}$ for all $H \in L^\infty(X \times \R \times Y)$, while the strong recurrence implies that for every fixed finite subset $\cF \subset G$ and fixed $\al \in (0,\al_0)$, we have that
$$\lim_{n \recht +\infty} \sum_{g \in \cF} \int_{X \times \R} q_{\al,n}(g,x,t) \, d\mu_\al(x,t) = 0 \; .$$
Fix $C > 0$ and define $\cU \subset X_0$ by
$$\cU = \Bigl\{ x \in X_0 \Bigm| C^{-1} \leq \frac{d\mu_g}{d\mu_e}(x) \leq C \;\;\text{for all $g \in G$}\;\Bigr\} \; .$$
Taking $C > 0$ large enough, $\mu_e(X_0 \setminus \cU)$ can have arbitrarily small measure. We then define a variant $\omtil_\al$ of $\om_\al$ in which all coordinates $x_e$ are removed. More precisely, we write
\begin{align*}
& \omtil(g,x) = \prod_{h \neq e} \frac{d\mu_{gh}}{d\mu_h}(x_h) \quad\text{and}\quad \gammatil(g,x) = \log \omtil(g,x) \; , \\
& \omtil_\al(g,x,t) = \omtil(g,x) \, F_\al(t,\gammatil(g,x)) \; .
\end{align*}
Note that using \eqref{eq.favorite}, we have for all $g \in G$, $x \in X$ and $t,t'\in \R$,
\begin{equation}\label{eq.behav-omtil-t}
\exp(-2\al |t-t'|) \, \omtil_\al(g,x,t) \leq \omtil_\al(g,x,t') \leq \exp(2\al|t-t'|) \, \omtil_\al(g,x,t) \; .
\end{equation}

By construction, $\omtil(g,x)$ and $\omtil_\al(g,x)$ do not depend on the coordinate $x_e$ of $x \in X$. By our definition of $\cU$ and by \eqref{eq.favorite}, we find a $D > 0$ such that for all $\al \in (0,\al_0)$, all $g \in G$ and all $x \in X$ with $x_e \in \cU$,
$$D^{-1} \, \om_\al(g,x) \leq \omtil_\al(g,x) \leq D \, \om_\al(g,x) \; .$$
Define the subset $\cV \subset X$ by $\cV = \{x \in X \mid x_e \in \cU\}$. Defining
$$p_{\al,n}(g,x,t) = \sum_{h \in G} \eta_n(h) \frac{\eta_n(hg^{-1}) \, \omtil_\al(g,x,t)}{\sum_{k \in G} \eta_n(hk^{-1}) \, \omtil_\al(k,x,t)} \; ,$$
we get that
$$D^{-2} \, q_{\al,n}(g,x,t) \leq p_{\al,n}(g,x,t) \leq D^2 \, q_{\al,n}(g,x,t) \quad\text{whenever $x \in \cV$,}$$
so that the unital positive maps
$$\Psi_{\al,n} : L^\infty(X \times \R \times Y) \recht L^\infty(\cV \times \R \times Y) : \Psi_{\al,n}(H)(x,t,y) = \sum_{g \in G} p_{\al,n}(g,x,t) H(g \cdot (x,t,y))$$
satisfy $\|\Psi_{\al,n}(H)\|_{1,\al} \leq D^2 \, \|H\|_{1,\al}$ for all $H \in L^\infty(X \times \R \times Y)$. Also, for every fixed finite subset $\cF \subset G$ and every fixed $\al \in (0,\al_0)$, we have that
\begin{equation}\label{eq.discard-finitely-many}
\lim_{n \recht +\infty} \sum_{g \in \cF} \int_{\cV \times \R} p_{\al,n}(g,x,t) \, d\mu_\al(x,t) = 0 \; .
\end{equation}
Finally, it follows from \eqref{eq.behav-omtil-t} that
\begin{equation}\label{eq.est-star-star}
\exp(-4 \al |t-t'|) \, p_{\al,n}(g,x,t) \leq p_{\al,n}(g,x,t') \leq \exp(4\al |t-t'|) \, p_{\al,n}(g,x,t)
\end{equation}
for all $g \in G$, $x \in X$ and $t,t' \in \R$.

Define $(\Xtil,\mutil)$ as in \eqref{eq.def-Xtil}. We again denote the elements of $\Xtil$ as $\xtil$ and then denote by $x_e$ and $x'_e$ their first two coordinates in $(X_0,\mu_e)$ and by $x_h$, $h \neq e$, their remaining coordinates in $(X_0,\mu_h)$. We denote $\mutil_\al = \mutil \times \nu_\al$ and
$$\|H\|_{1,\al} = \int_{\Xtil \times \R \times Y} |H(\xtil,t,y)| \, d\mutil_\al(\xtil,t) \, d\eta(y) \; .$$
Define the corresponding factor maps $\pi_i : (\Xtil,\mutil) \recht (X,\mu)$ as in \eqref{eq.factor-maps-pi}. Define the corresponding isometries for the norms $\|\,\cdot\,\|_{1,\al}$~:
$$V_i : L^\infty(X \times \R \times Y) \recht L^\infty(\Xtil \times \R \times Y) : (V_i(H))(\xtil,t,y) = H(\pi_i(\xtil),t,y) \; .$$
Denote by $\cW \subset \Xtil$ the subset given as $\cW = \{\xtil \in \Xtil \mid x_e,x'_e \in \cU\}$. For $g \in G$ and $x \in X_0$, we write
$$\gamma_g(x) = \log \frac{d\mu_g}{d\mu_e}(x) \; .$$
We then define the unital positive maps
\begin{multline*}
\Theta_{\al,n} : L^\infty(X \times \R \times Y) \recht L^\infty(\cW \times \R \times Y) : \\
\Theta_{\al,n}(H)(\xtil,t,y) = \sum_{g \in G} p_{\al,n}(g,\pi_1(\xtil),t) H(g \cdot (\pi_1(\xtil),t+\gamma_g(x_e)-\gamma_g(x'_e),y)) \; .
\end{multline*}
Using \eqref{eq.est-star-star}, we find $D_1 > 0$ such that for all $\al \in (0,\al_0)$, all $n \in \N$ and all $H \in L^\infty(X \times \R \times Y)$,
$$\|\Theta_{\al,n}(H)\|_{1,\al} \leq D_1 \, \|V_1(\Psi_{\al,n}(H))\|_{1,\al} = D_1 \, \|\Psi_{\al,n}(H)\|_{1,\al} \leq D_1 \, D^2 \, \|H\|_{1,\al} \; .$$
We claim that for all fixed $\al \in (0,\al_0)$ and $H \in L^\infty(X \times \R \times Y)$,
\begin{equation}\label{eq.crucial-claim-here}
\lim_{n \recht +\infty} \|(1_{\cW} \ot 1 \ot 1) V_0(\Psi_{\al,n}(H)) - \Theta_{\al,n}(H)\|_{1,\al} = 0 \; .
\end{equation}
Because of the uniform boundedness properties of $\Psi_{\al,n}$ and $\Theta_{\al,n}$ proven above, it suffices to prove \eqref{eq.crucial-claim-here} for functions $H$ that only depend on the coordinates $t \in \R$, $y \in Y$ and finitely many coordinates $(x_h)_{h \in \cF}$, for some finite subset $\cF \subset G$. By construction,
$$p_{\al,n}(g,\pi_1(\xtil),t) = p_{\al,n}(g,\pi_0(\xtil),t) \quad\text{and}\quad \gamma(g,\pi_1(\xtil)) + \gamma_g(x_e)-\gamma_g(x'_e) = \gamma(g,\pi_0(\xtil)) \; .$$
Therefore,
$$\Theta_{\al,n}(H)(\xtil,t,y) = \sum_{g \in G} p_{\al,n}(g,\pi_0(\xtil),t) H(g \cdot \pi_1(\xtil),t+ \gamma(g,\pi_0(\xtil)), g \cdot y) \; .$$
When $g \in G \setminus \cF$, we have
$$H(g \cdot \pi_1(\xtil),t+ \gamma(g,\pi_0(\xtil)), g \cdot y) = H(g \cdot \pi_0(\xtil),t+ \gamma(g,\pi_0(\xtil)), g \cdot y) = H(g \cdot (\pi_0(\xtil),t,y)) \; .$$
By \eqref{eq.discard-finitely-many}, both in the definition of $\Psi_{\al,n}$ and in the definition of $\Theta_{\al,n}$, the finitely many terms with $g \in \cF$ tend to zero in $\|\,\cdot\,\|_{1,\al}$. So, the claim \eqref{eq.crucial-claim-here} is proven.

We apply \eqref{eq.crucial-claim-here} to the $G$-invariant function $F$. Defining the unital positive maps
\begin{multline*}
\Gamma_{\al,n} : L^\infty(X \times \R \times Y) \recht L^\infty(\cW \times \R \times Y) : \\
\Gamma_{\al,n}(H)(\xtil,t,y) = \sum_{g \in G} p_{\al,n}(g,\pi_1(\xtil),t) H(\pi_1(\xtil),t+\gamma_g(x_e)-\gamma_g(x'_e),y) \; ,
\end{multline*}
we find that for all $\al \in (0,\al_0)$,
$$\lim_{n \recht +\infty} \|(1_{\cW} \ot 1 \ot 1) V_0(F) - \Gamma_{\al,n}(F)\|_{1,\al} = 0 \; .$$
Fixing $\al_i \in (0,\al_0)$ such that $\al_i \recht 0$, we can then find a sequence $n_i \recht +\infty$ in $\N$ such that
\begin{equation}\label{eq.alm-ev}
\lim_{i \recht +\infty} \Gamma_{\al_i,n_i}(F)(\xtil,t,y) = F(\pi_0(\xtil),t,y) \quad\text{for a.e.\ $(\xtil,t,y) \in \cW \times \R \times Y$.}
\end{equation}
For fixed $\xtil \in \Xtil$ and $t \in \R$, it follows from \eqref{eq.est-star-star} that
\begin{equation}\label{eq.remove-t}
\lim_{i \recht +\infty} \sum_{g \in G} |p_{\al_i,n_i}(g,\pi_1(\xtil),t) - p_{\al_i,n_i}(g,\pi_1(\xtil),0)| = 0 \; .
\end{equation}
We then define the probability measures $\rho_i(\xtil)$ on $\R$ by
\begin{equation}\label{eq.def-rho-i}
\rho_i(\xtil) = \sum_{g \in G} p_{\al_i,n_i}(g,\pi_1(\xtil),0) \, \delta(\gamma_g(x_e)-\gamma_g(x'_e)) \; .
\end{equation}
Using \eqref{eq.remove-t} and using a convolution product notation, it follows from \eqref{eq.alm-ev} that for a.e.\ $(\xtil,y) \in \cW \times Y$, we have that
\begin{equation}\label{eq.good-alm-ev-conv}
\lim_{i \recht +\infty} \rho_i(\xtil) * F(\pi_1(\xtil), \cdot , y) = F(\pi_0(\xtil),\cdot,y) \quad\text{with a.e.\ convergence on $\R$.}
\end{equation}
Denote by $\tau : \Xtil \recht \Xtil$ the map that exchanges the coordinates $x_e$ and $x'_e$. Note that $\tau(\cW) = \cW$. Denote by $\cZ \subset \cW \times Y$ the subset of all $(\xtil,y) \in \cW \times Y$ such that \eqref{eq.good-alm-ev-conv} holds for both $(\xtil,y)$ and $(\tau(\xtil),y)$, and such that $\per(F(\pi_1(\xtil),\cdot,y))=\{0\}$. Note that $(\cW \times Y) \setminus \cZ$ has measure zero. We prove that for all $(\xtil,y) \in \cZ$, the sequence of probability measures $\rho_i(\xtil)$ converges weakly to a Dirac measure. Note that for all $i \in \N$ and $\xtil \in \cW$, the probability measure $\rho_i(\xtil)$ is supported on the interval $[-2 \log C, 2 \log C]$.

Fix $(\xtil,y) \in \cZ$. Let $\om$ be any weak limit point of the sequence $\rho_i(\xtil)$. Take a subsequence $i_j$ such that $\rho_{i_j}(\xtil) \recht \om$ weakly and such that $\rho_{i_j}(\tau(\xtil))$ converges weakly to some probability measure $\om'$. For a fixed $H \in L^\infty(\R)$, the map $\om \recht \om * H$ is weakly continuous. It thus follows from \eqref{eq.good-alm-ev-conv} that
$$\om * F(\pi_1(\xtil),\cdot,y) = F(\pi_0(\xtil),\cdot,y) \quad\text{and}\quad \om' * F(\pi_1(\tau(\xtil)),\cdot,y) = F(\pi_0(\tau(\xtil)),\cdot,y) \; .$$
The last equation is saying that $\om' * F(\pi_0(\xtil),\cdot,y) = F(\pi_1(\xtil),\cdot,y)$. It thus follows that
$$(\om' * \om) * F(\pi_1(\xtil),\cdot,y) = F(\pi_1(\xtil),\cdot,y) \; .$$
By the Choquet-Deny theorem \cite[Th\'{e}or\`{e}me 1.1]{CD60} and because $\per(F(\pi_1(\xtil),\cdot,y))=\{0\}$, the probability measure $\om' * \om$ must be the Dirac measure at $0$. Therefore, $\om$ must be a Dirac measure satisfying $\om * F(\pi_1(\xtil),\cdot,y) = F(\pi_0(\xtil),\cdot,y)$. Since $\per(F(\pi_1(\xtil),\cdot,y))=\{0\}$, there can be at most one Dirac measure with this property.

We have thus proven that for a.e.\ $\xtil \in \cW$, the probability measures $\rho_i(\xtil)$ converge to the Dirac measure in $\rho(\xtil)$, where $\rho : \cW \recht \R$ is a bounded measurable map and
$$
F(\pi_1(\xtil),t+\rho(\xtil),y) = F(\pi_0(\xtil),t,y) \quad\text{for a.e.\ $(\xtil,t,y) \in \cW \times \R \times Y$.}
$$
We next prove that the map $\rho : \cW \recht \R$ essentially only depends on the coordinates $x_e$ and $x'_e$. Fix $h \neq e$ and $E > 0$. Define
$$\cU_E = \Bigl\{ x \in X_0 \Bigm| E^{-1} \leq \frac{d\mu_{gh}}{d\mu_h}(x) \leq E \;\;\text{for all $g \in G$}\;\Bigr\} \; .$$
We then find $E_1 > 0$ such that for all $\xtil, \ztil \in \cW$ that only differ in the coordinate $h$ and satisfy $x_h,z_h \in \cU_E$, we have
$$p_{\al_i,n_i}(g,\pi_1(\ztil),0) \leq E_1 \, p_{\al_i,n_i}(g,\pi_1(\xtil),0) \quad\text{for all $g \in G$ and all $i \in \N$.}$$
Then, $\rho_i(\ztil) \leq E_1 \, \rho_i(\xtil)$ and thus $\rho(\ztil) = \rho(\xtil)$. Taking $E > 0$ larger and larger, the measure of the complement of $\cU_E$ tends to zero. We can thus conclude that the map $\xtil \mapsto \rho(\xtil)$ does not depend on the $h$'th coordinate. This holds for all $h\neq e$. We thus rewrite $\rho$ as a map from $\cU \times \cU$ to $\R$ satisfying
\begin{equation}\label{eq.looksgood}
F(\pi_1(\xtil),t+\rho(x_e,x'_e),y) = F(\pi_0(\xtil),t,y) \quad\text{for a.e.\ $(\xtil,t,y) \in \cW \times \R \times Y$.}
\end{equation}
Fix $x_0 \in \cU$ such that for a.e.\ $(x,t,y) \in \cV \times \R \times Y$, the equality in \eqref{eq.looksgood} holds for $(x_0,x,t,y)$. Define $\be : \cU \recht \R$ by $\be(x_1) = \rho(x_0,x_1)$. We have found a measurable map $\be : \cU \recht \R$ such that on $\cV \times \R \times Y$, the function $F(x,t+\be(x_e),y)$ is essentially independent of the coordinate $x_e$. By the lack of periodicity of $F(x,\cdot,y)$, such a map $\be$ is essentially unique up to a constant. Enlarging the constant $C > 0$ fixed a the start of the proof, we can define $\be$ on larger and larger subsets $\cU \subset X_0$ and ultimately find a measurable map $\be : X_0 \recht \R$ such that $F(x,t+\be(x_e),y)$ is essentially independent of the coordinate $x_e$.

In the third case, when $\per F(x,\cdot,y) = p \Z$ for a.e.\ $(x,y) \in X \times Y$, we can view $F$ as a function on $X \times \R/p\Z \times Y$, with $\per F(x,\cdot,y)$ being $\{0\}$ as a subgroup of $\R/p\Z$. The proof above can be repeated and we again find a measurable map $\be : X_0 \recht \R$ such that $F(x,t+\be(x_e),y)$ is essentially independent of the coordinate $x_e$.

Denote by $\cR \subset X \times X$ the tail equivalence relation, with $(x,z) \in \cR$ if and only if $x_g = z_g$ for all but finitely many $g \in G$. Define the map
$$\Om : \cR \recht \R : \Om(x,z) = \sum_{g \in G} \bigl( - \gamma_g(x_g) - \be(x_g) + \gamma_g(z_g) + \be(z_g) \bigr) \; .$$
For any subset $\cF \subset G$, we write $(X_\cF,\mu_\cF) = \prod_{g \in \cF}(X_0,\mu_g)$. For every finite subset $\cF \subset G$, we define $\cR_\cF \subset \cR$ by $(x,z) \in \cR_\cF$ if and only if $x_g = z_g$ for all $g \in G \setminus \cF$. We identify $\cR_\cF$ with $X_\cF \times X_\cF \times X_{G \setminus \cF}$. Since $\gamma_e = 0$, we have proven above that
\begin{equation}\label{eq.goal}
F(x,t,y) = F(z,t+\Om(x,z),y)
\end{equation}
for a.e.\ $(x,z) \in \cR_{\{e\}}$ and a.e.\ $(t,y) \in \R \times Y$. Since $F$ is $G$-invariant, it follows that for every finite subset $\cF \subset G$, \eqref{eq.goal} holds for a.e.\ $(x,z) \in \cR_\cF$ and a.e.\ $(t,y) \in \R \times Y$. We conclude that $F$ is invariant under all permutations of $\cF$. Since this holds for every finite subset $\cF \subset G$, the lemma is proven.
\end{proof}

We are now ready to prove Theorems \ref{thm.Krieger-type-Bernoulli} and \ref{thm.rule-out-III0}.

\begin{proof}[{Proof of Theorem \ref{thm.Krieger-type-Bernoulli}}]
Combining Lemma \ref{lem.strongly-conservative} and Proposition \ref{prop.ergodic-general}, it follows that under both hypotheses on $G \actson (X,\mu)$, the action is weakly mixing and Lemma \ref{lem.link-to-permutation-action} is applicable.

Fix an ergodic pmp action $G \actson (Y,\eta)$. Consider the product $G \actson X \times \R \times Y$ of the Maharam extension $G \actson X \times \R$ of $G \actson (X,\mu)$ and the action $G \actson Y$. From Lemma \ref{lem.link-to-permutation-action}, we get that $L^\infty(X \times \R \times Y)^G \subset L^\infty(X \times \R)^{\cS_G} \ovt L^\infty(Y)$.

First assume that the action $G \actson X \times Y$ is semifinite. It follows from Lemma \ref{lem.trivial-flow-result} below that also $\cS_G \actson (X,\mu)$ is semifinite. We apply Theorem \ref{thm.T-invariant-permutation-action}. Either, $\mu \sim \nu^G$ for some probability measure $\nu \sim \mu_e$ on $X_0$, or the second point of Theorem \ref{thm.T-invariant-permutation-action} applies. In the first case, $\nu^G$ is also $G$-invariant and $G \actson X \times Y$ is of type II$_1$. The converse is obvious: whenever $\mu \sim \nu^G$, the action $G \actson X \times Y$ is of type II$_1$.

In the second case, we find a probability measure $\nu \sim \mu_e$ on $X_0$ and subsets $\cU_g \subset X_0$ such that
$$\sum_{g \in G} \mu_g(X_0 \setminus \cU_g) < +\infty \;\; , \;\; \sum_{g \in G} H^2(\mu_g , \nu(\cU_g)^{-1} \nu|_{\cU_g}) < +\infty \quad\text{and}\quad \sum_{g \in G} \nu(X_0 \setminus \cU_g) = +\infty \; .$$
By Remark \ref{rem.maps-sums}, the sum
\begin{equation}\label{eq.map-F-density}
F : X \recht \R : F(x) = \sum_{h \in G} \Bigl( \log \frac{d\mu_h}{d\nu}(x_h) + \log(\nu(\cU_h)) \Bigr)
\end{equation}
is unconditionally a.e.\ convergent and the map
\begin{equation}\label{eq.my-theta}
\theta : X \times \R \recht \R : \theta(x,t) = t - F(x)
\end{equation}
implements the $\R$-equivariant isomorphism $L^\infty(X \times \R)^{\cS_G} \cong L^\infty(\R)$. Since the sum defining $F$ is unconditionally a.e.\ convergent, we have for every $g \in G$ and a.e.\ $x \in X$,
$$F(g \cdot x) = \sum_{h \in G} \Bigl( \log \frac{d\mu_{gh}}{d\nu}(x_h) + \log(\nu(\cU_{gh})) \Bigr) \; .$$
We also know that
$$\log \frac{d(g^{-1} \mu)}{d\mu}(x) = \sum_{h \in G}  \log \frac{d\mu_{gh}}{d\mu_h}(x_h)$$
with unconditional a.e.\ convergence. We conclude that
$$F(g \cdot x) - F(x) - \log \frac{d(g^{-1} \mu)}{d\mu}(x) = \sum_{h \in G} \bigl( \log(\nu(\cU_{gh})) - \log(\nu(\cU_{h})) \bigr)$$
with unconditional a.e.\ convergence. So, the right hand side is absolutely convergent and its limit $\al(g)$ defines a group homomorphism $\al : G \recht \R$. We have also proven that
$$\theta(g \cdot (x,t)) = \theta(x,t) - \al(g) \; .$$
So, defining the action $G \actson \R \times Y$ by $g \cdot (t,y) = (t-\al(g),g \cdot y)$, we have found an $\R$-equivariant identification $L^\infty(X \times \R \times Y)^G \cong L^\infty(\R \times Y)^G$. Recall that we assumed $G \actson X \times Y$ to be semifinite. So there exists an $\R$-equivariant and $G$-invariant factor map $\psi : \R \times Y \recht \R$. The $\R$-equivariance implies that $\psi(t,y) = t + H(y)$ for some measurable map $H : Y \recht \R$. Then the $G$-invariance implies that $H(g\cdot y) =\al(g) + H(y)$ for all $g \in G$ and a.e.\ $y \in Y$. So, the probability measure $H_*(\eta)$ on $\R$ is invariant under the subgroup $\al(G) \subset \R$. This forces $\al(G) = \{0\}$. We have proven that the conclusions of point~2 of the theorem hold.

Conversely, if the conclusions of point~2 hold, as in Remark \ref{rem.maps-sums}, the map $F$ in \eqref{eq.map-F-density} is well defined and $\exp(-F(x)) \, d\mu(x)$ is an infinite $G$-invariant measure on $X$. So, $G \actson X \times Y$ is of type II$_\infty$.

To prove the third statement, assume that $p \neq 0$ and that $2\pi / p$ belongs to the $T$-invariant of $G \actson X \times Y$. Because $L^\infty(X \times \R \times Y)^G \subset L^\infty(X \times \R)^{\cS_G} \ovt L^\infty(Y)$ and because of Lemma \ref{lem.trivial-flow-result}, we get that $2\pi / p$ belongs to the $T$-invariant of $\cS_G \actson X$. We apply Theorem \ref{thm.T-invariant-permutation-action} and make the same reasoning as above. We find the measures $\nu_g$ and the homomorphism $\al : G \recht \R/p\Z$ as in the formulation of the theorem. We also find $H : Y \recht \R / p \Z$ satisfying $H(g\cdot y) = \al(g) + H(y)$ for all $g \in G$ and a.e.\ $y \in Y$. This says that $\exp(2\pi i p^{-1} \al)$ is an eigenvalue for $G \actson (Y,\eta)$. Again, the converse implication holds by construction.
\end{proof}

\begin{lemma}\label{lem.trivial-flow-result}
Let $\R \actson (Z,\zeta)$ be any nonsingular ergodic action. Let $(Y,\eta)$ be any standard probability space. Consider the diagonal action $\R \actson Z \times Y : t \cdot (z,y) = (t \cdot z,y)$.
\begin{enumlist}
\item If there exists a measurable map $\pi : Z \times Y \recht \R$ that is $\R$-equivariant, i.e.\ for every $t \in \R$, we have $\pi(t \cdot z,y) = t + \pi(z,y)$ for a.e.\ $(z,y) \in Z \times Y$, then $\R \actson Z$ is conjugate to $\R \actson \R$.
\item If $s \in \R$ and if $F \in L^\infty(Z \times Y)$ is not a.e.\ zero and satisfies $F(t \cdot z,y) = \exp(its) \, F(z,y)$ for all $t \in \R$ and a.e.\ $(z,y) \in Z \times Y$, then $s$ is an eigenvalue of $\R \actson Z$.
\end{enumlist}
\end{lemma}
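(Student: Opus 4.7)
The plan is to exploit the triviality of the $\R$-action on $Y$ in both parts: differences or moduli of the equivariant objects become $\R$-invariant in the $Z$-variable, and ergodicity then forces them to be constant.

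For part~1, I would first observe that for any $y,y' \in Y$, the function $z \mapsto \pi(z,y) - \pi(z,y')$ is $\R$-invariant in $z$, since the two $t$-shifts cancel. By ergodicity it equals a constant $c(y,y')$, and a Fubini argument produces a fixed $y_0 \in Y$ such that $\pi(z,y) = \vphi(z) + h(y)$ for a.e.\ $(z,y)$, with $\vphi(z) := \pi(z,y_0)$ and $h(y) := c(y,y_0)$. In particular, $\vphi : Z \recht \R$ is measurable and $\R$-equivariant. I would then prove that $\vphi$ implements a conjugacy by considering the retraction $\si : Z \recht Z : \si(z) = (-\vphi(z)) \cdot z$. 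A direct computation gives $\si(t \cdot z) = \si(z)$ for a.e.\ $z$ and every $t$, so $\si$ is $\R$-invariant. Consequently, for every bounded measurable $G$ on $Z$, $G \circ \si$ is $\R$-invariant, hence essentially constant by ergodicity. This forces $\si_* \zeta$ to be a Dirac mass at some $z_0$, i.e.\ $z = \vphi(z) \cdot z_0$ for a.e.\ $z \in Z$.

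To conclude part~1, I would rule out nontrivial isotropy at $z_0$. If the stabilizer were $p\Z$ with $p>0$, then $\R$-equivariance of $\vphi$ on the orbit $\R / p\Z$ would force $\vphi(z) = p + \vphi(z)$, which is absurd; if the stabilizer were all of $\R$, ergodicity would force $Z$ to consist of a single point, again contradicting the $\R$-equivariance of $\vphi$. Hence the orbit map $\R \recht Z : t \mapsto t \cdot z_0$ is a measure-class isomorphism intertwining the two $\R$-actions, proving that $\R \actson (Z,\zeta)$ is conjugate to $\R \actson \R$.

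For part~2, the same averaging principle applies: for each fixed $y \in Y$, $|F(z,y)|^2 = F(z,y) \, \overline{F(z,y)}$ is $\R$-invariant in $z$, since the factors $\exp(\pm its)$ cancel. By ergodicity, it is a.e.\ equal to some nonnegative constant $c(y)^2$. Because $F$ is not a.e.\ zero, Fubini yields a $y_0 \in Y$ with $c(y_0) > 0$. Setting $H(z) := c(y_0)^{-1} F(z,y_0)$ gives $|H| = 1$ a.e.\ and $H(t \cdot z) = \exp(its) \, H(z)$ for a.e.\ $z$ and every $t$, exhibiting $s$ as an eigenvalue of $\R \actson (Z,\zeta)$. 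The main technical nuisance in both parts is juggling null sets that vary with $t$ or $y$, resolved by routine Fubini arguments; the one non-trivial ingredient is the stabilizer analysis in part~1 that promotes essential surjectivity of the orbit map into essential injectivity.
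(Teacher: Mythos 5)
Your proof is correct and takes essentially the same route as the paper. The only tactical differences: in part~1 the paper applies the retraction $(z,y)\mapsto(-\pi(z,y))\cdot z$ directly on $Z\times Y$ instead of first reducing $\pi$ to a map $\vphi$ on $Z$, and it rules out periodicity by noting that a periodic $\R\actson Z\times Y$ is incompatible with an $\R$-equivariant $\pi:Z\times Y\recht\R$; in part~2 it produces the eigenvector on $Z$ by averaging $F(z,\cdot)$ against a test function $K\in L^\infty(Y)$ rather than extracting a section $F(\cdot,y_0)$ after showing $|F(\cdot,y)|$ is a.e.\ constant, but the two devices are interchangeable.
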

\begin{proof}
1.\ The measurable function $F : Z \times Y \recht Z : F(z,y) = (-\pi(z,y))\cdot z$ is $\R$-invariant. Since $\R \actson (Z,\zeta)$ is ergodic, we find a measurable map $H : Y \recht Z$ such that $F(z,y) = H(y)$ a.e., and thus $z = \pi(z,y) \cdot H(y)$ for a.e.\ $(z,y) \in Z \times Y$. Pick $y \in Y$ such that $z = \pi(z,y) \cdot H(y)$ for a.e.\ $z \in Z$. It follows that the measure $\zeta$ is supported on a single orbit of the action $\R \actson Z$. So, the action $\R \actson (Z,\zeta)$ is either periodic, or conjugate to $\R \actson \R$. If the action $\R \actson (Z,\zeta)$ is periodic, also $\R \actson Z \times Y$ is periodic, which is incompatible with the existence of the $\R$-equivariant $\pi : Z \times Y \recht \R$. So, $\R \actson Z$ is conjugate to $\R \actson \R$.

2.\ Take $K \in L^\infty(Y)$ such that the function $H(z) = \int_Y F(z,y) \, K(y) \, d\eta(y)$ is not a.e.\ zero. Then $H$ is an eigenvector with eigenvalue $s$ for the ergodic action $\R \actson (Z,\zeta)$.
\end{proof}

\begin{proof}[{Proof of Theorem \ref{thm.rule-out-III0}}]
We start by proving that if $G$ is nonamenable, then either $\mu \sim \nu^G$ for some probability measure $\nu \sim \mu_e$, or the associated flow of $\cS_G \actson (X,\mu)$ is periodic. So assume that $G$ is nonamenable and that the associated flow of $\cS_G \actson (X,\mu)$ is not periodic. We prove that there exists a probability measure $\nu \sim \mu_e$ such that $\mu \sim \nu^G$.

By Theorem \ref{thm.permutation-condition-periodic}, we find a probability measure $\nu \sim \mu_e$ and a bounded family $t_g \in \R$ such that $\log(d\mu_g / d\nu) - t_g$ converges to zero in measure as $g \recht \infty$. Define the $1$-cocycle $c_g \in \ell^2(G) \ot L^2(X_0,\nu)$ by
$$c_g(h,x) = \sqrt{\frac{d\mu_{g^{-1}h}}{d\nu}}(x) - \sqrt{\frac{d\mu_{h}}{d\nu}}(x) \; .$$
We also consider the $1$-cocycle $\ctil_g \in \ell^2(G) \ot L^2(X_0 \times X_0,\nu \times \nu)$ given by
$$\ctil_g(h,x,y) = c_g(h,x) - c_g(h,y) \; .$$
For every $C > 0$, define
$$\cU_C = \bigl\{x \in X_0 \bigm| |\log(d\mu_g/d\nu)| \leq C \;\;\text{for all $g \in G$}\bigr\} \; .$$
By \eqref{eq.boundedness-weak}, the set $\bigcup_{C > 0} \cU_C$ is conull. Denote $p_C = 1_{\cU_C}$ and $\rho_g = \exp(t_g/2)$. Since $\log(d\mu_g/d\nu) - t_g \recht 0$ in measure, we find that for every $C > 0$,
$$\lim_{h \recht \infty} \Bigl\| \sqrt{\frac{d\mu_h}{d\nu}} \, p_C - \rho_h \, p_C \Bigr\|_{2,\nu} = 0 \; .$$
Defining
$$F_C : G \recht L^2(\cU_C \times \cU_C,\nu \times \nu) : (F_C(h))(x,y) = \sqrt{\frac{d\mu_h}{d\nu}}(x) - \sqrt{\frac{d\mu_h}{d\nu}}(y) \; ,$$
it follows that for every $C > 0$, $\lim_{g \recht \infty} \|F_C\|_{2,\nu \times \nu} = 0$. Note that
$$(1 \ot p_C \ot p_C)\ctil_g(h) = F_C(g^{-1}h) - F_C(h) \; ,$$
meaning that $F_C$ implements an $\ell^2$-cocycle. In \cite[Appendix A]{BK18}, it is proven that a function $H : G \recht \C$ that tends to zero at infinity on a finitely generated nonamenable group and that implements an $\ell^2$-cocycle, must belong to $\ell^2(G)$. For an arbitrary nonamenable group, a $1$-cocycle with values in a multiple of the regular representation that is an approximate coboundary on each finitely generated subgroup, must be a coboundary. We conclude that $(1 \ot p_C \ot p_C)\ctil_g$ is a coboundary for every $C > 0$, and hence that also $\ctil_g$ is a coboundary.

Defining
$$F : G \recht L^2(X_0 \times X_0,\nu \times \nu) : (F(h))(x,y) = \sqrt{\frac{d\mu_h}{d\nu}}(x) - \sqrt{\frac{d\mu_h}{d\nu}}(y) \; ,$$
we have that $\ctil_g(h) = F(g^{-1}h) - F(h)$. Since $\ctil_g$ is a coboundary, we thus find $\xi \in L^2(X_0 \times X_0,\nu \times \nu)$ such that $F - \xi \in \ell^2(G) \ot L^2(X_0 \times X_0,\nu \times \nu)$. So, for every $C > 0$, we get that $F_C - (p_C \ot p_C)\xi \in \ell^2(G) \ot L^2(X_0 \times X_0,\nu \times \nu)$. Since $\|F_C(h)\|_{2,\nu\times \nu} \recht 0$ when $h \recht \infty$, we conclude that $(p_C \ot p_C)\xi = 0$ for all $C > 0$. Hence, $\xi = 0$ and $F \in \ell^2(G) \ot L^2(X_0 \times X_0,\nu \times \nu)$. Since
$$\|F(h)\|_{2,\nu \times \nu}^2 = 2\Bigl(1 - \Bigl(\int_{X_0} \sqrt{\frac{d\mu_h}{d\nu}} \, d\nu\Bigr)^2 \Bigr) \geq 2 H^2(\mu_h,\nu) \; ,$$
we find that
$$\sum_{h \in G} H^2(\mu_h,\nu) < +\infty \; .$$
So, $\mu \sim \nu^G$.

Fix an ergodic pmp action $G \actson (Y,\eta)$. Consider the product $G \actson X \times \R \times Y$ of the Maharam extension $G \actson X \times \R$ of $G \actson (X,\mu)$ and the action $G \actson Y$. As explained at the start of the proof of Theorem \ref{thm.Krieger-type-Bernoulli}, we get $L^\infty(X \times \R \times Y)^G \subset L^\infty(X \times \R)^{\cS_G} \ovt L^\infty(Y)$.

1.\ By the statement proven above, either $\mu \sim \nu^G$, in which case $G \actson X \times Y$ is of type II$_1$, or $\R \actson L^\infty(X \times \R)^{\cS_G}$ is periodic, in which case also the associated flow of $G \actson X \times Y$ is periodic, so that $G \actson X \times Y$ is of type III$_\lambda$ for some $\lambda \in (0,1]$.

2.\ Assume that $G$ is nonamenable and has only one end. Assume that the $T$-invariant of $\cS_G \actson (X,\mu)$ is different from $\{0\}$. Theorem \ref{thm.T-invariant-permutation-action} provides $p \neq 0$, a probability measure $\nu \sim \mu_e$ on $X_0$ and a family $t_g \in [0,p)$ such that
$$\sum_{g \in G} \int_{X_0} d\Bigl( \log \frac{d\mu_g}{d\nu}(x) - t_g , p \Z \Bigr)^2 \, d\mu_g(x) < +\infty \; .$$
We uniquely define $\gamma_g : X_0 \recht \Z$ such that
\begin{equation}\label{eq.bound-1}
-\frac{p}{2} < \log \frac{d\mu_g}{d\nu}(x) - t_g - p \gamma_g(x) \leq \frac{p}{2}
\end{equation}
for all $g \in G$, $x \in X_0$, and conclude that
$$\sum_{g \in G} \int_{X_0} \Bigl( \log \frac{d\mu_g}{d\nu}(x) - t_g -p \gamma_g(x) \Bigr)^2 \, d\mu_g(x) < +\infty \; .$$
For every $C > 0$, define
$$\cU_C = \Bigl\{x \in X_0 \Bigm| \bigl| \log \frac{d\mu_g}{d\nu}(x) \bigr| \leq C \;\;\text{and}\;\; |\gamma_g(x)| \leq C \;\;\text{for all $g \in G$}\;\Bigr\} \; .$$
By \eqref{eq.bound-1} and \eqref{eq.boundedness-weak}, it follows that $\bigcup_{C > 0} \cU_C$ is conull. Since $d\nu / d\mu_g$ stays bounded on $\cU_C$, we get that for all $C > 0$,
\begin{equation}\label{eq.finite-1}
\sum_{h \in G} \int_{\cU_C} \Bigl( \log \frac{d\mu_h}{d\nu}(x) - t_h -p \gamma_h(x) \Bigr)^2 \, d\nu(x) < +\infty \; .
\end{equation}
Since for every $g \in G$,
$$\sum_{h \in G} \int_{X_0} \Bigl( \sqrt{\frac{d\mu_{g^{-1}h}}{d\nu}}(x) - \sqrt{\frac{d\mu_h}{d\nu}}(x) \Bigr)^2 \, d\nu(x) = 2 \, \sum_{h \in G} H^2(\mu_{g^{-1}h},\mu_h) < +\infty$$
and since $\log$ is Lipschitz on sets of the form $[D^{-1},D]$, we find that for every $g \in G$ and $C > 0$,
\begin{equation}\label{eq.finite-2}
\sum_{h \in G} \int_{\cU_C} \Bigl( \log \frac{d\mu_{g^{-1}h}}{d\nu}(x) - \log \frac{d\mu_{h}}{d\nu}(x) \Bigr)^2 \, d\nu(x) < +\infty \; .
\end{equation}
For every $C > 0$, define the function
$$F_C : G \recht L^2(\cU_C,\nu) : (F_C(h))(x) = t_h + p \gamma_h(x) \; .$$
Combining \eqref{eq.finite-1} and \eqref{eq.finite-2}, it follows that $F_C$ implements an $L^2$-cocycle, meaning that
$$\sum_{h \in G} \|F_C(g^{-1}h) - F_C(h)\|_{2,\nu}^2 < +\infty \quad\text{for all $g \in G$.}$$
So, also the function
$$H_C : G \recht L^2(\cU_C \times \cU_C,\nu \times \nu) : (H_C(h))(x,y) = p (\gamma_h(x) - \gamma_h(y))$$
implements an $L^2$-cocycle. In particular, for a.e.\ $(x,y) \in \cU_C \times \cU_C$, the map $h \mapsto (H_C(h))(x,y)$ implements a cocycle with values in $\ell^2(G)$. By construction, this function takes only finitely many values. Since $G$ has only one end, it follows that $h \mapsto (H_C(h))(x,y)$ is constant outside a finite set. We conclude that for all $C > 0$ and a.e.\ $(x,y) \in \cU_C \times \cU_C$, the limit
$$\lim_{h \recht \infty} (\gamma_h(x) - \gamma_h(y))$$
exists. So, this limit exists for a.e.\ $(x,y) \in X_0 \times X_0$. Pick $C > 0$ and $y \in \cU_C$ such that, writing $s_h = \gamma_h(y)$, the limit $\lim_{h \recht \infty} (\gamma_h(x) - s_h)$ exists for a.e.\ $x \in X_0$. Note that $(s_h)$ is a bounded family. From \eqref{eq.finite-1}, we know that for a.e.\ $x \in X_0$,
$$\lim_{h \recht \infty} \Bigl(\log \frac{d\mu_h}{d\nu}(x) - t_h -p \gamma_h(x)\Bigr) = 0 \; .$$
Therefore, for a.e.\ $x \in X_0$, the limit
$$\be(x) = \lim_{h \recht \infty} \Bigl(\log \frac{d\mu_h}{d\nu}(x) - t_h - p s_h\Bigr)$$
exists. We again deduce that $\exp(\be)$ is $\nu$-integrable. We thus find a probability measure $\nu_1 \sim \mu_e$ on $X_0$ and a bounded family $r_h \in \R$ such that
$$\lim_{h \recht \infty} \Bigl(\log \frac{d\mu_h}{d\nu_1}(x) - r_h\Bigr) = 0$$
for a.e.\ $x \in X_0$. So, we are again in the same situation as in the beginning of the proof of the theorem. Since $G$ is nonamenable, we conclude that $\mu \sim \nu_1^G$.

We have thus proven that in the case where $G$ is nonamenable with one end, either $\mu \sim \nu^G$ for some probability measure $\nu \sim \mu_e$ on $X_0$, or $L^\infty(X \times \R)^{\cS_G} = \C 1$. In the latter case, since $L^\infty(X \times \R \times Y)^G \subset L^\infty(X \times \R)^{\cS_G} \ovt L^\infty(Y)$ and since $G \actson Y$ is ergodic, it follows that $G \actson X \times Y$ is of type III$_1$.

3.\ By Theorems \ref{thm.permutation-condition-periodic} and \ref{thm.T-invariant-permutation-action}, either $\mu \sim \nu^G$ or the associated flow of $\cS_G \actson (X,\mu)$ is periodic. In the first case, $G \actson X \times Y$ is of type II$_1$ and in the second case, also the associated flow of $G \actson X \times Y$ is periodic. We have thus ruled out that $G \actson X \times Y$ is of type II$_\infty$ or type III$_0$.

4.\ By Theorem \ref{thm.permutation-condition-periodic}, the associated flow of $\cS_G \actson (X,\mu)$ is periodic. Hence, also $G \actson X \times Y$ has a periodic associated flow.
\end{proof}

\begin{remark}\label{rem.operational-T-invariant}
Assume that we are in the context of Theorem \ref{thm.Krieger-type-Bernoulli}. The following provides a more practical approach to check whether for some $p \neq 0$, $2\pi/p$ belongs to the $T$-invariant of $G \actson X \times Y$. Write $\phi_g = \log(d\mu_g / d\mu_e)$. Since the proof of Theorem \ref{thm.Krieger-type-Bernoulli} invokes Theorem \ref{thm.T-invariant-permutation-action}, note that \eqref{eq.condition-T-inv-perm} provides as a first necessary condition that for a.e.\ $(x,y) \in X_0 \times X_0$, we have that $\phi_g(x)-\phi_g(y) + p \Z$ converges in $\R/p\Z$ to a limit function $\Om(x,y)$ satisfying
\begin{equation}\label{eq.good-condition}
\sum_{g \in G} \int_{X_0 \times X_0} d(\phi_g(x)-\phi_g(y) - \Om(x,y),p\Z)^2 \, d\mu_g(x) \, d\mu_g(y) < +\infty \; .
\end{equation}
This limit function is automatically of the form $\Om(x,y) = \be(x) - \be(y)$ for some measurable $\be : X_0 \recht \R / p\Z$. Lifting $\be$ to a bounded function $\be : X_0 \recht \R$ and adding to $\be$ a constant value if needed, we may assume that $\int_{X_0} \exp(\be(x)) \, d\mu_e(x) = 1$.

Define the probability measure $\nu \sim \mu_e$ by $d\nu / d\mu_e = \exp(\be)$. Write $\psi_g = \log(d\mu_g / d\nu)$. Uniquely define the measurable functions $\gamma_g : X_0 \recht \Z$ such that
$$-p/2 < \psi_g(x) - p \gamma_g(x) \leq p/2 \quad\text{for all $g \in G$, $x \in X_0$.}$$
Since the ratio between $d\mu_g / d\nu$ and $\exp(p \gamma_g)$ is bounded by construction, we can uniquely define probability measures $\nu_g \sim \nu$ and $\rho_g > 0$ such that $d\nu_g/d\nu(x) = \rho_g \exp(p \gamma_g(x))$. Since \eqref{eq.good-condition} holds, we have that
$$\sum_{g \in G} H^2(\mu_g , \nu_g) < +\infty \; .$$
As we saw in the proof of Theorem \ref{thm.Krieger-type-Bernoulli}, the series $\al(g) = \sum_{h \in G} (\log \rho_{gh} - \log \rho_h)+p\Z$ is absolutely convergent in $\R/p\Z$ for all $g \in G$ and defines a group homomorphism $\al : G \recht \R / p \Z$.

Define the subgroup of $\Z$ consisting of all $k \in \Z$ such that $\exp(2\pi i (k/p) \al)$ is an eigenvalue for $G \actson (Y,\eta)$. If this subgroup equals $\{0\}$, the action $G \actson X \times Y$ is of type III$_1$. If this subgroup equals $n \Z$ with $n \in \N$, the action $G \actson X \times Y$ is of type III$_\lambda$ with $\lambda = \exp(-|p|/n)$.

In particular, the action $G \actson X$ is of type III$_1$ if $\al(G) \subset \R / p \Z$ is a dense subgroup, and it is of type III$_\lambda$ with $\lambda = \exp(-|p|/n)$ if $\al(G) = ((p/n)\Z)/p\Z$.
\end{remark}

\section{Nonsingular Bernoulli actions of amenable groups}

The main goal of this section is to prove Theorem \ref{thmA.amenable-all-types}, which will follow immediately from Corollary \ref{cor.amenable-type-III-lambda} and Theorems \ref{thm.amenable-type-III-0} and \ref{thm.amenable-type-II-infty} below.

Before moving to the more delicate case of types II$_\infty$ and III$_0$, we provide a general construction for Bernoulli actions of type III$_\lambda$ with $\lambda \in (0,1)$. In \cite[Proposition 6.8]{BKV19}, the following examples of Bernoulli actions were associated to an almost invariant subset $W \subset G$
$$G \actson \prod_{g \in G} (\{0,1\},\mu_g) \quad\text{where}\;\; \mu_g(0) = \frac{\lambda}{1+\lambda}\;\;\text{if $g \in W$, and}\;\; \mu_g(0) = \frac{1}{1+\lambda}\;\; \text{if $g \not\in W$.}$$
For nonamenable groups with more than one end, as well as for infinite locally finite groups, this gave examples of Bernoulli actions of type III$_\lambda$ with $0 < \lambda < 1$. We now allow for larger base spaces $X_0$ and show that type III$_\lambda$ with $0 < \lambda < 1$ occurs for all amenable groups, in particular for the group of integers. The same result was obtained in the very recent paper \cite{KS20}. Note however that \cite{KS20} is mainly constructing examples, while our Theorems \ref{thm.Krieger-type-Bernoulli} and \ref{thm.rule-out-III0} above, complemented with the examples in Proposition \ref{prop.general-type-III-lambda-construction} below, provide an almost complete picture when an arbitrary nonsingular Bernoulli action is of type III$_\lambda$.

For the following proposition, fix a countably infinite group $G$ and a standard probability space $(X_0,\mu_0)$. Consider the action $G \actson G \times X_0$ given by $g \cdot (h,x) = (gh,x)$. Equip $G \times X_0$ with the measure $\zeta$ given as the product of the counting measure on $G$ and $\mu_0$ on $X_0$. When $A_1 , A_2 \subset G \times X_0$ are measurable subsets, we write $A_1 \sim A_2$ if the symmetric difference $A_1 \symdiff A_2$ has finite measure. When $A \subset G \times X_0$, we denote by $A_g \subset X_0$ the sections consisting of all $x \in X_0$ such that $(g,x) \in A$.

\begin{proposition}\label{prop.general-type-III-lambda-construction}
Let $A \subset G \times X_0$ be a measurable subset. Fix $\lambda \in (0,1)$ and define the probability measures $\mu_g$ on $X_0$ by
$$\frac{d\mu_g}{d\mu_0}(x) = \begin{cases} \rho_g \, \lambda &\;\;\text{if $x \in A_g$,}\\ \rho_g &\;\;\text{if $x \not\in A_g$,}\end{cases} \quad\text{where}\quad \rho_g = (\lambda \mu_0(A_g) + \mu_0(X_0 \setminus A_g))^{-1} \; .$$
Make the following assumptions.
\begin{enumlist}
\item For every $g \in G$, we have $g \cdot A \sim A$.
\item We have that
$$\limsup_{s \recht +\infty} \frac{\log |\{g \in G \mid \zeta(g \cdot A \symdiff A) \leq s\}|}{s} > 3 \lambda^{-2} \; .$$
\item Whenever $B \subset G$ and $\cU \subset X_0$ is measurable, we have $A \not\sim (B \times \cU) \cup (B^c \times \cU^c)$.
\end{enumlist}
Then the Bernoulli action $G \actson (X,\mu) = \prod_{g \in G} (X_0,\mu_g)$ is nonsingular, weakly mixing and of type~III$_\gamma$ with $\gamma \in \{1\} \cup \{\lambda^{1/n} \mid n \in \N\}$.

Write $p = \log \lambda$. Then, $\al(g) = \sum_{h \in G} (\log \rho_{gh} - \log \rho_h) + p \Z$ is absolutely convergent in $\R/p\Z$ and defines a group homomorphism $\al : G \recht \R/p\Z$. If $\al(G) = ((p/n)\Z) / p\Z$ for some $n \in \N$, then $G \actson (X,\mu)$ is of type III$_\gamma$ with $\gamma = \lambda^{1/n}$. If $\al(G) \subset \R/p\Z$ is dense, $G \actson (X,\mu)$ is of type III$_1$.

Under the additional hypotheses that a.e.\ section $A^x \subset G$ is finite and that $\sum_{g \in G} \mu_0(A_g)^2 < +\infty$, we have $\al(G) = \{0\}$ and the action $G \actson (X,\mu)$ is of stable type III$_\lambda$.
\end{proposition}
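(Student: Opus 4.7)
The plan is to deduce the proposition from Theorems \ref{thm.Krieger-type-Bernoulli} and \ref{thm.rule-out-III0} together with Remark \ref{rem.operational-T-invariant}. Each ratio $d\mu_{gh}/d\mu_h$ takes only four values in $[\lambda^2,\lambda^{-2}]$, so the strong boundedness \eqref{eq.boundedness-strong} holds, and a direct computation gives both $H^2(\mu_{gh},\mu_h)$ and $D(\mu_h,\mu_{gh})$ bounded by a $\lambda$-dependent constant times $\mu_0(A_h\symdiff A_{gh})$. Summing over $h$ and using assumption~1, Kakutani's criterion yields nonsingularity, while the numerical constant $3\lambda^{-2}$ in assumption~2 is calibrated so that $|\{g : C(g^{\pm 1}) \leq s\}|$ satisfies the threshold $6$ in \eqref{eq.strong-conservative-assumption}. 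Lemma \ref{lem.strongly-conservative} then provides strong conservativeness, Theorem \ref{thm.Krieger-type-Bernoulli} yields weak mixing, and Theorem \ref{thm.rule-out-III0}(3) rules out types II$_\infty$ and III$_0$.

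Type II$_1$ would force, by Theorem \ref{thm.Krieger-type-Bernoulli}(1), $\mu \sim \nu^G$ for some $\nu \sim \mu_e$ and hence $\sum_g H^2(\mu_g,\nu) < \infty$; since each $d\mu_g/d\mu_0$ takes only two values in a bounded range, a weak-limit argument forces $d\nu/d\mu_0$ itself to be two-valued on a partition $X_0 = \cU \sqcup \cU^c$ and forces each $A_g$ to be close (with summable symmetric differences) to either $\cU$ or $\cU^c$; letting $B = \{g \in G : A_g \sim \cU\}$ would then give $A \sim (B \times \cU)\cup(B^c\times\cU^c)$, contradicting assumption~3. So the action is of type III$_\gamma$ with $\gamma \in (0,1]$. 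To pin down $\gamma$, I would apply Remark \ref{rem.operational-T-invariant} with $p = \log\lambda$: writing $\phi_g(x) = \log(\rho_g/\rho_e) + p\,(1_{A_g}(x) - 1_{A_e}(x))$, the difference $\phi_g(x) - \phi_g(y)$ lies in $p\Z$, so $\Om \equiv 0$ is a valid limit and one takes $\nu = \mu_e$. Adjusting $\log(\rho_g/\rho_e)$ into the fundamental interval $(-|p|/2,|p|/2]$ by an integer multiple of $p$ yields a valid $\gamma_g$ with $\nu_g = \mu_g$ exactly, and the associated homomorphism simplifies to $\al(g) = \sum_h(\log\rho_{gh} - \log\rho_h) + p\Z$; absolute convergence follows from $|\log\rho_{gh} - \log\rho_h| \leq C_\lambda\,\mu_0(A_h\symdiff A_{gh})$ and assumption~1. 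The three cases of Remark \ref{rem.operational-T-invariant} then yield the set $\{1\}\cup\{\lambda^{1/n} : n \in \N\}$.

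For the final claim, the plan is to prove $\al \equiv 0$ as an identity in $\R$ by expanding $\log\rho_h = -\log(1-(1-\lambda)a_h) = \sum_{k\geq 1}((1-\lambda)^k/k)\,a_h^k$, with $a_h = \mu_0(A_h)$. The double sum $\sum_h\sum_{k\geq 1}((1-\lambda)^k/k)|a_{gh}^k - a_h^k|$ is dominated by $K'_\lambda\sum_h\mu_0(A_h\symdiff A_{gh}) < \infty$, so Fubini splits $\sum_h(\log\rho_{gh} - \log\rho_h)$ into $\sum_{k\geq 1}((1-\lambda)^k/k)\sum_h(a_{gh}^k - a_h^k)$. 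For $k \geq 2$, the $\ell^2$-summability $\sum_g a_g^2 < \infty$ together with $a_h \leq 1$ give $\sum_h a_h^k < \infty$, so reindexing $h \mapsto g^{-1}h$ kills the inner sum. For the $k = 1$ term, a second Fubini gives $\sum_h(a_{gh} - a_h) = \int_{X_0}\sum_h(1_{A_{gh}}(x) - 1_{A_h}(x))\,d\mu_0(x)$, and the pointwise inner sum equals $|A^x| - |A^x| = 0$ by the a.e.\ finiteness of $A^x$ and the bijectivity of $h \mapsto gh$. Hence $\al \equiv 0$, and Remark \ref{rem.operational-T-invariant} applied to an arbitrary ergodic pmp $G \actson (Y,\eta)$ gives stable type III$_\lambda$. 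The main subtlety is the $k=1$ term: without $\sum_h a_h < \infty$ pure reindexing fails, and the geometric cancellation via finite sections is essential.
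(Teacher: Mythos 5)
Your first half matches the paper's proof: the estimate $\log\int\frac{d\mu_h}{d\mu_{gh}}\,d\mu_h \le \lambda^{-2}\mu_0(A_h\symdiff A_{gh})$ gives nonsingularity from assumption~1 and $C(g^{\pm 1})\le\frac{\lambda^{-2}}{2}\zeta(g\cdot A\symdiff A)$, so the $3\lambda^{-2}$ threshold yields \eqref{eq.strong-conservative-assumption} and hence strong conservativeness; weak mixing and the exclusion of II$_\infty$/III$_0$ follow from Proposition~\ref{prop.ergodic-general} and Theorem~\ref{thm.rule-out-III0}(3). Your direct exclusion of II$_1$ via $\mu\sim\nu^G$ and assumption~3 is a reasonable variant (the paper handles II$_1$ implicitly through the $T$-invariant). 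Your $\al\equiv 0$ argument — expanding $\log\rho_h=\sum_{k\ge1}\frac{(1-\lambda)^k}{k}\mu_0(A_h)^k$, using $\sum_h\mu_0(A_h)^2<\infty$ to kill the $k\ge2$ terms by reindexing, and using Fubini plus finiteness of the sections $A^x$ to kill the $k=1$ term — is correct and is essentially equivalent to the paper's decomposition $\log\rho_h=(1-\lambda)\mu_0(A_h)+\eps_h$ with $\eps_h\ge0$ summable, combined with $\zeta(g\cdot A\setminus A)=\zeta(A\setminus g\cdot A)$.

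The gap is the step ``The three cases of Remark~\ref{rem.operational-T-invariant} then yield the set $\{1\}\cup\{\lambda^{1/n}\}$.'' Running Remark~\ref{rem.operational-T-invariant} with $p=\log\lambda$ and computing $\al$ only tells you which of the elements $2\pi k/p$, $k\in\Z$, lie in the $T$-invariant; it does not, by itself, rule out $T$-invariant elements outside $(2\pi/p)\Z$. Concretely, without a prior containment $T\text{-invariant}\subset(2\pi/\log\lambda)\Z$, the action could a priori be of type III$_{\lambda^2}$, with $T$-invariant $(\pi/\log\lambda)\Z$ strictly containing $(2\pi/\log\lambda)\Z$, and $\lambda^2\notin\{1\}\cup\{\lambda^{1/n}\,:\,n\in\N\}$. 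The paper establishes this containment by a second, more substantial use of assumption~3, distinct from your exclusion of II$_1$: assuming $2\pi/(k\log\lambda)$ lies in the $T$-invariant with $1\notin k\Z$, the necessary condition \eqref{eq.good-condition} forces $1_{A_g}(x)-1_{A_g}(y)$ to converge in $\R/k\Z$ to $1_\cU(x)-1_\cU(y)$ for some Borel $\cU\subset X_0$; restricting the resulting summability to $\cU\times\cU$ and to $\cU^c\times\cU^c$ and using $1\ne0$ in $\R/k\Z$ gives $\sum_g\bigl(\min\{\mu_0(\cU\cap A_g),\mu_0(\cU\setminus A_g)\}+\min\{\mu_0(\cU^c\cap A_g),\mu_0(\cU^c\setminus A_g)\}\bigr)<\infty$, hence $A\sim(B\times\cU)\cup(B^c\times\cU^c)$, contradicting assumption~3. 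Only after proving $1\in k\Z$ for every such $k$ may one specialize to $k=1$ and invoke the Remark to read off the type; this containment argument is missing from your proposal.
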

\begin{proof}
For all $g,h \in G$, since $\rho_g \leq \lambda^{-1}$, we have that
\begin{align*}
\int_{X_0} \frac{d\mu_h}{d\mu_{gh}} \, d\mu_h &= \frac{\rho_h}{\rho_{gh}} \bigl( \lambda \, \mu_h(A_h \setminus A_{gh}) + \mu_h(A_h \cap A_{gh}) + \mu_h(A_h^c \cap A_{gh}^c) + \lambda^{-1} \, \mu_h(A_{gh} \setminus A_h)\bigr) \\
& \leq \frac{\rho_h}{\rho_{gh}} \bigl( 1 + (\lambda^{-1} - 1) \, \mu_h(A_{gh} \setminus A_h)\bigr) \leq \frac{\rho_h}{\rho_{gh}} \bigl( 1 + (\lambda^{-1} - 1)\lambda^{-1} \, \mu_0(A_{gh} \setminus A_h)\bigr) \; .
\end{align*}
Since $\log(1+t) \leq t$ for all $t \geq 0$ and $|\log a - \log b| \leq \lambda^{-1} |a-b|$ for all $a,b \geq \lambda$, we conclude that
\begin{align*}
\log \int_{X_0} \frac{d\mu_h}{d\mu_{gh}} \, d\mu_h & \leq \lambda^{-1} \, |\mu_0(A_h) - \mu_0(A_{gh})| + (\lambda^{-1} - 1)\lambda^{-1} \, \mu_0(A_{gh} \setminus A_h) \\
& \leq \lambda^{-2} \, \mu_0(A_{gh} \symdiff A_h) \; .
\end{align*}
Using the notation in \eqref{eq.notation-C}, we find that
$$C(g^{\pm 1}) \leq \frac{\lambda^{-2}}{2} \, \zeta(g \cdot A \symdiff A) < +\infty \; .$$
By assumption~1, the Bernoulli action $G \actson (X,\mu)$ is nonsingular. By assumption~2 and Lemma \ref{lem.strongly-conservative}, the action is strongly conservative. By Proposition \ref{prop.ergodic-general}, the action is weakly mixing. Also, Theorems \ref{thm.Krieger-type-Bernoulli} and \ref{thm.rule-out-III0} apply. By point~3 of Theorem \ref{thm.rule-out-III0}, we know that $G \actson (X,\mu)$ is either of type II$_1$ or of type III$_\mu$ for some $\mu \in (0,1]$. The precise type is then determined by the $T$-invariant and we use Remark \ref{rem.operational-T-invariant} to determine this $T$-invariant.

Let $k \in \R \setminus \{0\}$ such that $2\pi / (k \log \lambda)$ belongs to the $T$-invariant of $G \actson (X,\mu)$. We prove that $1 \in k \Z$. Assume the contrary. Denote $p_g = 1_{A_g}$. By Remark \ref{rem.operational-T-invariant}, for a.e.\ $(x,y) \in X_0 \times X_0$, the family $p_g(x) - p_g(y)$ converges in $\R / k \Z$ as $g \recht \infty$.
The only possible limit function is of the form $(x,y) \mapsto p(x) - p(y)$ where $p = 1_\cU$ for a Borel set $\cU \subset X_0$. By \eqref{eq.good-condition} and the uniform boundedness of $d\mu_0/d\mu_g$, we have that
\begin{equation}\label{eq.aa}
\sum_{g \in G} (\mu_0 \times \mu_0)\bigl(\{(x,y) \in X_0 \times X_0 \mid p_g(x) - p(x) \neq p_g(y) - p(y) \}\bigr) < +\infty \; .
\end{equation}
We restrict the subsets in \eqref{eq.aa} to $(x,y) \in \cU \times \cU$ and $(x,y) \in \cU^c \times \cU^c$. Since $1 \neq 0$ in $\R/k\Z$, we find that
$$\sum_{g \in G} \bigl(\min\{\mu_0(\cU \cap A_g),\mu_0(\cU \setminus A_g)\} + \min \{\mu_0(\cU^c \cap A_g),\mu_0(\cU^c \setminus A_g)\}\bigr) < + \infty \; .$$
It follows that $A \sim (B \times \cU) \cup (B^c \times \cU^c)$ for some subset $B \subset G$, contrary to assumption~3 of the proposition. So, we have proven that $1 \in k \Z$.

Taking $k = 1$, as explained in Remark \ref{rem.operational-T-invariant}, the series defining $\al$ is absolutely convergent and determines the type as stated in the proposition.

Finally assume that the additional hypotheses hold. We prove that $\al(G) = \{0\}$. The additional hypotheses imply that
$$\log \rho_h = (1-\lambda) \mu_0(A_h) + \eps_h \quad\text{where $\eps_h \geq 0$ satisfies}\;\;\sum_{h \in G} \eps_h < +\infty \; .$$
Then, $\sum_{h \in G} (\eps_{gh} - \eps_h) = 0$ for all $g \in G$, while
$$\sum_{h \in G} (\mu_0(A_{gh}) - \mu_0(A_h)) = \sum_{h \in G} \bigl(\mu_0(A_{gh} \setminus A_h) - \mu_0(A_h \setminus A_{gh})\bigr) \; .$$
But,
$$\sum_{h \in G} \mu_0(A_{gh} \setminus A_h) = \zeta(g^{-1} \cdot A \setminus A) \quad\text{and}\quad \sum_{h \in G} \mu_0(A_h \setminus A_{gh}) = \zeta(A \setminus g^{-1} \cdot A) \; .$$
Both values are finite. To conclude the proof of the proposition, it thus suffices to prove that $\zeta(g \cdot A \setminus A) = \zeta(A \setminus g \cdot A)$ for all $g \in G$. Note that
$$\zeta(g \cdot A \setminus A) = \int_{X_0} |g A^x \setminus A^x| \, d\mu_0(x) \; .$$
Since for a.e.\ $x \in X_0$, the set $A^x$ is finite, we have
$$|g A^x \setminus A^x| = |g A^x \cup A^x| - |A^x| = |gA^x \cup A^x| - |g A^x| = |A^x \setminus g A^x| \; .$$
It follows that $\zeta(g \cdot A \setminus A) = \zeta(A \setminus g \cdot A)$.
\end{proof}

\begin{corollary}\label{cor.amenable-type-III-lambda}
Let $G$ be an infinite amenable group and $\lambda \in (0,1)$. Then $G$ admits a nonsingular Bernoulli action that is weakly mixing and of stable type III$_\lambda$, with countably infinite base space $X_0$.
\end{corollary}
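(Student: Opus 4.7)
The plan is to apply Proposition~\ref{prop.general-type-III-lambda-construction} together with its additional hypotheses, taking $X_0 = \N$ with the geometric measure $\mu_0(\{n\}) = (1-\alpha)\alpha^n$ for some $\alpha \in (0,1)$ chosen sufficiently small in terms of $\lambda$. I take $A$ to be the ``staircase'' $A = \bigsqcup_{n \geq 1} F_n \times \{n\}$, where $F_1 \subset F_2 \subset \cdots$ is a nested F{\o}lner sequence in $G$ with cardinalities comparable to $\alpha^{-n}$ and satisfying $|s F_n \symdiff F_n|/|F_n| \leq 1/n^2$ for all $s$ in a prescribed exhaustion $S_1 \subset S_2 \subset \cdots$ of $G$ by finite sets with $|S_n| \geq e^{Kn}$ for a constant $K > 6 \lambda^{-2}$. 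Existence of such $(F_n)$ uses amenability of $G$, which provides F{\o}lner sets with any prescribed F{\o}lner constant on any given finite subset, of arbitrarily large cardinality.

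Most of the hypotheses are straightforward. Splitting $\zeta(g \cdot A \symdiff A) = \sum_n c_n |g F_n \symdiff F_n|$ at $n = n_0(g) := \min\{n : g \in S_n\}$ gives a head bounded by $2(1-\alpha) n_0(g)$ (from $c_n |F_n| = O(1)$ together with the trivial bound $|g F_n \symdiff F_n| \leq 2 |F_n|$) and a tail bounded by $O(\sum_{n \geq n_0(g)} 1/n^2)$, so the nonsingularity hypothesis~1 of Proposition~\ref{prop.general-type-III-lambda-construction} holds. The non-product hypothesis~3 is automatic from the staircase structure: if $A \sim (B \times \cU) \cup (B^c \times \cU^c)$, then for every $n$ one of $F_n \symdiff B$, $F_n \symdiff B^c$ must be finite, which since $F_n$ is finite forces $B$ or $B^c$ to be finite; but then the weighted sum $\sum_n c_n |F_n \symdiff B^{(c)}|$ diverges since $|F_n| \recht \infty$ while $|B^{(c)}|$ is bounded. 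For the additional hypotheses of the proposition, each section $A^n = F_n$ is finite by design, and by nestedness $\mu_0(A_g) = \alpha^{n_1(g)}$ where $n_1(g) = \min\{n : g \in F_n\}$, so $\sum_g \mu_0(A_g)^2 \leq \sum_n |F_n| \alpha^{2n} = O(\sum_n \alpha^n) < \infty$.

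The main obstacle is the growth condition (hypothesis~2), $\limsup_s \log |\{g : \zeta(g \cdot A \symdiff A) \leq s\}|/s > 3 \lambda^{-2}$. The head/tail estimate above yields $\zeta(g \cdot A \symdiff A) \leq 2(1-\alpha) n_0(g) + O(1)$, so this set contains $S_{\lfloor (s - O(1))/(2(1-\alpha)) \rfloor}$ of cardinality at least $e^{Ks/(2(1-\alpha)) - O(1)}$, giving $\limsup_s \log N(s)/s \geq K/(2(1-\alpha)) > 3 \lambda^{-2}$ by the choice of $K$. The delicate point is the compatibility between $\alpha$, $K$ and the required F{\o}lner cardinalities: achieving F{\o}lner constant $1/n^2$ on a set of cardinality $e^{Kn}$ forces $|F_n|$ to grow with $|S_n|$, so $\alpha$ must be chosen small enough that $\alpha^{-n}$ comfortably dominates this F{\o}lner threshold; conversely, the square-summability constraint $|F_n| = o(\alpha^{-2n})$ is loose because $\alpha^{-2n}$ is exponentially larger than $\alpha^{-n}$. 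For any $\lambda \in (0,1)$ such a joint choice of parameters exists, and Proposition~\ref{prop.general-type-III-lambda-construction} then yields a weakly mixing Bernoulli action of stable type III$_\lambda$, as required.
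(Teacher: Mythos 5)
Your construction is structurally the same as the paper's: a staircase $A = \{(g,n) : g \in F_n\}$ built from a nested F\o lner sequence $F_n$ adapted to an exhaustion $S_n$ of $G$, fed into Proposition~\ref{prop.general-type-III-lambda-construction}. The hypothesis checks (nonsingularity via the head/tail split at $n_0(g)$, the non-product condition from $\zeta(A) = +\infty$, finiteness of sections, square summability) are all the right ones. But there is a genuine gap, and it is precisely at the ``delicate point'' you flagged and then waved away.

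The problem is the geometric base measure $\mu_0(\{n\}) = (1-\alpha)\alpha^n$. Your head estimate needs $\mu_0(\{n\})\,|F_n| = O(1)$, i.e.\ $|F_n| = O(\alpha^{-n})$, so $|F_n|$ may grow at most exponentially with a rate prescribed in advance by the fixed $\alpha$. On the other side, $F_n$ must have F\o lner constant $1/n^2$ on $S_n \ni e$, hence in particular F\o lner constant $1/n^2$ on the generating set. The minimal cardinality of such a set is governed by the F\o lner function of $G$, \emph{not} by $|S_n|$ as you seem to assume when you write ``forces $|F_n|$ to grow with $|S_n|$''. For many amenable groups the F\o lner function is super-exponential: for $G = \Z \wr \Z$ it is roughly $k \mapsto k^k$ (Erschler), so a F\o lner set with constant $1/n^2$ already has cardinality $\gtrsim n^{2n^2}$, which is not $O(\alpha^{-n})$ for \emph{any} $\alpha \in (0,1)$. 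The assertion ``For any $\lambda \in (0,1)$ such a joint choice of parameters exists'' is therefore false in general, and the proof does not cover all infinite amenable groups.

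The fix is to stop prescribing the decay rate of $\mu_0$ in advance and instead let $\mu_0$ adapt to the F\o lner sizes: take $\mu_0(\{n\}) = \rho^{-1}|F_n|^{-1}$ with $\rho = \sum_n |F_n|^{-1}$ (well-defined as soon as $|F_n| \geq 2|F_{n-1}|$). Then $\mu_0(\{n\})\,|F_n| = \rho^{-1}$ is automatically constant, the head estimate gives $\zeta(g\cdot A \symdiff A) \leq 2\rho^{-1}\,n_0(g)$ regardless of how fast $|F_n|$ grows, and the square summability $\sum_h \mu_0(A_h)^2 \leq 4\rho^{-2}\sum_n |F_n|^{-1} < \infty$ also follows for free from $|F_n| \geq 2|F_{n-1}|$. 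This removes the tension entirely, since there is now no upper constraint on $|F_n|$. It is also cleaner to take $|\cG_n|$ growing so that $(\log|\cG_n|)/n \to +\infty$ (always possible for any countably infinite $G$); then the $\limsup$ in hypothesis~2 is $+\infty$ and one does not have to track the constant $K > 6\lambda^{-2}$ against the head factor $2(1-\alpha)$.
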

\begin{proof}
Fix an increasing sequence of finite subsets $\cG_n \subset G$ such that $(\log |\cG_n|)/n \recht +\infty$ and $\bigcup_n \cG_n = G$. Choose finite F{\o}lner sets $\cF_n \subset G$ such that
$$\frac{|g \cF_n \symdiff \cF_n|}{|\cF_n|} \leq 2^{-n} \quad\text{for all $g \in \cG_n$}$$
and make this choice such that $|\cF_1| \geq 2$, $\cF_{n-1} \subset \cF_n$ and $|\cF_n| \geq 2 \, |\cF_{n-1}|$ for all $n \geq 2$. Take $X_0 = \N$ and define the probability measure $\mu_0$ on $X_0$ by
$$\mu_0(n) = \rho^{-1} \, |\cF_n|^{-1} \quad\text{where}\quad \rho = \sum_{n=1}^\infty |\cF_n|^{-1} \; ,$$
which is well defined because $|\cF_n| \geq 2^n$. Define the subset $A \subset G \times X_0$ by $(g,n) \in A$ if and only if $g \in \cF_n$.

Fix $\lambda \in (0,1)$. We prove that all hypotheses of Proposition \ref{prop.general-type-III-lambda-construction} are satisfied. For every $g \in \cG_n$, we have
\begin{align*}
\zeta(g \cdot A \symdiff A) &= \sum_{k=1}^\infty |g \cF_k \symdiff \cF_k| \, \mu_0(k) \leq \rho^{-1} \sum_{k=1}^\infty \frac{|g \cF_k \symdiff \cF_k|}{|\cF_k|} \\
&\leq \rho^{-1} \sum_{k=1}^{n-1} 2 + \rho^{-1} \sum_{k=n}^\infty 2^{-k} \leq 2\rho^{-1} n \; .
\end{align*}
So assumption~1 of Proposition \ref{prop.general-type-III-lambda-construction} holds. Writing $s_n = 2\rho^{-1} n$, since $(\log |\cG_n|)/n \recht +\infty$, also assumption~2 holds. By construction, $\zeta(A) =+\infty$. Whenever $\cU \subset X_0$ and $B \subset G$, the symmetric difference of $A \cap (G \times \cU)$ and $B \times \cU$ has infinite measure, unless both $B$ and $\cU$ are finite. Therefore, assumption~3 holds.

By construction, the sections $A^n \subset G$ are given by the finite subsets $\cF_n \subset G$. When $n \in \N$ and $h \in \cF_n \setminus \cF_{n-1}$, we have $A_h = [n,+\infty)$, so that
$$\mu_0(A_h) \leq \rho^{-1} \sum_{k=n}^\infty |\cF_k|^{-1} \leq 2 \rho^{-1} \, |\cF_n|^{-1}$$
because $|\cF_m| \geq 2 |\cF_{m-1}|$ for all $m \geq 2$. It follows that
$$\sum_{h \in \cF_n \setminus \cF_{n-1}} \mu_0(A_h)^2 \leq 4 \rho^{-2} \frac{|\cF_n \setminus \cF_{n-1}|}{|\cF_n|^2} \leq 4 \rho^{-2} |\cF_n|^{-1} \; .$$
We conclude that $\sum_{h \in G} \mu_0(A_h)^2 <+\infty$, so that the corollary is proven.
\end{proof}

\begin{example}\label{ex.integers-type-III-lambda}
Let $\lambda \in (0,1)$. For every $k \in \Z$, define the probability measure $\mu_k$ on $\N$ by
$$\mu_k(n) = \|\xi_k\|_1^{-1} \, \xi_k(n) \quad\text{where}\quad \xi_k(n) = \begin{cases} 2^{-n^2} &\;\;\text{if $2^{n^2} < |k|$,}\\ \lambda \, 2^{-n^2} &\;\;\text{if $2^{n^2} \geq |k|$.}\end{cases}$$
Then, $\Z \actson \prod_{k \in \Z} (\N,\mu_k)$ is a nonsingular Bernoulli action that is weakly mixing and of stable type III$_\lambda$.
\end{example}

To streamline the notations in the proof of Theorems \ref{thm.amenable-type-III-0} and \ref{thm.amenable-type-II-infty}, we formulate the following elementary lemma.

\begin{lemma}\label{lem.function-N-Folner}
Let $G$ be a countable group and $\cG_n \subset G$ an increasing sequence of subsets satisfying $\cG_n = \cG_n^{-1}$. Let $\cF_n \subset G$ be another increasing sequence of subsets satisfying $\cF_0 = \{e\}$ and $\cG_n \cF_{n-1} \subset \cF_n$ for all $n \geq 1$. Assume that $\bigcup_{n=0}^\infty \cF_n = G$ and define the function
\begin{equation}\label{eq.def-function-N}
N : G \recht \N \cup \{0\} : \begin{cases} N(g) = 0 &\;\; \text{if $g = e$,} \\ N(g) = n &\;\;\text{if $n \geq 1$ and $g \in \cF_n \setminus \cF_{n-1}$.}\end{cases}
\end{equation}
For every $n \geq 1$ and $g \in \cG_n$, we have that the set $\{h \in G \mid N(gh) \neq N(h)\}$ is contained in
$$\cF_{n-1} \cup g^{-1}\cF_{n-1} \cup \bigcup_{k=n}^\infty (\cF_k \setminus g^{-1} \cF_k) \cup \bigcup_{k=n}^\infty (g^{-1} \cF_k \setminus \cF_k) \; .$$
Moreover, for every $k \geq n \geq 1$ and $g \in \cG_n$, we have that
\begin{align*}
& \cF_k \setminus g^{-1} \cF_k = \{h \in G \mid N(h) = k \;\; , \;\; N(gh) = k+1\} \quad\text{and}\\
& g^{-1}\cF_k \setminus \cF_k = \{h \in G \mid N(h) = k+1 \;\;,\;\; N(gh) = k\} \; .
\end{align*}
\end{lemma}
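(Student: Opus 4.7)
The plan is to prove both assertions by direct case analysis on the values of $N(h)$ and $N(gh)$, using the single structural inequality $\cG_{k+1}\cF_k \subseteq \cF_{k+1}$ (which holds for every $k \geq 0$: for $k = n-1$ it is the hypothesis, and for $k \geq n$ it follows from the fact that $\cG_n \subseteq \cG_{k+1}$). Combined with $\cG_n = \cG_n^{-1}$, this gives the key monotonicity: if $g \in \cG_n$ and $h \in \cF_k$ with $k \geq n-1$, then both $gh$ and $g^{-1}h$ lie in $\cF_{k+1}$. So multiplying by an element of $\cG_n$ can change $N$ by at most one as soon as $N$ is at least $n$.

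For the first inclusion, I would fix $g \in \cG_n$ and $h \in G$ with $N(gh) \neq N(h)$, and split into cases. If $N(h) \leq n-1$, then $h \in \cF_{n-1}$. If $N(gh) \leq n-1$, then $gh \in \cF_{n-1}$, i.e.\ $h \in g^{-1}\cF_{n-1}$. Otherwise, set $k = N(h)$ and $k' = N(gh)$, both $\geq n$, and WLOG assume $k < k'$. Then $h \in \cF_k$, while $h \notin g^{-1}\cF_k$ (since $gh \notin \cF_k$ because $N(gh) = k' > k$), placing $h$ in $\cF_k \setminus g^{-1}\cF_k$ with $k \geq n$. The symmetric case $k > k'$ puts $h$ in $g^{-1}\cF_{k'} \setminus \cF_{k'}$ with $k' \geq n$.

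For the second part, I would prove the identity $\cF_k \setminus g^{-1}\cF_k = \{h : N(h) = k,\ N(gh) = k+1\}$ by double inclusion. The inclusion $\supseteq$ is immediate from the definition of $N$. For $\subseteq$, take $h \in \cF_k \setminus g^{-1}\cF_k$; then $N(h) \leq k$ and $N(gh) \geq k+1$. The monotonicity above applied to $gh = g \cdot h$ gives $gh \in \cG_{k+1}\cF_k \subseteq \cF_{k+1}$, so $N(gh) \leq k+1$, forcing $N(gh) = k+1$. To rule out $N(h) \leq k-1$, I would assume $h \in \cF_{k-1}$; then since $g \in \cG_n \subseteq \cG_k$ and $\cG_k \cF_{k-1} \subseteq \cF_k$, we would get $gh \in \cF_k$, contradicting $h \notin g^{-1}\cF_k$. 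Hence $N(h) = k$. The twin equality $g^{-1}\cF_k \setminus \cF_k = \{h : N(h) = k+1,\ N(gh) = k\}$ follows by the same argument applied to $g^{-1} \in \cG_n$, or equivalently by substituting $h' = gh$.

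There is no real obstacle here; the only thing to keep track of carefully is the range of indices. The hypothesis $\cG_n = \cG_n^{-1}$ is used to handle $g^{-1}$ on equal footing with $g$, and the compatibility condition $\cG_n\cF_{n-1} \subseteq \cF_n$ (combined with the monotonicity of $\cG_\bullet$ and $\cF_\bullet$) is what guarantees that $N(gh)$ and $N(h)$ differ by at most one once both are at least $n$, which is the content of both conclusions.
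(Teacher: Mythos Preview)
Your proof is correct and is exactly the argument the paper has in mind: the paper's own proof is the single sentence ``straightforward, repeatedly using that $g\cF_k \subset \cF_{k+1}$ and $g^{-1}\cF_k \subset \cF_{k+1}$ for all $k \geq n-1$ and $g \in \cG_n = \cG_n^{-1}$,'' and you have simply spelled out the case analysis that this sentence summarizes.
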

\begin{proof}
The proof is straightforward, repeatedly using that $g \cF_k \subset \cF_{k+1}$ and $g^{-1} \cF_k \subset \cF_{k+1}$ for all $k \geq n-1$ and $g \in \cG_n = \cG_n^{-1}$.
\end{proof}

We are now ready to construct nonsingular Bernoulli actions of type III$_0$ and type II$_\infty$.

\begin{theorem}\label{thm.amenable-type-III-0}
Every infinite amenable group admits a nonsingular Bernoulli action that is weakly mixing and of stable type III$_0$.
\end{theorem}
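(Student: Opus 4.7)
The plan is to reduce the problem, via Theorem~\ref{thm.Krieger-type-Bernoulli} together with Lemmas~\ref{lem.link-to-permutation-action} and~\ref{lem.link-to-tail-boundary}, to constructing a sequence of probability measures on $\R$ whose tail boundary flow is properly ergodic, and then to realise this sequence as the pushforward of the Radon-Nikodym log-derivatives of a nonsingular Bernoulli action on an infinite base space. Note first that Theorem~\ref{thm.rule-out-III0}(3) forces the construction to use an infinite base space on which strong boundedness~\eqref{eq.boundedness-strong} fails (only weak boundedness~\eqref{eq.boundedness-weak} holding), since otherwise type III$_0$ is ruled out automatically.

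I would set up the combinatorial scaffolding exactly as in Corollary~\ref{cor.amenable-type-III-lambda}: fix nested F{\o}lner sets $(\cF_n)$ with $\cG_n = \cG_n^{-1}$, $\cG_n\cF_{n-1}\subset\cF_n$, and $(\log|\cG_n|)/n \recht \infty$, and let $N$ be the level function of Lemma~\ref{lem.function-N-Folner}. Take $X_0 = \N$ with $\mu_0(n) = \rho^{-1}|\cF_n|^{-1}$, and define $\mu_g$ by reweighting the atom $n$ of $\mu_0$ by a factor $\lambda_n \in (0,1)$ when $g \in \cF_n$, with subsequent normalisation. Provided the $\lambda_n$ stay bounded away from $0$, the Kakutani estimate~\eqref{eq.kakutani}, the growth condition~\eqref{eq.strong-conservative-assumption}, and weak boundedness~\eqref{eq.boundedness-weak} can all be verified by the same computations as in Corollary~\ref{cor.amenable-type-III-lambda}. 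Proposition~\ref{prop.ergodic-general} then gives weak mixing, and Theorem~\ref{thm.Krieger-type-Bernoulli} becomes applicable, so that the Krieger type is entirely controlled by the tail boundary flow described in Lemma~\ref{lem.link-to-tail-boundary}.

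The hard part is to choose the ratios $(\lambda_n)$ so that this tail boundary flow is properly ergodic. Three things must be verified for the pushforward measures $\zeta_n$: (i) via Proposition~\ref{prop.T-set-tail}(1), that $\sum_n \int T_\kappa(t - t_n)^2\,d\zeta_n = +\infty$ for every sequence $(t_n) \subset \R$, ruling out types II$_1$ and II$_\infty$; (ii) via Proposition~\ref{prop.T-set-tail}(2) applied to every $p \neq 0$, that the measures do not concentrate along any translate of $p\Z$, ruling out type III$_\lambda$ with $\lambda \in (0,1)$; and (iii) that nevertheless the tail $\si$-algebra is nontrivial, ruling out type III$_1$. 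The natural model is a Krieger-style ITPFI$_0$ construction: arrange the levels into blocks on which the $\lambda_n$ take values tending slowly to $1$, with block multiplicities large enough that the variances sum to infinity on every bounded window (securing (i)), but with means shifting between blocks in such a way that none of the concentration hypotheses of Theorem~\ref{thm.tail-periodic-criterion}(2)--(5) can hold simultaneously (securing (ii)), while the total variance still fails to be summable (securing (iii)).

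Stability under product with an arbitrary ergodic pmp action $G \actson (Y,\eta)$ should then follow from the relative statement in Theorem~\ref{thm.Krieger-type-Bernoulli}: once the Bernoulli associated flow is properly ergodic with trivial $T$-invariant, the eigenvalue condition in Theorem~\ref{thm.Krieger-type-Bernoulli}(3) cannot be satisfied for any $p \neq 0$ no matter what eigenvalues $G \actson (Y,\eta)$ has, and the semifiniteness conditions of points~1 and~2 fail verbatim, so $G \actson X \times Y$ remains of type III$_0$.
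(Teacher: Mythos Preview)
Your proposal contains a genuine contradiction. You correctly observe in the first paragraph that strong boundedness \eqref{eq.boundedness-strong} must fail, since otherwise Theorem~\ref{thm.rule-out-III0}(3) rules out type III$_0$. But the concrete construction you then describe---reweighting atom $n$ by a factor $\lambda_n\in(0,1)$ with the $\lambda_n$ ``bounded away from $0$'' and ``tending slowly to $1$''---actually \emph{satisfies} strong boundedness. Indeed, if $\lambda_n\in[\delta,1)$ for all $n$, then all normalising constants $\rho_g$ lie in $[1,\delta^{-1}]$, and one checks directly that $d\mu_g/d\mu_e$ takes values in $[\delta,\delta^{-2}]$ for every $g$ and every atom. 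So Theorem~\ref{thm.rule-out-III0}(3) applies and the action cannot be of type III$_0$. The flaw is not merely a matter of tuning parameters: any scheme in which the reweighting factors stay uniformly bounded will run into the same obstruction.

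The paper's construction avoids this by going in the opposite direction: the ratios $\lambda_n=2^{2^n}$ grow without bound, but are applied only on sets $X_0\setminus\cU_n$ of $\nu$-measure $\eps_n=|\cF_n|^{-1}\to 0$, with the $\cU_n$ increasing to $X_0$. For each fixed $x\in X_0$ one eventually has $x\in\cU_n$, so weak boundedness \eqref{eq.boundedness-weak} holds; but the unbounded $\lambda_n$ make strong boundedness fail. More importantly, the paper does not attempt your tail-boundary analysis at all. The special relation $\lambda_n^2=\lambda_{n+1}$ is used to exhibit, for every $n$, an equivalent probability measure $\tilde\mu\sim\mu$ whose Radon--Nikodym cocycle under $G$ takes values in $2^{2^n\Z}$; since this holds for all $n$, Krieger's ratio set is contained in $\{0,1\}$, and this argument applies verbatim to $G\actson X\times Y$ because the pmp factor contributes nothing to the cocycle. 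Non-semifiniteness is proved separately by a short direct computation. This ratio-set approach is both simpler and more robust than trying to engineer a properly ergodic tail boundary flow by hand.
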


\begin{proof}
For every $n \in \N$, write $\lambda_n = 2^{2^n}$. Note that $\lambda_n^2 = \lambda_{n+1}$. Fix an increasing sequence of finite subsets $\cH_n \subset G$ with $\bigcup_{n=0}^\infty \cH_n = G$ and $\cH_0 = \{e\}$. Put $\cF_0 = \{e\}$ and $\cG_0 = \emptyset$. We inductively construct finite subsets $\cF_n \subset G$ and $\cG_n \subset G$.

Assume that $n \geq 1$ and that $\cF_{n-1}$ and $\cG_{n-1}$ have been fixed. Choose a large enough finite subset $\cG_n \subset G$ such that $\cH_n \subset \cG_n$, $\cG_n^{-1} = \cG_n$ and $|\cG_n| \geq \exp(n \cdot |\cF_{n-1}|)$. Then choose a large enough finite F{\o}lner set $\cF_n \subset G$ such that
$$\cG_n \cF_{n-1} \cup \cF_{n-1} \subset \cF_n \quad , \quad |\cF_n| \geq \max\bigl\{2^n \lambda_{n+1} , \lambda_n \, |\cF_{n-1}| \bigr\} \quad , \quad \frac{|\cG_n \cF_n \setminus \cF_n|}{|\cF_n|} \leq 2^{-n} \lambda_{n+1}^{-1} \; .$$
Write $\eps_n = |\cF_n|^{-1}$ and $\gamma_n = \lambda_{n+1} \, \eps_n$. By construction, $\gamma_n$ and $\eps_n$ are decreasing sequences and $\gamma_n \leq 1$ for all $n$.

Fix any probability space $(X_0,\nu)$ with an increasing sequence of subsets $\cU_n \subset X_0$ such that $\nu(X_0 \setminus \cU_n) = \eps_n$. Define the probability measures $\mu_n \sim \nu$ by
$$\frac{d\mu_n}{d\nu}(x) = \begin{cases} \rho_n^{-1} &\;\;\text{if $x \in \cU_n$}, \\ \rho_n^{-1} \, \lambda_n &\;\;\text{if $x \in X_0 \setminus \cU_n$,}\end{cases} \quad\text{where}\quad \rho_n = 1 + \eps_n(\lambda_n - 1) \; .$$
Note that $1 \leq \rho_n \leq 1 + \gamma_n \leq 2$. In particular, for a.e.\ $x \in X_0$, we have that
\begin{equation}\label{eq.local-bound-example}
\sup_n \bigl|\log \frac{d\mu_n}{d\nu}(x)\bigr| < + \infty \; .
\end{equation}
For all $0 \leq n < m$, we have that
\begin{align*}
\int_{X_0} \frac{d\mu_n}{d\mu_m} \, d\mu_n &= \rho_n^{-2} \, \rho_m \, (1-\eps_n + \lambda_n^2(\eps_n - \eps_m) + \lambda_n^2 \lambda_m^{-1} \eps_m) \\ &\leq (1+\gamma_m)(1 + \gamma_n) \leq 1 + 3 \gamma_n \leq \exp(3 \gamma_n) \; .
\end{align*}
Similarly, for all $0 \leq n < m$,
$$
\int_{X_0} \frac{d\mu_m}{d\mu_n} \, d\mu_m \leq \exp(3 \gamma_n) \; .
$$
Define the map $N : G \recht \N \cup \{0\}$ by \eqref{eq.def-function-N}. We prove that
$$G \actson (X,\mu) = \prod_{g \in G} (X_0,\mu_{N(g)})$$
is a nonsingular Bernoulli action that is weakly mixing and of stable type III$_0$.

We claim that for every $n \geq 1$ and $g \in \cG_n$,
\begin{equation}\label{eq.crucial-estimate-towards-conservative}
\prod_{h \in G} \int_{X_0} \frac{d\mu_{N(h)}}{d\mu_{N(gh)}} \, d\mu_{N(h)} \leq \exp(6 (|\cF_{n-1}|+1)) \; .
\end{equation}
Fix $n \geq 1$ and fix $g \in \cG_n$. To analyze the different parts of the infinite product in \eqref{eq.crucial-estimate-towards-conservative}, we denote for every finite subset $\cK \subset G$,
$$P(\cK) = \prod_{h \in \cK} \int_{X_0} \frac{d\mu_{N(h)}}{d\mu_{N(gh)}} \, d\mu_{N(h)} \; .$$
Obviously, $P(\cK) = 1$ if $\cK \subset \{h \in G \mid N(gh) = N(h)\}$. We then use Lemma \ref{lem.function-N-Folner}. Since $P(\{h\}) \leq \exp(3)$ for every $h \in G$, we have that
$$P(\cF_{n-1} \cup g^{-1}\cF_{n-1}) \leq \exp(6|\cF_{n-1}|) \; .$$
For every $k \geq n$, we have that
\begin{align*}
P(\cF_k \setminus g^{-1}\cF_k) &\leq \exp(3 \gamma_k |\cF_k \setminus g^{-1}\cF_k|) = \exp(3 \gamma_k |g \cF_k \setminus \cF_k|)\\
& \leq \exp(3 \gamma_k 2^{-k} \lambda_k^{-2} |\cF_k|) = \exp(3 \cdot 2^{-k}) \; .
\end{align*}
Similarly,
$$P(g^{-1} \cF_k \setminus \cF_k) \leq \exp(3 \gamma_k |g^{-1} \cF_k \setminus \cF_k|) \leq \exp(3 \cdot 2^{-k}) \; .$$
Multiplying all contributions, we conclude that \eqref{eq.crucial-estimate-towards-conservative} holds.

Using the notation in \eqref{eq.notation-C}, we find that for all $n \geq 1$ and $g \in \cG_n$,
$$C(g) \leq 3(|\cF_{n-1}| + 1) < +\infty \; .$$
So, the Bernoulli action $G \actson (X,\mu)$ is nonsingular. Using $s = 3 (|\cF_{n-1}| + 1)$ and using that $\cG_n = \cG_n^{-1}$, we find that
\begin{align*}
\limsup_{s \recht +\infty} \frac{\log\bigl| \{g \in G \mid C(g^{\pm 1}) \leq s\}\bigr|}{s} & \geq \limsup_{n \recht +\infty} \frac{\log |\cG_n|}{3 (|\cF_{n-1}| + 1)} \\
& \geq \limsup_{n \recht+\infty} \frac{n \cdot |\cF_{n-1}|}{3 (|\cF_{n-1}| + 1)} = +\infty \; .
\end{align*}
By Lemma \ref{lem.strongly-conservative}, $G \actson (X,\mu)$ is strongly conservative. By \eqref{eq.local-bound-example} and Proposition \ref{prop.ergodic-general}, the action is weakly mixing. Also, Lemma \ref{lem.link-to-permutation-action} applies.

Fix an ergodic pmp action $G \actson (Y,\eta)$ and consider the diagonal action $G \actson X \times Y$. We prove that $G \actson X \times Y$ is not semifinite. Assume the contrary. By Lemmas \ref{lem.link-to-permutation-action} and \ref{lem.trivial-flow-result}, also the permutation action $\cS_G \actson (X,\mu)$ is semifinite. As explained in the first paragraphs of the proof of Theorem \ref{thm.T-invariant-permutation-action}, we then find a bounded family $t_h \in \R$ and a measurable map $\be : X_0 \recht \R$ such that
\begin{align}
& \lim_{h \recht \infty} \bigl( \log \frac{d\mu_h}{d\nu}(x) - t_h \bigr) = \be(x) \quad\text{for a.e.\ $x \in X_0$,} \notag\\
& \sum_{h \in G} \int_{X_0} T_\kappa\bigl(\log \frac{d\mu_h}{d\nu}(x) - t_h - \be(x)\bigr)^2 \, d\mu_h(x) < +\infty \; ,\label{eq.this-sum-is-finite}
\end{align}
for $\kappa > 0$. By construction, $\lim_{h \recht \infty} \rho_{N(h)} = 1$ and
$$\lim_{h \recht \infty} \log \frac{d\mu_h}{d\nu}(x) = 0 \quad\text{for a.e.\ $x \in X_0$.}$$
We conclude that $\be$ is constant a.e.\ and we may thus assume that $\be = 0$. Also by construction, $\log(d\mu_h / d\nu)$ only takes the values $-\log(\rho_{N(h)})$ on $\cU_{N(h)}$ and $\log(\lambda_{N(h)}) - \log(\rho_{N(h)})$ on the complement $X_0 \setminus \cU_{N(h)}$. The difference between both values tends to infinity, so that $t_h$ can only be close to one of both. It thus follows from \eqref{eq.this-sum-is-finite} that
\begin{align*}
+\infty &> \sum_{h \in G} \min \{\mu_{N(h)}(\cU_{N(h)}), \mu_{N(h)}(X_0 \setminus \cU_{N(h)})\} \geq \sum_{h \in G} \eps_{N(h)} \\
& \geq \sum_{n = 1}^\infty \eps_n \, |\cF_n \setminus \cF_{n-1}| \geq \frac{1}{2} \sum_{n = 1}^\infty \eps_n \, |\cF_n| = +\infty \; ,
\end{align*}
because $\eps_n = |\cF_n|^{-1}$. This result is absurd, so that $G \actson X \times Y$ is not semifinite.

To prove that $G \actson (X,\mu)$ is of stable type III$_0$, we prove that for every $n \geq 1$, there exists a probability measure $\mutil \sim \mu$ such that
$$\frac{d(g^{-1} \cdot \mutil)}{d\mutil}(x) \in 2^{2^n \Z} \quad\text{for all $g \in G$ and a.e.\ $x \in X$,}$$
because then Krieger's ratio set of $G \actson X \times Y$ is contained in $\{0,1\}$.

Fix $n \geq 1$. Define $\mutil \sim \mu$ as the product of the probability measure $\mu_n$ in the coordinates $g \in \cF_n$ and the probability measures $\mu_{N(g)}$ in the coordinates $g \in G \setminus \cF_n$. For every $g \in G$, write $\rho_g = \rho_{N(g)}$. Also define $\rhotil_g = \rho_n$ for all $g \in \cF_n$ and $\rhotil_g = \rho_{N(g)}$ for all $g \in G \setminus \cF_n$.

Define the continuous map $\theta : (0,+\infty) \recht \R/2^n\Z : \theta(r) = \log_2(r) + 2^n \Z$. By construction, for all $g \in G$ and a.e.\ $x \in X$,
\begin{equation}\label{eq.my-series}
\theta\bigl( \frac{d(g^{-1} \cdot \mutil)}{d\mutil}(x) \bigr) = \sum_{h \in G} (-\theta(\rhotil_{gh}) + \theta(\rhotil_h)) \; .
\end{equation}
To conclude the proof of the theorem, it thus suffices to show that the series at the right hand side of \eqref{eq.my-series} converges to $0$ for all $g \in G$. We actually prove that the series in $\R$ given by
$$\sum_{h \in G} (\log(\rhotil_{gh}) - \log(\rhotil_h))$$
is absolutely convergent to $0$. Dividing by $\log(2)$ and reducing modulo $2^n \Z$, the required conclusion then follows.

Since
$$\sum_{n=1}^\infty |\cF_n| \, (\eps_n (\lambda_n-1))^2 \leq \sum_{n=1}^\infty |\cF_n|^{-1} \, \lambda_n^2 \leq \sum_{n=1}^\infty 2^{-n} < +\infty \; ,$$
we get that, writing $\zeta_h = \eps_{N(h)}(\lambda_{N(h)} - 1)$,
$$\sum_{h \in G} |\log(\rhotil_{h}) - \zeta_h| < +\infty \; .$$
It thus suffices to prove that for every $g \in G$, the series $\sum_{h \in G} (\zeta_{gh} - \zeta_h)$ is absolutely convergent with sum zero. Fix $g \in G$. We will prove that
\begin{equation}\label{eq.final-goal}
\lim_{m \recht +\infty} \sum_{h \in G \setminus \cF_m} |\zeta_{gh} - \zeta_h| = 0 \quad\text{and}\quad \lim_{m \recht +\infty} \sum_{h \in \cF_m} (\zeta_{gh} - \zeta_h) = 0 \; .
\end{equation}
Fix $n_0 \in \N$ such that $g \in \cG_{n_0}$. By Lemma \ref{lem.function-N-Folner}, for every $m \geq n_0$,
$$\sum_{h \in G \setminus \cF_m} |\zeta_{gh} - \zeta_h| \leq \sum_{k=m}^\infty |\cF_k \setminus g^{-1} \cF_k| \, \eps_k \lambda_k + \sum_{k=m}^\infty |g^{-1} \cF_k \setminus \cF_k| \, \eps_k \lambda_k \leq 2 \sum_{k=m}^\infty 2^{-k} = 2^{-m+2} \; .$$
On the other hand,
\begin{align*}
\Bigl| \sum_{h \in \cF_m} (\zeta_{gh} - \zeta_h) \Bigr| &\leq \sum_{h \in g \cF_m \setminus \cF_m} \zeta_h + \sum_{h \in \cF_m \setminus g \cF_m} \zeta_h \\
& \leq |g \cF_m \setminus \cF_m| \, \eps_{m+1} \lambda_{m+1} + |\cF_m \setminus g \cF_m| \, \eps_m \lambda_m \leq 2^{-m+1} \; .
\end{align*}
So, \eqref{eq.final-goal} holds and the theorem is proven.
\end{proof}

\begin{theorem}\label{thm.amenable-type-II-infty}
Every infinite amenable group admits a nonsingular Bernoulli action that is weakly mixing and of type II$_\infty$.
\end{theorem}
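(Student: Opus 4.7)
The approach is to parallel the construction of Theorem \ref{thm.amenable-type-III-0}, but with density parameters $\lambda_n$ tuned so that point~2 of Theorem \ref{thm.Krieger-type-Bernoulli} delivers type II$_\infty$ rather than type III$_0$. Keep the F\o lner sets $\cF_n$ and the level function $N : G \recht \N \cup \{0\}$ from \eqref{eq.def-function-N}, ensuring in addition that $|\cF_n|/|\cF_{n-1}| \geq 2$, so that $\eps_n := |\cF_n|^{-1}$ is summable. Pick any standard probability space $(X_0,\nu)$ with a decreasing family $\cU_n \subset X_0$ satisfying $\nu(X_0 \setminus \cU_n) = \eps_n$, and choose a summable positive sequence $\lambda_n > 0$ (for instance $\lambda_n = 2^{-n}$). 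Define
$$\frac{d\mu_n}{d\nu}(x) = \rho_n^{-1}\bigl(1_{\cU_n}(x) + \lambda_n \, 1_{X_0 \setminus \cU_n}(x)\bigr), \qquad \rho_n = 1 + \eps_n(\lambda_n - 1),$$
and set $\mu_g = \mu_{N(g)}$, so that $G \actson (X,\mu) = \prod_{g \in G}(X_0,\mu_g)$ is the candidate action.

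First I would verify the weak boundedness property \eqref{eq.boundedness-weak}. Since $\sum_n \eps_n < +\infty$, Borel--Cantelli gives that $\nu$-a.e.\ $x$ lies in $\cU_n$ for all $n$ sufficiently large; for such $x$ one has $d\mu_n/d\nu(x) = \rho_n^{-1} \in [1/2,2]$ eventually, yielding the required uniform bound in $n$. Nonsingularity and strong conservativeness then follow from the same $C(g)$ estimates as in \eqref{eq.crucial-estimate-towards-conservative} applied with the smaller values of $\lambda_n$ (in fact the computation becomes easier), and weak mixing comes from Proposition \ref{prop.ergodic-general}.

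Next I would verify the four conditions in point~2 of Theorem \ref{thm.Krieger-type-Bernoulli} with $\cU_g := \cU_{N(g)}$ and the above $\nu$. A direct expansion gives $H^2(\mu_n, \nu(\cU_n)^{-1}\nu|_{\cU_n}) = O(\eps_n \lambda_n)$ and $\mu_n(X_0 \setminus \cU_n) = \rho_n^{-1}\lambda_n \eps_n$, so summing over $g \in G$ bounds both series by $\sum_n |\cF_n \setminus \cF_{n-1}|\,\eps_n \lambda_n \leq \sum_n \lambda_n < +\infty$. In contrast, $\sum_g \nu(X_0 \setminus \cU_g) = \sum_n \eps_n\,|\cF_n \setminus \cF_{n-1}| \geq \tfrac{1}{2}\sum_n 1 = +\infty$ because $|\cF_{n-1}| \leq |\cF_n|/2$; this rules out type II$_1$. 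For the cocycle condition, fix $g \in \cG_n$: using Lemma \ref{lem.function-N-Folner} together with the F\o lner estimate $|\cF_k \symdiff g^{-1}\cF_k| \leq 2^{-k}|\cF_k|$, the series $\sum_h |\log \nu(\cU_{N(gh)}) - \log \nu(\cU_{N(h)})|$ is absolutely convergent. Since $g^{-1}$ is a bijection of $G$, the sets $\{h : N(gh)=n\}$ and $\{h : N(h)=n\}$ both have cardinality $|\cF_n \setminus \cF_{n-1}|$, so after rearrangement the sum telescopes to $0$. Theorem \ref{thm.Krieger-type-Bernoulli} then gives that $G \actson (X,\mu)$ is of type II$_\infty$.

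The main obstacle will be the simultaneous balancing of three competing summability conditions: $\sum_n \eps_n < +\infty$ (needed so that \eqref{eq.boundedness-weak} does not fail on the set of $x$ lying in $X_0 \setminus \cU_n$ infinitely often), $\sum_n \lambda_n < +\infty$ (needed for the Hellinger conditions and the absolute convergence of the cocycle sum), but $\sum_n \eps_n |\cF_n \setminus \cF_{n-1}| = +\infty$ (needed to rule out type II$_1$). The compatibility of these three requirements is what forces the geometric growth condition $|\cF_n|/|\cF_{n-1}| \geq 2$ on the F\o lner sets, and also explains why an infinite base space is essential; in the finite base space case the family $\nu(X_0 \setminus \cU_g)$ cannot decay to $0$ while staying non-summable over $g \in G$.
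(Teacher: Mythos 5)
Your route is genuinely different from the paper's. The paper does not invoke point~2 of Theorem~\ref{thm.Krieger-type-Bernoulli} at all: it builds the $\sigma$-finite invariant structure by hand (probability measures $\eta_m$ on nested sets $Y_m$ invariant under the restricted orbit equivalence relation $\cR_m$, with $\eta_n(Y_m)\to 0$), and proves conservativeness by a combinatorial recurrence argument involving specially chosen finite ``return'' sets $\cS_n\subset\cG_n$; it also allows $d\mu_g/d\nu$ to be essentially arbitrary on $X_0\setminus\cU_g$. Your idea of taking the type~III$_0$ density profile and flipping $\lambda_n$ from huge to summable so as to land in case~2 of Theorem~\ref{thm.Krieger-type-Bernoulli} is an appealing alternative and the verifications of the three summability/divergence conditions and of \eqref{eq.boundedness-weak} are correct (modulo a typo: with $\eps_n$ decreasing and $\nu(X_0\setminus\cU_n)=\eps_n$, the $\cU_n$ must be \emph{increasing}, not decreasing).

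There is, however, a genuine gap in the conservativeness step. You claim the $C(g)$ estimate \eqref{eq.crucial-estimate-towards-conservative} carries over and ``becomes easier'' with small $\lambda_n$. It does not: the proof of \eqref{eq.crucial-estimate-towards-conservative} uses, for $h$ with $N(h)=k+1>N(gh)=k$, the bound $\int (d\mu_{k+1}/d\mu_k)\,d\mu_{k+1}\le\exp(3\gamma_k)$, and this bound exploits $\lambda_k\ge1$ to control the term $\lambda_k^{-1}(\eps_k-\eps_{k+1})$. With $\lambda_k=2^{-k}$ that term becomes $\approx 2^k\eps_k$ rather than $\le\eps_k$, so the per-level contribution to $C(g)$ is no longer damped. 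Worse, the F{\o}lner requirement in Theorem~\ref{thm.amenable-type-III-0}, namely $|\cG_n\cF_n\setminus\cF_n|/|\cF_n|\le 2^{-n}\lambda_{n+1}^{-1}$, becomes the trivial bound $\le 2$ once $\lambda_{n+1}=2^{-(n+1)}$, and your only added condition $|\cF_n|/|\cF_{n-1}|\ge2$ does nothing to compensate. The corresponding sum over $k\ge n$ then contributes $\approx\sum_k 2^k\eps_k\,|\cG_n\cF_k\setminus\cF_k|$, which one cannot control without imposing a new, much faster F{\o}lner decay (something like $|\cG_k\cF_k\setminus\cF_k|/|\cF_k|\le 4^{-k}$). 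The same missing hypothesis is needed for the absolute convergence of $\sum_h|\log\nu(\cU_{N(gh)})-\log\nu(\cU_{N(h)})|$ on which your ``telescoping'' argument relies. The route can likely be salvaged by adding such a F{\o}lner rate and rewriting the $C(g)$ estimate from scratch, but as written the key conservativeness claim is unsupported and the assertion that the computation ``becomes easier'' is false.
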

\begin{proof}
Fix an increasing sequence of finite subsets $\cH_n \subset G$ such that $\cH_0 = \{e\}$ and $G = \bigcup_n \cH_n$. We inductively define the following increasing sequences of finite subsets $\cF_n \subset G$ and $\cG_n \subset G$, together with subsets $\cS_n \subset \cG_n \setminus \cG_{n-1}$ and a strictly increasing sequence $0 < \rho_n < 1$ converging to $1$.

Start by defining $\rho_0 = 1/2$, $\cF_0 = \cG_0 = \{e\}$ and $\cS_0 = \emptyset$. Assume that $n \geq 1$ and that $\cF_{n-1}$, $\cG_{n-1}$, $\cS_{n-1}$ and $\rho_{n-1}$ have been defined. Put
$$\delta_n = \rho_0 \cdot \prod_{k=1}^{n-1} \rho_k^{|\cF_k \setminus \cF_{k-1}|} > 0 \; .$$
Take an integer $k_n \geq 1$ large enough such that $(1-\delta_n)^{k_n} < 2^{-n}$. Choose a subset $\cS_n \subset G \setminus (\cG_{n-1} \cup \cF_{n-1}\cF_{n-1}^{-1})$ with $|\cS_n| = k_n$ and such that the $k_n + 1$ sets $g \cF_{n-1}$ with $g \in \{e\} \cup \cS_n$ are all disjoint. Choose a finite subset $\cG_n \subset G$ such that $\cG_n = \cG_n^{-1}$ and $\cG_{n-1} \cup \cS_n \cup \cH_n \subset \cG_n$. Then pick a large enough finite F{\o}lner set $\cF_n \subset G$ such that
$$\cG_n \cF_{n-1} \subset \cF_n \quad , \quad |\cF_n| \geq 2 |\cF_{n-1}| \quad , \quad (1-\rho_{n-1}) |\cF_n \setminus \cF_{n-1}| > 1  \quad , \quad
 \frac{|\cG_n \cF_n \setminus \cF_n|}{|\cF_n|} \leq 2^{-n-3} \; .$$
Then take $\rho_n$ such that $(1-\rho_n) |\cF_n \setminus \cF_{n-1}| = 1$. Note that $\rho_{n-1} < \rho_n < 1$. Also note that $\cF_{n-1} \subset \cF_n$ and $\cH_n \subset \cG_n$. Since $1/2 = \rho_0 < \rho_n$, we have
$$- \log\bigl(\rho_n^{|\cG_n \cF_n \setminus \cF_n|}\bigr) \leq 2 \, |\cG_n \cF_n \setminus \cF_n| \, (1-\rho_n) = 2 \, \frac{|\cG_n \cF_n \setminus \cF_n|}{|\cF_n \setminus \cF_{n-1}|} \leq 4 \, \frac{|\cG_n \cF_n \setminus \cF_n|}{|\cF_n|} \leq 2^{-n-1}$$
and thus
\begin{equation}\label{eq.important-bound}
\rho_n^{|\cG_n \cF_n \setminus \cF_n|} \geq 1 - 2^{-n-1} \; .
\end{equation}

Choose any standard probability space $(X_0,\nu)$ equipped with an increasing sequence of subsets $\cU_n \subset X_0$ such that $\nu(\cU_n) = \rho_n$. Denote by $\nu_n$ the probability measure $\nu_n = \rho_n^{-1} \nu|_{\cU_n}$ given by restricting $\nu$ to $\cU_n$. Define the function $N : G \recht \N \cup \{0\}$ as in \eqref{eq.def-function-N}. We write $\cU_g := \cU_{N(g)}$, $\nu_g = \nu_{N(g)}$ and $\rho_g = \rho_{N(g)}$. By taking arbitrarily small values on $X_0 \setminus \cU_g$, we can choose probability measures $\mu_g \sim \nu$ such that
$$\sum_{g \in G} H^2(\mu_g,\nu_g) < +\infty \; .$$
We prove that the Bernoulli action
$$G \actson (X,\mu) = \prod_{g \in G} (X_0,\mu_g)$$
is nonsingular, weakly mixing and of type II$_\infty$.

For every $n \in \N$, define the subset $Y_n \subset X$ by
$$Y_n = \{x \in X \mid x_h \in \cU_h \;\;\text{for all $h \in G \setminus \cF_n$, and}\;\; x_h \in \cU_n \;\;\text{for all $h \in \cF_n$}\;\}$$
and equip $Y_n$ with the probability measure $\eta_n$ given by the product of the probability measure $\nu_n$ in the coordinates $x_h$, $h \in \cF_n$, and the probability measures $\nu_h$ in the coordinates $x_h$, $h \in G \setminus \cF_n$.

When $n \in \N$ and $g \in \cG_n$, it follows from Lemma \ref{lem.function-N-Folner} that
$$g \cdot Y_n \cap Y_n = \{ x \in Y_n \mid \;\text{for all $k \geq n$ and $h \in g \cF_k \setminus \cF_k$, we have}\; x_h \in \cU_k \;\} \; .$$
Note that whenever $1 \leq m \leq n$ and $g \in \cG_n$, we have
\begin{equation}\label{eq.estimate-measure-Yn}
\eta_m(Y_m \cap (g \cdot Y_n \cap Y_n)) = \prod_{k=n}^\infty \Bigl(\frac{\rho_k}{\rho_{k+1}}\Bigr)^{|g\cF_k \setminus \cF_k|} \geq \prod_{k=n}^\infty \rho_k^{|g \cF_k \setminus \cF_k|} \; .
\end{equation}
By \eqref{eq.important-bound}, the right hand side of \eqref{eq.estimate-measure-Yn} tends to $1$ when $n \recht +\infty$. When $n \in \N$ and $g \in \cG_n$, \eqref{eq.estimate-measure-Yn} says that $\eta_n(g \cdot Y_n \cap Y_n)>0$. Moreover, for $g \in \cG_n$, the map from $g \cdot Y_n \cap Y_n$ to $g^{-1}\cdot Y_n \cap Y_n$ given by $x \mapsto g^{-1} \cdot x$ scales the measure $\eta_n$ with a factor $\al(n,g)$ that, using Lemma \ref{lem.function-N-Folner}, we can write as
$$\al(n,g) = \lim_{l \recht +\infty} \prod_{h \in \cF_n \setminus g \cF_n} \frac{\rho_n}{\rho_{n+1}} \cdot \prod_{k=n+1}^l \Bigl( \prod_{h \in g \cF_{k-1} \setminus \cF_{k-1}} \frac{\rho_k}{\rho_{k-1}} \cdot \prod_{h \in \cF_k \setminus g \cF_k} \frac{\rho_k}{\rho_{k+1}} \Bigr) \; .$$
For all $k \in \N$, we have
$$|g \cF_k \setminus \cF_k| = |g \cF_k \cup \cF_k| - |\cF_k| = |g \cF_k \cup \cF_k| - |g\cF_k| = |\cF_k \setminus g \cF_k| \; .$$
Using \eqref{eq.important-bound}, we find that
$$\al(n,g) = \lim_{l \recht +\infty} \Bigl(\frac{\rho_l}{\rho_{l+1}}\Bigr)^{|g \cF_l \setminus \cF_l|} = 1 \; .$$
So, whenever $g \in \cG_n$, the map $x \mapsto g^{-1} \cdot x$ from $g \cdot Y_n \cap Y_n$ to $g^{-1}\cdot Y_n \cap Y_n$ preserves the measure $\eta_n$. Whenever $n \geq m \geq 1$, the restriction of $\eta_n$ to $Y_m \subset Y_n$ is by construction a multiple of $\eta_m$. We thus conclude that for all $g \in G$ and all $m \in \N$, the map $x \mapsto g^{-1} \cdot x$ from $g \cdot Y_m \cap Y_m$ to $g^{-1}\cdot Y_m \cap Y_m$ preserves the measure $\eta_m$. This means that the restriction of the orbit equivalence relation of $G \actson X$ to the subset $Y_m$, given by
$$\cR_m = \{(x,y) \in Y_m \times Y_m \mid y \in G \cdot x\} \; ,$$
preserves the probability measure $\eta_m$.

Note that by construction, the subset $\bigcup_{m=1}^\infty Y_m$ of $X$ is $\mu$-conull. Also, the restriction of $\mu$ to $Y_m$ is equivalent with $\eta_m$ for every $m \in \N$. For every $g \in G$, write $Y_g = \bigcup_{n=1}^\infty (g \cdot Y_n \cap Y_n)$. By \eqref{eq.estimate-measure-Yn}, we have that $\eta_m(Y_m \setminus Y_g) = 0$ for all $m \in \N$ and $g \in G$. Therefore, $\mu(Y_m \setminus Y_g) = 0$ for all $m \in \N$ and $g \in G$. We conclude that $Y_g \subset X$, and thus also $Y_{g^{-1}} \subset X$, are $\mu$-conull for all $g \in G$. Acting by $g^{-1}$ from $g \cdot Y_n \cap Y_n$ to $g^{-1} \cdot Y_n \cap Y_n$ is $\eta_n$-preserving and thus $\mu$-nonsingular. This holds for all $n$, so that acting by $g^{-1}$ from $Y_g$ to $Y_{g^{-1}}$ is $\mu$-nonsingular. Since both $Y_g$ and $Y_{g^{-1}}$ are $\mu$-conull, we have proven that $G \actson (X,\mu)$ is a nonsingular action.

Below we prove that for every $m \in \N$ the equivalence relation $\cR_m$ has infinite orbits a.e. Since it preserves the probability measure $\eta_m$, it then follows that $\cR_m$ is conservative. So, it follows that $G \actson (X,\mu)$ is conservative. Since $G$ is amenable, we then conclude using Proposition \ref{prop.ergodic-general} that $G \actson (X,\mu)$ is weakly mixing and that each $\cR_m$ is an ergodic equivalence relation of type II$_1$.

For every $n \geq m$, the restriction of $\eta_n$ to $Y_m \subset Y_n$ is a multiple of $\eta_m$. So one of the following holds: either $\lim_{n \recht +\infty} \eta_n(Y_m) = 0$ for all $m \in \N$, or there exists a probability measure $\eta$ on $X$ such that the restriction of $\eta$ to $Y_n$ equals $\eta(Y_n) \eta_n$ for all $n \in \N$. We claim that the second option does not hold. Assume that it does. In particular, $\eta \sim \mu$. Choose a cylinder set $A \subset X$, given by a finite family of sides $A_g \subset X_0$, $g \in \cF$, such that
$$A = \{x \in X \mid x_g \in A_g \;\;\text{for all}\;\; g \in \cF \} \; .$$
Whenever $n \in \N$ is large enough so that $\cF \subset \cF_n$, we have that
$$\eta_n(Y_n \cap A) = \prod_{g \in \cF} \nu_n(A_g \cap \cU_n) \recht \prod_{g \in \cF} \nu(A_g) = \nu^G(A) \; .$$
It follows that $\eta = \nu^G$, so that $\nu^G \sim \mu$. Since $\mu(Y_1) > 0$, also $\nu^G(Y_1) > 0$, meaning that
$$\sum_{g \in G} \nu(X_0 \setminus \cU_g) < +\infty \; .$$
It follows that
$$+\infty > \sum_{n=1}^\infty |\cF_n \setminus \cF_{n-1}| \, (1-\rho_n) = + \infty \; ,$$
because $|\cF_n \setminus \cF_{n-1}| \, (1-\rho_n) = 1$ for all $n$. We obtained a contradiction and conclude that $\lim_{n \recht +\infty} \eta_n(Y_m) = 0$ for all $m \in \N$. It follows that the union of all $\cR_m$ is an ergodic equivalence relation of type II$_\infty$. By construction, this union equals the orbit equivalence relation of $G \actson (X,\mu)$, up to measure zero. We conclude that $G \actson (X,\mu)$ is of type II$_\infty$.

Fix $m \in \N$. It remains to prove that $G \cdot x \cap Y_m$ is infinite for a.e.\ $x \in Y_m$. For every $n \geq m+1$, define the subset $Z_n \subset Y_m$ by
$$Z_n = \{x \in Y_m \mid \exists g \in \cS_n \;\;\text{such that}\;\; g^{-1} \cdot x \in Y_m \} \; .$$
We prove that $\eta_m(Z_n) > 1 - 2^{-n+1}$ for all $n \geq m+1$. It then follows that a.e.\ $x \in Y_m$ has the property that $x \in Z_n$ for all $n$ large enough. Since the sets $\cS_n$ are disjoint, this means that for a.e.\ $x \in Y_m$, there are infinitely many $g \in G$ with $g^{-1} \cdot x \in Y_m$.

For every subset $\cL \subset G \setminus \cF_m$, we define
$$Y_\cL = \prod_{h \in \cL} \cU_h \; ,$$
we consider the natural map $Y_m \recht Y_\cL : x \mapsto x_{\cL}$ and equip $Y_\cL$ with the probability measure $\eta_\cL$ such that $(Y_m,\eta_m) \recht (Y_\cL,\eta_\cL) : x \mapsto x_\cL$ is measure preserving.

Fix $n \geq m+1$. By our choice of $\cS_n$, we have that $\cS_n \cF_{n-1} \subset G \setminus \cF_{n-1}$. By our choice of $\cF_n$, we have that $\cS_n \cF_{n-1} \subset \cF_n$. So, for every $g \in \cS_n$, we have that $g \cF_{n-1} \subset \cF_n \setminus \cF_{n-1} \subset G \setminus \cF_m$. We define, for every $g \in \cS_n$,
$$\cW_g = \{x \in Y_{g \cF_{n-1}} \mid x_{gh} \in \cU_h \;\;\text{for all}\;\; h \in \cF_{n-1} \}$$
and note that
$$\eta_{g \cF_{n-1}}(\cW_g) \geq \prod_{h \in \cF_{n-1}} \rho_h = \delta_n \; .$$
Since the subsets $(g \cF_{n-1})_{g \in \cS_n}$ of $\cF_n \setminus \cF_{n-1}$ are disjoint, the subset $\cW \subset Y_{\cF_n \setminus \cF_{n-1}}$ defined by
$$\cW = \{x \in Y_{\cF_n \setminus \cF_{n-1}} \mid \;\text{there exists a $g \in \cS_n$ such that $x_{g \cF_{n-1}} \in \cW_g$}\;\}$$
has measure
$$\eta_{\cF_n \setminus \cF_{n-1}}(\cW) \geq 1 - (1-\delta_n)^{|\cS_n|} = 1 - (1-\delta_n)^{k_n} > 1 - 2^{-n} \; .$$
For every $k \geq n$, define $\cV_k \subset Y_{\cF_{k+1} \setminus \cF_k}$ by
$$\cV_k = \{ x \in Y_{\cF_{k+1} \setminus \cF_k} \mid x_h \in \cU_k \;\;\text{for all}\;\; h \in (\cF_{k+1} \setminus \cF_k) \cap \cS_n \cF_k \;\} \; .$$
Since $\cS_n \subset \cG_n \subset \cG_k$, we have that
$$|(\cF_{k+1} \setminus \cF_k) \cap \cS_n \cF_k| \leq |\cG_k \cF_k \setminus \cF_k| \quad\text{and}\quad \eta_{\cF_{k+1} \setminus \cF_k}(\cV_k) \geq \rho_k^{|\cG_k \cF_k \setminus \cF_k|} \geq 1 - 2^{-k-1} \; .$$
Defining the subset $\cV \subset Y_{G \setminus \cF_n}$ by
$$\cV = \{x \in Y_{G \setminus \cF_n} \mid x_{\cF_{k+1} \setminus \cF_k} \in \cV_k \;\;\text{for all $k \geq n$}\;\} \; ,$$
we get that $\eta_{G \setminus \cF_n}(\cV) \geq 1-2^{-n}$. Finally, define $\cZ \subset Y_m$ by
$$\cZ = \{x \in Y_m \mid x_{\cF_n \setminus \cF_{n-1}} \in \cW \;\;\text{and}\;\; x_{G \setminus \cF_n} \in \cV \} \; .$$
We have that $\eta_m(\cZ) \geq 1 - 2^{-n+1}$. We prove that $\cZ \subset Z_n$.

Take $x \in \cZ$. Since $x_{\cF_n \setminus \cF_{n-1}} \in \cW$, pick $g \in \cS_n$ such that $x_{g \cF_{n-1}} \in \cW_g$. We prove that $g^{-1} \cdot x \in Y_m$. So, we have to show that $x_{gh} \in \cU_m$ for all $h \in \cF_m$ and $x_{gh} \in \cU_h$ for all $h \in G \setminus \cF_m$.

When $h \in \cF_m \subset \cF_{n-1}$, because $x_{g \cF_{n-1}} \in \cW_g$, we get that $x_{gh} \in \cU_h \subset \cU_m$. When $h \in \cF_{n-1} \setminus \cF_m$, because $x_{g \cF_{n-1}} \in \cW_g$, we have $x_{gh} \in \cU_h$. Finally, if $k \geq n$ and $h \in \cF_k \setminus \cF_{k-1}$, we either have $gh \in \cF_k$, in which case $x_{gh} \in \cU_k = \cU_h$, or we have $g h \not\in \cF_k$, in which case $gh \in \cF_{k+1} \setminus \cF_k$ because $\cS_n \cF_k \subset \cF_{k+1}$. So, $g h \in (\cF_{k+1} \setminus \cF_k) \cap \cS_n \cF_k$. Because $x_{\cF_{k+1} \setminus \cF_k} \in \cV_k$, we find that $x_{gh} \in \cU_k = \cU_h$. This ends the proof of the theorem.
\end{proof}

\section{Examples of nonsingular Bernoulli actions with diffuse base}

We prove Theorem \ref{thmD.examples-diffuse}. Consider the setting introduced in the paragraphs preceding Theorem \ref{thmD.examples-diffuse}, with the probability measures $\mu_g = \nu_{F(g)}$ on $\R$, where $d\nu_s(t) = \vphi(t+s) \, dt$, and
$$G \actson (X,\mu) = \prod_{g \in G} (\R,\mu_g) \; .$$
Note that
\begin{equation}\label{eq.formula-R-N-transl}
\frac{d\mu_g}{d\mu_e}(t) = \frac{\vphi(t + F(g))}{\vphi(t + F(e))} \quad\text{for all $g \in G$, $t \in \R$.}
\end{equation}

\begin{proof}[{Proof of Theorem \ref{thmD.examples-diffuse}}]
The function $F : G \recht \R$ is bounded. So by adding a constant and replacing $\vphi$ by a translation, we may assume that $0$ is a limit value of $(F(g))_{g \in G}$.

We start by estimating in both cases of Theorem \ref{thmD.examples-diffuse}, the function
\begin{equation}\label{eq.function-theta}
\theta(s) = \int_\R \frac{\vphi(t+s)^2}{\vphi(t)} \, dt \; .
\end{equation}
Denote $\psi = \log \vphi$.

1.\ First assume that $\psi$ is a Lipschitz function with constant $M \geq 0$. Since
$$\theta(s) - 1 = \int_\R \frac{\bigl(\vphi(t+s) - \vphi(t)\bigr)^2}{\vphi(t)} \, dt \; ,$$
we find that
$$\theta(s) - 1 \leq \int_\R \bigl(\exp(M |s|) - 1\bigr)^2 \, \vphi(t) \, dt = \bigl(\exp(M |s|) - 1\bigr)^2 \; .$$
Since for every $t \geq 0$, $(\exp(t) - 1)^2 \leq \exp(3 t^2 / 2) - 1$, we conclude that
$$\frac{1}{2} \log(\theta(s)) \leq \frac{3}{4} M^2 \, s^2 \quad\text{for all $s \in \R$.}$$

2.\ Next assume that $\psi$ is differentiable and that $\psi'$ is Lipschitz with constant $M \geq 0$. Note that
$$\theta(s) = \int_\R \exp\bigl( 2 \psi(t+s) -\psi(t)\bigr) \, dt = \int_\R \exp\bigl( 2\psi(t-s) - \psi(t-2s) - \psi(t)\bigr) \, \vphi(t) \, dt \; .$$
For all $s,t \in \R$, there exist $r_1$ between $t-s$ and $t$, and $r_2$ between $t-2s$ and $t-s$ such that
$$\psi(t-s) - \psi(t) = - s \psi'(r_1) \quad\text{and}\quad \psi(t-2s) - \psi(t-s) = - s \psi'(r_2) \; .$$
It follows that
$$|2\psi(t-s) - \psi(t-2s) - \psi(t)| \leq |s| \, |\psi'(r_1) - \psi'(r_2)| \leq 2 M s^2 \; .$$
We conclude that $(1/2) \log(\theta(s)) \leq M s^2$ for all $s \in \R$.

Writing $\kappa = 3M^2/4$ in case~1 of the theorem and $\kappa = M$ in case~2, we find that for all $g \in G$,
\begin{align*}
& \sum_{h \in G} H^2(\mu_{gh},\mu_h) \leq \frac{1}{2} \sum_{h \in G} \log(\theta(c_{g^{-1}}(h))) \leq \kappa \, \|c_g\|^2 < +\infty \quad\text{and}\\
& \sum_{h \in G} H^2(\mu_h,\nu_0) \leq \frac{1}{2} \sum_{h \in G} \log(\theta(F(h))) \leq \kappa \, |F(h)|^2 \; .
\end{align*}
We conclude that $G \actson (X,\mu)$ is nonsingular. Since $0$ is a limit value of $(F(g))_{g \in G}$, we have that $c_g$ is a coboundary if and only if $F \in \ell^2(G)$. If this is the case, we conclude that $\mu \sim \nu_0^G$.

When $\delta(c) > 6 \kappa$, we get that \eqref{eq.strong-conservative-assumption} holds. So, by Lemma \ref{lem.strongly-conservative} and Proposition \ref{prop.ergodic-general}, the action $G \actson (X,\mu)$ is weakly mixing. When $c_g$ is a coboundary, we have already proven that $\mu$ is equivalent to the $G$-invariant probability measure $\nu_0^G$, so that $G \actson X$ is of type II$_1$. For the rest of the proof, we assume that $c_g$ is not a coboundary.

Fix an ergodic pmp action $G \actson (Y,\eta)$ and consider the diagonal action $G \actson X \times Y$. We prove that this action is of type III$_1$. We first prove that this action is not of type II$_\infty$ and not of type III$_0$. We again distinguish between the two cases of the theorem, always assuming that $\delta(c) > 6 \kappa$.

1.\ Using that $F$ is a bounded function, it follows from \eqref{eq.formula-R-N-transl} that the strong boundedness property \eqref{eq.boundedness-strong} holds. By Theorem \ref{thm.rule-out-III0}, the action $G \actson X \times Y$ is not of type II$_\infty$ and not of type III$_0$.

2.\ We now additionally assume that $\int_\R |t|^\al \, \vphi(t) \, dt < +\infty$ for some $\al > 2$. Since $F$ is bounded and $\vphi$ is continuous, it follows from \eqref{eq.formula-R-N-transl} that the weak boundedness property \eqref{eq.boundedness-weak} holds. Consider the permutation action $\cS_G \actson (X,\mu)$. By Lemma \ref{lem.link-to-permutation-action}, it suffices to prove that either $\mu \sim \nu^G$ for some probability measure $\nu$ on $\R$, or the associated flow of $\cS_G \actson (X,\mu)$ is periodic. Take a sequence $g_n \recht \infty$ in $G$ such that $F(g_n) \recht 0$. By \eqref{eq.formula-R-N-transl}, we have that $\log(d\mu_{g_n}/d\nu_0(t)) \recht 0$ for all $t \in \R$.

If $a \neq 0$ is another limit point of $(F(g))_{g \in G}$, say $F(h_n) \recht a$, we find that $\log(d\mu_{h_n}/d\mu_0(t)) \recht \psi(t+a) - \psi(t)$ for all $t \in \R$. Since $\vphi$ is strictly positive and integrable, the function $t \mapsto \psi(t+a) - \psi(t)$ cannot be constant. It then follows from the second point of Theorem \ref{thm.permutation-condition-periodic} that the associated flow of $\cS_G \actson (X,\mu)$ is periodic.

Next assume that $\lim_{g \recht \infty} F(g) = 0$. By the third point of Lemma \ref{lem.link-to-tail-boundary}, the associated flow of $\cS_G \actson (X,\mu)$ is given by the tail boundary flow associated with the probability measures $\zeta_g$ given by pushing forward $\mu_g$ with the map $t \mapsto - \psi(t+F(g)) + \psi(t)$. Note that
$$\zeta_g := (\gamma_g)_*(\nu_0) \quad\text{where}\quad \gamma_g(t) = - \psi(t) + \psi(t-F(g)) \; .$$
We apply point~5 of Theorem \ref{thm.tail-periodic-criterion}. Since $\psi'$ is Lipschitz with constant $M$ and since the function $F$ is bounded, we find $A > 0$ such that
\begin{equation}\label{eq.some-bounds}
|\psi'(t)| \leq A + M |t| \quad\text{and}\quad |\gamma_g(t)| \leq (A + M |t|) \, |F(g)| \quad\text{for all $g \in G$, $t \in \R$.}
\end{equation}
In particular, because $\nu_0$ has a finite second moment, we find $B > 0$ such that
$$\int_\R t^2 \, d\zeta_g(t) \leq  F(g)^2 \, \int_\R (A + M |t|)^2 \, d\nu_0(t) \leq B \, F(g)^2$$
for all $g \in G$. Also, the limits
\begin{align*}
& \lim_{\eps \recht 0} \frac{1}{\eps^2} \int_\R (- \psi(t) + \psi(t-\eps))^2 \, d\nu_0(t) = \int_\R (\psi'(t))^2 \, \vphi(t) \, dt > 0 \quad\text{and} \\
& \lim_{\eps \recht 0} \frac{1}{\eps} \int_\R (- \psi(t) + \psi(t-\eps)) \, d\nu_0(t) = - \int_\R \psi'(t) \, \vphi(t) \, dt = 0
\end{align*}
exist. Since $\lim_{g \recht \infty} F(g) = 0$ and $\Var \zeta_g = 0$ if and only if $F(g) = 0$, we find a $\delta > 0$ such that
\begin{equation}\label{eq.variances}
\delta \, F(g)^2 \leq \Var \zeta_g \leq B \, F(g)^2 \quad\text{for all $g \in G$.}
\end{equation}
By \eqref{eq.some-bounds}, we find $C_1 > 0$ such that $(\gamma_g(t))^2 \leq C_1 \, F(g)^2 \, (1 + t^2)$ for all $g \in G$, $t \in \R$. It follows that for all $D \geq 1$ and all $g \in G$,
\begin{align*}
\int_{|t| \geq D} (\gamma_g(t))^2 \, \vphi(t) \, dt & \leq 2 C_1 \, F(g)^2 \, \int_{|t| \geq D} t^2 \, \vphi(t) \, dt \\
& \leq 2 C_1 \, F(g)^2 \,  D^{2-\al} \, \int_\R |t|^\al \, \vphi(t) \, dt = C \, F(g)^2 \, D^{2-\al} \; ,
\end{align*}
where $C > 0$ does not depend on $g$ or $D$. Take a finite subset $\cF \subset G$ such that $|F(g)| \leq M/A$ for all $g \in G \setminus \cF$. If $g \in G \setminus \cF$ and $|\gamma_g(t)| \geq 2 M$, it follows from \eqref{eq.some-bounds} that $|t| \geq |F(g)|^{-1}$. So, if $g \in G \setminus \cF$, we find that
$$\int_{\R \setminus [-2M,2M]} t^2 \, d\zeta_g(t) \leq \int_{|t| \geq |F(g)|^{-1}} (\gamma_g(t))^2 \, \vphi(t) \, dt \leq C \, F(g)^\al \; .$$
It then follows from \eqref{eq.variances} that
$$\int_{\R \setminus [-2M,2M]} t^2 \, d\zeta_g(t) = o(\Var \zeta_g) \quad\text{when $g \recht \infty$.}$$
Since $c_g$ is not a coboundary, we know that $F$ is not square summable. It then also follows from \eqref{eq.variances} that
$$\sum_{g \in G} \Var \zeta_g = +\infty \; .$$
So all assumptions of point~5 of Theorem \ref{thm.tail-periodic-criterion} are satisfied. Therefore, the tail boundary flow of $(\zeta_g)_{g \in G}$ is periodic. Hence, the associated flow of $\cS_G \actson (X,\mu)$ is periodic.

We have thus proven that under both hypotheses of Theorem \ref{thmD.examples-diffuse}, the action $G \actson X \times Y$ is not of type II$_\infty$ and not of type III$_0$. We finally prove that if the $T$-invariant is nontrivial, then the $1$-cocycle $c_g$ is a coboundary.

Assume that $p \neq 0$ such that $2\pi /p$ belongs to the $T$-invariant of $G \actson X \times Y$. We apply Theorem \ref{thm.Krieger-type-Bernoulli} and Remark \ref{rem.operational-T-invariant}. We find that the family of functions
$$\R^2 \recht \R/p\Z : (s,t) \mapsto (\psi(s + F(g)) - \psi(s)) - (\psi(t+F(g)) - \psi(t)) + p \Z$$
has a limit $\Om(s,t) \in \R/p\Z$ for a.e.\ $(s,t) \in \R^2$ as $g \recht \infty$. Since $0$ is a limit point of $(F(g))_{g \in G}$, we conclude that $\Om(s,t) = 0$ a.e. If $a \neq 0$ is another limit point of $(F(g))_{g \in G}$, it then follows that
$$(\psi(s + a) - \psi(s)) - (\psi(t+a) - \psi(t)) \in p \Z \quad\text{for a.e.\ $(s,t) \in \R^2$.}$$
By continuity, $s \mapsto \psi(s+a) - \psi(s)$ must be a constant function, which contradicts the integrability of $\vphi$. So, $\lim_{g \recht \infty} F(g) = 0$.

For every $r \in \R$, define
$$\Theta(r) = \int_{\R^2} d\bigl((\psi(s) - \psi(s-r)) - (\psi(t) - \psi(t-r)),p\Z\bigr)^2 \, \vphi(s) \, \vphi(t) \, ds \, dt \; .$$
Using \eqref{eq.good-condition} in Remark \ref{rem.operational-T-invariant}, we get that
\begin{equation}\label{eq.almost-done}
\sum_{g \in G} \Theta(F(g)) < +\infty \; .
\end{equation}
Under both hypotheses of Theorem \ref{thmD.examples-diffuse}, the function $\psi$ is a.e.\ differentiable. Since $\vphi$ is integrable, $\psi'$ is not essentially constant. We can then take $C > 0$ large enough, such that $\psi'$ is not essentially constant on $[-C,C]$. Since $\psi$ is continuous, we can take $\eps_0 > 0$ such that $|\psi(s) - \psi(s-r)| < |p|/4$ whenever $s \in [-C,C]$ and $|r|\leq \eps_0$. It follows that
\begin{multline*}
\lim_{\eps \recht 0} \frac{1}{\eps^2} \int_{[-C,C]^2} d\bigl((\psi(s) - \psi(s-\eps)) - (\psi(t) - \psi(t-\eps)),p\Z\bigr)^2 \, \vphi(s) \, \vphi(t) \, ds \, dt \\
= \int_{[-C,C]^2} (\psi'(s)-\psi'(t))^2 \, \vphi(s) \, \vphi(t) \, ds \, dt  > 0
\end{multline*}
exists and is strictly positive. We thus find $\eps_1 > 0$ such that $\Theta(r) \geq \eps_1 \, r^2$ whenever $|r| \leq \eps_1$. Take a finite subset $\cF \subset G$ such that $|F(g)| \leq \eps_1$ for all $g \in G \setminus \cF$. Then, \eqref{eq.almost-done} implies that
$$\sum_{g \in G \setminus \cF} F(g)^2 < +\infty \; ,$$
and it follows that $c_g$ is a coboundary.
\end{proof}

\begin{remark}\label{rem.about-growth}
Given the formulation of Theorem \ref{thmD.examples-diffuse}, it is tempting to try to prove that $G \actson (X,\mu)$ is dissipative when $\delta(c)$ is smaller than another ``constant'' associated with the function $\vphi$. But the following example shows that this is simply not true. More concretely, with $\vphi(t) = (2/\pi) (1+t^2)^{-2}$, we prove that for every $\eps > 0$, there exists a group $G$ and a cocycle $c_g$ such that $\delta(c) < \eps$ but still, $G \actson (X,\mu)$ is weakly mixing.

Assume that $W \subset G$ is an almost invariant subset. For every $\kappa > 0$, consider the function $F = \kappa \, 1_W$ with associated $1$-cocycle
$$(c_\kappa)_g = \kappa \, (1_{g W} - 1_W) = \kappa \, (1_{g W \setminus W} - 1_{W \setminus gW}) \; .$$
Writing
$$\delta(W) = \limsup_{s \recht +\infty} \frac{\log |\{g \in G \mid |g W \symdiff W| \leq s\}|}{s} \; ,$$
we find that $\delta(c_\kappa) = \kappa^{-2} \, \delta(W)$ for every $\kappa > 0$. Let $\vphi : \R \recht (0,+\infty)$ be any continuous function with $\int_\R \vphi(t) \, dt = 1$, with $\vphi(-t) = \vphi(t)$ for all $t \in \R$ and such that the function $\theta$ defined in \eqref{eq.function-theta} is finite for all $s \in \R$.

Writing $\al_\kappa = (1/2) \log(\theta(\kappa))$ and using the notation \eqref{eq.notation-C}, we find that $C(g) = \al_\kappa \, |g W \symdiff W|$. We also write
$$\beta_\kappa := - \log \Bigl( \int_\R \sqrt{\vphi(t+\kappa) \vphi(t)} \, dt \Bigr)$$
and note that
$$\int_X \sqrt{\frac{d(g^{-1}\mu)}{d\mu}(x)} \, d\mu(x) = \exp(-\beta_\kappa \, |g W \symdiff W|) \; .$$
By Lemma \ref{lem.strongly-conservative} and Proposition \ref{prop.ergodic-general}, the Bernoulli action $G \actson (X,\mu)$ is weakly mixing if $\delta(W) > 6 \, \al_\kappa$. By \cite[Proposition 4.1]{VW17}, the action $G \actson (X,\mu)$ is dissipative if $\delta(W) < \beta_\kappa$. As we have seen, $\delta(c_{\kappa}) = \kappa^{-2} \, \delta(W)$.

Below, we give for $a \in \{2,3,\ldots\}$ a concrete example of $W_a \subset G = \Z * (\Z/a\Z)$ with $\delta(W_a) = \log (2a-1) \in (0,+\infty)$.
Taking $\vphi(t) = (2/\pi) (1+t^2)^{-2}$, one computes that
$$\beta_\kappa = \log(1 + \kappa^2/4) \quad\text{and}\quad \al_\kappa = \log(1+2\kappa^2 + 5 \kappa^4/8) \; ,$$
which, for large values of $\kappa > 0$, is incomparable to $\kappa^2$. So, keeping $\vphi$ fixed, we can choose $\kappa > 0$ and $a \in \N$ large such that $\kappa^{-2} \log(2a-1)$ is arbitrarily small, while $6 \, \al_\kappa < \log(2a-1)$. So, $\delta(c_{\kappa})$ is arbitrarily small, while $G \actson (X,\mu)$ is weakly mixing.

Similarly to the proof of \cite[Proposition 6.8]{BKV19}, the above almost invariant subset $W_a \subset \Z * (\Z/a\Z)$ is defined as the set of elements that can be written by a reduced word either ending with a nonzero element of $\Z/a\Z$ or a positive element of $\Z$. By convention, the neutral element $e$ belongs to $W_a$. Whenever $g = a_0 n_1 a_1 \cdots a_{k-1} n_k a_k$ is a reduced word, with $a_i \in \Z/a\Z$ and $n_i \in \Z$, we define $|g| = |n_1| + \cdots + |n_k|$. By convention, $|g| = 0$ if $g \in \Z/a\Z$. A direct computation shows that
$$|g W_a \symdiff W_a| = |g| \;\;\text{for all $g \in G$, and}\quad |\{g \in G \mid |g| \leq m\}| = \frac{a}{a-1} (a (2a-1)^m -1)$$
for all $m \geq 1$. Hence, $\delta(W_a) = \log(2a-1)$.
\end{remark}

\section{\boldmath Nonsingular Bernoulli actions of $\Z$}

We prove Theorem \ref{thmE.Z-marginals-to-zero}. Assume that $(\mu_n)_{n \in \Z}$ are probability measures on $\{0,1\}$ with $\mu_n(0)$ converging to zero when $|n| \recht +\infty$. Consider the Bernoulli action $\Z \actson (X,\mu) = \prod_{n \in \Z} (\{0,1\},\mu_n)$. While the Kakutani criterion \eqref{eq.kakutani} makes it easy to give examples where $\Z \actson (X,\mu)$ is nonsingular and $\mu$ is nonatomic, the difficulty is to ensure that $\Z \actson (X,\mu)$ is conservative.

For this, we use a rather fine estimate for the quantity $C(n)$ defined in \eqref{eq.notation-C}. We start with the following elementary lemma.

\begin{lemma}\label{lem.estimate-projections}
Let $X_0,X_1$ be standard Borel spaces and $\pi : X_0 \recht X_1$ a Borel map. Let $\mu \sim \nu$ be equivalent probability measures on $X_0$. Then,
$$\int_{X_1} \frac{d(\pi_*(\mu))}{d(\pi_*(\nu))} \, d(\pi_*(\mu)) \leq \int_{X_0} \frac{d\mu}{d\nu} \, d\mu \; .$$
\end{lemma}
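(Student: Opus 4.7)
The plan is to rewrite both sides as $L^2$-norms (with respect to $\nu$ and $\pi_*(\nu)$ respectively) and then identify the pushforward Radon-Nikodym derivative as a conditional expectation, so that the inequality becomes the standard $L^2$-contractivity of conditional expectation.

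More precisely, set $f = d\mu/d\nu \in L^1(X_0,\nu)$ and $g = d(\pi_*\mu)/d(\pi_*\nu) \in L^1(X_1,\pi_*\nu)$. A direct rewriting gives
\begin{equation*}
\int_{X_0} \frac{d\mu}{d\nu} \, d\mu = \int_{X_0} f^2 \, d\nu \quad\text{and}\quad \int_{X_1} \frac{d(\pi_*\mu)}{d(\pi_*\nu)} \, d(\pi_*\mu) = \int_{X_1} g^2 \, d(\pi_*\nu) = \int_{X_0} (g \circ \pi)^2 \, d\nu \; .
\end{equation*}
Both quantities are allowed to be $+\infty$, in which case the required inequality is either trivial or the only content.

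Next I would verify that $g \circ \pi$ equals the $\nu$-conditional expectation of $f$ onto the sub-$\sigma$-algebra $\cA := \pi^{-1}(\cB(X_1)) \subset \cB(X_0)$. The function $g\circ \pi$ is $\cA$-measurable by construction, and for every Borel set $B \subset X_1$,
\begin{equation*}
\int_{\pi^{-1}(B)} g\circ \pi \, d\nu = \int_B g \, d(\pi_*\nu) = (\pi_*\mu)(B) = \mu(\pi^{-1}(B)) = \int_{\pi^{-1}(B)} f \, d\nu \; ,
\end{equation*}
which is exactly the defining property of $E_\nu[f \mid \cA]$.

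Finally, by Jensen's inequality applied to the convex function $t \mapsto t^2$ (equivalently, by the $L^2$-contractivity of conditional expectation), $\|E_\nu[f\mid \cA]\|_{L^2(\nu)} \leq \|f\|_{L^2(\nu)}$, which after squaring and combining with the two displayed identities yields the lemma. There is no real obstacle here; the only small point to be careful about is the equivalence assumption $\mu \sim \nu$, which guarantees that $f$ is strictly positive $\nu$-a.e.\ and that $g$ is a well-defined Radon-Nikodym derivative (since $\pi_*\mu \ll \pi_*\nu$ automatically).
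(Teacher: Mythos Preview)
Your proof is correct and is essentially the same as the paper's: both rewrite each side as an $L^2$-norm with respect to $\nu$ (resp.\ $\pi_*\nu$), identify $d(\pi_*\mu)/d(\pi_*\nu)$ with the conditional expectation of $d\mu/d\nu$, and conclude via Jensen's inequality $(E(F))^2 \leq E(F^2)$. The only cosmetic difference is that the paper phrases the conditional expectation as a map $E : L^+(X_0) \to L^+(X_1)$ rather than as $E_\nu[\,\cdot\mid \pi^{-1}(\cB(X_1))]$, but these are the same object.
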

\begin{proof}
Denote by $E$ the $\nu$-preserving conditional expectation mapping a positive measurable function $F : X_0 \recht [0,+\infty)$ to the positive measurable function $E(F) : X_1 \recht [0,+\infty)$ uniquely determined, up to $\pi_*(\nu)$-a.e.\ equality, by
$$\int_{X_1} E(F) \, H \, d(\pi_*(\nu)) = \int_{X_0} F \, (H \circ \pi) \, d\nu \quad\text{for all positive measurable $H : X_1 \recht [0,+\infty)$.}$$
Recall that $(E(F))^2 \leq E(F^2)$ for all positive measurable $F : X_0 \recht [0,+\infty)$. Noting that
$$\frac{d(\pi_*(\mu))}{d(\pi_*(\nu))} = E\Bigl(\frac{d\mu}{d\nu}\Bigr) \; ,$$
we find that
\begin{align*}
\int_{X_1} \frac{d(\pi_*(\mu))}{d(\pi_*(\nu))} \, d(\pi_*(\mu)) &= \int_{X_1} \Bigl(\frac{d(\pi_*(\mu))}{d(\pi_*(\nu))}\Bigr)^2 \, d(\pi_*(\nu)) = \int_{X_1} \Bigl(E\Bigl(\frac{d\mu}{d\nu}\Bigr)\Bigr)^2 \, d(\pi_*(\nu)) \\
&\leq \int_{X_1} E\Bigl( \Bigl(\frac{d\mu}{d\nu}\Bigr)^2\Bigr) \, d(\pi_*(\nu)) = \int_{X_0} \Bigl(\frac{d\mu}{d\nu}\Bigr)^2 \, d\nu = \int_{X_0} \frac{d\mu}{d\nu} \, d\mu \; .
\end{align*}
So the lemma is proven.
\end{proof}

\begin{lemma}\label{lem.elementary-estimate}
Denote, for $a \in (0,1)$, by $\mu_a$ the probability measure on $\{0,1\}$ given by $\mu_a(0) = a$ and $\mu_a(1) = 1-a$. Define the increasing continuous map
$$\zeta : (0,1) \recht \R : \zeta(a) = \begin{cases} \log(2a) &\;\; \text{if $0 < a \leq 1/2$,}\\
-\log(2(1-a)) &\;\;\text{if $1/2 \leq a < 1$.}\end{cases}$$
Then, for all $a,b \in (0,1)$,
$$\frac{a^2}{b} + \frac{(1-a)^2}{1-b} = \int_{\{0,1\}} \frac{d\mu_a}{d\mu_b} \, d\mu_a \leq \exp(|\zeta(a) - \zeta(b)|^2) \; .$$
\end{lemma}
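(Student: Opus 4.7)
First I would verify the equality by direct computation: $d\mu_a/d\mu_b$ takes the values $a/b$ at $0$ and $(1-a)/(1-b)$ at $1$, so
$$\int \frac{d\mu_a}{d\mu_b} \, d\mu_a = a \cdot \frac{a}{b} + (1-a) \cdot \frac{1-a}{1-b} = \frac{a^2}{b} + \frac{(1-a)^2}{1-b}.$$
Using the identity $a^2(1-b) + b(1-a)^2 = b(1-b) + (a-b)^2$, this can be rewritten in the convenient form $K(a,b) := 1 + (a-b)^2/[b(1-b)]$, so the inequality to prove becomes
$$\log K(a,b) \leq (\zeta(a) - \zeta(b))^2.$$

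My next move is a symmetry reduction: both $K(a,b)$ and $\exp((\zeta(a)-\zeta(b))^2)$ are invariant under the swap $(a,b) \mapsto (1-a, 1-b)$, the first manifestly and the second because $\zeta(1-x) = -\zeta(x)$. So I may assume $b \leq 1/2$. I would then split into Case A ($a \leq 1/2$), where $\zeta(a) - \zeta(b) = \log(a/b)$, and Case B ($a \geq 1/2$), where $\zeta(a) - \zeta(b) = -\log(4b(1-a))$.

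In each case, setting $F(a) = (\zeta(a)-\zeta(b))^2 - \log K(a,b)$, a direct computation gives $F(b) = F'(b) = 0$ and
$$F''(b) = 2\,\zeta'(b)^2 - \frac{2}{b(1-b)} = \frac{2}{\min(b,1-b)^2} - \frac{2}{b(1-b)} \geq 0,$$
where the inequality holds because $\min(b,1-b)^2 \leq b(1-b)$. Thus the desired estimate holds locally around $a = b$.

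The main obstacle will be promoting this local behavior to global non-negativity of $F$ on each sub-region. In Case A, substituting $r = a/b \in (0, 1/(2b)]$ reduces the claim to $(\log r)^2 \geq \log(1 + b(r-1)^2/(1-b))$, which should yield to routine calculus exploiting the bound $b/(1-b) \leq 1$ and the constraint $r \leq 1/(2b)$. Case B is more delicate; the cleanest reduction is the parametrization $a = 1 - e^{-u}/2$, $b = e^{-v}/2$ with $u,v \geq 0$, under which $\zeta(a) - \zeta(b) = u + v$ and the inequality becomes a concrete bound in the two variables $(u,v)$ that can be verified by elementary (if slightly intricate) analysis. The bookkeeping of the case split, rather than any single conceptual step, is the real difficulty.
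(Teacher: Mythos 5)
Your argument takes a genuinely different route from the paper's and, as written, has a real gap. The paper sidesteps the case analysis entirely by recognizing $\zeta$ as the inverse CDF of the standard Laplace density $\vphi(t)=\tfrac12 e^{-|t|}$: then $\mu_a = \pi_*(\nu_{\zeta(a)})$ for $\pi(x)=1_{[0,\infty)}(x)$ and $d\nu_s(t)=\vphi(t+s)\,dt$, and the pushforward contraction Lemma~\ref{lem.estimate-projections} yields
$$\int_{\{0,1\}} \frac{d\mu_a}{d\mu_b}\,d\mu_a \;\leq\; \int_\R \frac{\vphi(t+s)^2}{\vphi(t)}\,dt \;=\; \tfrac{2}{3}e^{|s|}+\tfrac{1}{3}e^{-2|s|} \quad\text{with } s = \zeta(a)-\zeta(b),$$
after which the single one-variable inequality $\tfrac{2}{3}e^{|s|}+\tfrac{1}{3}e^{-2|s|} \leq e^{s^2}$ finishes the proof. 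The absolute value $|\zeta(a)-\zeta(b)|$ and the splits according to $a,b$ being above or below $1/2$ come along for free.

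Your reductions to $K(a,b)=1+(a-b)^2/[b(1-b)]$, the symmetry under $(a,b)\mapsto(1-a,1-b)$, and the Taylor check $F(b)=F'(b)=0$, $F''(b)\geq 0$ are all correct, but they only give the inequality near the diagonal $a=b$, and the global verification --- which you yourself call ``the real difficulty'' --- is exactly where the argument stops. Moreover your one concrete hint for Case~A is misleading: replacing $b/(1-b)$ by its bound $1$ would require $(\log r)^2 \geq \log(1+(r-1)^2)$, which already fails at $r=2$ (left side $\approx 0.48$, right side $= \log 2 \approx 0.69$). The constraint $r\leq 1/(2b)$ must therefore be used essentially; optimizing over $b$ at fixed $r\geq 1$ gives the worst case $b=1/(2r)$ and the one-variable claim $(\log r)^2 \geq 2\log r - \log(2r-1)$, which is true but has a \emph{triple} zero at $r=1$, so the second-order local analysis you set up does not settle it. Case~B in your $(u,v)$-coordinates becomes $\log\bigl(2e^v(1-e^{-u}/2)^2 + \tfrac{e^{-2u}}{4(1-e^{-v}/2)}\bigr) \leq (u+v)^2$, a two-variable inequality you do not attempt. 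Until those verifications are actually carried out, this is a plan rather than a proof; the Laplace pushforward route avoids the whole issue.
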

\begin{proof}
Define $\vphi : \R \recht (0,+\infty) : \vphi(t) = (1/2) \exp(-|t|)$. For every $s \in \R$, consider the probability measure $\nu_s$ on $\R$ given by $d\nu_s(t) = \vphi(t+s) \, dt$. By construction, the function $\zeta$ is the inverse of the distribution function $x \mapsto \int_{-\infty}^x \vphi(t) \, dt$. Define the Borel map $\pi : \R \recht \{0,1\}$ by putting $\pi(x) = 0$ if $x < 0$ and $\pi(x) = 1$ if $x \geq 0$. Note that $\mu_a = \pi_*(\nu_{\zeta(a)})$. By Lemma \ref{lem.estimate-projections}, we have
$$\int_{\{0,1\}} \frac{d\mu_a}{d\mu_b} \, d\mu_a \leq \int_\R \frac{d\nu_{\zeta(a)}}{d\nu_{\zeta(b)}} \, d\nu_{\zeta(a)} = \int_\R \frac{\vphi(t+\zeta(a))^2}{\vphi(t + \zeta(b))} \, dt = \int_\R \frac{\vphi(t+\zeta(a)-\zeta(b))^2}{\vphi(t)} \, dt \; .$$
By a direct computation, for all $s \in \R$,
$$\int_\R  \frac{\vphi(t+s)^2}{\vphi(t)} \, dt = \frac{2}{3} \exp(|s|) + \frac{1}{3} \exp(-2 |s|) \leq \exp(s^2) \; .$$
This concludes the proof of the lemma.
\end{proof}

Using Lemma \ref{lem.strongly-conservative}, we obtain the following sufficient condition for (strong) conservativeness. Applying \cite[Theorem A]{BKV19}, we get weak mixing, at least when $G$ is an abelian group. When $G = \Z$, a small argument allows us to prove that the action is of type III$_1$. The proof of Theorem \ref{thmE.Z-marginals-to-zero} below then consists in showing that all the hypotheses of the proposition can indeed be realized.

\begin{proposition}\label{prop.sufficient-conservative}
Let $G$ be a countable group and $F : G \recht [0,+\infty)$ a positive function implementing the cocycle $c_g(h) = F(g^{-1}h) - F(h)$ with $c_g \in \ell^2(G)$. Define the probability measures $\mu_g$ on $\{0,1\}$ by $\mu_g(0) = \exp(-F(g))/2$. If the Poincar\'{e} exponent of $c$ (see \eqref{eq.poincare}) satisfies $\delta(c) > 3$, then the Bernoulli action
$$G \actson (X,\mu) = \prod_{g \in G} (\{0,1\},\mu_g)$$
is nonsingular, satisfies \eqref{eq.strong-conservative-assumption} and is strongly conservative.
\begin{enumlist}
\item If moreover $G$ is abelian, then $G \actson (X,\mu)$ is weakly mixing.
\item If moreover $G = \Z$ and the function $F$ satisfies the following two properties, then $G \actson (X,\mu)$ is of stable type III$_1$.
\begin{itemlist}
\item The function $\N \recht \R : n \mapsto F(n)$ is nondecreasing and $\lim_{n \recht +\infty} F(n) = +\infty$.
\item For every $a \in \N$, the function $\N \recht \R : n \mapsto F(n+a) - F(n)$ is nonincreasing.
\end{itemlist}
\end{enumlist}
\end{proposition}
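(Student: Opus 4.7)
I plan to prove the three successive assertions of the proposition. For the nonsingularity and strong conservativeness under $\delta(c) > 3$, I apply Lemma \ref{lem.elementary-estimate} together with Lemma \ref{lem.strongly-conservative}. Since $\mu_g(0) = e^{-F(g)}/2 \leq 1/2$, the increasing function $\zeta$ of Lemma \ref{lem.elementary-estimate} satisfies $\zeta(\mu_h(0)) = -F(h)$, yielding
\[
\int_{\{0,1\}} \frac{d\mu_h}{d\mu_{gh}} \, d\mu_h \leq \exp(|F(h)-F(gh)|^2).
\]
Summing over $h \in G$ and using $\|c_{g^{-1}}\| = \|c_g\|$, this gives $C(g) \leq \tfrac{1}{2}\|c_g\|^2 < +\infty$; combined with the inequality $H^2(\mu_{gh},\mu_h) \leq D(\mu_h,\mu_{gh})$ from Section \ref{sec.conservative}, this yields Kakutani's criterion \eqref{eq.kakutani} and hence nonsingularity. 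Since $\{g \in G : C(g^{\pm 1}) \leq s\} \supseteq \{g : \|c_g\|^2 \leq 2s\}$, the hypothesis $\delta(c) > 3$ translates into a limsup of at least $2\delta(c) > 6$ in \eqref{eq.strong-conservative-assumption}, so Lemma \ref{lem.strongly-conservative} delivers strong conservativeness. Weak mixing in the abelian case then follows from \cite[Theorem A]{BKV19}, which applies to strongly conservative nonsingular Bernoulli actions of abelian groups with a two-point base space and requires no boundedness of the marginals.

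For the type III$_1$ statement when $G = \Z$ with $F$ satisfying the two monotonicity hypotheses, I would show directly that Krieger's ratio set of the action equals $[0,+\infty)$. Writing $\delta(k) = F(k+1)-F(k) \geq 0$ for $k \in \N$, square-summability of the cocycle for $g = 1$ forces $\delta \in \ell^2(\N)$, and since $\delta$ is nonincreasing, $\delta(k) \to 0$. The function $\phi_a(n) := F(n+a)-F(n)$ is then nonincreasing in $n$ with $\phi_a(n) \to 0$ as $n \to +\infty$, while $\phi_a(0) = F(a)-F(0) \to +\infty$ as $a \to +\infty$, and the jumps $\phi_a(n) - \phi_a(n+1) = \delta(n)-\delta(n+a) \leq \delta(n)$ can be made arbitrarily small by choosing $n$ large. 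Hence for any prescribed $\alpha > 0$ and $\varepsilon > 0$, we can find $(a,n)$ with $|\phi_a(n) - \alpha| < \varepsilon$ and $\delta(n) < \varepsilon$. Given any cylinder $\cU$ of positive measure depending only on coordinates in $[-N,N]$, I would take $(a,n)$ with $n$ much larger than $N$, so that the $a$-shift of $\cU$ is disjoint from its defining coordinates, and construct a positive-measure subset $\cV \subset \cU \cap a^{-1}\cU$ on which $\log d(a^{-1}\mu)/d\mu$ is uniformly within $O(\varepsilon)$ of $-\phi_a(n) \approx -\alpha$: the dominant contribution comes from imposing $x_n = 0$ (a single flip relative to the typical configuration of $1$'s at faraway coordinates), while the background $\sum_k [\log(1-p_{a+k}) - \log(1-p_k)]$, with $p_k = e^{-F(k)}/2$, cancels telescopically when $\sum_k p_k < +\infty$ and is otherwise controlled by a conditional argument. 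Varying $\alpha > 0$ together with the reversibility of shifts then yields ratio set $[0,+\infty)$, hence type III$_1$; the stable version is obtained because the constructed $\cV$ yields a nearly-constant Radon--Nikodym value, which is preserved upon tensoring with any ergodic pmp action of $\Z$.

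The main obstacle is the rigorous control of these background contributions in the case $\sum_k p_k = +\infty$, when the ``all ones'' event has measure zero and the simple telescoping argument is unavailable. One must then work with configurations carrying infinitely many zeros and isolate the intended $-\phi_a(n)$ contribution via a Borel--Cantelli-type analysis of the random set $S(x) = \{k : x_k = 0\}$, exploiting both the summability $\sum_k H^2(\mu_{a+k},\mu_k) < +\infty$ and the monotonicity of $F$. The failure of the weak boundedness property \eqref{eq.boundedness-weak}, forced by $F(n) \to +\infty$, precludes direct appeal to Theorem \ref{thm.Krieger-type-Bernoulli} or Lemma \ref{lem.link-to-permutation-action}, so this explicit ratio-set computation is the unavoidable hard step.
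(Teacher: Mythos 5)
The first half of your argument is correct and essentially coincides with the paper's: the elementary estimate of Lemma \ref{lem.elementary-estimate} gives $C(g) \leq \tfrac{1}{2}\|c_g\|^2 < +\infty$, hence nonsingularity; the containment $\{g : \|c_g\|^2 \leq 2s\} \subseteq \{g : C(g^{\pm 1}) \leq s\}$ turns $\delta(c) > 3$ into the bound $>6$ needed in \eqref{eq.strong-conservative-assumption}, so Lemma \ref{lem.strongly-conservative} yields strong conservativeness; and weak mixing in the abelian case is indeed an application of \cite[Theorem A]{BKV19}.

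The type III$_1$ part has a genuine gap, and you in fact name it yourself: you attempt to compute Krieger's ratio set of the \emph{shift} action $\Z \actson (X,\mu)$ directly, which forces you to control the full infinite product defining $\log d(a^{-1}\mu)/d\mu$. That "background contribution" is precisely the thing you cannot control when $\sum_k \mu_k(0) = +\infty$, and this divergent case is unavoidable: the paper's proof begins by showing that $\sum_{n\geq 1}\mu_{2n}(0) = +\infty$ is \emph{forced} by conservativeness (otherwise the set $\{x : x_n = 1 \text{ for all } n \geq 1\}$ would have positive measure and the action by $-1$ would be non-recurrent on it). So your "telescoping when $\sum_k p_k < +\infty$" branch is empty, and the hard branch is the only one.

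The missing idea is the reduction step the paper takes before any ratio-set computation. Since $\mu$ is nonatomic (a consequence of the divergence just mentioned), \cite[Theorem 1.1]{AP77} gives ergodicity of the permutation action $\cS_\Z \actson (X,\mu)$, and \cite[Lemma 3.1]{BKV19} shows that it suffices to prove that $\cS_\Z \actson (X,\mu)$ is of type III$_1$. You correctly observe that the weak boundedness hypothesis \eqref{eq.boundedness-weak} fails, so Lemma \ref{lem.link-to-permutation-action} and Theorem \ref{thm.Krieger-type-Bernoulli} are unavailable — but you then conclude that a direct ratio-set computation for the shift is unavoidable, overlooking that \cite[Lemma 3.1]{BKV19} is exactly the alternative route. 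Once one works with $\cS_\Z$, the elements used are \emph{transpositions} $\si_n$ (flipping $2n$ and $2n+k_n$), and $\log d(\si_n^{-1}\mu)/d\mu$ is a product of only two factors: no infinite background to control. Your monotonicity analysis of $\phi_a(n) = F(n+a) - F(n)$ is sound and mirrors the paper's construction of the odd integers $k_n$ with $F(2n+k_n) - F(2n) \in (c,d)$, but it is being applied to the wrong group of transformations. To close the gap, you need the permutation-action reduction before your combinatorial estimates can be deployed.
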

\begin{proof}
By Lemma \ref{lem.elementary-estimate} and using the notation \eqref{eq.notation-C}, we have
$$C(g) = \frac{1}{2} \sum_{h \in G} \log \int_{\{0,1\}} \frac{d\mu_h}{d\mu_{gh}} \, d\mu_h \leq \frac{1}{2} \sum_{h \in G} |F(h) - F(gh)|^2 = \frac{1}{2} \|c_g\|^2 <+\infty \; .$$
So, $G \actson (X,\mu)$ is nonsingular. By Lemma \ref{lem.strongly-conservative}, the action is strongly conservative. If $G$ is abelian, it follows from \cite[Theorem A]{BKV19} that $G \actson (X,\mu)$ is weakly mixing.

For the rest of the proof, assume that $G = \Z$ and that $F$ satisfies the two extra conditions in the formulation of the proposition. We first claim that
\begin{equation}\label{eq.enough-non-atomic}
\sum_{n=1}^\infty \mu_{2n}(0) = +\infty \; .
\end{equation}
Indeed, if this does not hold, translating by $1$ and using the Kakutani criterion for the nonsingularity of the Bernoulli action, we get that $\sum_{n=1}^\infty \mu_n(0) < +\infty$. Then $\cU = \{x \in X \mid x_n = 1 \;\;\text{for all $n \geq 1$ \;}\}$ has positive measure and $(-1) \cdot \cU \subset \cU$ with $\cU \setminus (-1)\cdot \cU$ having positive measure. This contradicts the conservativeness of $\Z \actson (X,\mu)$. So, \eqref{eq.enough-non-atomic} is proven.

In particular, since $\mu_n(1) \geq 1/2$ for all $n \in \Z$ and since \eqref{eq.enough-non-atomic} holds, the measure $\mu$ is nonatomic. By \cite[Theorem 1.1]{AP77}, the permutation action $\cS_\Z \actson (X,\mu)$ is ergodic. By \cite[Lemma 3.1]{BKV19}, it suffices to prove that $\cS_\Z \actson (X,\mu)$ is of type III$_1$. Fix $\rho > 1$. We prove that $\rho$ belongs to Krieger's ratio set for $\cS_\Z \actson (X,\mu)$. Fix a nonnegligible subset $\cU \subset X$ and $\eps > 0$. We construct a nonnegligible subset $\cV \subset \cU$ and an element $\si \in \cS_\Z$ such that $\si \cdot \cV \subset \cU$ and $|(d(\si^{-1}\cdot \mu)/d\mu)(x) - \rho| < \eps$ for all $x \in \cV$.

Fix $0 < a < b < +\infty$ such that $\exp([a,b]) \subset (1-\eps,1+\eps)$. Fix a nonempty open subinterval $(c,d) \subset (a,b)$. We claim that there exists $n_0 \in \N$ and a nondecreasing sequence $(k_n)_{n \geq n_0}$ of odd integers such that $F(2n + k_n) - F(2n) \in (c,d)$ for all $n \geq n_0$. Since $m \mapsto F(m+1) - F(m)$ is square summable, it certainly converges to zero as $m \recht +\infty$. We thus fix $n_0 \in \N$ such that $F(m+1) - F(m) < (d-c)/2$ for all $m \geq 2n_0$ and such that $F(2n_0+1) - F(2n_0) < c$. We then construct $(k_n)_{n \geq n_0}$ by induction. To fix $k_{n_0}$, note that $k \mapsto F(2n_0 + k) - F(2n_0)$ is nondecreasing, tending to $+\infty$, starting at $k=1$ below $c$ and having increments smaller than $(d-c)/2$. We can then define $k_{n_0}$ as the smallest odd integer such that $F(2n_0 + k_{n_0}) - F(2n_0) \in (c,d)$. Assume that $k_{n_0} \leq k_{n_0+1} \leq \cdots \leq k_n$ have been fixed for $n \geq n_0$. Since we assumed that the function $m \recht F(m + k_n) - F(m)$ is nonincreasing, we have
$$F(2(n+1) + k_n) - F(2(n+1)) \leq F(2n + k_n) - F(2n) < d \; .$$
If $F(2(n+1) + k_n) - F(2(n+1)) > c$, we put $k_{n+1} = k_n$. If $F(2(n+1) + k_n) - F(2(n+1)) \leq c$, we note as above that the function $m \mapsto F(2(n+1) + m) - F(2(n+1))$ is nondecreasing, tending to $+\infty$ and having increments smaller than $(d-c)/2$. The value at $m = k_n$ is smaller or equal than $c$. We define $k_{n+1} \geq k_n$ as the first odd integer such that $F(2(n+1)+k_{n+1}) - F(2n) \in (c,d)$. This ends the proof of the claim.

Since $\lim_{n \recht +\infty} \mu_n(1) = 1$, by taking $n_0$ large enough, we may assume that for all $n \geq n_0$,
$$|\log \mu_n(1)| < \min \{(b-d)/2,(c-a)/2\} \; .$$
Our choices guarantee that for all $n \geq n_0$,
\begin{equation}\label{eq.values-ratios}
r_n := \frac{\mu_{2n}(0) \, \mu_{2n+k_n}(1)}{\mu_{2n}(1) \, \mu_{2n+k_n}(0)} \in \exp([a,b]) \; .
\end{equation}
The sequences $(2n)_{n \geq n_0}$ and $(2n + k_n)_{n \geq n_0}$ of even, resp.\ odd integers are both strictly increasing. Therefore, the events
$$X_n = \bigl\{x \in X \bigm| (x_{2n},x_{2n+k_n}) \in \{(0,1),(1,0)\}\, \bigr\} \quad\text{with $n \geq n_0$,}$$
are independent. Since $\mu_m(1) \geq 1/2$ for all $m \in \Z$, it follows from \eqref{eq.enough-non-atomic} that $\sum_{n \geq n_0} \mu(X_n) = + \infty$. By the second Borel-Cantelli lemma, for every $n \geq n_0$, we have that $\bigcup_{m \geq n} X_m$ is conull. For every $n \geq n_0$, denote by $\si_n \in \cS_\Z$ the flip of $2n$ and $2n+k_n$. For every $n \geq n_0$, define the nonsingular automorphism $\vphi_n$ by
$$\vphi_n(x) = \si_m(x) \quad\text{where $m$ is the smallest integer satisfying $m \geq n$ and $x \in X_m$.}$$
By construction, $\vphi_n \circ \vphi_n = \id$ and, for all $n \geq n_0$ and a.e.\ $x \in X$,
\begin{equation}\label{eq.location-rn}
\frac{d(\vphi_n \mu)}{d\mu}(x) \in \{r_m , r_m^{-1} \mid m \geq n\} \subset R \quad\text{where $R:= \exp([a,b] \cup [-b,-a])$.}
\end{equation}
Define the operators $T_n$ on $L^1(X,\mu)$ by $T_n(\xi) = \xi \circ \vphi_n$. Then, $\|T_n(\xi)\|_1 \leq \exp(b) \|\xi\|_1$ for all $n$ and all $\xi \in L^1(X,\mu)$. We claim that $\lim_{n \recht +\infty} \|T_n(\xi) - \xi\|_1 = 0$ for all $\xi \in L^1(X,\mu)$. By the uniform boundedness of the operators $T_n$, it suffices to prove this claim when $\xi$ only depends on finitely many coordinates $x_m$, $m \in \Z$. In that case, $T_n(\xi) = \xi$ for all $n$ large enough.

Applying this to $\xi = 1_\cU$, where $\cU$ was fixed above, we find $n \geq n_0$ such that $\cV := \vphi_n(\cU) \cap \cU$ has positive measure. Note that $\vphi_n(\cV) \subset \cU$. By \eqref{eq.location-rn}, the Radon-Nikodym derivative of $\vphi_n$ takes values in $\exp([a,b]) \cup (\exp([a,b]))^{-1}$. Making $\cV$ smaller and interchanging $\cV$ and $\vphi_n(\cV)$ if needed, we may assume that the Radon-Nikodym derivative of $\vphi_n$ restricted to $\cV$ takes values in $\exp([a,b])$. Making $\cV$ even smaller, we find $\si \in \cS_\Z$ such that $\vphi_n(x) = \si \cdot x$ for all $x \in \cV$. By construction, $|(d(\si^{-1}\cdot \mu)/d\mu)(x) - \rho| < \eps$ for all $x \in \cV$.
\end{proof}

\begin{proof}[{Proof of Theorem \ref{thmE.Z-marginals-to-zero}}]
It suffices to construct a function $F : \Z \recht [0,+\infty)$ such that all hypotheses of Proposition \ref{prop.sufficient-conservative} hold.

Inductively define the increasing sequence $(b_n)_{n \in \N}$ by putting $b_1 = 1$ and choosing
$$b_n \geq \exp\Bigl(n^3 \sum_{k=1}^{n-1} b_k\Bigr)$$
for all $n \geq 2$. Write $a_0 = 0$ and $a_n = \sum_{k=1}^n b_k$ for all $n \geq 1$. Define the function
$$F : \Z \recht [0,+\infty) : F(n) = k + \frac{j}{b_k} \quad\text{whenever $|n| = a_{k-1} + j$ for $k \in \N$ and $0 \leq j \leq b_k$.}$$
Note that this definition is coherent when $|n| = a_{k-1} + b_k = a_k + 0$. Also note that $\N \recht \R : n \mapsto F(n)$ is increasing to $+\infty$ and that for all $a \in \N$, the function $\N \recht \R : n \mapsto F(n+a) - F(n)$ is nonincreasing, since the slopes of the piecewise linear function $F$ are decreasing as $n \recht +\infty$. Finally, note that $F(-n) = F(n)$ for all $n \in \Z$. It remains to prove that $F$ implements an $\ell^2$-cocycle $c$ whose Poincar\'{e} exponent is larger than $3$.

For every $k \in \N$, define $F_k(n) = F(n) - k$ and $G_k(n) = k$ if $|n| \leq a_{k-1}$, and $F_k(n) = 0$ and $G_k(n) = F(n)$ if $|n| \geq a_{k-1}$. Obviously, $F = F_k + G_k$. Note that $-k \leq F_k(n) \leq 0$ for all $n$, so that $\|F_k\|_2^2 \leq 2 k^2 a_{k-1}$. Also, using the symmetry of $G_k$,
\begin{align*}
\sum_{n \in \Z} |G_k(n-1) - G_k(n)|^2 & = 2 \sum_{n=a_{k-1}}^{\infty} (F(n+1) - F(n))^2 \\ & = 2 \sum_{s = k}^{\infty} \sum_{j=0}^{b_s-1} (F(a_{s-1} + j + 1) - F(a_{s-1} + j))^2 \\
& = 2 \sum_{s = k}^{\infty} \sum_{j=0}^{b_s-1} \frac{1}{b_s^2} = 2 \sum_{s=k}^{\infty} \frac{1}{b_s} \leq \frac{4}{b_k}
\end{align*}
because $b_s \geq 2 b_{s-1}$ for all $s$.

For every $N \in \Z$, write $c_N(n) = F(n-N) - F(n)$. We have proven that $c_1 \in \ell^2(\Z)$ and thus, $c_N \in \ell^2(\Z)$ for all $N \in \Z$. For every $\xi \in \ell^2(\Z)$ and $N \in \Z$, write $(\lambda_N \xi)(n) = \xi(n-N)$. For every $k \geq 1$, define $\xi_k(n) = G_k(n-1) - G_k(n)$. By the computation above, $\|\xi_k\|_2^2 \leq 4/b_k$.

For every $k \geq 1$ and $N \geq 1$, because $F = F_k + G_k$, we have
$$c_N = (\lambda_N F_k - F_k) + \sum_{j=0}^{N-1} \lambda_j \xi_k \; .$$
It follows that
$$\|c_N\|_2 \leq 2 \|F_k\|_2 + N \|\xi_k\|_2 \leq 4 k \sqrt{a_{k-1}} + \frac{2N}{\sqrt{b_k}} \; .$$
So, whenever $k \geq 2$ and $1 \leq N \leq \sqrt{b_k}$, we have
$$\|c_N\|_2 \leq 4 k \sqrt{a_{k-1}} + 2 \leq 5 k \sqrt{a_{k-1}}$$
and thus, since $b_k$ was chosen larger than $\exp(k^3 a_{k-1})$,
$$\|c_N\|_2^2 \leq  25 k^2 a_{k-1} \leq \frac{50}{k} \log(\sqrt{b_k}) \; .$$
Put $\al_k = (50/k) \log(\sqrt{b_k})$. We have shown that
$$|\{ N \in \Z \mid \|c_N\|_2^2 \leq \al_k\}| \geq \sqrt{b_k} \; ,$$
so that
$$\frac{\log |\{ N \in \Z \mid \|c_N\|_2^2 \leq \al_k\}|}{\al_k} \geq \frac{k}{50} \quad\text{and}\quad \limsup_{s \recht +\infty} \frac{\log |\{N \in \Z \mid \|c_N\|_2^2 \leq s\}|}{s} = +\infty \; .$$
This means that $\delta(c) = +\infty$ and the theorem is proven.
\end{proof}


\begin{thebibliography}{CCMT12}\setlength{\itemsep}{-1mm} \setlength{\parsep}{0mm} \small
\bibitem[AP77]{AP77} D. Aldous and J. Pitman, On the zero-one law for exchangeable events. {\it Ann. Probab.} {\bf 7} (1979), 704-723.

\bibitem[AIM19]{AIM19} Y. Arano, Y. Isono and A. Marrakchi, Ergodic theory of affine isometric actions on Hilbert spaces. {\it Preprint.} \href{http://arxiv.org/abs/1911.04272}{arXiv:1911.04272}

\bibitem[BK18]{BK18} M. Bj\"{o}rklund and Z. Kosloff, Bernoulli actions of amenable groups with weakly mixing Maharam extensions. {\it Preprint.} \href{http://arxiv.org/abs/1808.05991}{arXiv:1808.05991}

\bibitem[BKV19]{BKV19} M. Bj\"{o}rklund, Z. Kosloff and S. Vaes, Ergodicity and type of nonsingular Bernoulli actions. {\it Invent. Math.}, to appear. \href{http://arxiv.org/abs/1901.05723}{arXiv:1901.05723}

\bibitem[CD60]{CD60} G. Choquet and J. Deny, Sur l'\'{e}quation de convolution $\mu = \mu * \sigma$. {\it C. R. Acad. Sci. Paris} {\bf 250} (1960), 799-801.

\bibitem[CW88]{CW88} A. Connes and E.J. Woods, Hyperfinite von Neumann algebras and Poisson boundaries of time dependent random walks. {\it Pacific J. Math.} {\bf 137} (1989), 225-243.

\bibitem[Dan18]{Dan18} A. Danilenko, Weak mixing for nonsingular Bernoulli actions of countable amenable groups. {\it Proc. Amer. Math. Soc.} {\bf 147} (2019), 4439-4450.

\bibitem[DL16]{DL16} A. Danilenko and M. Lema\'nczyk, K-property for Maharam extensions of nonsingular Bernoulli and Markov shifts. {\it Ergodic Theory Dynam. Systems.} {\bf 39} (2019), 3292-3321.

\bibitem[DD89]{DD89} W. Dicks and M.J. Dunwoody, Groups acting on graphs. {\it Cambridge Studies in Advanced Mathematics} {\bf 17}. Cambridge University Press, Cambridge, 1989.

\bibitem[Ham81]{Ham81} T. Hamachi, On a Bernoulli shift with non-identical factor measures. {\it Ergodic Theory Dynam. Systems} \textbf{1} (1981), 273-284.

\bibitem[HI15]{HI15} C. Houdayer and Y. Isono, Bi-exact groups, strongly ergodic actions and group measure space type III factors with no central sequence. {\it Comm. Math. Phys.} {\bf 348} (2016), 991-1015.

\bibitem[HMV16]{HMV16} C. Houdayer, A. Marrakchi and P. Verraedt, Strongly ergodic equivalence relations: spectral gap and type III invariants. {\it Ergodic Theory Dynam. Systems} {\bf 39} (2019), 1904-1935.

\bibitem[Kos09]{Kos09} Z. Kosloff, On a type III$_1$ Bernoulli shift. {\it Ergodic Theory Dynam. Systems} \textbf{31} (2011), 1727-1743.

\bibitem[Kos12]{Kos12} Z. Kosloff, On the K property for Maharam extensions of Bernoulli shifts and a question of Krengel. {\it Israel J. Math.} \textbf{199} (2014), 485-506.

\bibitem[Kos18]{Kos18} Z. Kosloff, Proving ergodicity via divergence of time averages. {\it Studia Math.} {\bf 248} (2019), 191-215.

\bibitem[KS20]{KS20} Z. Kosloff and T. Soo, The orbital equivalence of Bernoulli actions and their Sinai factors. {\it Preprint.} \href{http://arxiv.org/abs/2005.02812}{arXiv:2005.02812}

\bibitem[M65]{M65} C.C. Moore, Invariant measures on product spaces. In {\it Proc. Fifth Berkeley Sympos. Math. Statist. and Probability (Berkeley, Calif., 1965/66)}, Vol. II: Contributions to Probability Theory, Part 2, Univ. California Press, Berkeley, 1967, pp.\ 447-459.

\bibitem[Ore66]{Ore66} S. Orey, Tail events for sums of independent random variables. {\it J. Math. Mech.} {\bf 15} (1966), 937-951.

\bibitem[Pop03]{Pop03} S. Popa, Strong rigidity of II$_1$ factors arising from malleable actions of $w$-rigid groups,~I.  {\it Invent. Math.} \textbf{165} (2006), 369-408.

\bibitem[Pop05]{Pop05} S. Popa, Cocycle and orbit equivalence superrigidity for malleable actions of $w$-rigid groups. {\it Invent. Math.} {\bf 170} (2007), 243-295.

\bibitem[SV77]{SV77} \c{S}. Str\u{a}til\u{a} and D. Voiculescu, On a class of KMS states for the unitary group $U(\infty)$. {\it Math. Ann.} {\bf 235} (1978), 87-110.

\bibitem[VW17]{VW17} S. Vaes and J. Wahl, Bernoulli actions of type III$_1$ and $L^2$-cohomology. {\it Geom. Funct. Anal.} {\bf 28} (2018), 518-562.

\end{thebibliography}
\end{document}